\numberwithin{equation}{section} 
\newtheorem{theorem}{Theorem}[section] 
\newtheorem{proposition}[theorem]{Proposition} 
\newtheorem{corollary}[theorem]{Corollary} 
\newtheorem{lemma}[theorem]{Lemma} 
\theoremstyle{definition}
\newtheorem{remark}[theorem]{Remark} 
\newcommand*{\bigchi}{\mbox{\Large$\chi$}}
\def\D{\mathbb {D}}
\def\R{\mathbb {R}}
\def\C{\mathbb {C}}
\def\N{\mathbb {N}}
\def\H{\mathbb {H}}
\def\Z{\mathbb {Z}}
\def\O{\mathbb {O}}
\def\E{\mathbb {E}}
\def\sgn{\mathbf {sgn}}
\def\Db{\mathbf {D}}
\def\Ab{\mathbf {A}}
\def\ib{\mathbf {i}}
\def\jb{\mathbf {j}}
\def\kb{\mathbf {k}}
\def\d{\mathbf{ d}}
\def\ZC{\mathcal {Z}}
\def\CC{\mathcal {C}}
\def\PC{\mathcal {P}}
\def\NC{\mathcal {N}}
\def\OC{\mathcal {O}}
\def\BC{\mathcal {B}}
\def\YC{\mathcal {Y}}
\def\EC{\mathcal {E}}
\def\HC{\mathcal {H}}
\def\SC{\mathcal {S}}
\def\DC{\mathcal {D}}
\def\AC{\mathcal {A}}
\def\GC{\mathcal {G}}
\def\g{\mathfrak {g}}
\def\h{\mathfrak {h}}
\def\p{\mathfrak {p}}
\def\k{\mathfrak {k}}
\def\s{\mathfrak {s}}
\def\a{\mathfrak {a}}
\def\u{\mathfrak {u}}
\def\o{\mathfrak {o}}
\def\l{\mathfrak {l}}
\def\z{\mathfrak {z}}
\def\m{\mathfrak {m}}
\def\b{\mathfrak b}
\def\<>{\langle \cdot ,\, \cdot \rangle}
\begin{document} 
 
\title[Second cohomology groups of nilpotent orbits]{The second cohomology groups of nilpotent orbits in classical Lie algebras} 

\author[I. Biswas]{Indranil Biswas} 
\address{School of Mathematics, Tata Institute of 
Fundamental Research, Homi Bhabha Road, Mumbai 400005, India}
\email{indranil@math.tifr.res.in} 

\author[P. Chatterjee]{Pralay Chatterjee}
\address{The Institute of Mathematical Sciences, HBNI, CIT Campus, 
Tharamani, Chennai 600113, India}
\email{pralay@imsc.res.in} 

\author[C. Maity]{Chandan Maity}
\address{The Institute of Mathematical Sciences, HBNI, CIT Campus, 
Tharamani, Chennai 600113, India}
\email{cmaity@imsc.res.in} 

\subjclass[2010]{57T15, 17B08}
\keywords{Nilpotent orbits, classical groups, homogeneous spaces.}
%\date{\today}

\begin{abstract} 
The second de Rham cohomology groups of nilpotent orbits in non-compact real forms of classical complex simple
Lie algebras are explicitly computed. Furthermore, the first de Rham cohomology groups of nilpotent orbits in
non-compact classical simple Lie algebras are computed; they are proven to be zero for nilpotent orbits 
in all the complex simple Lie algebras. A key component in these computations is a description of the
second and first cohomology groups of homogeneous spaces of general connected Lie groups which is obtained here.
This description, which generalizes a previous theorem of the first two authors, may be of independent interest.
\end{abstract}

\maketitle 

\begingroup
\def\addvspace#1{}
\tableofcontents
\endgroup

\section{Introduction}

\subsection{Context and motivations}
The nilpotent orbits in the semisimple Lie algebras, under the adjoint action of the associated
semisimple Lie groups, form a rich class of homogeneous spaces which are extensively studied; 
see the book \cite{CoM} and the relatively recent article \cite{M} for an account. Nevertheless, surprisingly 
for a very long period there seems to have been hardly any literature on the topological invariants of 
such orbits other than the description of fundamental groups in the case of simple 
Lie algebras. The computation of fundamental groups of such orbits 
is attributed to T. Springer, R. Steinberg \cite{SS} for the classical case and A. Alexeevski \cite{A} for the complex exceptional case; 
see \cite[Cor 6.1.6, p. 91, pp. 128--134]{CoM}, \cite[pp. 229--230]{M}.
We also refer to the works of D. King \cite{Ki} and E. Sommers \cite{So} in this regard.

It was only recently attention has been drawn to topological 
invariants of such orbits  other than the fundamental groups. In \cite{Ju}, D. Juteau 
describes the integral cohomology  of the minimal nontrivial nilpotent orbit in complex 
simple Lie algebras, and in \cite{Cr}, P. Crooks  computed the equivariant rational 
cohomology algebras of the regular and minimal nilpotent orbits in complex simple Lie 
algebras under the action of a maximal (compact) torus. Contribution towards this by 
some of us began in \cite{BC} where the second cohomology groups of the nilpotent 
orbits in all the complex simple Lie algebras, under the adjoint action of the 
corresponding complex group, are computed; see \cite[Theorems 5.4, 5.5, 5.6, 5.11, 5.12]{BC}. 
Recently in \cite{CM} the second cohomology groups of nilpotent orbits are computed for 
most of the nilpotent orbits in non-compact non-complex exceptional Lie algebras, and 
for the rest of cases of nilpotent orbits, which are not covered in the above 
computations, upper bounds for the dimensions of the second cohomology groups 
are obtained; see \cite[Theorems 3.2--3.13]{CM}.

Each orbit in a semisimple Lie algebra under the adjoint action is equipped with 
the Kostant-Kirillov two form. In \cite{BC}, besides finding invariants other 
than the fundamental groups, further motivation for initiating computation of 
the second cohomology groups naturally stem from the exactness criterion obtained 
in \cite[Proposition 1.2]{BC} for the Kostant-Kirillov two-forms on adjoint orbits of 
arbitrary elements in real semisimple Lie groups with 
semisimple maximal compact subgroups. It should also be mentioned that 
\cite[Proposition 1.2]{BC} generalises \cite[Theorem 1.2]{ABB}  where the 
exactness criterion are obtained for the Kostant-Kirillov form on an adjoint 
orbit of a semisimple element in a complex semisimple Lie algebra.

\subsection{Description of our results}
As it is always the case with non-compact real forms of the complex simple Lie
algebras, when the analogues questions as above are cast in this setting, 
because of their inherent complexities the computations become much more 
harder compared to the case of complex simple Lie algebras; see \S \ref{complexities}
for the precise description of the intricacies that occur in this case.
In the present paper we carry forward what is initiated in \cite{BC}, 
and using involved computations we describe the second and the first cohomology groups 
of all the nilpotent orbits in all the real classical Lie algebras which are 
non-complex and non-compact. 

We first obtain a Lie theoretic formulation of the first and second cohomologies of 
nilpotent orbits in Theorem \ref{thm-nilpotent-orbit} when the underlying group 
is the connected component of the $\R$ -points of a $\R$-simple algebraic group.
Applying this result, and developing the 
computational machineries in \S \ref{sec-Second-Cohomology-of-Nilpotent-Orbits}
 we next obtain the second cohomology groups 
of all the nilpotent orbits in Theorems \ref{sl-n-R}, \ref{sl-n-H}, \ref{su-pq}, \ref{so-pq}, 
\ref{so-pq-2}, \ref{so*}, \ref{sp-n-R}, \ref{sp-pq}. Our computations, in 
particular, yield that the second cohomology groups vanish for all the 
nilpotent orbits in $\s\l_n (\H)$, and for most of the nilpotent orbits 
in $\s\l_n (\R)$.  In the case of $\s\u (p,q)$ the second cohomology 
groups of the nilpotent orbits depend on their signed Young diagrams. 
It also follows that for the remaining cases of non-compact real forms 
of classical complex Lie algebras the second cohomology groups of the 
nilpotent orbits are related in a straightforward way to the partition 
component of the associated signed Young diagrams. 

The methods involved 
above also steered us to describing the first cohomology groups in all 
the simple Lie algebras except the non-compact non-complex exceptional 
ones. Using the generalities in Theorem \ref{thm-nilpotent-orbit} it 
follows that the first cohomology groups of nilpotent orbits in 
non-compact simple Lie algebras are at the most one dimensional.  
However, more precise descriptions of the first cohomology groups 
of nilpotent orbits are obtained in Theorems \ref{complex-simple-H1}, 
\ref{sl-n-H-H1}, \ref{sl-n-R-H1}, \ref{su-pq-H1},
\ref{so-pq-H1}, \ref{so-pq-H1-2}, \ref{so*-H1}, \ref{sp-n-R-H1}.
In particular, we obtain that the first cohomology groups of nilpotent 
orbits vanish in the cases of  all the complex simple Lie algebras and 
in $\s\l_n (\H)$, $\s\p (p,q)$, $\s\l_n (\R)$ when $n \geq 3$, $\s\o (p,q)$ 
when $p \neq 2, q \neq 2$ and $(p,q) \neq (1,1)$. We also refer to \cite{CM} 
for some results on the dimensions of the first cohomology groups
of nilpotent orbits in non-compact non-complex exceptional Lie algebras.

The computations described above necessitate a description of the second
 cohomology groups of homogeneous spaces of 
Lie groups in full generality, which we formulate in Theorem 
\ref{thm-i1}, generalizing \cite[Theorem 3.3]{BC}. We also
obtain a description of the first cohomology groups in Theorem \ref{H^1} in the same general
set-up as above. In \cite{BC} to facilitate the computations in complex simple Lie algebras a suitable 
description of the second cohomology group of any homogeneous space of a connected Lie
group was obtained in \cite[Theorem 3.3]{BC} under the assumption that all the maximal
compact subgroups of the Lie group are semisimple; see \cite{BC1} for a relatively simple
proof of \cite[Theorem 3.3]{BC}. As only the complex simple Lie groups were considered in 
\cite{BC}, this condition was not restrictive because the maximal compact subgroups in 
complex simple Lie groups are in fact simple Lie groups. However, in the present case of
non-compact real forms of complex simple Lie algebras, the maximal compact subgroups of 
the associated simple real Lie groups  are not necessarily semisimple, and hence 
\cite[Theorem 3.3]{BC} can not be applied anymore, in general, to do the computations. 
Hence we need to invoke Theorem \ref{thm-i1} and Theorem \ref{H^1} where no additional conditions 
are imposed on the maximal compact subgroups therein.  
Although general theories of cohomology groups of (compact) homogeneous spaces
are extensively studied in the past (see, for example \cite{Bo1}, \cite{CE}, \cite{GHV}) 
we are unable to locate Theorem \ref{thm-i1} and Theorem \ref{H^1} in the literature and 
hence proofs are included for the sake of completeness. On the other hand, in view of their
applicability the above results  may be of independent interest as they are general and hold
under a very mild restriction. We deduce stronger consequences of the above results 
in the special cases when the ambient Lie groups are complex semisimple or real simple; 
see Corollary \ref{H.1} and Theorem \ref{H.21}.

\subsection{Underlying complexities and our strategy}\label{complexities}
We now describe the underlying complexities that arise in the present setting of 
non-compact non-complex classical Lie algebras, and briefly sketch the strategy 
that is followed in this paper to overcome them. 

In our setting without loss of generality the simple real Lie groups $G$ may as well be assumed 
to be the connected component of $\R$-points of $\R$-simple algebraic groups defined 
over $\R$. We set $\g:= {\rm Lie}\, G$. In this setting the Lie theoretic formulation is obtained 
in Theorem \ref{thm-nilpotent-orbit}; the computations in \cite{CM} also use this result crucially.
Let $ X\in \g$ be a non-zero nilpotent element, and $\ZC_G(X)$ be the centralizer of $X$ in $G$. Let
$M$ be a maximal compact subgroup in $G$, and $K$ be a maximal compact subgroup in 
$\ZC_G(X)$ such that $K\subset M$. Let $\m,\, \z(\k)$ 
be the Lie algebras of $M$ and the center of $K$, respectively. Then Theorem 
\ref{thm-nilpotent-orbit} says that the computation of the second cohomology of the nilpotent orbits 
boils down to understanding the action of the component group $K/K^\circ$ on the subalgebra $\z(\k)\cap[\m,\m]$. 
Thus, when $M$ is semi-simple this amounts to describing the action of $K/K^\circ$ on 
$\z(\k)$, and hence knowing the isomorphism class of $K$ is enough to compute the second 
cohomology groups in this case. However, when $M$ is not semisimple it is not enough to know the 
isomorphism classes of $K$ and $M$, rather one also needs to understand the embedding 
of $K$ in $M$; the case of ${\s\u}(p,q)$ as in Section \ref{sec-su-pq} a typical example of such a situation.
Although in the setting of non-compact classical groups the isomorphism 
classes of $M$ are well-known, and the isomorphism classes of $K$ can be obtained 
immediately either using Lemma \ref{reductive-part-comp} or by the work of Springer 
and Steinberg \cite{SS}, hardly anything can be concluded, from these isomorphism 
classes, on how $K$ is embedded in $M$. 
Thus, one needs to go above and beyond the computations done in \cite{SS}
to find out this embedding explicitly for all the nilpotent orbits in $\g$, and 
major parts of the Sections \ref{sec-su-pq}, 
\ref{sec-so-pq}, \ref{sec-so*} and \ref{sec-sp-n-R} are devoted to do this.

\subsection{Organization of the paper}
The paper is organized as follows. In Section \ref{sec-notation} some notation and 
terminologies are fixed, and some backgrounds needed in the paper are recalled. 
Section \ref{sec-second-first-cohomologies-of-homogeneous-spaces} deals with the 
second and first cohomology groups of general homogeneous spaces of connected Lie groups; Theorem
\ref{thm-i1} and Theorem \ref{H^1} are proved, and some corollaries of them are
drawn which are of independent interest.
 In Section \ref{sec-Second-Cohomology-of-Nilpotent-Orbits} the 
second cohomology groups are computed in Theorems \ref{sl-n-R}, \ref{sl-n-H}, \ref{su-pq},
\ref{so-pq}, \ref{so-pq-2}, \ref{so*}, \ref{sp-n-R} and \ref{sp-pq}. In Section 
\ref{sec-First-Cohomology-of-Nilpotent-Orbits} the first cohomology groups are computed in 
Theorems \ref{complex-simple-H1}, \ref{sl-n-H-H1}, \ref{sl-n-R-H1}, \ref{su-pq-H1}, 
\ref{so-pq-H1}, \ref{so-pq-H1-2}, \ref{so*-H1} and \ref{sp-n-R-H1}.
In the Appendix we workout certain details on nilpotent orbits that are used in Section 
\ref{sec-Second-Cohomology-of-Nilpotent-Orbits}. 

The computations in the Appendix 
lead us to Remark \ref{CM-correction} where we rectify an error in 
\cite[p. 139, Lemma 9.3.1]{CoM} related to the parametrizations of nilpotent orbits.

\section{Notation and background}\label{sec-notation}

In this section we fix some notation, and recall some backgrounds which will be used throughout in this paper.
Subsequently, a few specialized notation are mentioned as and when they occur.

Once and for all fix a square root of $-1$ and call it $\sqrt{-1}$. The {\it center} 
of a group $G$ is denoted by $\ZC(G)$ while the {\it center} of a Lie algebra
$\g$ is denoted by $ \z (\g)$. The Lie groups will be denoted by the capital letters, 
while the Lie algebra of a Lie group will be denoted by the corresponding lower case 
German letter, unless a different notation is explicitly mentioned. Sometimes, for 
notational convenience, the Lie algebra of a Lie group $G$ is also denoted by 
${\rm Lie} (G)$. The connected component of $G$ containing the identity element 
is denoted by $G^{\circ}$. For a subgroup $H$ of $G$ and a subset $S$ of $\g$, the
subgroup of $H$ that fixes $S$ pointwise under the adjoint action is called the {\it centralizer}
of $S$ in $H$; the centralizer of $S$ in $H$ is denoted by $\ZC_{H} (S)$. Similarly, for a
Lie subalgebra $\h \,\subset\, \g$ and a subset $S \,\subset\, \g$, by $\z_\h (S)$ we will denote
the subalgebra of $\h$ consisting of all the elements that commute with every element of $S$. 

Let $\Gamma$ be a group acting linearly on a vector space $V$. The subspace of $V$ fixed pointwise
by the action of $\Gamma$ is denoted by $V^\Gamma$. If $G$ is a Lie group with Lie
algebra $\g$, then it is immediate that the adjoint (respectively, coadjoint) action
of $G^{\circ}$ on $\z(\g)$ (respectively, $\z(\g)^*$) is trivial; in particular, one
obtains a natural action of $G/G^{\circ}$ on $\z(\g)$ (respectively, $\z (\g)^*$). We
denote by $[\z (\g)]^{G/G^{\circ}}$ (respectively, $[\z (\g)^*]^{G/G^{\circ}}$) the space of points
of $\z (\g)$ (respectively, of $\z (\g)^*$) fixed pointwise under the action of $G/G^{\circ}$. 

For a real semisimple Lie group $G$, an element $X \in \g$ is called {\it nilpotent} if
the linear endomorphism $${\rm ad}(X)\,:\, \g \,\longrightarrow \,\g$$ is nilpotent.
The {\it set of nilpotent elements} in $\g$ is denoted by ${\NC}_{\g} $. 
A {\it nilpotent orbit} is an orbit of a nilpotent element in $\g$ under the adjoint
action of $G$. The orbit of a nilpotent element $X \,\in\, \g$ is denoted by $\OC_X$.
The {\it set of all nilpotent orbits in $\g$ under the adjoint action of $G$} is denoted by ${\NC}(G)$.

\subsection{\texorpdfstring{$\epsilon$-$\sigma$ Hermitian forms and associated 
groups}{Lg}}\label{sec-epsilon-sigma-forms}

The notation $\D$ will stand for either $\R$ or $ \C $ or $ \H$, unless mentioned otherwise. 
Take a right vector space $V$ defined over $\D$.
Let ${\rm End}_\D (V)$ be the right $\R$-algebra of {\it $\D$-linear endomorphisms} of $V$,
and let ${\rm GL(V)}$ be the {\it group of invertible elements} of ${\rm End}_\D (V)$.
For a $\D$-linear endomorphism $T \,\in\, {\rm End}_\D (V)$ and an ordered $\D$-basis $\BC$ of $V$,
the {\it matrix of $T$ with respect to $\BC$} is denoted by $[T]_{\BC}$.
Recall that if $\D\,=\, \C$
then ${\rm End}_\D(V)$ is also a (right) $\C$-algebra. 
When $\D$ is either $\R$ or $\C$, let
$$
{\rm tr} \,:\, {\rm End}_\D (V) \,\longrightarrow\, \D\, \ \ \text{ and }\ \
{\rm det} \,:\, {\rm End}_\D V \,\longrightarrow\, \D
$$
respectively be the usual {\it trace} and {\it determinant} maps. Let
$A$ be a central simple $\R$-algebra. Let
$$\text{Nrd}_A \,:\, A \,\longrightarrow\, \R$$ be the {\it reduced norm} on $A$, and
let $\text{Trd}_A \,:\, A \,\longrightarrow\, \R$ be the {\it reduced trace} on $A$.
When $\D \,=\, \R$ or $ \C$, define 
$${\rm SL}(V)\,:=\, \{ z \,\in \,{\rm GL}(V) \, \mid \, \text{det}(z)\,= \,1\} \ \ \text{ and } 
\ \ \s\l (V ) \,:=\, \{ y \,\in\, {\rm End}_\D (V) \, \mid \, \text{tr}(y)\,=\, 0 \}\, .$$ 
If $\D \,= \,\H$ then recall that ${\rm End}_\D(V)$ is a central simple $\R$-algebra. In that
case, define
$${\rm SL}(V)\,:=\, \{ z \,\in\, {\rm GL}(V) \, \mid \, \text{Nrd}_{ {\rm End}_\D V} (z)\,=\, 1\}$$
and
$$\s\l (V )\,:=\, \{ y\,\in\, {\rm End}_\D(V) \,\mid \,
\text{Trd}_{ {\rm End}_\D (V)} (y)\,=\, 0\}\, .$$

Let $\D$ be $\R$, $\C$ or $\H$, as above.
Let $\sigma$ be either the identity map $\text{Id}$ or an {\it involution} of $\D$,
meaning $\sigma$ is $\R$-linear with $\sigma^2 \,=\, \text{Id}$ and 
$\sigma (xy) \,=\, \sigma (y) \sigma (x)$ for all $x,\, y \,\in\, \D$. Let $\epsilon \,=\, \pm 1$.
Following \cite[\S~23.8, p. 264]{Bo} we call a map $$\langle \cdot,\,
\cdot \rangle \,\colon\, V \times V \,\longrightarrow\, \D$$ a
$\epsilon$-$\sigma$ {\it Hermitian form} if
\begin{itemize}
\item $ \langle \cdot,\, \cdot \rangle $ is additive in each argument, 

\item $ \langle v,\, u \rangle \,= \, \epsilon \sigma( \langle u, v \rangle)$, and

\item $ \langle v \alpha,\, u \rangle \,=\, \sigma (\alpha) \langle v,\, u \rangle$ for all
$v,\,u \,\in\, V$ and for all $\alpha \,\in\, \D$.
\end{itemize}

A $\epsilon$-$\sigma$ Hermitian form $ \langle \cdot, \, \cdot \rangle $ is called {\it
non-degenerate} if $ \langle v,\, u \rangle \,=\,0 $ for all $v$ if and only if $u \,=\, 0$.
All $\epsilon$-$\sigma$ Hermitian forms considered here will be assumed to be non-degenerate.

We define $${\rm U} (V,\, \langle \cdot,\, \cdot \rangle ) \,:=\,
\{T \,\in\, {\rm GL}(V) \, \mid \, \langle Tv ,\, Tu \rangle \,= \, \langle v ,
\, u \rangle ~~ \forall~~ v,u \,\in \,V \}$$ and
$$\u (V,\, \langle \cdot, \,\cdot \rangle ) \,:=\, \{T\,\in\, {\rm End}_\D(V) \, \mid \, 
\langle Tv ,\, u \rangle + \langle v ,\, Tu \rangle \,= \,0 ~~\forall~~ v,u \,\in \,V \}\, .$$ 
We next define $${\rm SU} (V, \langle \cdot, \cdot \rangle )\,: =\,
{\rm U} (V, \langle \cdot, \cdot \rangle ) \cap {\rm SL}(V)\ \ \text{ and } \ \
\s\u (V, \langle \cdot, \cdot \rangle ) \,:=\,
\u (V, \langle \cdot, \cdot \rangle ) \cap \s\l (V)\, .$$ Recall that 
$\s\u (V, \,\langle \cdot,\,\cdot \rangle )$ is
a simple Lie algebra (cf. \cite[Chapter I, Section 8]{K}).

If $\D \,=\, \C$, then multiplication by $\sqrt{-1}$ sends the non-degenerate $\epsilon$-$\sigma$
Hermitian forms on $V$ with $\epsilon\,=\, -1$,
$\sigma\,\neq\, \text{Id}$ to the non-degenerate $\epsilon$-$\sigma$ Hermitian forms with
$\epsilon \,=\, 1$, $\sigma \,\neq\, \text{Id}$, and this mapping is a bijection.
Hence, when $\D \,=\, \C$ and $\sigma \,\neq\, \text{Id}$, we consider only the case where
$\epsilon \,=\, 1$. 
If $\D \,=\, \H$ and $\sigma \,= \,\text{Id}$, then it can be easily seen
that $\langle \cdot,\, \cdot \rangle \,\equiv\, 0$. As $ \langle \cdot,\, \cdot \rangle$ 
is assumed to be non-degenerate, this, in particular, implies 
that there is no form $ \langle \cdot, \, \cdot \rangle$ on $V$ with $\D \,=\, \H,\, \sigma
\,=\, \text{Id}$.

We define the {\it usual conjugations} $\sigma_c$ on $\C$ by
$\sigma_c (x_1 + \sqrt{-1}x_2 ) \,=\, x_1-\sqrt{-1}x_2$, and on 
$\H$ by $\sigma_c (x_1 + \ib x_2 + \jb x_3 + \kb x_4 ) \,=\, x_1 - \ib x_2 - \jb x_3 - \kb x_4$, $x_i \in \R$ for $i= 1, \cdots , 4$.
Define $|z| \,:=\, (z \sigma_c (z))^{1/2}$, for $z \,\in\, \D$.
For $\alpha \,\in\, \H^*$ define
$$C_\alpha \,\colon\, \H \,\longrightarrow\, \H\, ,\ \ x\,\longmapsto\,
\alpha x \alpha^{-1}\, .$$
Clearly $C_\alpha $ is a $\R$-algebra automorphism, and $C_\alpha \,=\, C_{\alpha/ |\alpha|}$.
When $\D \,= \,\H$, the following facts justify that it is enough to consider the involution
$\sigma_c$ instead of arbitrary involutions.
The proof of the next lemma is a straightforward application of Skolem-Noether theorem which can
be found in \cite[\S~23.7, p. 262]{Bo}. 

\begin{lemma}[{cf.\,\cite[\S~23.7, p.~262]{Bo}}]\label{involution1}
Let $\sigma \,:\, \H \,\longrightarrow\, \H$ be $\R$-linear with 
$\sigma (xy) \,=\, \sigma (y) \sigma (x)$ for all $x,\,y \,\in\, \H$. 
Then $\sigma$ is an involution, meaning $\sigma^2 \,= \,{\rm Id}$, if and only if 
either $\sigma \,= \,\sigma_c$ or $\sigma \,= \,C_{\alpha} \circ \sigma_c$ for some
$\alpha $ with $\alpha^2 \,=\,-1$.
\end{lemma}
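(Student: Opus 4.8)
The plan is to reduce the problem to the classification of $\R$-algebra automorphisms of $\H$, which is governed by the Skolem--Noether theorem, and then to pin down the involutivity condition by a direct computation. First I would normalize at the unit: from $\sigma(1) = \sigma(1\cdot 1) = \sigma(1)\sigma(1)$ the element $\sigma(1)$ is idempotent, and since $\H$ is a division algebra its only idempotents are $0$ and $1$; moreover $\sigma(1) = 0$ forces $\sigma \equiv 0$. An involution is in particular bijective, hence nonzero, so for the forward implication I may assume $\sigma(1) = 1$.

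The key step is the observation that the composite $\sigma \circ \sigma_c$ of the two anti-homomorphisms $\sigma$ and $\sigma_c$ is an $\R$-algebra homomorphism (a composite of two anti-homomorphisms multiplies in the correct order). It is unital and nonzero, and as $\H$ is a finite-dimensional division algebra it is automatically bijective, hence an automorphism. The Skolem--Noether theorem then supplies $\alpha \in \H^*$ with $\sigma \circ \sigma_c = C_\alpha$, and since $\sigma_c^2 = \text{Id}$ this gives $\sigma = C_\alpha \circ \sigma_c$. Thus every nonzero $\R$-linear anti-homomorphism of $\H$ already has the asserted shape, and only the condition $\sigma^2 = \text{Id}$ remains to be analyzed.

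Finally I would compute $\sigma^2$ explicitly. Conjugating $\alpha x \alpha^{-1}$ and using that $\sigma_c$ reverses products yields the commutation relation $\sigma_c \circ C_\alpha = C_{\sigma_c(\alpha)^{-1}} \circ \sigma_c$, which together with $C_\alpha \circ C_\beta = C_{\alpha\beta}$ gives $\sigma^2 = C_{\alpha\, \sigma_c(\alpha)^{-1}}$. Since the center of $\H$ is $\R$, the requirement $\sigma^2 = \text{Id}$ is equivalent to $\alpha\, \sigma_c(\alpha)^{-1} \in \R^*$, that is $\alpha = \lambda\, \sigma_c(\alpha)$ for some $\lambda \in \R^*$. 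Decomposing $\alpha$ into its real and purely imaginary parts forces either $\alpha \in \R^*$, whence $C_\alpha = \text{Id}$ and $\sigma = \sigma_c$, or $\alpha$ purely imaginary and nonzero; in the latter case $C_\alpha = C_{\alpha/|\alpha|}$ with $(\alpha/|\alpha|)^2 = -1$, which is the second alternative. The converse is the same computation read in reverse: for $\alpha^2 = -1$ one has $\sigma_c(\alpha) = -\alpha$ and $\alpha\, \sigma_c(\alpha)^{-1} = \alpha^2 = -1 \in \R^*$, so $\sigma^2 = C_{-1} = \text{Id}$, while $\sigma_c$ is visibly an involution.

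The computation is routine once the reduction is in place, so the only real subtlety is the quaternionic bookkeeping: getting the commutation relation between $\sigma_c$ and $C_\alpha$ right, and invoking the normalization $C_\alpha = C_{\alpha/|\alpha|}$ to replace a purely imaginary $\alpha$ by a genuine square root of $-1$. I do not expect any deeper obstacle beyond this, consistent with the lemma being a straightforward application of Skolem--Noether.
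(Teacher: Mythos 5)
Your proof is correct, and it follows exactly the route the paper intends: the paper omits the argument, noting only that the lemma ``is a straightforward application of Skolem--Noether'' and citing Borel \cite[\S~23.7, p.~262]{Bo}, and your reduction of $\sigma\circ\sigma_c$ to an inner automorphism $C_\alpha$ followed by the computation $\sigma^2 = C_{\alpha\,\sigma_c(\alpha)^{-1}}$ is precisely that standard argument. All the quaternionic bookkeeping (the commutation relation, the dichotomy $\alpha\in\R^*$ versus $\alpha$ purely imaginary, and the normalization to $\alpha^2=-1$) checks out.
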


\begin{lemma}[{cf.\,\cite[\S~23.8, p.~264]{Bo}}]\label{involution2}
Let $\sigma \,:\, \H \,\longrightarrow\, \H$ be an involution, $\epsilon \,=\, \pm 1$ and
$$\langle \cdot,\, \cdot \rangle \,:\, V \times V \,\longrightarrow\, \D$$ a
$\epsilon$-$\sigma$ Hermitian form. Let $\delta \,=\, \pm 1$ and $\alpha\,\in\, \H$ such that
$|\alpha |\,=\,1$ and $\alpha \sigma (\alpha)^{-1} \,= \,\delta$. Then $\alpha \langle \cdot,\,
\cdot \rangle$ is a $\delta \epsilon$-$C_{\alpha} \circ \sigma$ Hermitian form.
\end{lemma}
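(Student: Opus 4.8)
The plan is to verify the three defining axioms of an $\epsilon'$-$\sigma'$ Hermitian form directly for the form $\langle \cdot,\, \cdot \rangle' := \alpha \langle \cdot,\, \cdot \rangle$, where $\sigma' := C_\alpha \circ \sigma$ and $\epsilon' := \delta \epsilon$. Additivity in each argument is immediate since left-multiplication by the fixed scalar $\alpha$ is $\R$-linear and $\langle \cdot,\, \cdot \rangle$ is additive in each slot. So the content lies in checking the symmetry axiom $\langle v,\, u \rangle' = \epsilon' \sigma'(\langle u,\, v \rangle')$ and the sesquilinearity axiom $\langle v\beta,\, u \rangle' = \sigma'(\beta) \langle u,\, v \rangle'$ for $\beta \in \H$.

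For the symmetry axiom, I would start from $\langle v,\, u \rangle' = \alpha \langle v,\, u \rangle = \alpha \epsilon \sigma(\langle u,\, v \rangle)$, using that $\langle \cdot,\, \cdot \rangle$ is $\epsilon$-$\sigma$ Hermitian. The target is $\epsilon' \sigma'(\langle u,\, v \rangle') = \delta \epsilon\, (C_\alpha \circ \sigma)(\alpha \langle u,\, v \rangle)$. Expanding the right-hand side, $\sigma(\alpha \langle u,\, v\rangle) = \sigma(\langle u,\, v \rangle)\sigma(\alpha)$ by the anti-multiplicativity of $\sigma$, and then $C_\alpha$ conjugates by $\alpha$, giving $\alpha \sigma(\langle u,\, v\rangle)\sigma(\alpha)\alpha^{-1}$. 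The hypothesis $\alpha \sigma(\alpha)^{-1} = \delta$, rewritten as $\sigma(\alpha)\alpha^{-1} = \delta^{-1} = \delta$ (since $\delta = \pm 1$), lets me collapse the trailing factor $\sigma(\alpha)\alpha^{-1}$ to the scalar $\delta$, which being central in $\H$ can be pulled out. This should reduce the right-hand side to $\delta \epsilon \cdot \alpha \sigma(\langle u,\, v\rangle)\delta = \epsilon\, \alpha \sigma(\langle u,\, v\rangle)$, matching the left-hand side. The arithmetic to watch is the placement of the $\delta$ factors and the reuse of $\delta^2 = 1$.

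For the sesquilinearity axiom I would compute $\langle v\beta,\, u \rangle' = \alpha \langle v\beta,\, u \rangle = \alpha \sigma(\beta)\langle u,\, v \rangle$, again invoking the corresponding axiom for $\langle \cdot,\, \cdot \rangle$. The target is $\sigma'(\beta)\langle u,\, v \rangle' = (C_\alpha \circ \sigma)(\beta)\,\alpha \langle u,\, v\rangle = \alpha \sigma(\beta)\alpha^{-1}\alpha \langle u,\, v\rangle = \alpha \sigma(\beta)\langle u,\, v\rangle$, where the factor $\alpha^{-1}\alpha$ cancels. This matches directly, so this axiom is essentially automatic once the definition of $\sigma'$ is unwound.

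The only genuine obstacle is bookkeeping in the symmetry computation: one must be careful that $\sigma$ is an \emph{anti}-automorphism so that $\sigma(\alpha\langle u,\,v\rangle)$ reverses order, and that the conjugation $C_\alpha$ interacts correctly with the relation $\alpha \sigma(\alpha)^{-1} = \delta$. Since $\delta$ is central, no scalar-ordering subtleties arise once it is isolated, and $|\alpha| = 1$ together with $\delta = \pm 1$ guarantees all the inverses appearing are well defined. I would also note in passing that $\sigma' = C_\alpha \circ \sigma$ is itself an involution (consistent with Lemma \ref{involution1}), though verifying $(\sigma')^2 = \mathrm{Id}$ is not strictly required for the statement and follows from the same relation $\alpha \sigma(\alpha)^{-1} = \delta$.
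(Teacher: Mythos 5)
Your proof is correct and is exactly the standard direct verification: additivity is immediate, the sesquilinearity axiom reduces to the cancellation $\alpha\sigma(\beta)\alpha^{-1}\alpha \,=\, \alpha\sigma(\beta)$, and the symmetry axiom reduces to $\sigma(\alpha)\alpha^{-1} \,=\, \delta^{-1} \,=\, \delta$ together with the centrality of $\delta$ and $\delta^2\,=\,1$, all of which you handle correctly. The paper gives no proof of Lemma \ref{involution2} at all --- it simply cites \cite[\S~23.8, p.~264]{Bo} --- so your computation supplies precisely the verification that the citation stands in for, and your closing remark that $C_\alpha \circ \sigma$ is again an involution (consistent with Lemma \ref{involution1}) indeed follows from the same identity.
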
 

As a consequence of Lemma \ref{involution2} if $\sigma \,:\, \H \,\longrightarrow\, \H$ is an
involution, $\epsilon \,= \,\pm 1$ and $$\langle \cdot,\, \cdot \rangle \,: \,V \times V
\,\longrightarrow\, \D$$ a
$\epsilon$-$\sigma$ Hermitian form, then $\alpha \langle \cdot,\, \cdot \rangle$ is a
$\epsilon$-$ \sigma_c$ Hermitian form with $\alpha \,\in\, \H$ being such that 
$ \sigma \,=\, C_{\alpha} \circ \sigma_c$ and $\alpha^2 \,=\,-1$ (as in Lemma \ref{involution1}).
In particular, an immediate consequence is that if $\sigma$, $\epsilon$ and
$\langle \cdot, \,\cdot \rangle$ are as above, then there exists a 
$\epsilon$-$ \sigma_c$ Hermitian form, say, $\langle \cdot,\,
\cdot \rangle' \,:\, V \times V \,\longrightarrow\, \D$ such that 
${\rm SU} (V,\, \langle \cdot,\, \cdot \rangle ) \,=\, {\rm SU} (V,\, \langle \cdot,\,
\cdot \rangle' )$ and $\s\u (V,\, \langle \cdot,\, \cdot \rangle )\,= \,\s\u (V,\, \langle \cdot,\,
\cdot \rangle' )$.
In view of the above observations, without loss of generality, we may only consider
the involution $\sigma_c$. {\it From now on we will restrict to the
involution $\sigma_c$ instead of arbitrary involutions on $\D$}.

The case where $\D\,=\, \C$, $\sigma \,= \,\text{Id}$ and $\epsilon \,=\, \pm 1$ is already
investigated in \cite{BC}. Here the remaining three cases
\begin{enumerate}
\item $\D \,=\, \R$, $\sigma \,= \, \text{Id}$ and $\epsilon \,=\, \pm 1$,

\item $\D \,=\, \C$, $\sigma \,=\, \sigma_c$ and $\epsilon \,=\, 1$, and

\item $\D\,=\,\H$, $\sigma \,=\, \sigma_c$ and $\epsilon \,=\, \pm 1$
\end{enumerate}
will be investigated.

We next introduce certain standard nomenclature associated to the specific values of
$\epsilon$ and $\sigma$. If
$\sigma\,=\, \sigma_c$ and $\epsilon \,=\,1$, then $\langle \cdot ,\, \cdot \rangle$
is called a {\it Hermitian} form.
When $\sigma \,= \,\sigma_c$ and $\epsilon \,=\,-1$, then $\langle \cdot ,\, \cdot \rangle$
is called a {\it skew-Hermitian} form. 
The form $\langle \cdot, \, \cdot \rangle$ is called {\it symmetric} if
$\sigma \,=\, \text{Id}$ and $\epsilon \,=\,1 $. Lastly, if
$\sigma \,=\, \text{Id}$ and $\epsilon \,=\, -1 $, then $\langle \cdot, \, \cdot \rangle$ 
is called a {\it symplectic} form.
If $\langle \cdot, \, \cdot \rangle$ is a symmetric form on $V$, define
$${\rm SO} (V,\, \langle \cdot,\, \cdot \rangle)\,:=\, {\rm SU} (V, \,\langle \cdot,\, \cdot \rangle )
\ \ \text{ and }\ \ 
\s\o (V,\, \langle \cdot,\, \cdot \rangle )\,:=\, \s\u (V,\, \langle \cdot,\, \cdot \rangle)\, .$$
Similarly, if $\langle \cdot, \, \cdot \rangle$ is a symplectic form on $V$, then define
$${\rm Sp} (V,\, \langle \cdot,\, \cdot \rangle)\,:=\, {\rm SU} (V,\, \langle \cdot,\, \cdot \rangle )
\ \ \text{ and }\ \ \s\p (V, \,\langle \cdot, \,\cdot \rangle) \,:=\, \s\u (V,\,
 \langle \cdot,\, \cdot \rangle )\, .$$
When $\D\,=\,\H$ and $\<>$ is a skew-Hermitian form on $V$, define
$${\rm SO}^*(V,\, \langle \cdot,\, \cdot \rangle)\,:=\,{\rm SU}(V,\, \langle \cdot,\, \cdot \rangle )
\ \ \text{ and } \ \ \s\o^* (V,\, \langle \cdot,\, \cdot \rangle )\,:=\, \s\u (V,\, \langle \cdot,\,
\cdot \rangle )\, .$$

As before, $V$ is a right vector space over $\D$. We now introduce some terminologies associated to
certain types of $\D$-basis of $V$. When either $\D \,=\,\R,\, \sigma \,= \,\text{Id}$ or
$ \D\,= \,\C, \,\sigma \,=\, \sigma_c$ or $ \D \,=\, \H,\, \sigma \,=\, \sigma_c$, for a 
$1$-$\sigma$ Hermitian form $\<>$ on $V$, an orthogonal basis $\AC$ of $V$ is called
{\it standard orthogonal} if $\langle v,\, v \rangle \,=\, \pm 1$ for all
$v \,\in \,\AC$. For a standard orthogonal basis ${\AC}$ of $V$,
set $$p \,:=\, \# \{ v \,\in\, \AC \,\mid\, \langle v, \,v\rangle \,=\,1\} \ \ \text{ and }\ \
q \,:=\, \# \{ v \,\in \,\AC \,\mid\, \langle v, \,v\rangle \, =\, -1\}\, .$$ 
The pair $(p,\,q)$, which is independent of the choice
of the standard orthogonal basis ${\AC}$, is called the {\it signature} of $\<>$. When
$\D \,=\, \C$ and $\sigma \,=\, \sigma_c$, if $\<>$ is a skew-Hermitian form on $V$ then
$\sqrt{-1}\<>$ is a Hermitian form on $V$; in this case the {\it signature} of $\<>$ is defined to
be the signature of the Hermitian form $\sqrt{-1}\<>$.

In the case where $\D \,=\,\R$ or $\C$, $\sigma \,= \,\text{Id}$ and $\epsilon \,= \,-1$, the
dimension $\dim_\D V$ is an even number. 
Let $2n \,=\, \dim_\D V$. In this case an ordered basis $\BC \,:=\, (v_1,\, \cdots, \,v_n;\, v_{n+1},
\, \cdots ,\, v_{2n})$ of $V$ is said to be {\it symplectic} if $\langle v_i,\, v_{n+i} \rangle
\,=\, 1$ for all $1\,\leq\, i \,\leq\, n$ and
$\langle v_i,\, v_j \rangle \,=\, 0$ for all $j \,\neq\, n +i$. 
The ordered set $(v_1,\, \cdots ,\, v_n)$ is called the {\it positive part} of $\BC$ and it
is denoted by $\BC_+$.
Similarly, the ordered set $(v_{n+1},\, \cdots ,\, v_{2n})$ is called the {\it negative part} of
$\BC$, and it is denoted by $\BC_-$. The {\it complex structure on $V$ associated to the above
symplectic basis $\BC$} is defined to be the $\R$-linear map $$J_{\BC} \,:\, V
\,\longrightarrow\, V\, , \ \ v_i\,\longmapsto\, v_{n+i}\, ,\ \ v_{n+i}\,\longmapsto\, -v_{i}
\ \ \forall~ \ 1\,\leq\, i \,\leq\, n\, .$$
If $\D \,=\,\H$ and $\<>$ is a skew-Hermitian form on $V$,
an orthogonal $\H$-basis $$\BC\,:=\, (v_1,\, \cdots,\, v_m)$$
of $V$ ($m \,:=\, \dim_\H V$) is said to be {\it standard orthogonal} if
$\langle v_r,\, v_r \rangle \,=\, \jb$ for all $1 \,\leq\, r \,\leq\, m$ and
$\langle v_r, \, v_s \rangle \,=\, 0$ for all $r \,\neq \,s$.

Take $P\,=\, (p_{ij}) \,\in\, {\rm M}_{r\times s}(\D)$. Then $P^t$ denotes the {\it transpose} of $P$. If
$\D \,=\, \C$ or $\H$, then define $\overline{P} \,:=\, (\sigma_c(p_{ij}))$. Let
\begin{equation}\label{defn-I-pq-J-n}
{\rm I}_{p,q} \,:=\, \begin{pmatrix}
 {\rm I}_p  \\
 & -{\rm I}_q
 \end{pmatrix}\, , ~~~ \
 {\rm J}_n \,:=\, \begin{pmatrix}
 & -{\rm I}_n  \\
 {\rm I}_n & 
 \end{pmatrix}\,.
\end{equation}
 The classical groups and Lie algebras that we will be working with are:
 \begin{align*}
 {\rm SL}_n (\R)&:=\, \{g \,\in\, {\rm GL}_n (\R) \,\mid\, \det (g) \,=\,1  \}, \qquad \qquad
 {\s\l}_n (\R)\,:=\, \{z \,\in\, {\rm M}_n (\R) \,\mid\, \text{tr} (z) \,=\,0 \};
 \\
 {\rm SL}_n (\H)&:=\, \{g \,\in\, {\rm GL}_n (\H) \,\mid\, \text{Nrd}_{{\rm M}_n (\H)} (g) \,=\,1 \}, \quad
 {\s\l}_n (\H)\,:=\, \{z \,\in\, {\rm M}_n (\H) \,\mid\, \text{Trd}_{{\rm M}_n (\H)} (z) \,=\,0 \};
 \\
 {\rm SU} (p,q)&:=\, \{g \,\in\, {\rm SL}_{p+q}(\C) \,\mid\, \overline{g}^t{\rm I}_{p,q} g \,= \,{\rm I}_{p,q} \} , \quad
 {\s\u} (p,q)\,:=\, \{z \,\in\, \s\l_{p+q}(\C)\,\mid\, \overline{z}^t{\rm I}_{p,q} + {\rm I}_{p,q} z\,=\,0 \};
 \\
 {\rm SO} (p,q)&:=\, \{g \,\in\, {\rm SL}_{p+q}(\R)\,\mid\, g^t{\rm I}_{p,q} g \,=\, {\rm I}_{p,q} \},\quad
 {\s\o} (p,q)\,:=\, \{z \,\in\, \s\l_{p+q}(\R) \,\mid\, z^t {\rm I}_{p,q} + {\rm I}_{p,q} z \,=\,0 \};
 \\
 {\rm Sp} (p,q)&:=\, \{g \,\in\, {\rm SL}_{p+q}(\H)\, \mid\, \overline{g}^t{\rm I}_{p,q} g \,=\, {\rm I}_{p,q} \},\quad
 {\s\p} (p,q)\,:=\, \{z \,\in\, \s\l_{p+q}(\H) \,\mid\, \overline{z}^t{\rm I}_{p,q} + {\rm I}_{p,q} z \,=\,0 \};
 \\
 {\rm Sp} (n,\R)&:=\, \{g \,\in\, {\rm SL}_{2n}(\R) \,\mid\, g^t {\rm J}_{n} g \,=\,{\rm J}_{n} \}, \quad
 {\s\p} (n,\R)\,:=\, \{z \,\in\, \s\l_{2n}(\R) \,\mid\, z^t {\rm J}_{n} + {\rm J}_{n} z \,=\,0 \};
\\
 {\rm SO}^* (2n)&:=\, \{g \,\in\, {\rm SL}_{n}(\H) \,\mid\, \overline{g}^t \jb{\rm I}_n g\,= \, \jb{\rm I}_n \}, \quad
 {\s\o}^* (2n)\,:=\, \{z \,\in\, \s\l_{n}(\H) \,\mid\, \overline{z}^t \jb {\rm I}_n + \jb {\rm I}_n z \,=\,0 \}.
\end{align*}

For any group $H$, let $H^n_\Delta$ denote the diagonally embedded copy of $H$ 
in the $n$-fold direct product $H^n$. 
Let $V$ be a vector space over $\D$. Define $\mathfrak{d}_V \,: \,{\rm End}_\D (V) \,\longrightarrow\,
\D^*$ to be $\mathfrak{d}_V \,:=\, \det$ if $\D\,=\, \C$ or $\R$, and $\mathfrak{d}_V \,:=\,
{\rm Nrd}_{{\rm End}_\D V}$ if
$\D \,= \,\H$. Let now $V_i$, $1 \,\leq\, i \,\leq\, m$, be right vector spaces over $\D$.
As before, $\D$ is either $\R$ or $\C$ or $\H$. 
For every $1 \,\leq\, i \,\leq\, m$, let $H_i \,\subset \,{\rm GL} (V_i) $ be a 
matrix subgroup. Now define the subgroup
$$
S\big( \prod_i H_i \big) \, :=\, \Big\{ (h_1,\, \cdots,\, h_m) \,\in\, \prod_{i=1}^m H_i \ \Bigm| \ 
\prod_i \mathfrak{d}_{V_i} (h_i) \,=\,1 \Big\}\, \subset\, \prod_{i=1}^m H_i\, .
$$

The following notation will allow us to write block-diagonal square matrices with many blocks in a convenient way.
For $r$-many square matrices $A_i \,\in\, {\rm M}_{m_i} (\D)$, $1 \,\leq\, i \,\leq\, r$, the block diagonal square
matrix of size $\sum m_i \times \sum m_i$, with $A_i$ as the $i$-th 
block in the diagonal, is denoted by $A_1 \oplus \cdots \oplus A_r$. This is also abbreviated as
$\oplus_{i =1}^r A_i$. Furthermore, if $B \,\in\, {\rm M}_m (\D)$ and $s$ is a positive integer,
then denote
$ B_\blacktriangle^s \,:=\, \underset{s\text{-many}}{\underbrace{B \oplus \cdots \oplus B}}$.

The following lemma is a basic fact which readily follows from the Skolem-Noether theorem.

\begin{lemma}\label{H-conjugate}
Let $\alpha,\, \beta \,\in\, \H^*$ be such that ${\rm Re}(\alpha) \,=\,{\rm Re}(\beta)$ and $| \alpha |\,=\,
| \beta|$. Then there exists an element $\lambda \,\in\, \H^*$ with $|\lambda| \,=\,1$ such that
$ \alpha \,= \,\lambda \beta \lambda^{-1}$.
\end{lemma}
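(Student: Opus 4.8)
The plan is to deduce this from the Skolem--Noether theorem, exactly as the statement suggests, after recasting the two hypotheses in terms of the reduced characteristic polynomial of $\H$ viewed as a central simple $\R$-algebra. Writing $\alpha \,=\, x_1 + \ib x_2 + \jb x_3 + \kb x_4$, one has ${\rm Re}(\alpha) \,=\, x_1 \,=\, \tfrac12(\alpha + \sigma_c(\alpha))$ and $|\alpha|^2 \,=\, \alpha\sigma_c(\alpha) \,=\, \sum_i x_i^2$, and a direct computation (using $\alpha\sigma_c(\alpha) \,=\, \sigma_c(\alpha)\alpha \,=\, |\alpha|^2$) shows that $\alpha$ is a root of
\[
p_\alpha(t) \,:=\, t^2 - 2\,{\rm Re}(\alpha)\,t + |\alpha|^2 \,\in\, \R[t]\, .
\]
Since ${\rm Re}(\alpha) \,=\, {\rm Re}(\beta)$ and $|\alpha| \,=\, |\beta|$, the polynomials $p_\alpha$ and $p_\beta$ coincide; denote this common quadratic by $p$. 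Thus $\alpha$ and $\beta$ are both roots in $\H$ of the single real polynomial $p$.

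First I would isolate the degenerate case where $\alpha \,\in\, \R$. Here $|\alpha|^2 \,=\, {\rm Re}(\alpha)^2$, and comparing with $|\beta|^2 \,=\, {\rm Re}(\beta)^2 + (x_2^2 + x_3^2 + x_4^2)$ for $\beta$ forces the imaginary part of $\beta$ to vanish, so $\beta \,=\, {\rm Re}(\beta) \,=\, {\rm Re}(\alpha) \,=\, \alpha$; taking $\lambda \,=\, 1$ finishes this case. This case must be peeled off first because here $p$ has a repeated real root and the subalgebra arguments below do not apply.

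In the main case $\alpha \,\notin\, \R$, the discriminant of $p$ equals $4\,{\rm Re}(\alpha)^2 - 4|\alpha|^2 \,=\, -4\,|{\rm Im}(\alpha)|^2 \,<\, 0$, so $p$ is irreducible over $\R$ and $B \,:=\, \R[t]/(p) \,\cong\, \C$ is a field, in particular a simple $\R$-algebra. Because $\alpha$ and $\beta$ each satisfy $p$, the assignments $t \,\mapsto\, \alpha$ and $t \,\mapsto\, \beta$ define unital $\R$-algebra homomorphisms $g,\, f \,:\, B \,\longrightarrow\, \H$. Applying the Skolem--Noether theorem to the central simple $\R$-algebra $\H$ and these two homomorphisms of the simple algebra $B$ produces an element $\mu \,\in\, \H^*$ with $g(b) \,=\, \mu\, f(b)\, \mu^{-1}$ for every $b \,\in\, B$; in particular $\alpha \,=\, \mu\beta\mu^{-1}$. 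Finally I would normalize: setting $\lambda \,:=\, \mu/|\mu|$ gives $|\lambda| \,=\, 1$, and since $C_\mu \,=\, C_{\mu/|\mu|} \,=\, C_\lambda$ one still has $\alpha \,=\, \lambda\beta\lambda^{-1}$, as required.

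The only genuine obstacle is verifying the hypotheses of Skolem--Noether, namely that $B$ is simple and that $f,\, g$ are honest $\R$-algebra homomorphisms; this is precisely where irreducibility of $p$ (equivalently $\alpha \,\notin\, \R$) enters, which is why the real case is separated out at the start. Everything else is the standard quaternionic bookkeeping already recorded in the excerpt, namely $|z|^2 \,=\, z\sigma_c(z)$ and $C_\alpha \,=\, C_{\alpha/|\alpha|}$. As an alternative one could bypass Skolem--Noether entirely and argue geometrically: conjugation by unit quaternions realizes the double cover ${\rm Sp}(1) \,\longrightarrow\, {\rm SO}(3)$ of the rotation group of the imaginary quaternions ${\rm Im}(\H) \,\cong\, \R^3$, fixing ${\rm Re}(\cdot)$ and preserving $|{\rm Im}(\cdot)|$; since ${\rm Im}(\alpha)$ and ${\rm Im}(\beta)$ have equal norm, transitivity of ${\rm SO}(3)$ on spheres supplies the desired $\lambda$. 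I would nonetheless present the Skolem--Noether route, as it is cleaner and matches the cited reference.
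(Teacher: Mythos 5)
Your proposal is correct and follows exactly the route the paper intends: the paper gives no proof of Lemma \ref{H-conjugate}, remarking only that it ``readily follows from the Skolem-Noether theorem,'' and your argument supplies precisely that deduction, with the two quaternions realized as roots of the common real quadratic $t^2 - 2\,{\rm Re}(\alpha)\,t + |\alpha|^2$ and Skolem--Noether applied to the two embeddings of $\R[t]/(p)\cong\C$ into $\H$. Your separate treatment of the degenerate case $\alpha\in\R$ (where $p$ is not irreducible and the quotient fails to be simple) and the final normalization $\lambda=\mu/|\mu|$ using $C_\mu=C_{\mu/|\mu|}$ are both correct and complete the details the paper leaves implicit.
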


\subsection{Some standard results associated to \texorpdfstring{$\s\l_2(\R)$}{Lg}-triples in semisimple Lie algebras}\label{results-sl2-triples}
For a Lie algebra $\g$ over $\R$, a subset $\{X,\,H,\,Y\} \,\subset\, \g$ is said to be a {\it $\s\l_2(\R)$-triple}
if $X \,\neq\, 0$, $[H,\, X] \,=\, 2X$, $[H,\, Y] \,= \, -2Y$ and $[X,\, Y] \,=\,H$. It is immediate that
$\text{Span}_\R \{X,\,H,\,Y\}$ for a $\s\l_2(\R)$-triple $\{X,\,H,\,Y\} \,\subset\, \g$ is a 
$\R$-subalgebra of $\g$ which is isomorphic to the Lie 
algebra $\s\l_2(\R)$. We now recall a well-known result due to Jacobson and Morozov.

\begin{theorem}[{Jacobson-Morozov, cf.~\cite[Theorem~9.2.1]{CoM}}]\label{Jacobson-Morozov-alg}
Let $X\,\in\, \g$ be a non-zero nilpotent element in a real semisimple Lie algebra $\g$. Then there exist
$H,\,Y\, \in\, \g$ such that $\{X,\,H,\,Y\} \,\subset\, \g$ is a $\s\l_2(\R)$-triple.
\end{theorem}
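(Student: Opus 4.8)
The plan is to follow the classical two-step route to an $\s\l_2(\R)$-triple, carried out directly over $\R$. This costs nothing here, since the only structural facts the argument uses are that the Killing form $\kappa$ of $\g$ is nondegenerate (because $\g$ is real semisimple) and that ${\rm ad}(X)$ is a nilpotent operator. I would deliberately avoid complexifying and quoting the complex Jacobson--Morozov theorem: an $\s\l_2$-triple through $X$ in $\g_\C$ need not be defined over $\R$, and repairing this would require a descent step via conjugacy of triples, whereas the direct argument yields a genuinely real $Y$. First I would produce a \emph{neutral element} $H$ with $[H,X]=2X$, and then complete $\{X,H\}$ by constructing $Y$.

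For the first step I would show that $X\in{\rm Im}({\rm ad}\,X)=[X,\g]$. Because ${\rm ad}(X)$ is skew for $\kappa$, one has ${\rm Im}({\rm ad}\,X)^{\perp}=\z_\g(X)$, so by nondegeneracy it suffices to verify $\kappa(X,Z)=0$ for every $Z\in\z_\g(X)$. For such $Z$ the operators ${\rm ad}(X)$ and ${\rm ad}(Z)$ commute and ${\rm ad}(X)$ is nilpotent, hence ${\rm ad}(X)\,{\rm ad}(Z)$ is nilpotent and $\kappa(X,Z)={\rm tr}\big({\rm ad}(X)\,{\rm ad}(Z)\big)=0$. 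Thus $X=[X,W]$ for some $W\in\g$, and $H:=-2W$ satisfies $[H,X]=2X$. Replacing $H$ by its semisimple Jordan part---which still lies in $\g$ and still satisfies $[H,X]=2X$, because $X$ is a genuine ${\rm ad}(H)$-eigenvector for the eigenvalue $2$---I may assume $H$ is semisimple, so that ${\rm ad}(H)$ is diagonalizable with an eigenspace grading $\g=\bigoplus_\lambda\g_\lambda$ and $X\in\g_2$ (diagonalizing over $\C$ only if forced to).

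For the second step I would seek $Y$ inside $\g_{-2}$. The relation $[H,Y]=-2Y$ then holds automatically, so the entire completion collapses to finding $Y\in\g_{-2}$ with $[X,Y]=H$, that is, to the single requirement $H\in{\rm ad}(X)(\g_{-2})$. Since ${\rm ad}(X)$ is homogeneous of degree $+2$, its image meets $\g_0$ exactly in ${\rm ad}(X)(\g_{-2})$, so this requirement is equivalent to $H\in{\rm Im}({\rm ad}\,X)$, equivalently---via the orthogonality of Step 1---to $\kappa(H,Z)=0$ for all $Z\in\z_\g(X)$.

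I expect this remaining orthogonality $H\perp_\kappa\z_\g(X)$ to be the main obstacle. It is genuinely deeper than the orthogonality $X\perp_\kappa\z_\g(X)$ of Step 1: the trace argument there used that $X$ is nilpotent, and it simply does not apply to $H$, which is semisimple. By the grading the condition only concerns the common centralizer $\z_\g(X)\cap\g_0=\z_\g(X)\cap\z_\g(H)$, and it is the technical heart of Morozov's lemma; in the classical formulation, where one corrects an arbitrary preimage $Y_0$ of $H$ by an element of $\z_\g(X)$, the same content surfaces as the assertion that $-2$ is not an eigenvalue of ${\rm ad}(H)$ on $\z_\g(X)$. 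Both are facets of the weight-space symmetry $\g_\lambda\cong\g_{-\lambda}$ of the $\s\l_2(\R)$-action that exists a posteriori: for $c$ centralizing $\{X,H\}$ this symmetry forces $\kappa(H,c)=\sum_\lambda\lambda\,{\rm tr}\big({\rm ad}(c)|_{\g_\lambda}\big)=0$. The real work, and the step I expect to be hardest, is to extract this symmetry directly from $[H,X]=2X$ and the nilpotency of ${\rm ad}(X)$---for instance by analyzing the ${\rm ad}(X)$-strings through the grading, whose lengths are capped by nilpotency. Once the orthogonality is secured, $H\in{\rm ad}(X)(\g_{-2})$ produces $Y\in\g_{-2}$ with $[X,Y]=H$, and $\{X,H,Y\}$ is the desired $\s\l_2(\R)$-triple.
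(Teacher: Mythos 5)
The paper does not prove this statement at all; it is quoted from \cite[Theorem~9.2.1]{CoM}, so there is no in-paper argument to compare against, and your decision to run the classical Morozov route directly over $\R$ is a perfectly reasonable plan. Your Step 1 is correct as written: the skewness of ${\rm ad}(X)$ for $\kappa$ gives ${\rm Im}({\rm ad}\,X)=\z_\g(X)^{\perp}$, the commuting-nilpotent trace argument gives $\kappa(X,\z_\g(X))=0$, and the passage to the semisimple Jordan part of $H$ is legitimate (the abstract Jordan decomposition exists in a real semisimple $\g$, and $X$ is a genuine eigenvector of ${\rm ad}(H)$, so it lies in the generalized $2$-eigenspace and the semisimple part still satisfies $[H,X]=2X$). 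One small repair you should make explicit: over $\R$ a semisimple $H$ can have non-real ${\rm ad}$-eigenvalues, so the grading $\g=\bigoplus_\lambda \g_\lambda$ lives in $\g_\C$; this is harmless because once $Y\in(\g_\C)_{-2}$ solves $[X,Y]=H$, $[H,Y]=-2Y$, the real part of $Y$ solves the same real-linear system, since $X$ and $H$ are real.

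The genuine gap is exactly where you flag it, and flagging it is not the same as closing it: you never prove $\kappa(H,c)=0$ for $c\in\z_\g(X)\cap\z_\g(H)$, which is the whole content of Morozov's lemma and hence of the theorem. The only justification you offer is the weight-space symmetry $\g_\lambda\cong\g_{-\lambda}$ of the $\s\l_2(\R)$-action, and as you yourself note this symmetry exists only \emph{a posteriori} --- it is a consequence of the triple $\{X,H,Y\}$ you are trying to build, so invoking it here is circular. The reduction you perform before this point is fine (for $c\in\z_\g(X)$ in a nonzero ${\rm ad}(H)$-eigenspace, $c=\lambda^{-1}[H,c]$ and invariance of $\kappa$ kill $\kappa(H,c)$ outright, so only $\g_0$ matters, and there $\kappa(H,c)=\sum_\lambda \lambda\,{\rm tr}\bigl({\rm ad}(c)\vert_{\g_\lambda}\bigr)$), but the vanishing of this signed trace sum must then be extracted from $[H,X]=2X$ and nilpotency of ${\rm ad}(X)$ alone --- e.g.\ by the telescoping argument on the filtrations $\ker({\rm ad}\,X)\cap\g_\lambda$ and ${\rm ad}(X)(\g_{\lambda-2})$, using that ${\rm ad}(c)$ commutes with ${\rm ad}(X)$ and ${\rm ad}(H)$, or equivalently by proving that the spectrum of ${\rm ad}(H)$ on $\z_\g(X)$ is nonnegative so that ${\rm ad}(H)+2$ is invertible there. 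Without carrying out one of these arguments, what you have is a correct skeleton of the standard proof with its load-bearing lemma missing, not a proof; as it stands the theorem should still rest on the citation to \cite{CoM}.
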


We now state a result relating two $\s\l_2(\R)$-triples with common elements.

\begin{lemma}[{cf.~\cite[Lemma 3.4.4]{CoM}}]\label{ei}
 Let $X$ be a nilpotent element, and let $H$ be a semisimple element, in a Lie algebra $\g$ such that $\{X,\,H,\,Y_1\}$
and $\{X,\,H,\,Y_2\}$ are two $\s\l_2(\R)$-triples in $\g$. Then $Y_1 = Y_2$.
\end{lemma}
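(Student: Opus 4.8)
The plan is to reduce the statement to finite-dimensional $\s\l_2$-representation theory applied to the adjoint action of the triple on $\g$. Set $Z \,:=\, Y_1 - Y_2$; the goal is to show that $Z \,=\, 0$. Subtracting the defining relations of the two triples gives $[H,\, Z] \,=\, [H,\,Y_1] - [H,\,Y_2] \,=\, -2Y_1 + 2Y_2 \,=\, -2Z$ and $[X,\, Z] \,=\, [X,\,Y_1] - [X,\,Y_2] \,=\, H - H \,=\, 0$. Thus $Z$ is a weight vector for ${\rm ad}(H)$ of weight $-2$ that is annihilated by the raising operator ${\rm ad}(X)$.

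Next I would exploit the $\s\l_2$-structure. The real span $\s \,:=\, \text{Span}_\R\{X,\,H,\,Y_1\}$ is a subalgebra isomorphic to $\s\l_2(\R)$, and the adjoint action turns the finite-dimensional space $\g$ into an $\s$-module in which ${\rm ad}(X)$, ${\rm ad}(Y_1)$ and ${\rm ad}(H)$ are the raising, lowering and weight operators. Because $\s\l_2(\R)$ is semisimple and the ground field has characteristic zero, Weyl's theorem gives a decomposition $\g \,=\, \bigoplus_j W_j$ into irreducible $\s$-submodules; equivalently, one may complexify and work with $\g \otimes_\R \C$ under $\s\l_2(\C)$, where the element $Z$ still lies in $\g$ and the classical theory applies verbatim. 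Writing $Z \,=\, \sum_j Z_j$ with $Z_j \,\in\, W_j$, each $Z_j$ inherits the weight $-2$ and is killed by ${\rm ad}(X)$, since the $W_j$ are stable under both ${\rm ad}(H)$ and ${\rm ad}(X)$.

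Finally I would invoke the weight structure of an irreducible $\s\l_2$-module: the kernel of the raising operator is exactly the one-dimensional highest-weight space, whose weight is a nonnegative integer, so ${\rm ad}(X)$ is injective on every negative weight space and in particular on the weight $-2$ space. Hence each $Z_j \,=\, 0$, forcing $Z \,=\, 0$ and therefore $Y_1 \,=\, Y_2$. The only point demanding care is the bookkeeping over $\R$ — namely that ${\rm ad}(H)$ has integer eigenvalues and that the highest-weight description of $\ker {\rm ad}(X)$ holds for real representations of $\s\l_2(\R)$. I expect this to be the mild technical obstacle, and the cleanest way around it is to run the argument after complexifying; since $Z$ is a real element, the conclusion descends immediately.
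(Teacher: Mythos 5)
Your proof is correct, and it is essentially the standard argument behind \cite[Lemma 3.4.4]{CoM}, which the paper simply cites rather than reproduces: setting $Z = Y_1 - Y_2$, noting $[X,Z]=0$ and $[H,Z]=-2Z$, and using that $\ker({\rm ad}\,X)$ meets only nonnegative ${\rm ad}(H)$-weight spaces in each irreducible $\s\l_2$-summand. Your care about the real ground field, resolved by complexifying and descending since $Z$ is real, is exactly the right (and routine) fix, so there is nothing to add.
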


We now record an immediate consequence of the above result.

\begin{lemma}\label{comm-XH} 
Let $\{X,\,H,\,Y\}$ be a $\s\l_2$-triple in the Lie algebra $\g$
of a Lie group $G$. Then $\ZC_{G}(X,H) \,= \,\ZC_{G}(X,H,Y)$.
\end{lemma}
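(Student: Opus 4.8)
The plan is to prove the two inclusions $\ZC_{G}(X,H,Y) \,\subseteq\, \ZC_{G}(X,H)$ and $\ZC_{G}(X,H) \,\subseteq\, \ZC_{G}(X,H,Y)$ separately. The first inclusion is immediate from the definition of the centralizer, since any element of $G$ fixing all of $X$, $H$ and $Y$ under the adjoint action in particular fixes $X$ and $H$. So the real content lies in the reverse inclusion: I must show that an element $g \,\in\, G$ with $\text{Ad}(g)X \,=\, X$ and $\text{Ad}(g)H \,=\, H$ automatically satisfies $\text{Ad}(g)Y \,=\, Y$.

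First I would fix $g \,\in\, \ZC_{G}(X,H)$ and set $Y' \,:=\, \text{Ad}(g)Y$. The strategy is to recognize $\{X,\,H,\,Y'\}$ as a second $\s\l_2(\R)$-triple sharing the elements $X$ and $H$ with the original triple, and then invoke the uniqueness in Lemma \ref{ei}. To carry this out, I would check that $\{X,\,H,\,Y'\}$ is indeed a $\s\l_2(\R)$-triple. Since $\text{Ad}(g)$ is a Lie algebra automorphism, it preserves all bracket relations; applying $\text{Ad}(g)$ to the defining identities $[H,\,Y] \,=\, -2Y$ and $[X,\,Y] \,=\, H$ and using $\text{Ad}(g)X \,=\, X$, $\text{Ad}(g)H \,=\, H$ yields
\begin{equation*}
[H,\, Y'] \,=\, [\text{Ad}(g)H,\, \text{Ad}(g)Y] \,=\, \text{Ad}(g)[H,\,Y] \,=\, -2\,\text{Ad}(g)Y \,=\, -2Y',
\end{equation*}
and similarly $[X,\,Y'] \,=\, \text{Ad}(g)[X,\,Y] \,=\, \text{Ad}(g)H \,=\, H$. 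Together with $[H,\,X] \,=\, 2X$ (which holds by hypothesis on the original triple and involves only $X$ and $H$), this shows $\{X,\,H,\,Y'\}$ satisfies all the defining relations of a $\s\l_2(\R)$-triple.

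The one hypothesis of Lemma \ref{ei} that requires a word of justification is that $H$ is a semisimple element of $\g$. This is a standard consequence of $H$ being the neutral element of an $\s\l_2(\R)$-triple: since $\text{ad}(H)$ acts on the three-dimensional subalgebra $\text{Span}_\R\{X,\,H,\,Y\} \,\cong\, \s\l_2(\R)$ with the integer eigenvalues $2$, $0$, $-2$ on $X$, $H$, $Y$ respectively, and more generally $\text{ad}(H)$ is diagonalizable on $\g$ by the representation theory of $\s\l_2(\R)$, the element $H$ is semisimple. With both triples $\{X,\,H,\,Y\}$ and $\{X,\,H,\,Y'\}$ in hand and $H$ semisimple, Lemma \ref{ei} forces $Y \,=\, Y'$, that is, $\text{Ad}(g)Y \,=\, Y$. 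Hence $g \,\in\, \ZC_{G}(X,H,Y)$, completing the reverse inclusion and the proof.

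I do not anticipate a serious obstacle here; the argument is a direct application of the uniqueness lemma. The only mild subtlety is ensuring the semisimplicity hypothesis of Lemma \ref{ei} is in force, which follows routinely from $\s\l_2(\R)$-representation theory, so if desired one could simply cite that $H$, being the semisimple element of an $\s\l_2(\R)$-triple, has $\text{ad}(H)$ diagonalizable.
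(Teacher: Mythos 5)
Your proof is correct and follows essentially the same route as the paper's: apply ${\rm Ad}(g)$ to the triple, observe that $\{X,\,H,\,{\rm Ad}(g)Y\}$ is again a $\s\l_2(\R)$-triple sharing $X$ and $H$ with the original, and invoke Lemma \ref{ei} to force ${\rm Ad}(g)Y \,=\, Y$. Your additional verification that $H$ is semisimple (via the diagonalizability of ${\rm ad}(H)$ from $\s\l_2(\R)$-representation theory) is a worthwhile detail the paper leaves implicit, but it does not change the argument.
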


\begin{proof}
To prove the lemma it suffices to show that $\ZC_G(X,H)\,\subset\, \ZC_G(X,H,Y)$. Take any $g \,\in\, \ZC_G (X,H)$.
Then $ \{{\rm Ad}(g) X,\,{\rm Ad}(g) H,\, {\rm Ad}(g) Y\} \,=\, \{X,\,H,\,{\rm Ad}(g)Y\}$ is another $\s\l_2$-triple
in $\g$ containing $X$ and $H$. Using Lemma \ref{ei} we have ${\rm Ad}(g)Y \,=\, Y$, implying that $g \,\in\,\ZC_G (X,H,Y)$. 
\end{proof}

\subsection{Some notions associated to finite dimensional \texorpdfstring{$\s\l_2(\R)$}{Lg}-modules}\label{notions-sl2-modules}
Given an endomorphism $T \,\in\, {\rm End}_\R (W)$, where $W$ is a $\R$-vector space, and any $\lambda \,\in\, \R$, set
$$W_{T, \lambda} \,:=\, \{ w \,\in\, W \,\mid\, T w \,= \,w \lambda \}\, .$$ 
Let $V$ be a right vector space of dimension $n$ over $\D$, where $\D$ is, as before, $\R$ or $ \C $ or $ \H$. 
Let $\{X,\,H,\,Y\} \,\subset\, \s\l (V )$ be a $\s\l_2(\R)$-triple. Note that $V$ is also
a $\R$-vector space using the inclusion $\R \,\hookrightarrow\, \D$. Hence $V$ is a module
over $ \text{ Span}_\R \{ X,\,H,\,Y\} \,\simeq\, \s\l_2(\R)$. 
For any positive integer $d$, let $M(d-1)$ denote the sum of all the $\R$-subspaces $A$ of $V$ such that
\begin{itemize}
	\item $\dim_\R A \,= \,d$, and
	
	\item{} $A$ is an
	irreducible $\text{Span}_\R \{ X,H,Y\}$-submodules of $V$.
\end{itemize} 
Then $M(d-1)$ is the {\it isotypical component} of $V$ containing all the irreducible submodules
of $V$ with highest weight $d-1$. Let
\begin{equation}\label{definition-L-d-1}
L(d-1)\,:= \,V_{Y,0} \cap M(d-1)\, .
\end{equation}
As the endomorphisms $X,\,H,\,Y$ of $V$ are $\D$-linear, 
the $\R$-subspaces $M(d-1)$, $V_{Y,0}$ and $L(d-1)$ of $V$ are also $\D$-subspaces. Let
$$t_{d} \,:=\, \dim_\D L(d-1)\, .$$

\subsection{Partitions and (signed) Young diagrams}\label{sec-partition-Young-diagram}

An {\it ordered set of order $n$} is a $n$-tuple $(v_1,\, \cdots ,\,v_n)$, where $v_1,\, \cdots,\, v_n$
are elements of some set, such that $v_i \,\neq\, v_j$ if $i \,\neq\, j$.
If $w \,\in\, \{ v_1,\, \cdots ,\,v_n\}$, then write
$w \,\in \,(v_1,\, \cdots ,\,v_n)$. For two ordered sets
$(v_1,\, \cdots ,\,v_n)$ and $(w_1,\, \cdots ,\,w_m)$, 
the ordered set $(v_1,\, \cdots ,\,v_n,\, w_1,\, \cdots ,\, w_m)$ will be denoted by
$$(v_1, \,\cdots ,\,v_n) \vee (w_1,\, \cdots ,\,w_m)\, .$$
Furthermore, for $k$-many ordered sets $(v^i_1,  \cdots, v^i_{n_i})$, $1 \leq  i \leq k$,
define the ordered set $(v^1_1, \cdots, v^1_{n_1})$ $ \vee \cdots \vee (v^k_1,\, \cdots,\, v^k_{n_k})$ 
to be the following juxtaposition of ordered sets $(v^i_1,\, \cdots, \,v^i_{n_i})$ with increasing $i$:
$$
(v^1_1,\, \cdots,\, v^1_{n_1}) \vee \cdots \vee (v^k_1,\, \cdots,\, v^k_{n_k})
\,:=\, (v^1_1,\, \cdots,\, v^1_{n_1}, \,\cdots,\, v^k_1,\, \cdots,\, v^k_{n_k})\, .
$$

A {\it partition} of a positive integer $n$ is an object of the form
$[ d_1^{t_1 },\, \cdots ,\, d_s^{t_s}]$, where $t_i,\, d_i \,\in\, \N$, $ 1 \,\leq\, i
\,\leq\, s$, such that
$\sum_{i=1}^{s} t_i d_i \,=\, n$, $t_i \,\geq\, 1$ and $d_{i+1}\,> \,d_i \,>\, 0$ for all $i$; see
\cite[\S~3.1, p.~30]{CoM}.
If $[ d_1^{t_1 },\, \cdots ,\, d_s^{t_s}]$ is a partition of $n$ in the above sense
then $t_i$ is called the {\it multiplicity} of $d_i$. Henceforth,
the multiplicity of $d_i$ will be denoted by $t_{d_i}$; this is to avoid any ambiguity.
Let $\PC(n)$ denote the {\it set of all partitions of $n$}. For
a partition $\d\, =\, [ d_1^{t_{d_1}},\, \cdots ,\, d_s^{t_{d_s}} ]$ of $n$,
define 
\begin{equation}\label{Nd-Ed-Od}
{\N}_{\d} \,:=\, \{ d_i \,\mid\, 1 \,\leq\, i \,\leq\, s \}\, ,\ \
{\E}_{\d} \,:=\, {\N}_{\d}\cap (2\N)\, , \ \ 
{\O}_{\d} \,:=\, {\N}_{\d}\setminus {\E}_{\d} .
\end{equation}
Further define 
\begin{equation}\label{Od1-Od3}
{\O}^1_{\d} \,:=\, \{ d \,\mid\, d \,\in\, \O_\d, \ d \,\equiv\, 1 \,\pmod{4} \}\ \
\text{ and }\ \ {\O}^3_{\d} \,:=\, \{ d \,\mid\, d \,\in\, \O_\d, \ d \,\equiv\, 3\, \pmod{4} \} .
\end{equation}
Following \cite[Theorem 9.3.3]{CoM}, a partition
$\d$ of $n$ will be called {\it even} if ${\N}_{\d} \,=\, {\E}_{\d}$. Let $\PC_{\rm even} (n)$
be the subset of $\PC(n)$ consisting of all even partitions of $n$. We call a partition 
$\d$ of $n$ to be {\it very even} if
\begin{itemize}
\item $\d$ is even, and

\item $t_\eta$ is even for all $\eta \in {\N}_{\d}$.
\end{itemize}
Let $\PC_{\rm v. even} (n)$ be the subset of $\PC(n)$ consisting of all very even partitions of
$n$. Now define $$\PC_1(n) \,:=\, \{ \d \,\in\, \PC(n) \,\mid \,t_\eta ~\text{ is even for all }~ 
\eta \,\in\, \E_\d \}$$
and
$$
\PC_{-1}(n) \,:=\, \{\d \,\in\, \PC(n) \,\mid\,
 t_\theta ~\text{ is even for all }~ \theta \,\in\, \O_\d \}\, .$$
Clearly, we have $\PC_{\rm v. even} (n) \,\subset\, \PC_1(n)$.

Following \cite[p.~140]{CoM} we define a {\it Young diagram} to be a left-justified array of rows
of empty boxes arranged so that no row is shorter than the one below it; the {\it size} of a
Young diagram is the number of empty boxes appearing in it.
There is an obvious correspondence between the set of Young diagrams of size $n$ and the set
$\PC(n)$ of partitions of $n$.
Hence the {\it set of Young diagrams of size $n$} is also denoted by $\PC(n)$.
A {\it signed Young diagram} is a Young diagram in which every box is labeled with $+1$ or $-1$ such
that the sign of $1$ alternate across rows except when the length of the row is of the form
$3\,\pmod{4}$. In the latter case when the length of the row is of the form $3\,\pmod{4}$
we will alternate the sign of $1$ till the last but one and repeat the sign of $1$ in the last
box as in the last but one box; see Remark \ref{CM-correction} why the choices of signs in this case
deviate from that in the previous cases. The sign of $1$ need not alternate down columns. Two
signed Young diagrams are equivalent if and only if each can be obtained from the other by
permuting rows of equal length. The {\it signature of a signed Young diagram} is the
ordered pair of integers $(p,\,q)$ where $p$-many $+1$ and $q$-many $-1$ occur in it. 

We next define certain sets using collections of matrices with entries comprising of signs $\pm 1$, which are
easily seen to be in bijection with sets of equivalence classes of various types of signed Young diagrams.
These sets will be used in parametrizing the nilpotent orbits in the classical Lie algebras.

For a partition $\d \,\in\, \PC (n)$ and $d \,\in\, \N_\d$, we define the subset 
$\Ab_d \,\subset\, {\rm M }_{t_d \times d} (\C)$  of matrices $(m^d_{ij})$ with entries in the set $\{ \pm 1 \}$ as follows : 
\begin{equation}\label{A-d}
 \Ab_d := \{ (m^d_{ij}) \in {\rm M }_{t_d \times d} (\C) \mid (m^d_{ij})  \text{ satisfies } \eqref{yd-def1} \text{ and }  \eqref{yd-def2} \} \,.
\end{equation}

\begin{enumerate}[label = {{\bf Yd}.\arabic*}]
\item~ \label{yd-def1} There is an integer $ 0 \,\leq\, p_d \,\leq\, t_d$ such that
$$
 m^d_{i1} \,:=\, \begin{cases}
+1  & \text{ if } \ 1 \,\leq\, i \,\leq\, p_d\\
-1 & \text{ if } \ p_d \,<\, i \,\leq\, t_d.
\end{cases}
$$
\item~ \label{yd-def2} 
\begin{align*}
m^d_{ij} &:=\, (-1)^{j+1}m^d_{i1} \qquad \text{if } \ 1\,<\,j \,\leq \,d , \ d\,\in\, \E_\d \cup \O^1_\d; \\
m^d_{ij} &:= \begin{cases}
(-1)^{j+1}m^d_{i1} & \text{ if }\ 1<j \,\leq\, d-1 \\
-m^d_{i1} & \text{ if }\ j \,=\, d 
\end{cases},\, \, d \,\in\, \O^3_\d\, .
\end{align*}
\end{enumerate}

For any $(m^d_{ij}) \,\in\, \Ab_d$ set $${\rm sgn}_+ (m^d_{ij})\,:=\, \# \{ (i,\,j) \,\mid\, 1 \,\leq\, i \,\leq\, t_d,\
1 \,\leq\, j \,\leq \, d ,\ m^d_{ij} \,=\, +1 \}$$ and
$${\rm sgn}_-(m^d_{ij})\,:=\, \# \{ (i,\,j) \,\mid\, 1\,\leq\, i \,\leq\, t_d,\ 1 \,\leq \,j \,\leq \,d ,\
m^d_{ij} \,=\, -1 \}\, .$$
Let $\SC_\d(n) \,:= \, \Ab_{d_1} \times \cdots \times \Ab_{d_s}$.
For a pair of non-negative integers $(p,q)$ with $p+q=n$
we now define the subset $\SC_\d(p, q) \subset \SC_\d(n) $ by 
\begin{equation}\label{S-d-pq}
\SC_\d(p, q) := \big\{ (M_{d_1},   \dotsc ,  M_{d_s})  \in \SC_\d(n)  \mid   \sum_{i=1}^s  
{\rm sgn}_+ M_{d_i}  =  p, \,  \sum_{i=1}^s { \rm sgn}_- M_{d_i}  = q \big\}.
\end{equation}
We also define
\begin{equation}\label{yd-Y-pq}
\YC(p,\,q) \,:=\, \{ (\d, \,\sgn ) \,\mid\, \d \,\in\, \PC (n),\ \sgn \,\in \,\SC_\d(p,\,q) \} \, .
\end{equation}

It is easy to see that there is a natural bijection between the set $\YC(p,\,q)$ and
the equivalence classes of signed Young diagrams of size $p+q$ with signature $(p,\,q)$.
Hence, we will call $\YC(p,\,q)$ the {\it set of equivalence classes of signed Young diagrams of size $p+q$
with signature $(p,\,q)$}.

For any $\d \,\in\, \PC(n)$ and $d \,\in\, \N_\d$, define the subset $\Ab_{d, 1} $ of $\Ab_d$ by
$$\Ab_{d, 1} \,:=\, \{(m^d_{ij}) \,\in \,\Ab_d \,\mid\, m^d_{i\, 1} = +1~ \ \forall\ 1 \,\leq\, i \,\leq\, t_d \}\, .$$
Further define $\SC^{\rm even}_{\d}(p,q) \,\subset\, \SC_\d (p,q)$ and $\SC^{\rm odd}_{\d}(n) \,\subset \,\SC_\d (n)$ by
\begin{equation}\label{S-d-pq-even}
\SC^{\rm even}_{\d}(p,\,q) \,:=\, \{ (M_{d_1},\, \dotsc ,\,M_{d_s}) \,\in \,\SC_\d (p,\,q) \,\mid\,
M_\eta \in \Ab_{\eta, 1} ~\ \forall~\  \eta \,\in\, \E_\d \}
\end{equation}
and
\begin{equation}\label{S-d-pq-odd}
\SC^{\rm odd}_{\d}(n) \,:=\, \{ (M_{d_1},\, \dotsc ,\,M_{d_s}) \,\in\, \SC_\d (n) \,\mid\,
M_\theta \,\in\, \Ab_{\theta, 1} ~\ \forall~\  \theta \,\in\, \O_\d \}\, .
\end{equation}

For a pair $(p,q)$ of non-negative integers we define the sets $\YC^{\rm even} (p,q)$ and $\YC^{\rm even}_1 (p,q)$ by
\begin{equation}\label{yd-even-Y-pq}
\YC^{\rm even} (p,q) := \{ (\d, \sgn) \mid~ \d \in \PC (n),~ \sgn \in \SC^{\rm even}_{\d}(p,q) \},
\end{equation}
\begin{equation}\label{yd-1-Y-pq}
\YC^{\rm even}_1(p,q) := \{ (\d, \sgn) \mid~ \d \in \PC_1 (n),~ \sgn \in \SC^{\rm even}_{\d}(p,q) \}.
\end{equation}
Similarly, for a non-negative integer $n$, set 
\begin{equation}\label{yd-odd-Y-pq}
\YC^{\rm odd}(n) \,:= \,\{ (\d, \,\sgn) \,\mid\, ~ \d \,\in\, \PC(n),~\ \sgn \,\in\, \SC^{\rm odd}_{\d}(n) \}, 
\end{equation}
\begin{equation}\label{yd-odd-1-Y-pq}
\YC^{\rm odd}_{-1}(2n)\,:=\,\{ (\d,\,\sgn)\,\mid\, \d \,\in\, \PC_{-1}(2n),~\ \sgn \,\in\,\SC^{\rm odd}_{\d}(2n)\}\, .
\end{equation}

Let $\d \,\in\, \PC(n)$. 
For $\theta \,\in\, \O_\d$ and $M_\theta \,:= \,(m^\theta_{rs})\,\in\,\Ab_\theta $,
define $$l^+_{\theta , i} (M_\theta ) \,:= \,\# \{ j \,\mid\, m^\theta_{ij} \,=\, +1 \}\ \ \text{ and }\ \
l^-_{\theta , i} (M_\theta ) \,:=\, \# \{j \,\mid\, m^\theta_{ij} \,= \,-1 \}$$
for all $1 \,\leq\, i \,\leq\, t_\theta$; set
$$
\SC'_\d(p,\, q) :=
\Bigg\{ (M_{d_1}, \cdots ,M_{d_s}) \in \SC^{\rm even}_\d (p,q) \biggm| \! \!
\begin{array}{cc}
\quad l^+_{\theta , i} (M_\theta )~ \text{ is even for all } ~\theta \,\in\, \O_\d,\ 1 \,\leq\, i \,\leq\, t_\theta \\
 \text{or } l^-_{\theta , i} (M_\theta )~ \text{ is even for all }~ \theta \,\in\, \O_\d,\ 1 \,\leq\, i \,\leq\, t_\theta 
\end{array} \!
\Bigg\}.
$$

\section{The second and first cohomologies of homogeneous spaces}\label{sec-second-first-cohomologies-of-homogeneous-spaces} 
 
In this section we obtain convenient descriptions of the second and the first cohomology groups of homogeneous spaces of 
general connected Lie groups with the purpose of applying them in the computations
done in Sections \ref{sec-Second-Cohomology-of-Nilpotent-Orbits} 
and \ref{sec-First-Cohomology-of-Nilpotent-Orbits}. The main results are Theorems \ref{thm-i1}, \ref{H^1}, and they
hold under the mild restriction that isotropy subgroups have finitely many connected components. Moreover, the results
generalize those of \cite[\S~3]{BC} where it is assumed that the maximal compact subgroups of the ambient Lie groups are semisimple.
We apply the above results to derive a stronger consequence in Theorem \ref{H.21} where the ambient Lie group real simple.

Given a Lie algebra $\mathfrak{a}$ and an integer $n \,\geq\, 0$, let $\Omega^n (\mathfrak{a})$ denote the space of
all $n$-forms on $\mathfrak{a}$.
A $n$-form $\omega \,\in\, \Omega^n (\mathfrak{a})$ is said to
{\it annihilate} a given subalgebra $\mathfrak{b}\,\subset\, \mathfrak{a}$ if $ \omega (X_1,\, \cdots,\, X_n)\,=\,0$
whenever $ X_i \,\in \,\mathfrak{b}$ for some $i$.
Let $\Omega^n (\mathfrak{a}/ \mathfrak{b} )$ denote the space of $n$-forms on $\mathfrak{a}$ which
annihilate $\mathfrak{b}$.

Let $L$ be a compact Lie group with Lie algebra $\l$. Let $J \,\subset\, L $ be a closed subgroup with
Lie algebra ${\mathfrak j}\,\subset\, \l$. 
The space of $J$-invariant $p$-forms on $\l$ will be denoted by $\Omega^p (\l)^J$.
Note that $ \omega \,\in\, \Omega^p (\l)^{J^\circ}$ if and only if 
\begin{equation}\label{inv-cond}
\sum_{i=1}^p \omega (X_1, \,\cdots,\, [Y,\, X_i],\, \cdots,\, X_p)\,=\,0
\end{equation}
for all $Y \,\in\, {\mathfrak j}$ and all $(X_1,\, \cdots,\, X_p ) \,\in\, \l^p$.
For a continuous function $$W\,:\, J\,\longrightarrow\, \Omega^p ( \l)$$ and a Haar measure $\mu_J$ on $J$,
define the integral $\int_J W (g) d \mu_J (g)\,\in\, \Omega^p ( \l)$ as follows:
$$
(\int_J W (g) d \mu_J(g)) (X_1,\, \cdots,\, X_p)\,:=\, \int_J W(g) (X_1,\, \cdots,\, X_p)
d \mu_J (g),\ \ (X_1,\, \cdots,\, X_p) \,\in\, \l^p\, .
$$
The above integral $\int_J W (g) d \mu_J (g)$ is also denoted by $\int_J W d \mu_J$.
The following equations are straightforward.
\begin{equation}\label{formula-1}
d \int_J W d \mu_J \,=\, \int_J d W d \mu_J\, ;
\end{equation}
For any $a \,\in\, L$, 
\begin{equation}\label{formula-2}
{\rm Ad}(a)^* \int_J W d \mu_J \,=\, \int_J {\rm Ad}(a)^* W d \mu_J\, .
\end{equation}

For any $\omega \,\in \,\Omega^p (\l)$, from the left-invariance of the Haar measure $\mu_J$ on
$J$ it follows that
$$\int_J ({\rm Ad}(g)^* \omega) d \mu_J (g) \,\in\, \Omega^p (\l)^J\, .$$

\begin{lemma}\label{inv-forms}
Let $L$ be a compact Lie group with Lie algebra $\l$. Let $p \,\geq\, 1$ be an integer.
\begin{enumerate}
\item If $\omega \,\in\, \Omega^p (\l)$ is invariant then $d \omega \,=\,0$.

\item Every element of $H^p (\l,\, \R)$ contains an unique invariant $\omega \,\in\, \Omega^p (\l)$.

\item If $J \,\subset\, L$ is a closed subgroup, then 
$$
\Omega^p (\l)^J \cap d (\Omega^{p-1} (\l)) \,=\, d (\Omega^{p-1} (\l)^J)\, .
$$

\item If $L$ is connected and $\omega \,\in\, \Omega^2 (\l)$, then
$\omega \,\in \,\Omega^2 (\l)^L $ if and only if $\omega \,\in\, \Omega^2 (\l / [\l, \,\l])$.
\end{enumerate}
\end{lemma}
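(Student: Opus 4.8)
The plan is to establish part (1) first, since the remaining three parts all follow from it together with the averaging operator furnished by \eqref{formula-1} and \eqref{formula-2}. For part (1) I would pass to the group picture, identifying $\Omega^p(\l)$ with the left-invariant $p$-forms on $L$ and the Chevalley--Eilenberg differential with the de Rham $d$; under this identification an \emph{invariant} form $\omega \in \Omega^p(\l)^L$ is precisely a bi-invariant form. Writing $\nu \colon L \to L$, $g \mapsto g^{-1}$, one has $d\nu_e = -\,{\rm Id}$ on $\l$, so a bi-invariant $p$-form satisfies $\nu^* \omega = (-1)^p \omega$ (both sides are bi-invariant and agree at the identity, as $\nu$ intertwines left and right translations). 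Since $d\omega$ is again bi-invariant and $\nu^*$ commutes with $d$, applying this rule to $\omega$ and to $d\omega$ gives $(-1)^p d\omega = \nu^*(d\omega) = (-1)^{p+1} d\omega$, whence $d\omega = 0$. Alternatively this is the standard fact that invariant cochains are Chevalley--Eilenberg cocycles, needing neither compactness nor the group picture.

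For part (3) I would use the normalized averaging operator $P\gamma := \int_J ({\rm Ad}(g)^* \gamma)\, d\mu_J(g)$, which by the remark preceding the lemma lands in $\Omega^{p}(\l)^J$ and, by \eqref{formula-1} and \eqref{formula-2}, satisfies $dP = Pd$. The inclusion $d(\Omega^{p-1}(\l)^J) \subseteq \Omega^p(\l)^J \cap d(\Omega^{p-1}(\l))$ is immediate, since ${\rm Ad}(g)^*$ commutes with $d$ and so $d$ preserves $J$-invariance. Conversely, if $\omega \in \Omega^p(\l)^J$ is exact, say $\omega = d\gamma$, then $P\gamma \in \Omega^{p-1}(\l)^J$ and $d(P\gamma) = P(d\gamma) = P\omega = \omega$, the last equality because $\omega$ is already $J$-invariant and $\mu_J$ is a probability measure; this gives the reverse inclusion.

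Part (2) is then obtained in the same spirit with $J = L$. For uniqueness, if two invariant forms are cohomologous their difference $\omega$ is invariant and exact; averaging a primitive $\omega = d\beta$ produces an invariant $(p-1)$-form $\bar\beta = P\beta$ with $d\bar\beta = P\omega = \omega$, but $\bar\beta$ is invariant, so $d\bar\beta = 0$ by part (1), forcing $\omega = 0$. For existence I would average a closed representative $\alpha$: the form $P\alpha$ is invariant, hence closed by part (1), and lies in $[\alpha]$ because ${\rm Ad}(g)^*$ induces the identity on $H^p(\l,\R)$ for $g$ in the identity component (differentiate a path from $e$ to $g$ and use the Cartan homotopy formula $\theta(Y) = d\,\iota(Y) + \iota(Y)\,d$ on a closed form). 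This suffices in the applications, where the relevant $L$ is connected.

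Finally, for part (4) the connectedness of $L$ yields $\Omega^2(\l)^L = \Omega^2(\l)^{L^\circ}$, so membership is governed by the infinitesimal condition \eqref{inv-cond}. If $\omega$ annihilates $[\l,\l]$ then both summands $\omega([Y,X_1],X_2)$ and $\omega(X_1,[Y,X_2])$ in \eqref{inv-cond} vanish, so $\omega$ is invariant. For the converse I would apply \eqref{inv-cond} three times, taking $Y$ to be each of $X_0,X_1,X_2$ in turn: abbreviating $a = \omega([X_0,X_1],X_2)$, $b = \omega([X_0,X_2],X_1)$, $c = \omega([X_1,X_2],X_0)$, the three instances give $a=b$, $a=-c$ and $b=c$, forcing $a=b=c=0$. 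Hence $\omega([X,Y],Z)=0$ for all $X,Y,Z$, i.e.\ $\omega \in \Omega^2(\l/[\l,\l])$; as a by-product this recovers $d\omega=0$ directly for $p=2$, since $d\omega(X_0,X_1,X_2) = -a+b-c$. The one subtlety to get right is that connectedness is needed only for the implication ``annihilates $[\l,\l]\Rightarrow$ fully $L$-invariant,'' because \eqref{inv-cond} by itself yields only $L^\circ$-invariance. I expect the main obstacle to be part (1): it is the load-bearing statement on which the averaging arguments for parts (2) and (3) rest, and its cleanest self-contained justification requires the passage to bi-invariant forms on $L$ described above.
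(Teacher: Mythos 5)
Your proposal is correct, and on the two parts for which the paper actually writes out an argument --- (3) and (4) --- it essentially coincides with the paper's proof: the paper establishes (3) by exactly your averaging computation (write $\omega = d\nu$, use $J$-invariance of $\omega$ to get $\omega = d\int_J \mathrm{Ad}(g)^*\nu \, d\mu_J(g)$ via \eqref{formula-1} and \eqref{formula-2}, and conclude from left-invariance of $\mu_J$ that the averaged primitive lies in $\Omega^{p-1}(\l)^J$), while it imports (1) and (2) by citation to \cite{CE}, which you instead prove from scratch. Your inversion/bi-invariance argument for (1) is the classical one and is sound; your side remark is also right, since the identity $\sum_{i}(-1)^i(\theta(X_i)\omega)(X_0,\ldots,\widehat{X_i},\ldots,X_p) = 2\, d\omega(X_0,\ldots,X_p)$ (with $\theta$ the Lie derivative on cochains) shows that invariant cochains are cocycles over an arbitrary Lie algebra, with neither compactness nor the group picture. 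In (4) you diverge mildly: the paper combines $d\omega=0$ from part (1) with a single instance of invariance to kill $\omega([\l,\l],\l)$, whereas your three instances of \eqref{inv-cond} (giving $a=b$, $a=-c$, $b=c$, hence $a=b=c=0$) reach the same conclusion purely algebraically and make (4) logically independent of (1); both treatments need connectedness only for the converse implication, exactly as you say. Finally, your connectedness caveat on existence in (2) is apt rather than a defect: the cited result in \cite{CE} is stated for compact \emph{connected} groups, and for disconnected $L$ the statement with ``$L$-invariant'' read literally fails --- for $L=\mathrm{O}(2)$ one has $H^1(\l,\R)\simeq\R$, yet the reflection acts by $-1$ on $\l$, so $0$ is the only $L$-invariant $1$-form. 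In the paper's sole use of (2), the group is $M$, a maximal compact subgroup of a connected Lie group and hence connected, so your proof covers precisely what is needed; note also that your uniqueness argument, unlike existence, requires no connectedness at all.
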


\begin{proof}
Statement (1) is proved in \cite[p. 102, 12.3]{CE}. Statement (2) is proved in \cite[p. 102, Theorem 12.1]{CE}.

To prove (3), note that it suffices to show that 
$$ 
\Omega^p (\l)^J \cap d (\Omega^{p-1} (\l))
\,\subset\, d (\Omega^{p-1} (\l)^J)\, .
$$
Let $\mu_J$ denote the Haar measure on $J$ such that $\mu_J (J)\,=\,1$.
For any $ \omega \,\in\, \Omega^p (\l)^J \cap d (\Omega^{p-1} (\l))$,
we have $\omega \,= \,d \nu$ for some $\nu \,\in\, \Omega^{p-1} (\l)$. Now as $\omega$ is $J$-invariant, it follows
that
\begin{equation}\label{1}
\omega \,= \,{\rm Ad} (g)^* d \nu \,= \, d {\rm Ad} (g)^* \nu
\end{equation}
for all $g \,\in\, J$. In particular, from \eqref{1} we have
$$
\omega\,= \,\int_J (d {\rm Ad }(g)^* \nu) d \mu_J (g) \,=\, d \int_J ({\rm Ad }(g)^* \nu) d \mu_J (g)\, .
$$
As $\mu_J$ is preserved by the left multiplication by elements of $J$, it now follows that 
$$
\int_J ({\rm Ad }(g)^* \nu) d \mu_J (g) \,\in\, \Omega^{p-1} (\l)^J\, .
$$
This in turn implies that $\omega \,\in \,d (\Omega^{p-1} (\l)^J)$.

The proof of (4) is essentially contained in the proof of \cite[p. 309, Corollary 12.9]{B}; we will give
the details.
Take any $\omega \,\in\, \Omega^2 (\l)^L $. Lemma \ref{inv-forms}(1) says that $d \omega \,=\,0$.
Thus, for all $x,\,y, \,z \,\in\, \l$,
\begin{equation}\label{2}
d \omega (x,\, y,\, z) \,=\, -\omega ([x,\,y],\,z) + \omega ([x,\,z],\,y) - \omega ([y,\,z],\,x) \,=\,0\, .
\end{equation}
As $\omega$ is $L$-invariant, we also have 
\begin{equation}\label{3}
-\omega ([x,\,y],\,z) + \omega ([x,\,z],\,y) \,=\, - ( \omega ([x,\,y],\,z) + \omega (y,\, [x,\,z]) \,=\, 0\, .
\end{equation}
{}From \eqref{2} and \eqref{3} it follows that $\omega ([y,\,z],\,x) \,=\,0$, therefore 
$$
\omega ([\l,\, \l],\, \l) \,=\, 0.
$$
This is equivalent to saying that $\omega \,\in\, \Omega^2 (\l / [\l, \,\l])$.

Conversely, if $\omega ([\l, \,\l], \,\l) \,=\,0$, then it is immediate that $\omega$
satisfies \eqref{inv-cond} for $p\,=\,2$. In particular, as $L$ is connected, we conclude that
$\omega \,\in\, \Omega^2 (\l)^L$. This completes the proof of (4). 
\end{proof}

\begin{theorem}[\cite{Mo}]\label{mostow} 
Let $G$ be a connected Lie group, and let $H \,\subset\, G$ be a closed subgroup with finitely many connected 
components. Let $M$ be a maximal compact subgroup of $G$ such that $M \cap H$ is a maximal compact subgroup of $H$. 
Then the image of the natural embedding $M/ (M \cap H)\, \hookrightarrow\, G/H$ is a deformation retraction of 
$G/H$.
\end{theorem}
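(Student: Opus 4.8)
The plan is to realize $G/H$ as the total space of a vector bundle over the compact base $M/(M\cap H)$, with the image of $M/(M\cap H)$ appearing as the zero-section, and then to contract the fibres linearly. I would describe the argument when $G$ is reductive, which is the case relevant to the applications in this paper; the general connected case is handled by the same scheme, with the Cartan--Iwasawa--Malcev coordinatization of $G$ replacing the Cartan decomposition. Fix a Cartan involution of $\g$ whose fixed-point subalgebra is $\m := {\rm Lie}(M)$, giving the Cartan decomposition $\g \,=\, \m\oplus\p$ and the global diffeomorphism
\[
M \times \p \,\longrightarrow\, G\, , \ \ (m,\, X)\,\longmapsto\, m \exp(X)\, ;
\]
in particular $G/M$ is diffeomorphic to $\p$ and is contractible. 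Using that $M\cap H$ is a maximal compact subgroup of $H$, I would first choose the Cartan involution to be $\h$-stable, so that $\h \,=\, (\h\cap\m)\oplus(\h\cap\p)$ and $H \,=\, (M\cap H)\exp(\h\cap\p)$; the existence of such a compatible involution is exactly where the hypothesis on $M\cap H$, together with the finiteness of the component group of $H$ (needed for the structure theory to apply to $H$), enters.

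Granting such an adapted decomposition, set $\mathfrak{q} := \h\cap\p$ and, by averaging over the compact group $M\cap H$, choose an ${\rm Ad}(M\cap H)$-invariant complement $\mathfrak{n}$ of $\mathfrak{q}$ in $\p$. The key object is the map from the associated vector bundle over $M/(M\cap H)$,
\[
\Psi \,\colon\, M \times_{M\cap H} \mathfrak{n} \,\longrightarrow\, G/H\, , \ \ [m,\, X] \,\longmapsto\, m \exp(X) H\, ,
\]
where $M\cap H$ acts on $\mathfrak{n}$ through ${\rm Ad}$ and on $M$ by right translation. This $\Psi$ is well defined, since for $c\in M\cap H$ one has $mc^{-1}\exp({\rm Ad}(c)X)H \,=\, m\exp(X)c^{-1}H \,=\, m\exp(X)H$, using $\exp({\rm Ad}(c)X) = c\exp(X)c^{-1}$ and $c^{-1}\in H$. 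I would then prove that $\Psi$ is a diffeomorphism. Because the base $M/(M\cap H)$ is compact and $\Psi$ carries the zero-section $[m,\,0]$ to the image $mH$ of the embedding in the statement, the fibrewise scaling $r_t[m,\,X] := [m,\,tX]$ (which respects the equivalence since ${\rm Ad}(c)$ is linear) is a strong deformation retraction of $M\times_{M\cap H}\mathfrak{n}$ onto its zero-section; transporting it through $\Psi$ yields a deformation retraction of $G/H$ onto the image of $M/(M\cap H)$, as required.

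The main obstacle is the diffeomorphism property of $\Psi$, which is equivalent to a generalized Cartan decomposition $G \,=\, M\exp(\mathfrak{n})H$ together with its uniqueness. For surjectivity I would start from the polar decomposition $g = m\exp(X)$, $X\in\p$, and move the $\mathfrak{q}$-part of $X$ into $H$, proving that the resulting smooth map $M\times\mathfrak{n}\times H\to G$ is a surjective submersion by a dimension count combined with the properness furnished by the Cartan decomposition of $G$; injectivity of $\Psi$ and the rank of its differential I would read off from the uniqueness in the polar decomposition of the contractible space $G/M$ and from the transversality of $\mathfrak{n}$ to $\h$. In the general connected case the same blueprint applies with the Cartan--Iwasawa--Malcev coordinates, and the delicate point, requiring the most care, is precisely the \emph{equivariant} splitting of the Euclidean directions of $G$ into those tangent to $H$ and a complementary factor carrying a linear $M\cap H$-action, for this is what makes $\Psi$ a genuine vector bundle and renders the retraction onto the zero-section linear.
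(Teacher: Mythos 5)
Your argument breaks down at its very first step: the claim that, because $M\cap H$ is a maximal compact subgroup of $H$, one can choose the Cartan involution to be $\h$-stable, so that $\h \,=\, (\h\cap\m)\oplus(\h\cap\p)$ and $H \,=\, (M\cap H)\exp(\h\cap\p)$. This is false for general closed subgroups, and the hypothesis on $M\cap H$ cannot deliver it: by the Cartan--Mostow conjugacy theorem, for \emph{every} closed subgroup $H$ with finitely many components one can conjugate $M$ so that $M\cap H$ is maximal compact in $H$, so this hypothesis puts no restriction on $H$; by contrast, $\theta$-stability of $\h$ essentially characterizes reductive (self-adjoint) subgroups. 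A concrete counterexample: $G\,=\,{\rm SL}_2(\R)$, $H$ the upper-triangular unipotent subgroup, $M\,=\,{\rm SO}(2)$. Then $M\cap H\,=\,\{e\}$ is indeed maximal compact in $H\,\simeq\,\R$, but $\h$ is spanned by a nilpotent matrix, and for any Cartan involution one has $\h\cap\m\,=\,\h\cap\p\,=\,0$, so $\h\,\neq\,(\h\cap\m)\oplus(\h\cap\p)$ and $H\,\neq\,(M\cap H)\exp(\h\cap\p)$, while the conclusion of the theorem still holds ($G/H\,\simeq\,\R^2\setminus\{0\}$ retracts onto the circle $M/(M\cap H)$). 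The failure is not a peripheral technicality for this paper: Theorem \ref{thm-nilpotent-orbit} applies the statement with $H\,=\,\ZC_{G(\R)^\circ}(X)$, whose unipotent radical is nontrivial for $X\,\neq\,0$, so the subgroups actually needed are precisely the non-reductive ones for which your adapted decomposition, and hence the bundle map $\Psi$, does not exist.

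For comparison, the paper does not reprove the statement at all: it quotes Mostow's Theorem 3.1 (proved there for connected $H$) and observes that, by Hochschild's structure theorem, the same proof goes through when $H$ has finitely many components. Mostow's argument does not pass through a $\theta$-stable $H$; it builds the covariant fibering for an \emph{arbitrary} closed subgroup, constructing the Euclidean slice by a more delicate inductive procedure in which the noncompact part of $H$ need not be $\exp$ of a subspace of $\p$. What you have sketched is essentially Mostow's decomposition $G\,=\,M\exp(\mathfrak{n})H$ for self-adjoint (reductive) $H$, which is a correct and useful statement in that restricted setting --- it would handle $H\,=\,\ZC_{G(\R)^\circ}(X,H,Y)$ --- but to reach the full theorem you would still need a separate mechanism (for instance, first fibering off the unipotent radical using the semidirect product decomposition of $\ZC_{G(\R)^\circ}(X)$, then treating the reductive quotient), and no such step appears in your proposal. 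Your remaining claims (surjectivity and injectivity of $\Psi$ via dimension count, properness, and polar-decomposition uniqueness) also amount to restating Mostow's decomposition theorems rather than proving them, but that is secondary to the gap above.
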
 

Theorem \ref{mostow} is proved in \cite[p. 260, Theorem 3.1]{Mo} under the assumption that $H$ is connected.
However, as mentioned in \cite{BC}, using \cite[p. 180, Theorem 3.1]{H}, the proof as in \cite{Mo}
goes through when $H$ has finitely many connected components.

Let $G,\, H, \,M$ be as in Theorem \ref{mostow}, and let $K \,:= \,M \cap H$. As
$\ M/K \ \hookrightarrow \ G/H$ is a deformation retraction by Theorem \ref{mostow}, we have 
\begin{equation}\label{homotopy}
{ H}^{i}(G/H ,\, \R) \,\simeq\, { H}^{i}(M/K ,\, \R ) ~\ \ \text{ for all } \ i\, . 
\end{equation}

\begin{theorem}\label{thm-i1}
Let $G$ be a connected Lie group, and let $H \,\subset\, G$ be a closed subgroup with finitely many connected
components. Let $K$ be a maximal compact subgroup of $H$, and let $M$ be a maximal compact subgroup of $G$
containing $K$. Then,
$$ 
 H^2 (G/H, \, \R) \, \simeq \, \Omega^2 \big( \frac{\m}{[ \m, \,\m] + \k}\big) \oplus
[(\z(\k) \cap [\m,\,\m])^*]^{K/K^{\circ}}\, .
$$
\end{theorem}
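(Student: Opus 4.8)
The plan is to reduce, via Mostow's theorem, to a compact homogeneous space and then to compute its second cohomology through invariant forms, isolating the two summands as pullbacks from a torus and as transgressed characters of $K$. First I would observe that $M\cap H=K$: since $K\subseteq M$ and $K\subseteq H$ we have $K\subseteq M\cap H$, and $M\cap H$ is compact (closed in the compact group $M$), so maximality of $K$ in $H$ forces equality. Thus Theorem \ref{mostow} applies and \eqref{homotopy} gives $H^2(G/H,\R)\cong H^2(M/K,\R)$, where $M$ is compact connected and $K\subseteq M$ is closed with finitely many components. Passing to the finite covering $M/K^\circ\to M/K$ with deck group $K/K^\circ$ and using exactness of $K/K^\circ$-invariants over $\R$, I would reduce to $H^2(M/K,\R)\cong H^2(M/K^\circ,\R)^{K/K^\circ}$, with $M$ and $K^\circ$ both compact connected.

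Fix an ${\rm Ad}(M)$-invariant inner product $\langle\cdot,\cdot\rangle$ on $\m$ and the reductive splitting $\m=\k\oplus\p$, $\p=\k^{\perp}$. Since $M$ is compact and connected, averaging (exactly as in the proof of Lemma \ref{inv-forms}(3)) identifies $H^2(M/K^\circ,\R)$ with the degree-two cohomology of the complex of ${\rm Ad}(K^\circ)$-invariant forms $((\Lambda^{\bullet}\p^{*})^{\k},d)$, where $d$ is the Chevalley--Eilenberg differential. I would then exhibit two families of closed invariant $2$-forms. The first is $\Omega^2(\m/([\m,\m]+\k))$: a form annihilating the ideal $[\m,\m]$ and $\k$ is automatically closed (all brackets lie in $[\m,\m]$) and ${\rm Ad}(M)$-invariant (the connected group $M$ acts trivially on $\m/[\m,\m]$). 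The second is $\omega_Z(X,Y):=\langle Z,[X,Y]\rangle$ for $Z\in\z(\k)\cap[\m,\m]$; the Jacobi identity gives $d\omega_Z=0$, and $[W,Z]=0$ for $W\in\k$ (as $Z\in\z(\k)$) gives invariance. Note that $\omega_Z=-d\langle Z,\cdot\rangle$ on all of $\m$, but $\langle Z,\cdot\rangle$ does not annihilate $\k$, so $\omega_Z$ carries a genuine relative class.

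The main obstacle is to show that these two families together exhaust $H^2(M/K^\circ,\R)$ and realize the asserted direct sum. I would run the Leray--Serre spectral sequence of $K^\circ\hookrightarrow M\to M/K^\circ$, using the low-degree cohomology of compact connected groups, $H^1(M)\cong\z(\m)^*$, $H^2(M)\cong\Lambda^2\z(\m)^*$, $H^1(K^\circ)\cong\z(\k)^*$, $H^2(K^\circ)\cong\Lambda^2\z(\k)^*$ (there are no even-degree primitives). Writing $\rho\colon\z(\k)\to\z(\m)$ for the projection along $[\m,\m]$, the restriction $H^1(M)\to H^1(K^\circ)$ is $\rho^{*}$, so the transgression $d_2\colon H^1(K^\circ)\to H^2(M/K^\circ)$ has kernel $\operatorname{im}\rho^{*}$ and hence image $\z(\k)^{*}/\operatorname{im}\rho^{*}\cong(\ker\rho)^{*}=(\z(\k)\cap[\m,\m])^{*}$; this image is precisely $\ker(\pi^{*}\colon H^2(M/K^\circ)\to H^2(M))$ and is realized by the classes $[\omega_Z]$. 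On the other hand, each form in $\Omega^2(\m/([\m,\m]+\k))$ extends to an invariant $2$-form on $M$ pulled back from $\m/[\m,\m]$, so it survives $\pi^{*}$ and hence injects into $H^2(M/K^\circ)$ with image meeting $\ker\pi^{*}$ only in $0$; a dimension count against $H^{\leq 2}(M)$ and $H^{\leq 2}(K^\circ)$ then shows that the two families span, giving $H^2(M/K^\circ,\R)\cong\Omega^2(\m/([\m,\m]+\k))\oplus(\z(\k)\cap[\m,\m])^{*}$. (Alternatively this splitting can be obtained by hand, decomposing $\p$ into ${\rm ad}(\k)$-isotypic summands and solving the cocycle equation directly, but the spectral sequence makes the two sources of classes transparent.)

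Finally I would take $K/K^\circ$-invariants. The decomposition above is natural for the residual ${\rm Ad}(K)$-action, hence $K/K^\circ$-equivariant. Since $K\subseteq M$ and $M$ acts trivially on $\m/[\m,\m]$, the group $K/K^\circ$ fixes the first summand pointwise, while on the second it acts through its natural action on $\z(\k)\cap[\m,\m]$. Taking invariants therefore yields
$$
H^2(G/H,\R)\,\cong\,H^2(M/K^\circ,\R)^{K/K^\circ}\,\cong\,\Omega^2\big(\tfrac{\m}{[\m,\m]+\k}\big)\oplus[(\z(\k)\cap[\m,\m])^{*}]^{K/K^\circ},
$$
as claimed.
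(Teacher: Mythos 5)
Your proof is correct in substance but follows a genuinely different route from the paper's. After the same Mostow reduction (and your observation that $M\cap H=K$, which the paper leaves implicit), the paper never passes to $K^{\circ}$ or invokes any spectral sequence: it stays inside the complex of $K$-invariant forms on $\m/\k$, proves by averaging over $K$ (Lemma \ref{inv-forms}(3)) the key splitting ${\rm Ker}\,(d\colon\Omega^2(\m/\k)^K\to\Omega^3(\m/\k)^K)=\Omega^2(\m/\k)^M\oplus d(\Omega^1(\m)^K)$, identifies the first summand with $\Omega^2(\m/([\m,\m]+\k))$ via Lemma \ref{inv-forms}(4), and computes the second summand by an explicit four-fold ${\rm Ad}(K)$-invariant orthogonal decomposition of $\m$ together with the vanishing $([\k,\k]^*)^K=0$ quoted from \cite{BC}. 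You instead transfer along the covering $M/K^{\circ}\to M/K$ and run the Serre spectral sequence of $K^{\circ}\to M\to M/K^{\circ}$ (where you should remark that the coefficient system is untwisted because this is a principal bundle with connected structure group), obtaining $(\z(\k)\cap[\m,\m])^{*}$ as the image of the transgression via the five-term sequence and realizing it by the cocycles $\omega_Z$; this is more conceptual and makes visible where each summand comes from, at the cost of importing Hopf's theorem and the spectral sequence. Two steps in your sketch should be expanded. First, injectivity of $Z\mapsto[\omega_Z]$: if $\omega_Z=dg$ with $g\in\Omega^1(\m/\k)^{K^{\circ}}$, then $\langle Z,\cdot\rangle+g$ is closed, hence annihilates $[\m,\m]$, and evaluating at $Z\in\z(\k)\cap[\m,\m]$ gives $\langle Z,Z\rangle=0$, so $Z=0$. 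Second, your ``dimension count'' is not a formal consequence of the five-term sequence, since the spectral sequence converges to $H^{*}(M)$ rather than $H^{*}(M/K^{\circ})$; to pin down ${\rm im}\,\pi^{*}$ one should check that $E_\infty^{0,2}$ is the image of the restriction $\Lambda^2\z(\m)^{*}\to\Lambda^2\z(\k)^{*}$ (of dimension $\binom{k}{2}$, where $k=\dim\rho(\z(\k))$ and $c=\dim\z(\m)-k$), that $E_\infty^{1,1}\supseteq H^1(M/K^{\circ})\otimes{\rm im}\,\rho^{*}$ has dimension at least $ck$, and that $E_\infty^{2,0}$ contains the basic classes, of dimension at least $\binom{c}{2}$; since these bounds sum to $\dim H^2(M)=\binom{c+k}{2}$, all are equalities and ${\rm im}\,\pi^{*}$ is exactly the image of $\Omega^2(\m/([\m,\m]+\k))$, which closes your argument. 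With those details supplied your proof is complete: the paper's averaging argument buys elementarity and self-containedness, while yours buys a transparent structural explanation of the two summands as basic classes from the central torus of $M$ and transgressed characters of $K$.
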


\begin{proof}
In view of \eqref{homotopy} it is enough to show that 
\begin{equation}\label{gh1}
H^2 (M/K, \, \R) \, \simeq \, \Omega^2 \big( \m /([ \m, \,\m] + \k)\big) \oplus
[(\z(\k) \cap [\m,\,\m])^*]^{K/K^{\circ}}\, .
\end{equation}
As $M$ is compact and connected, from \cite[p. 310, Theorem 30]{Sp} 
and the formula given in \cite[p. 313]{Sp} we conclude that there 
are natural isomorphisms
\begin{equation}\label{e1}
H^i ( M/K,\, \R) \, \simeq \, \frac{ {\rm Ker} \,( d:\Omega^i ( \m/ \k)^K 
\to \Omega^{i+1} ( \m/ \k)^K)}{ d (\Omega^{i-1} ( \m/ \k)^K)} ~\ \ \ \forall\ \ i\, .
\end{equation}
 
Setting $i\,=\, 2$ in \eqref{e1}, 
\begin{equation}\label{e2} 
H^2 (M/K,\, \R) \,\simeq \, \frac{ {\rm Ker} \,( d:\Omega^2 ( \m / \k)^K 
\to \Omega^3 ( \m/ \k)^K)}{ d (\Omega^1 ( \m / \k)^K)}\, . 
\end{equation} 
The numerator and the denominator in \eqref{e2} will be identified. 
 
We claim that 
\begin{equation}\label{e3} 
{\rm Ker} \,( d:\Omega^2 ( \m/ 
\k)^K 
\to \Omega^3 ( \m/ \k)^K)\,= \,\Omega^2 (\m/ \k)^M \oplus d ( \Omega^1 ( \m)^K).
\end{equation} 

To prove the claim, first note that $d (\Omega^2 (\m/ \k)^M ) \,=\, 0$ by
Lemma \ref{inv-forms}(1). Therefore, we have 
$$
\Omega^2 (\m/ \k)^M + d ( \Omega^1 ( \m)^K) \,\subset\, 
{\rm Ker} \,( d:\Omega^2 ( \m/ 
\k)^K \to \Omega^3 ( \m/ \k)^K)\, .
$$

To prove the converse, take any $\omega \,\in\, {\rm Ker} \,( d:\Omega^2 ( \m/ \k)^K 
\to \Omega^3 ( \m/ \k)^K)$. Then by Lemma \ref{inv-forms}(2) there is an element
$ \widetilde{\omega} \,\in\, \Omega^2 (\m)^M$ such that 
\begin{equation}\label{e4} 
\omega -\widetilde{\omega} \,\in\, d (\Omega^1 (\m))\, .
\end{equation} 
As $\omega \,\in\, \Omega^2 (\m)^K$ and $\widetilde{\omega} \,\in\, \Omega^2 (\m)^M$, it follows that 
$ \omega- \widetilde{\omega} \,\in \,\Omega^2 (\m)^K$. So \eqref{e4} and Lemma \ref{inv-forms}(3)
together imply that
$$
\omega -\widetilde{\omega} \,\in\, d (\Omega^1 (\m)^K)\, .
$$
Take any $f \,\in\, \Omega^1 (\m)^K$ such that $\omega - \widetilde{\omega} \,= \,df$. As 
$f \,\in\, \Omega (\m)^K$, it follows that $d f \,\in \,\Omega^2 (\m/ \k)^K$. Thus $ \widetilde{\omega} \,\in 
\, \Omega^2 (\m/ \k)^M $. This in turn implies that $\omega \,\in\,
\Omega^2 (\m/ \k)^M + d ( \Omega^1 ( \m)^K)$. Therefore,
$$
\Omega^2 (\m/ \k)^M + d ( \Omega^1 ( \m)^K) \,\supset\, 
{\rm Ker} \,( d:\Omega^2 ( \m/ 
\k)^K \to \Omega^3 ( \m/ \k)^K)\, .
$$

To complete the proof of the claim, it now remains to show that 
\begin{equation}\label{f1}
\Omega^2 (\m/ \k)^M \cap d ( \Omega^1 ( \m)^K) \,=\, 0\, .
\end{equation}

To prove \eqref{f1}, take any $f_1 \,\in\, \Omega^1 ( \m)^K$ such that $d f_1\,\in\, \Omega^2 (\m/ \k)^M$. From
Lemma \ref{inv-forms}(3) it follows that $d f_1\,=\, d f_2$ for some $f_2 \,\in \, \Omega^1 ( \m)^M$.
But then from Lemma \ref{inv-forms}(1) it follows that $d f_2 \,=\,0$. Thus we have $d f_1 \,=\, d f_2\,=\,0$.
This proves \eqref{f1}, and the proof of the claim is complete. 

Combining \eqref{e2} and \eqref{e3},
\begin{equation}\label{e5} 
H^2 (M/K, \, \R) \,\simeq\, 
\Omega^2 (\m/ \k)^M \oplus\frac{d ( \Omega^1 ( \m)^K)}{d (\Omega^1 (\m/ \k)^K)}\,.
\end{equation}

Moreover, as $M$ is connected, Lemma \ref{inv-forms}(4) implies that
\begin{equation}\label{e6} 
\Omega^2 (\m/ \k)^M \,\simeq\, \Omega^2 (\frac{\m}{[ \m,\, \m] + \k})\, .
\end{equation}

We have
$${\rm Ker}( d : \Omega^1 (\m) \to \Omega^2 (\m)) \,= \,\Omega^1 (\m/ [\m,\, \m]).$$
In view of the above it is straightforward to check that 
\begin{equation}\label{e7}
\frac{d ( \Omega^1 ( \m)^K)}{d (\Omega^1 ( 
\m/ \k)^K)} \,\simeq\, \frac {\Omega^1 ( \m)^K}{ \Omega^1 (\m/\k)^K + \Omega^1 ( \m / [\m,\,\m])^K}\, .
\end{equation}
We will identify the right-hand side of \eqref{e7}.

Consider the adjoint action of $K$ on $\m$. As $K$ is compact, there is a
$K$-invariant inner-product $\<>$ on the $\R$-vector space $\m$. Now decompose $\m$ as follows.
\begin{align}
 \m = & ([\m,\, \m] + \k) + \z (\m)\nonumber\\
 = & ([\m, \,\m] + \k) \oplus\big((([\m,\,\m] + \k)\cap \z (\m))^\perp \cap \z (\m) \big). \label{e8}
\end{align}

We next decompose $[\m, \m] + \k$ as
\begin{equation}\label{e9}
[\m, \,\m] + \k \,=\, 
\big( ([\m,\, \m] \cap \k)^\perp \cap [\m, \,\m] \big) \oplus ([\m,\, \m] \cap \k)
\oplus
\big( ([\m,\, \m] \cap \k)^\perp \cap \k \big)\, .
\end{equation}
Using \eqref{e8} and \eqref{e9} the decomposition of $\m$ is further refined as follows:
\begin{gather}\label{e10}
 \m \,=\, ([\m,\, \m] + \k) \oplus\big((([\m,\,\m] + \k)\cap \z (\m))^\perp \cap \z (\m) \big)\\
 = \big( ([\m, \m] \cap \k)^\perp \cap [\m, \m] \big) \oplus ([\m, \,\m] \cap \k)
\oplus
\big( ([\m,\, \m] \cap \k)^\perp \cap \k \big) \oplus \big((([\m,\,\m] + \k)\cap \z (\m))^\perp \cap \z (\m) \big). \nonumber
\end{gather}
It is clear that all the direct summands in \eqref{e10} are $K$-invariant.
For notational convenience, set $$\a \,:=\, (([\m,\,\m] + \k)\cap \z (\m))^\perp\ \
\text{ and }\ \ \b \,:=\, ([\m, \,\m] \cap \k)^\perp\, .$$
Let 
\begin{equation}\label{e111}
\sigma \, : \, \Omega^1 (\m))\,=\, {\m}^* \, 
\longrightarrow\, ([\m,\,\m] \cap \b)^* \oplus ([\m,\, \m] \cap \k)^* \oplus (\k \cap \b)^* \oplus (\a \cap \z (\m))^*
\end{equation}
be the isomorphism defined by 
$$
f\, \longmapsto\, ( f\vert_{ [\m,\,\m] \cap \b }, f\vert_{[\m,\, \m] \cap \k}, f\vert_{\k \cap \b}, 
f\vert_{\a \cap \z (\m)} )\, . 
$$
As each of the subspaces of $\m$ in \eqref{e10} is 
${\rm Ad}(K)$--invariant, the restriction of the isomorphism $\sigma$ in
\eqref{e111} to $ \Omega^1 ( \m)^K$ induces an isomorphism 
\begin{equation}\label{e12} 
\widetilde{\sigma} \,:\, \Omega^1 ( \m)^K \,\stackrel{\sim}{\longrightarrow}\, 
(([\m,\,\m] \cap \b)^*)^K \oplus (([\m, \,\m] \cap \k)^*)^K \oplus ((\k \cap \b)^*)^K \oplus ((\a \cap \z (\m))^*)^K\, . 
\end{equation} 
 As $\k \,=\,([\m,\,\m] \cap \k) \oplus (\k \cap \b)$ and $[\m,\,\m] \,=\, ( [\m,\,\m] \cap \k)
\oplus ([\m,\, \m] \cap \b)$, it follows that 
\begin{equation}\label{e13}
\widetilde{\sigma} ( \Omega^1 (\m / \k)^K + \Omega^1 ( \m / [\m,\,\m])^K)\,=\, (([\m,\,\m] \cap \b)^*)^K
\oplus ((\k \cap \b)^*)^K \oplus ((\a \cap \z (\m))^*)^K\, .
\end{equation}

Thus from \eqref{e12} and \eqref{e13} it follows that 
\begin{align}
&\frac {\Omega^1 ( \m)^K}{ \Omega^1 (\m/\k)^K + \Omega^1 ( \m / [\m,\,\m])^K} \nonumber\\
&\simeq \frac{(([\m,\,\m] \cap \b)^*)^K \oplus (([\m, \,\m] \cap \k)^*)^K \oplus ((\k \cap \b)^*)^K
\oplus ((\a \cap \z (\m))^*)^K}
 {(([\m,\,\m] \cap \b)^*)^K \oplus ((\k \cap \b)^*)^K \oplus ((\a \cap \z (\m))^*)^K} \nonumber \\
&\simeq (([\m,\, \m] \cap \k)^*)^K\, . \label{e14}
\end{align}

As $[\m, \,\m] \cap \k \,=\, [\k,\,\k] \oplus ( \mathfrak{z}(\k) \cap [\m,\,\m])$, it follows that 
\begin{equation}\label{e15}
(([\m,\, \m] \cap \k)^*)^K \,\simeq\, ([\k,\,\k]^*)^K \oplus (( \mathfrak{z}(\k) \cap [\m,\,\m])^*)^K\, .
\end{equation}
In \cite[\S~3, (3.13)]{BC} it is proved that
\begin{equation}\label{e16} 
([\k, \k]^*)^K \,= \,0\, . 
\end{equation} 

Thus from \eqref{e14}, \eqref{e15} and \eqref{e16} we have
\begin{equation}\label{e17}
\frac {\Omega^1 ( \m)^K}{ \Omega^1 (\m/\k)^K + \Omega^1 ( \m / [\m,\,\m])^K}
\,\simeq \, (( \mathfrak{z}(\k) \cap [\m,\,\m])^*)^K\, .
\end{equation}

Combining \eqref{e7} and \eqref{e17},
\begin{equation}\label{e18}
\frac{d ( \Omega^1 ( \m)^K)}{d (\Omega^1 (\m/ \k)^K)} \,\simeq \, \frac {\Omega^1 ( \m)^K}{ \Omega^1
(\m/\k)^K + \Omega^1 ( \m / [\m,\,\m])^K}\,\simeq\, (( \mathfrak{z}(\k) \cap [\m,\,\m])^*)^K\, .
\end{equation}
Moreover, as $K^\circ$ acts trivially on $(( \mathfrak{z}(\k) \cap [\m,\,\m])^*)$,
\begin{equation}\label{e19}
(( \mathfrak{z}(\k) \cap [\m,\,\m])^*)^K \,\simeq\, (( \mathfrak{z}(\k) \cap [\m,\,\m])^*)^{K/ K^\circ}\, .
\end{equation}
Combining \eqref{e18} and \eqref{e19},
$$
\frac{d ( \Omega^1 ( \m)^K)}{d (\Omega^1 (\m/ \k)^K)} \,\simeq \,
(( \mathfrak{z}(\k) \cap [\m,\,\m])^*)^{K/ K^\circ}\, .
$$
This and \eqref{e6} together imply that the right-hand side of \eqref{e5} coincides with the
right-hand side of \eqref{gh1}. This completes the proof of the theorem.
\end{proof}

\begin{corollary}\label{1dim}
Let $G$, $H$, $K$ and $M$ be as in Theorem \ref{thm-i1}. If $\dim_\R \z(\m) \,=\, 1$, then
$$
{ H}^2(G/H, \,\R) \, \simeq \, [(\z(\k) \cap [\m,\,\m])^*]^{K/K^{\circ}}\, .
$$
\end{corollary}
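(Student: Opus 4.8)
The plan is to specialize Theorem \ref{thm-i1} under the single additional hypothesis $\dim_\R \z(\m) = 1$. Recall that the theorem gives
$$
H^2 (G/H, \, \R) \, \simeq \, \Omega^2 \big( \tfrac{\m}{[ \m, \,\m] + \k}\big) \oplus
[(\z(\k) \cap [\m,\,\m])^*]^{K/K^{\circ}}\, ,
$$
and the corollary asserts precisely that the first summand vanishes when $\z(\m)$ is one-dimensional. So the entire content is to show that $\Omega^2 \big( \m/([ \m, \,\m] + \k)\big) = 0$ in this case, and then quote the theorem.

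First I would recall that for any Lie algebra $\a$, the space of $2$-forms $\Omega^2(\a)$ is the second exterior power of the dual $\a^*$, so $\Omega^2(\a) = 0$ if and only if $\dim_\R \a \leq 1$. Applying this with $\a = \m/([\m,\,\m]+\k)$, the first summand vanishes as soon as $\dim_\R \big(\m/([\m,\,\m]+\k)\big) \leq 1$. Thus the key step is the dimension bound
$$
\dim_\R \frac{\m}{[\m,\,\m]+\k} \,\leq\, 1\, .
$$
To establish this I would observe that $\m = [\m,\,\m] + \z(\m)$, since for a compact (equivalently, reductive) Lie algebra $\m$ one has the decomposition $\m = [\m,\,\m] \oplus \z(\m)$ into the semisimple derived subalgebra and the center. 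Consequently the quotient $\m/([\m,\,\m]+\k)$ is a quotient of $\z(\m)/(\z(\m) \cap ([\m,\,\m]+\k))$, hence of $\z(\m)$ itself, so its dimension is at most $\dim_\R \z(\m) = 1$.

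Putting these together: since $\dim_\R \big(\m/([\m,\,\m]+\k)\big) \leq \dim_\R \z(\m) = 1$, the exterior square $\Omega^2\big(\m/([\m,\,\m]+\k)\big)$ is zero, and the isomorphism of Theorem \ref{thm-i1} collapses to
$$
H^2(G/H,\,\R) \, \simeq \, [(\z(\k) \cap [\m,\,\m])^*]^{K/K^{\circ}}\, ,
$$
which is the claim. I do not anticipate a genuine obstacle here; the only point requiring a moment's care is the reductive decomposition $\m = [\m,\,\m] \oplus \z(\m)$, which is available because $M$ is compact and hence $\m$ carries an $\mathrm{Ad}(M)$-invariant inner product (exactly the kind of averaging already used in the proof of Theorem \ref{thm-i1}). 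Everything else is the elementary fact that a vector space of dimension at most one has trivial second exterior power.
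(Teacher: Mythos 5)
Your proof is correct and follows the paper's own argument exactly: the paper likewise invokes the compactness decomposition $\m = \z(\m) \oplus [\m,\,\m]$ to get $\dim_\R \big(\m/([\m,\,\m]+\k)\big) \leq 1$, so that the $\Omega^2$ summand in Theorem \ref{thm-i1} vanishes. You have merely spelled out the elementary steps (the exterior-square vanishing and the passage through $\z(\m)$) that the paper leaves implicit.
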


\begin{proof}
As $M$ is a compact Lie group, we have $\m \,= \,\z(\m) \oplus [\m,\,\m]$. Thus,
$$\dim_\R (\m/([\m,\,\m]+\k))
\,\leq \,1\, .$$ Now the corollary follows from Theorem \ref{thm-i1}.
 \end{proof} 

\begin{corollary}
Let $G$, $H$, $K$ and $M$ be as in Theorem \ref{thm-i1}.
If $K$ is semisimple, then
$$
 { H}^2(G/H, \,\R) \, \simeq \, \Omega^2 (\frac{\m}{[ \m,\, \m] + \k})\, .
$$
\end{corollary}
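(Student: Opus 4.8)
The plan is to invoke Theorem \ref{thm-i1} directly and observe that the hypothesis on $K$ forces the second direct summand in its conclusion to vanish. Recall that Theorem \ref{thm-i1} gives the isomorphism
$$
H^2 (G/H, \, \R) \, \simeq \, \Omega^2 \big( \frac{\m}{[ \m, \,\m] + \k}\big) \oplus
[(\z(\k) \cap [\m,\,\m])^*]^{K/K^{\circ}}\, ,
$$
so the only thing that must be checked is that
$$
[(\z(\k) \cap [\m,\,\m])^*]^{K/K^{\circ}} \,=\, 0\, .
$$

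The key (and essentially only) observation is a standard structural fact: the assertion that the Lie group $K$ is semisimple means precisely that its Lie algebra $\k$ is semisimple, and a semisimple Lie algebra has trivial center. Thus $\z(\k) \,=\, 0$. First I would record this, then note that it immediately forces $\z(\k) \cap [\m,\,\m] \,=\, 0$, whence the dual space $(\z(\k) \cap [\m,\,\m])^*$ is the zero vector space, and therefore so is its subspace of $K/K^{\circ}$-fixed points. Substituting back into the displayed isomorphism of Theorem \ref{thm-i1} leaves only the summand $\Omega^2 (\m/([\m,\,\m]+\k))$, which is exactly the claimed conclusion.

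There is no real obstacle in this argument; it is a one-step specialization of Theorem \ref{thm-i1}. The single point meriting a word of care is the conventional reading of the phrase ``$K$ is semisimple'': I would make explicit that this is a statement about $\k$ (a semisimple Lie group being one whose Lie algebra is semisimple), since it is precisely $\z(\k) = 0$, rather than any property of $K$ as a topological group, that drives the vanishing. Everything else is immediate.
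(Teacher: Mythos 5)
Your argument is correct and is exactly the paper's own proof: the paper also deduces $\z(\k)=0$ from semisimplicity of $K$ and then reads off the conclusion from Theorem \ref{thm-i1}. Your extra remark that semisimplicity of $K$ is a statement about $\k$ is a fair clarification but adds nothing beyond the paper's one-line argument.
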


\begin{proof}
As $K$ is semisimple, we have $\z(\k)\,=\,0$, so it follows from Theorem \ref{thm-i1}. 
 \end{proof}

\begin{theorem}\label{H^1}
Let $G$, $H$, $K$ and $M$ be as in Theorem \ref{thm-i1}. Then
 $$
 H^1(G/H, \,\R) \,\simeq \,\Omega^1(\frac{\m}{[\m,\,\m]+\k})\, .
 $$
\end{theorem}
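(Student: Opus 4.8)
The plan is to follow the same route as in the proof of Theorem~\ref{thm-i1}, but specialized to degree one, where the cohomology computation collapses considerably. By Theorem~\ref{mostow} and the resulting homotopy equivalence \eqref{homotopy}, it suffices to compute $H^1(M/K,\R)$, since $H^1(G/H,\R)\,\simeq\, H^1(M/K,\R)$. As $M$ is compact and connected, the general isomorphism \eqref{e1} with $i\,=\,1$ gives
\begin{equation*}
H^1(M/K,\R)\,\simeq\,\frac{{\rm Ker}\,(d\colon \Omega^1(\m/\k)^K \to \Omega^2(\m/\k)^K)}{d(\Omega^0(\m/\k)^K)}\, .
\end{equation*}

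Next I would observe that the denominator vanishes: $\Omega^0(\m/\k)^K$ consists of constants, and $d$ annihilates constants, so $d(\Omega^0(\m/\k)^K)\,=\,0$. Hence $H^1(M/K,\R)$ is exactly the space of closed $K$-invariant $1$-forms that annihilate $\k$. For a $1$-form $\omega$ one has $d\omega(x,\,y)\,=\,-\omega([x,\,y])$, so the condition $d\omega\,=\,0$ is equivalent to $\omega$ annihilating $[\m,\,\m]$. Combining this with the requirement that $\omega$ annihilate $\k$, I obtain
\begin{equation*}
H^1(M/K,\R)\,\simeq\,\Omega^1\big(\m/([\m,\,\m]+\k)\big)^K\, .
\end{equation*}

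The final, and key, step is to remove the superscript $K$, that is, to show that every element of $\Omega^1(\m/([\m,\,\m]+\k))$ is automatically $K$-invariant. This is where compactness and connectedness are used: since $M$ is compact we have $\m\,=\,\z(\m)\oplus[\m,\,\m]$, so $\m/([\m,\,\m]+\k)$ is a quotient of $\m/[\m,\,\m]\,\simeq\,\z(\m)$. Because $M$ is connected, its adjoint action on the center $\z(\m)$ is trivial, and hence so is that of its subgroup $K$; therefore $K$ acts trivially on $\m/([\m,\,\m]+\k)$, and the invariance condition on any $1$-form there is vacuous. This yields $\Omega^1(\m/([\m,\,\m]+\k))^K\,=\,\Omega^1(\m/([\m,\,\m]+\k))$ and completes the proof.

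I expect the only point requiring genuine care to be this last step -- the verification that the $K$-invariance constraint is vacuous -- since it rests on the triviality of the adjoint action on $\z(\m)$ rather than on a formal manipulation of the complex. Everything preceding it is a routine specialization to degree one of the degree-two computation already carried out in Theorem~\ref{thm-i1}, with the pleasant simplification that no $d(\Omega^0)$ term survives.
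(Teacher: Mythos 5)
Your proposal is correct and follows essentially the same route as the paper: reduction to $H^1(M/K,\,\R)$ via Theorem \ref{mostow} and \eqref{homotopy}, the complex of $K$-invariant forms on $\m/\k$ with $i\,=\,1$ (the $d(\Omega^0)$ term vanishing), and the observation that a $1$-form is closed precisely when it annihilates $[\m,\,\m]$. The only cosmetic difference is in the final step: the paper upgrades closed forms in $\Omega^1(\m/\k)^K$ to $M$-invariant ones using \eqref{inv-cond} and the connectedness of $M$, and then identifies $\Omega^1(\m/\k)^M$ with $\Omega^1(\m/([\m,\,\m]+\k))$, whereas you identify the kernel directly with $\Omega^1(\m/([\m,\,\m]+\k))^K$ and show the $K$-invariance is vacuous because $K \,\subset\, M$ acts trivially on $\z(\m)$ (hence on $\m/([\m,\,\m]+\k)$) --- two equivalent ways of exploiting the same fact.
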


\begin{proof}
In view of \eqref{homotopy} it is enough to show that $H^1(M/K,\, \R) \,\simeq\, \Omega^1(\m / ( [\m,\,\m]+\k))$.
As $M$ is compact and connected, from \cite[p. 310, Theorem 30]{Sp} and the formula given in \cite[p. 313]{Sp}
it follows that there are natural isomorphisms
\begin{equation}\label{q1}
H^i ( M/K,\, \R) \,\simeq\, \frac{ {\rm Ker} \,( d:\Omega^i ( \m/ \k)^K 
\to \Omega^{i+1} ( \m/ \k)^K)}{ d (\Omega^{i-1} ( \m/ \k)^K)}~\ \ \ \forall\ \ i\, .
\end{equation}
Setting $i\,=\, 1$ in \eqref{q1}, 
\begin{equation}\label{q2} 
H^1 (M/K,\, \R) \, \simeq \, {\rm Ker} \,( d:\Omega^1( \m/ \k)^K \to \Omega^2 ( \m/ \k)^K) \, . 
\end{equation}

We claim that
\begin{equation}\label{q3}
 {\rm Ker} \,( d:\Omega^1( \m/ \k)^K \to \Omega^2 ( \m/ \k)^K) \,=\, \Omega^1(\m/\k)^M\, .
\end{equation}

To prove \eqref{q3}, first note that $\Omega^1(\m/\k)^M \,\subseteq\,\Omega^1( \m/ \k)^K$. From Lemma 
\ref{inv-forms}(1) it follows that $d \alpha \,=\, 0$ for any $\alpha \,\in\, \Omega^1(\m/\k)^M$. Thus
$$
\Omega^1(\m/\k)^M \,\subseteq \,{\rm Ker} \,( d:\Omega^1( \m/ \k)^K \to \Omega^2 ( \m/ \k)^K)\, .
$$

To prove the other way inclusion, take any $\alpha \,\in\,{\rm Ker} \,( d:\Omega^1( \m/ \k)^K \to \Omega^2 ( \m/ \k)^K)$.
Then $d\alpha \,=\,0$ which in turn implies that $\alpha ([X,\,Y])\,=\,0$ for all $X,\,Y \,\in\, \m$. As $M$ is
connected, using \eqref{inv-cond} it follows that $\alpha$ is $M$-invariant. This proves the claim in \eqref{q3}.

As $\Omega^1(\m)^M \,\simeq\, \Omega^1({\m}/{[\m,\,\m]})$, it follows that 
\begin{equation}\label{q4}
 \Omega^1(\m/\k)^M \,\simeq \,\Omega^1(\frac{\m}{[\m,\,\m]+\k})\, .
\end{equation}
Combining \eqref{q2}, \eqref{q3}, \eqref{q4} we have 
$$ H^1(M/K, \, \R) \, \simeq\, \Omega^1(\frac{\m}{[\m,\, \m]+\k})\, .$$
As noted before, the theorem follows from it.
\end{proof}

\begin{corollary}\label{H1}
Let $G$, $H$, $K$ and $M$ be as in Theorem \ref{thm-i1}. Moreover, if $M$ is semisimple, then 
$$
 \dim_\R H^1(G/H , \, \R) \, = \, 0\, .
$$
\end{corollary}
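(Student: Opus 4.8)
The plan is to invoke Theorem \ref{H^1} and then use the semisimplicity of $M$ to annihilate the relevant quotient. First I would recall that Theorem \ref{H^1} gives, under exactly the hypotheses in force here, the isomorphism $H^1(G/H, \R) \simeq \Omega^1(\m/([\m,\,\m]+\k))$. Thus the entire problem reduces to showing that $\Omega^1(\m/([\m,\,\m]+\k)) = 0$, or equivalently that $\m = [\m,\,\m]+\k$.

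Next I would observe that the hypothesis that $M$ is semisimple forces its Lie algebra $\m$ to be a semisimple Lie algebra, whence $[\m,\,\m] = \m$ by the standard fact that a semisimple Lie algebra coincides with its own derived subalgebra. It follows immediately that $[\m,\,\m]+\k = \m$, so the quotient $\m/([\m,\,\m]+\k)$ is the zero vector space, and therefore $\Omega^1(\m/([\m,\,\m]+\k)) = 0$. Feeding this back through the isomorphism of Theorem \ref{H^1} yields $\dim_\R H^1(G/H, \R) = 0$, as claimed.

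The hard part is essentially nonexistent: once Theorem \ref{H^1} is available, the statement is a one-line consequence of the fact that semisimple Lie algebras are perfect. The only point worth verifying is that the hypotheses of Theorem \ref{H^1}, namely that $M$ is a maximal compact subgroup of the connected group $G$ containing the maximal compact subgroup $K$ of the closed subgroup $H$ (which has finitely many connected components), are carried over unchanged; but these are precisely the assumptions inherited from the statement of Theorem \ref{thm-i1}, so no further work is needed.
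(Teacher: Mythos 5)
Your proposal is correct and follows exactly the paper's own argument: the paper likewise deduces $\m = [\m,\,\m]$ from the semisimplicity of $M$ and then concludes immediately from Theorem \ref{H^1}. No gaps.
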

 
\begin{proof}
As $M$ is semisimple, we have $\m \,=\, [\m,\,\m]$, and hence the corollary follows from Theorem \ref{H^1}. 
\end{proof}

Recall that any maximal compact subgroup of a complex semisimple Lie group is semisimple. 
The following corollary now follows form \eqref{homotopy} and Corollary \ref{H1}.

\begin{corollary}\label{H.1}
Let $G$ be a connected complex semisimple Lie group, and let $H \,\subset\, G$ be a closed subgroup with finitely many
connected components. Then
$$ \dim_\R { H}^1(G/H, \,\R) \,= \, 0\, .$$
\end{corollary}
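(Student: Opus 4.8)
The plan is to obtain this as an immediate specialization of Corollary \ref{H1}, so the only work is to put ourselves in the situation of Theorem \ref{thm-i1} with a maximal compact subgroup of $G$ that happens to be semisimple. Since $G$ is connected and $H$ has finitely many connected components, I would first fix a maximal compact subgroup $K$ of $H$ and, appealing to the standard structure theory underlying Theorem \ref{mostow}, choose a maximal compact subgroup $M$ of $G$ with $K \subset M$. This makes the quadruple $(G,\,H,\,K,\,M)$ satisfy the hypotheses of Theorem \ref{thm-i1}, and hence those of Corollary \ref{H1}.

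The one substantive ingredient is the recalled fact that a maximal compact subgroup of a connected complex semisimple Lie group is semisimple. I would justify this by noting that the Lie algebra $\m$ of such an $M$ is a compact real form of the complex semisimple Lie algebra $\g = {\rm Lie}(G)$; a compact Lie algebra whose complexification is semisimple is itself semisimple, so $\m = [\m,\,\m]$. In particular $M$ is semisimple.

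With $M$ semisimple, Corollary \ref{H1} applies directly and gives $\dim_\R H^1(G/H,\,\R) = 0$. Equivalently, one sees the mechanism through Theorem \ref{H^1}: the description $H^1(G/H,\,\R) \simeq \Omega^1(\m/([\m,\,\m]+\k))$ forces the group to vanish because $[\m,\,\m] = \m$ already exhausts $\m$. I do not expect any real obstacle here, since the argument is purely a matter of verifying that the semisimplicity hypothesis of Corollary \ref{H1} holds; the homotopy equivalence \eqref{homotopy} then transports the vanishing from $M/K$ to $G/H$.
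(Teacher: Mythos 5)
Your proposal is correct and follows essentially the same route as the paper: the paper likewise deduces the corollary from Corollary \ref{H1} (equivalently Theorem \ref{H^1} together with \eqref{homotopy}), citing the standard fact that maximal compact subgroups of connected complex semisimple Lie groups are semisimple, which you verify correctly via the compact real form. Your extra care in choosing $K\subset H$ maximal compact and then $M\supset K$ maximal compact in $G$ is exactly the setup the paper takes for granted, so there is no gap.
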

 
In the special case where $G$ is a simple real Lie group, 
the following result is a stronger form of Theorem \ref{thm-i1} and Theorem \ref{H^1}.

\begin{theorem}\label{H.21}
Let $G$ be a connected simple real Lie group, and let $H \,\subset\, G$ be a closed subgroup with
finitely many connected components. Let $K$ be a maximal compact subgroup of $H$ and $M$ a maximal
compact subgroup of $G$ containing $K$. Then
$$ 
{ H}^2(G/H, \R) \  \simeq \ [(\z(\k) \cap [\m,\,\m])^*]^{K/K^{\circ}}
$$
and 
$$ \hspace*{0.6cm} 
 \dim_\R{ H}^1(G/H, \R) = \begin{cases}
1  &  \text{ if } \  \k + [\m,\,\m] \subsetneqq \m \\
0  &  \text{ if } \  \k + [\m,\,\m] = \m .
 \end{cases}
$$
In particular, $\dim_\R H^1(G/H, \,\R) \,\leq \, 1$.
\end{theorem}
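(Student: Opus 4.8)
The plan is to derive both formulas directly from Theorem \ref{thm-i1} and Theorem \ref{H^1}, thereby reducing everything to the single vector space $\m/([\m,\m]+\k)$. Recall those theorems give $H^2(G/H,\R) \simeq \Omega^2(\m/([\m,\m]+\k)) \oplus [(\z(\k)\cap[\m,\m])^*]^{K/K^{\circ}}$ and $H^1(G/H,\R) \simeq \Omega^1(\m/([\m,\m]+\k))$. Since $\Omega^1(W)=W^*$ has the same dimension as $W$, and since any vector space of dimension at most $1$ carries no nonzero alternating $2$-form, both assertions will follow at once once I establish that $\dim_\R \m/([\m,\m]+\k) \le 1$, together with the criterion telling when this dimension is $0$.

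First I would record that $M$, being a maximal compact subgroup of $G$, is compact, so $\m$ is reductive and splits as $\m = \z(\m)\oplus[\m,\m]$; hence $\m/[\m,\m]\cong\z(\m)$, and, since $[\m,\m]\subseteq[\m,\m]+\k$, the space $\m/([\m,\m]+\k)$ is a quotient of $\z(\m)$. In particular $\dim_\R \m/([\m,\m]+\k)\le\dim_\R\z(\m)$. The crucial input is then the classical fact that, for a simple real Lie group $G$, the centre $\z(\m)$ of the Lie algebra of a maximal compact subgroup is at most one-dimensional (see, e.g., \cite{K}). Here the simplicity of $\g$ is exactly what is used: it yields the dichotomy that either $G/M$ is of Hermitian type, in which case $\z(\m)$ is one-dimensional, or it is not, in which case $\m$ is semisimple and $\z(\m)=0$.

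Finally I would assemble the pieces. For $H^2$, the bound $\dim_\R\m/([\m,\m]+\k)\le1$ forces $\Omega^2(\m/([\m,\m]+\k))=0$, so Theorem \ref{thm-i1} collapses to $H^2(G/H,\R)\simeq[(\z(\k)\cap[\m,\m])^*]^{K/K^{\circ}}$. For $H^1$, the quotient $\m/([\m,\m]+\k)$ has dimension $1$ exactly when $\k+[\m,\m]\subsetneqq\m$ and dimension $0$ exactly when $\k+[\m,\m]=\m$; since $\Omega^1(\m/([\m,\m]+\k))$ has the same dimension as this quotient, Theorem \ref{H^1} yields the stated case distinction, whence also $\dim_\R H^1(G/H,\R)\le1$. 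The only step beyond routine bookkeeping is the structural fact $\dim_\R\z(\m)\le1$, and this is precisely the point at which the hypothesis that $G$ is simple (rather than merely semisimple) is essential; it is the real content steering the whole argument.
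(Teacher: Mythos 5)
Your proposal is correct and follows essentially the same route as the paper: both arguments feed the structural fact that $\dim_\R \z(\m) \le 1$ for a maximal compact subgroup of a simple real Lie group (the paper cites Helgason, you invoke the Hermitian/non-Hermitian dichotomy) into Theorems \ref{thm-i1} and \ref{H^1}, killing the $\Omega^2$ summand and reducing the $H^1$ computation to whether $\k + [\m,\m]$ equals $\m$. Your explicit use of the splitting $\m = \z(\m)\oplus[\m,\m]$ to see that $\m/([\m,\m]+\k)$ is a quotient of $\z(\m)$ is the same bookkeeping the paper performs implicitly, so there is nothing substantively different to flag.
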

 
\begin{proof}
Since $M$ is a maximal compact subgroup of a real simple Lie group, it follows from \cite[Proposition 6.2, p. 382]{He} that $ \dim_\R \z(\m)$ is either $0$ or $1$. In both these cases we have $\Omega^2(\m/([\m,\,\m]+\k)) \,=\,0$. In
view of Theorem \ref{thm-i1} and \eqref{homotopy}, it follows that
$$
{H}^2(G/H, \,\R) \  \simeq \ [(\z(\k) \cap [\m,\,\m])^*]^{K/K^{\circ}}\, . 
$$  
As $G$ is simple, we have  $\dim_\R \z(\m) \,\leq\, 1$. Thus, we have either
$$\, \k + [\m,\,\m] \,\subsetneqq \,\m\ \ \ \text{ or }\ \ \ \k + [\m,\,\m] \,=\, \m \, .$$ 
  Therefore, from Theorem \ref{H^1} and \eqref{homotopy} we conclude that
$$
\dim_\R { H}^1(G/H, \R)  =  \begin{cases}  1  \qquad  \text{ if } \ \k + [\m,\,\m] \subsetneqq \m  \\
                                            0  \qquad  \text{ if } \ \k + [\m,\,\m] = \m\, .
                                 \end{cases}
$$
\end{proof}

\section{Second cohomology groups of nilpotent orbits}\label{sec-Second-Cohomology-of-Nilpotent-Orbits} 

In this section we will compute the second de Rham cohomology groups of the nilpotent orbits in real classical simple
Lie algebras. The case of complex simple Lie algebras was dealt in \cite{BC}, so this case will not be considered here. 

\subsection{Some preparatory results}
Here we prove Theorem \ref{thm-nilpotent-orbit} which is crucial to our computations of 
the second and first cohomology groups of the nilpotent orbits. We begin with the following lemma.

\begin{lemma}\label{reductive-part}
Let ${\bf G}$ be a semisimple algebraic group defined over $\R$, and let $G :=  {\bf G}(\R)$. Let $\{X,\,H,\,Y\}$ be a $\s\l_2(\R)$-triple in
${\rm Lie}\,G$. Then $\ZC_{\bf G} (X,H,Y)$ is a (reductive) Levi subgroup of $\ZC_{\bf G} (X)$ which is defined over $\R$. 
\end{lemma}

\begin{proof}
The nontrivial fact that the group $\ZC_{\bf G} (X,H,Y)$ is a (reductive) Levi subgroup of $\ZC_{\bf G} (X)$ is proved in \cite[p.~50, 
Lemma 3.7.3]{CoM}. Since $X,\,H,\,Y \,\in\, {\rm Lie}\,G$, it is immediate that the group $\ZC_{\bf G} (X,H,Y)$ is defined 
over $\R$.
\end{proof}

As any connected adjoint simple real Lie group is always of the form ${\bf G} (\R)^\circ$ for some ($\R$-simple) algebraic group ${\bf G}$ defined over $\R$, it is enough
to deal with nilpotent orbits in $ {\rm Lie}\, {\bf G}(\R)$ under the adjoint action of ${\bf G} (\R)^\circ$.

\begin{theorem}\label{thm-nilpotent-orbit}
Let ${\bf G}$ be an algebraic group defined over $\R$ such that ${\bf G}$ is $\R$-simple, and let $G :=  {\bf G}(\R)$.
Let $$0\,\not=\, X \,\in\, {\rm Lie}\, G $$
be a nilpotent element and $\OC_X$ be the orbit of $X$ under the adjoint action
of the identity component $G^\circ$ on ${\rm Lie}\, G $. Let $\{X,\,H,\,Y\}$ be a $\s\l_2(\R)$-triple in
$ {\rm Lie}\, G$. Let $K$ be a maximal compact subgroup in $\ZC_{G^\circ}(X,H,Y)$ and $M$ a maximal compact
subgroup of $G^\circ$ containing $K$. Then,
$$ 
{ H}^2(\OC_X,\, \R)   \  \simeq \    [(\z(\k) \cap [\m,\,\m])^*]^{K/K^{\circ}}
$$
and 
$$ 
\dim_\R{ H}^1(\OC_X,\, \R)\,=\,  \begin{cases}
                                        1  \quad  \text{if } \  \k + [\m,\,\m] \,\subsetneqq\, \m \\
                                        0  \quad  \text{if } \  \k + [\m,\,\m] \,=\, \m\,.
                                      \end{cases}$$
\end{theorem}

\begin{proof}
{}From Lemma \ref{reductive-part} it follows that the group $\ZC_{\bf G}(X,H,Y)$ is a (reductive) Levi subgroup of
$\ZC_{\bf G}(X)$. In particular, we have the semidirect product decomposition:
$$
 \ZC_{G^\circ}(X) \,=\, \ZC_{G^\circ}(X,H,Y) (R_u \ZC_{\bf G}(X) (\R))\, ,
$$
where $R_u \ZC_{\bf G}(X)$ is the unipotent radical of $\ZC_{\bf G}(X)$.
As $R_u \ZC_{\bf G}(X) (\R)$ simply connected and nilpotent, this implies that any maximal compact subgroup in
$\ZC_{G^\circ}(X,H,Y)$ is a maximal compact subgroup in $\ZC_{G^\circ}(X)$. 
Since $G^\circ$ is a connected simple real Lie group, the theorem now follows from Theorem \ref{H.21}.
\end{proof}

Let $V$ be a right vector space of dimension $n$ over $\D$, where $\D$ is, as before, $\R$ or $ \C $ or $ \H$. 
Let $\{X,\,H,\,Y\} \,\subset\, \s\l (V )$ be a $\s\l_2(\R)$-triple. We follow the notation established in 	Section \ref{notions-sl2-modules}.
Consider the non-zero irreducible $\text{Span}_\R \{ X, H, Y\}$-submodules of $V$. 
Let $\{d_1,\, \cdots, \, d_s\}$, with $d_1 \,<\, \cdots \,<\, d_s$, be the integers that occur as $\R$-dimension of such
$\text{ Span}_\R \{ X,\,H,\,Y\}$-modules. From Lemma \ref{D-basis}(2) we have
$$ \sum_{i=1}^s t_{d_{i}} d_i \,=\,{\dim}_\D V\,=\, n\, .$$ Thus 
\begin{equation}\label{partition-symbol}
\d \,:=\, \big[d_1^{t_{d_1} },\, \cdots ,\, d_s^{t_{d_s} }\big] \,\in\, \PC(n)\, .
\end{equation}
Consider $\N_\d$, $\E_\d $ and $\O_\d $ defined in Section \ref{sec-partition-Young-diagram}. We have
\begin{align}\label{isotypicalcomp}
V &=\,  \bigoplus_{d \in \N_\d}  M(d-1) \  \    \   \text{ and } \ \ L(d-1) \,=\, V_{Y,0} \cap V_{H, 1-d}  \   \ \text{ for }
\ d \,\geq \,1\, .
\end{align}

Let $(v^d_1,\, \cdots ,\, v^d_{t_d})$
be the ordered $\D$-basis of $L(d-1)$ as in Proposition \ref{unitary-J-basis} for $d \,\in\, \N_\d$.
Then it follows from Proposition \ref{J-basis} and Proposition \ref{unitary-J-basis} that
\begin{equation}\label{old-ordered-basis-part}
\BC^l (d) \,:=\, (X^l v^d_1,\, \cdots ,\,X^l v^d_{t_d})
\end{equation}
is an ordered $\D$-basis of $X^l L(d-1)$ for $0\,\leq\, l \,\leq\, d-1$ with $d\,\in\, \N_\d$. Define
\begin{equation}\label{old-ordered-basis}
\BC(d) \,:=\, \BC^0 (d) \vee \cdots \vee  \BC^{d-1} (d) \ \forall\ d \,\in\, \N_\d\, ,\ \text{ and }\  \BC\,:=\,
\BC(d_1) \vee \cdots \vee  \BC(d_s)\, .
\end{equation}
Let
\begin{equation}\label{algebra-isom}
 \Lambda_\BC \,: \, {\rm End} (V) \,\longrightarrow\, {\rm M}_n(\D)
\end{equation}
be the isomorphism of $\R$-algebras with respect to the ordered basis $\BC$.
Next define the character $$\bigchi_\d \,\colon\, \prod_{d \in \N_\d} {\rm GL} (L(d-1))\,\longrightarrow \,\D^*$$  by
$$
\bigchi_\d \big(A_{t_{d_1}}, \cdots, A_{t_{d_s}}\big) \,:=\, \begin{cases}
                                          \prod_{i=1}^s \big(\det A_{t_{d_i}}\big)^{d_i}  &  \text{ if }\  \D \,=\, \R \text{ or } \C \vspace{.2cm}\\
        \prod_{i=1}^s \big(\text{Nrd}_{{\rm End}_{\H} (L(d_i-1))} A_{t_{d_i}} \big)^{d_i} &  \text{ if }\ \D\,=\, \H\, .
                                                        \end{cases}
$$

Henceforth, $\epsilon \,= \,\pm 1$, $\sigma \,:\, \D \,\longrightarrow\, \D$ will denote 
either the identity map or $\sigma_c$ (defined in Section \ref{sec-epsilon-sigma-forms}) 
when $\D$ is $\C$ or $\H$. Let $\langle \cdot,\, \cdot \rangle \,:\, V \times V \,\longrightarrow\, \D \ $ 
be a $\epsilon$-$\sigma$ Hermitian form. Moreover, assume that  $\{X,H,Y\}$ be a $\s\l_2(\R)$-triple in 
$\s\u (V, \,\langle \cdot,\, \cdot \rangle )$.  Define the form
\begin{equation}\label{new-form}
(\cdot  ,\,\cdot)_{d} \,:\, L(d-1) \times L(d-1 )\,\longrightarrow\, \D\, ,\ \ \
(v,\, u)_d \,:=\,  \langle v \,,\, X^{d-1} u \rangle
\end{equation}
as in \cite[p.~139]{CoM}.
\begin{remark}
	In \cite[p.139]{CoM}, starting with a nilpotent element $X\,\in\,\s\u (V, \,\<>)$, 
	the form in \eqref{new-form} is defined on the highest weight space of $M (d-1)$ involving the element $Y$ of an 
	$\s\l_2 (\R)$-triple $\{ X,\, H,\, Y\}$. However, we work with a basis of $M (d-1)$ constructed using
	$X$ (see Proposition \ref{unitary-J-basis} (2)).
	Hence for our convenience the form in \eqref{new-form} is defined using $X$.
\end{remark}

\begin{lemma}\label{reductive-part-comp}\mbox{} Let ${\rm SL} (V)$ and ${\rm SU} (V, \langle \cdot, \cdot \rangle )$ be the groups as defined in Section \ref{sec-epsilon-sigma-forms}.
\begin{enumerate}
\item The following equality holds:
$$ \ZC_{{\rm SL}(V)} (X,H,Y) 
 = \Bigg\{ g \in {\rm SL}(V)   \Biggm| \! \begin{array}{c}
                            g (X^l L(d-1))\, \subset \,  X^l L(d-1) ,   \vspace{.14cm} \\
                           \! \big[g |_{ X^l L(d-1)}\big]_{{\BC}^l (d)} =   \big[g |_{  L(d-1)} \big]_{{\BC}^0 (d)} \text{ for all } 0 \leq l < d, d\in \N_\d \!
                                \end{array} \!   \Bigg\}. 
$$

\item
In particular, $\ZC_{{\rm SL}(V)}(X,H,Y)\,\simeq\, \big\{ g \in \prod_{d \,\in\, \N_\d} {\rm GL} (L(d-1))
\,\mid\,  \bigchi_\d (g) \,=\,1 \big\}$.

\item If $\{X,\,H,\,Y\}$ is a $\s\l_2(\R)$-triple in $\s\u(V, \<>)$, then 
$$ 
\ZC_{{\rm SU}(V,\langle \cdot,\cdot \rangle)} (X,H,Y) 
 =  \left\{\! g \in {\rm SL}(V)   \middle\vert    \begin{array}{cccc}
                            g (X^l L(d-1))\, \subset \,  X^l L(d-1) ,\vspace{.14cm} \\
                             \!\!   \big[g |_{ X^l L(d-1)}\big]_{{\BC}^l (d)} =   \big[g |_{  L(d-1)} \big]_{{\BC}^0 (d)} , (gx,gy)_d = ( x,y)_d \!\! \vspace{.14cm}\\
                            \text{ for all }  d\in \N_\d,~ 0 \leq l \leq d-1, \text{ and }  x,y \in L(d-1)
                                 \end{array} \! 
 \right\};
$$
here $(\cdot,\, \cdot)_d$ is the form on $L(d-1)$ defined in \eqref{new-form}.

\item
In particular, $ \ZC_{{\rm SU}(V, \<>)} (X,H,Y) \,\simeq\, \big\{ g \in  \prod_{d \in \N_\d}
{\rm U} \big( L(d-1), (\cdot, \cdot)_d \big) \,\mid\,  \bigchi_\d (g) \,=\,1 \big\}$. 
\end{enumerate}
\end{lemma}

\begin{proof}
For notational convenience, denote
$$
\GC \, := \Bigg\{ g \in {\rm SL}(V) \Biggm| \begin{array}{c}
                                           g (X^l L(d-1)) \subset  X^l L(d-1) ; \\
                                          \big[g|_{ X^l L(d-1)}\big]_{{\BC}^l (d)} =  \big[g |_{  L(d-1)}\big]_{{\BC}^0 (d)}\,
                                          \forall\, 0 \leq l \leq d-1, d \in \N_\d
                                       \end{array} 
                                        \Bigg\}. 
$$
Take any $g \,\in \, \ZC_{{\rm SL}(V)}(X,H,Y)$. Then $g(L(d-1))\,\subseteq\, L(d-1)$
by \eqref{isotypicalcomp}.
In particular, it follows that $g(X^l L(d-1))\,\subseteq \,X^l L(d-1)$ because $g$ commutes with $X$.
Let $B_d \,:=\, [g|_{L(d-1)} ]_{\BC^0(d)}$ for all $d\,\in\, \N_\d$.
As $g$ commutes with $X$, it follows that $\big[g |_{ X^l L(d-1)} \big]_{\BC^l(d)} \,= \,B_d$ for
$0 \,\leq\, l \,\leq\, d-1$. This proves that $\ZC_{{\rm SL}(V)} (X,H,Y) \,\subset\, \GC$.

Take any $h \,\in\, \GC$. Then $h ( X^l L(d -1)) \,\subset\, X^l L(d -1)$
for all $ 0\,\leq\, l \,\leq\, d -1$ and $d \,\in\, \N_\d$.
For every $d \,\in\, \N_\d$, let $( a_{ij}^d)$ denote
the matrix $$\big[h\vert_{  L(d-1)} \big]_{\BC^0 (d)}\, \in\, {\rm GL}_{t_{d}} (\D)\, .$$
Then $( a_{ij}^d ) \,=\, \big[h\vert_{ X^l L(d-1)} \big]_{\BC^l(d)}$
for all $0 \,\leq\, l \,\leq\, d -1$.

We will show that $h$ commutes with $X$ and $H$. From \eqref{old-ordered-basis} it follows that ${\BC}$
is a $\D$-basis of $V$. Hence to prove that
$Xh \,=\, h X$ we need to show $Xh (X^{l}{v^d_{j}})\,=\, h X (X^{l}{v^d_{j}})$ for all $1\,\leq\, j \,\leq\,
t_{d}$ and $0 \,\leq\, l \,\leq\, d -1$ with $d \,\in\, \N_\d$. However this follows from the following straightforward
computation: 
\begin{equation}\label{commute-gX}
 h X (X^{l}{v^d_{j}}) \,=\, h X^{l+1}{v^d_{j}} \,=\, \sum_{i=1}^{t_{d}} X^{l+1} v^d_{i} a_{ij}^d
\,=\,X \big( \sum_{i=1}^{t_{d}} X^{l} v^d_{i} a_{ij}^d \big) \,=\, Xh ( X^{l}{v^d_{j}})\, .
\end{equation}
As $H$ acts as multiplication by a scalar in $\R$ (in fact, by a scalar in $\Z$) on the $\D$-basis
$\BC^l(d)$ (of $X^l L(d -1)$) for all $ 0 \,\leq\, l \,\leq\, d-1$ with $d \,\in\, \N_\d$,
it is immediate that $h$ commutes with $H$. 
In view of Lemma \ref{comm-XH}, we conclude that $h$ commutes with $Y$. 
This completes the proof of statement (1).

The third statement follows from statement (1) and Remark \ref{inv-form-old-new}.
\end{proof}

\begin{remark}
When $\D \,=\, \R$ or $\C$, the isomorphisms (2) and (4) in Lemma \ref{reductive-part-comp} were proved in
\cite[p. 251, 1.8]{SS} and \cite[p. 261, 2.25]{SS} using only the Jordan canonical forms. However, as the
non-commutativity of $\H$ creates technical difficulties in extending these results of \cite{SS} to the case of $\D \,=\, \H$,
we take a different approach by appealing to the Jacobson-Morozov theorem and the basic results on the structures of
finite dimensional representations of $\s\l_2 (\R)$.
\end{remark}

\subsection{Second cohomology groups of nilpotent orbits in \texorpdfstring{${\s\l}_n(\R)$}{Lg}}\label{sec-sl-n-R}

In this subsection we compute the second cohomology groups of nilpotent orbits in 
${\s\l}_n(\R)$ under the adjoint action of ${\rm SL}_n (\R)$. We will follow notation as defined in \S \ref{sec-notation}. First recall a standard parametrization of $\NC({\rm SL}_n (\R))$, the set of all nilpotent orbits in $\s\l_n(\R)$.
Let $X \,\in\, {\NC}_ {\s\l_n(\R)}$ be a nilpotent element in $\s\l_n(\R)$ and $\OC_X$ be the corresponding nilpotent
orbit in $\s\l_n(\R)$ under the adjoint action of ${\rm SL}_n (\R)$. First assume that $X\neq 0$. Let $\{X,\, H,\, Y\}\,\subset \,\s\l_n(\R)$ be a $\s\l_2(\R)$-triple.
Let $V:= \R^n$ be the right $\R$-vector space of column vectors.
Let $\{ d_1, \dotsc, d_s\}$ with $ d_1 < \cdots< d_s$ be the finite set of natural numbers that occur as dimension of the non-zero irreducible 
$\text{Span}_\R \{ X,H,Y\}$-submodules of $V$.
Recall that $M(d-1)$ is defined to be the isotypical component of $V$ containing all irreducible submodules of $V$ with highest weight $d-1$ and as in \eqref{definition-L-d-1}, 
we set $L(d-1)\,:= \,V_{Y,0} \cap M(d-1)$.
Let $t_{d_r} \,:=\, \dim_\R L(d_r-1)$ for $ 1 \,\leq\, r \,\leq\, s$. Then $\big[d_1^{t_{d_1} }, \,
\cdots ,\,d_s^{t_{d_s} }\big]
\,\in\, \PC(n)$ because $\sum_{r =1}^s t_{d_r} d_r \,=\,n$. This induces a map
$\Psi_{{\rm SL}_n (\R)} \,\colon\, \NC({\rm SL}_n(\R)) \,\longrightarrow\, \PC(n)$.
It is easy to see that $\Psi_{{\rm SL}_n (\R)} (\OC_X) \,\neq\, [ 1^n ]$ as $X \,\neq\, 0$.
By declaring $\Psi_{{\rm SL}_n (\R)} (\OC_0) \, := \,[ 1^n ]$ we now have a surjective map
\begin{equation}\label{Psi}
\Psi_{{\rm SL}_n (\R)} \,\colon\, {\NC} ({{\rm SL}_n (\R)}) \,\longrightarrow\, {\PC}(n)\, .
\end{equation}
The following known result says that $\Psi_{{\rm SL}_n (\R)} $ is ``almost''
a parametrization of the nilpotent orbits in $\s\l_n(\R)$. 

\begin{theorem}[{\cite[Theorem 9.3.3]{CoM}}]\label{sl-R-parametrization}
For the map $\Psi_{{\rm SL}_n (\R)}$ in \eqref{Psi},
$$\# \Psi_{{\rm SL}_n (\R)}^{-1} (\d)  =
\begin{cases}
 1 & \text{ for all } \  \d \in \PC(n) \setminus  \PC_{\rm even} (n)\\
 2 & \text{ for all } \ \d \in  \PC_{\rm even} (n).
\end{cases}$$ 
\end{theorem}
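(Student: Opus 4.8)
The plan is to compare the ${\rm SL}_n(\R)$-orbits with the ${\rm GL}_n(\R)$-orbits, and to count how many of the former sit inside a single one of the latter. First I would recall the classical fact that over any field a nilpotent matrix is determined up to conjugacy by the sizes of its Jordan blocks; over $\R$ this means that two nilpotent elements of $\s\l_n(\R)$ are ${\rm GL}_n(\R)$-conjugate if and only if they have the same associated partition $\d$. Since the map $\Psi_{{\rm SL}_n(\R)}$ records precisely this partition, the fibre $\Psi_{{\rm SL}_n(\R)}^{-1}(\d)$ is exactly the set of ${\rm SL}_n(\R)$-orbits contained in the unique ${\rm GL}_n(\R)$-orbit of Jordan type $\d$. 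The theorem thus reduces to counting, for each $\d \in \PC(n)$, the number of ${\rm SL}_n(\R)$-orbits inside this ${\rm GL}_n(\R)$-orbit.

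I would carry out this count by a double-coset computation. Put $G \,=\, {\rm GL}_n(\R)$ and $G^1 \,=\, {\rm SL}_n(\R) \,=\, \ker(\det)$, which is connected and normal in $G$. If $X$ is nilpotent of type $\d$, then the ${\rm GL}_n(\R)$-orbit of $X$ is $G/\ZC_G(X)$, and the $G^1$-orbits inside it are indexed by the double cosets $G^1 \backslash G / \ZC_G(X)$. As $\det$ identifies $G/G^1$ with $\R^*$ and the residual right action is multiplication by $\det(\ZC_G(X))$, this set of double cosets is $\R^*/\det(\ZC_G(X))$. Hence the number of ${\rm SL}_n(\R)$-orbits of type $\d$ equals the index $[\R^* : \det(\ZC_{{\rm GL}_n(\R)}(X))]$.

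It remains to compute $\det(\ZC_{{\rm GL}_n(\R)}(X))$, and here I would feed in the structure of the centraliser. Fixing an $\s\l_2(\R)$-triple $\{X,H,Y\}$ and applying the ${\rm GL}(V)$-analogue of Lemma \ref{reductive-part-comp}(1) (dropping the determinant-one condition there), the reductive centraliser $\ZC_{{\rm GL}(V)}(X,H,Y)$ is isomorphic to $\prod_{d \in \N_\d} {\rm GL}_{t_d}(\R)$, embedded so that an element $(g_d)_d$ acts by the same matrix $g_d$ on each block $X^l L(d-1)$, $0 \,\leq\, l \,\leq\, d-1$; consequently $\det\big((g_d)_d\big) \,=\, \prod_{d \in \N_\d}(\det g_d)^d$, which is precisely $\bigchi_\d$. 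Using the Levi decomposition $\ZC_{{\rm GL}(V)}(X) \,=\, \ZC_{{\rm GL}(V)}(X,H,Y)\,R_u(\ZC_{{\rm GL}(V)}(X))$ exactly as in the proof of Theorem \ref{thm-nilpotent-orbit}, together with the fact that unipotent elements have determinant $1$, I obtain $\det(\ZC_{{\rm GL}_n(\R)}(X)) \,=\, \bigchi_\d\big(\prod_{d \in \N_\d}{\rm GL}_{t_d}(\R)\big)$, the subgroup of $\R^*$ generated by the $d$-th powers $(\R^*)^d$ for $d \in \N_\d$.

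Finally I would read off the index. Since $(\R^*)^d \,=\, \R^*$ for $d$ odd and $(\R^*)^d \,=\, \R_{>0}$ for $d$ even, this subgroup equals $\R^*$ whenever $\d$ has an odd part, and equals $\R_{>0}$ precisely when every part of $\d$ is even, that is when $\d \,\in\, \PC_{\rm even}(n)$. Therefore $[\R^* : \det(\ZC_{{\rm GL}_n(\R)}(X))]$ is $1$ for $\d \notin \PC_{\rm even}(n)$ and $2$ for $\d \in \PC_{\rm even}(n)$, as claimed. I expect the substantive part of the argument to be the passage from ${\rm GL}$ to ${\rm SL}$, converting the orbit count into the index $[\R^* : \det \ZC_{{\rm GL}_n(\R)}(X)]$, together with the identification of this determinant with the image of $\bigchi_\d$; the Jordan-type classification and the computation of $d$-th powers in $\R^*$ are routine.
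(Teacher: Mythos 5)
Your proof is correct. Note that the paper itself offers no proof of this statement, citing it as Theorem 9.3.3 of Collingwood--McGovern; your argument --- splitting the unique ${\rm GL}_n(\R)$-orbit of Jordan type $\d$ into ${\rm SL}_n(\R)$-orbits counted by the index $[\R^* : \det \ZC_{{\rm GL}_n(\R)}(X)]$, and computing that image via the Levi decomposition and the character $\bigchi_\d$ on $\prod_{d \in \N_\d} {\rm GL}_{t_d}(\R)$ --- is essentially the standard proof underlying the cited result, and all the steps (the double-coset reduction using normality of ${\rm SL}_n(\R)$, the determinant-one triviality on the unipotent radical, and the parity computation $(\R^*)^d = \R^*$ or $\R_{>0}$) check out.
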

 
\begin{theorem}\label{sl-n-R}
Let $X \,\in\, \s \l_n(\R)$ be a nilpotent element. Let $\d\,=\,[d_1^{t_{d_1} },\, \cdots ,\,d_s^{t_{d_s} }]\,\in\, \PC(n)$
be the partition associated to the orbit $\OC_X$ (i.e., $\Psi_{{\rm SL}_n (\R)}(\OC_X) \,= \,\d$ in the notation of
Theorem \ref{sl-R-parametrization}). Then the following hold:
\begin{enumerate}
\item If $n\,\geq\, 3$, $\#\O_\d\,=\,1$ and $t_\theta\,=\,2$ for $\theta\,\in\,\O_\d$, then $\dim_\R H^2(\OC_X,\,\R )\,=\,1$.
\item In all the other cases $\dim_\R H^2(\OC_X, \,\R )\,=\,0$.
\end{enumerate}
\end{theorem}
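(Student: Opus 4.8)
The plan is to apply Theorem \ref{thm-nilpotent-orbit} with $G \,=\, {\rm SL}_n$, so that $G(\R)^\circ \,=\, {\rm SL}_n(\R)$ and we may take $M \,=\, {\rm SO}(n)$ with $\m \,=\, \s\o(n)$. For $n \,\geq\, 3$ the Lie algebra $\s\o(n)$ is semisimple, whence $[\m,\,\m] \,=\, \m$ and $\z(\k) \,\subseteq\, \k \,\subseteq\, \m \,=\, [\m,\,\m]$; consequently Theorem \ref{thm-nilpotent-orbit} reduces the problem to computing
$$
H^2(\OC_X,\,\R) \,\simeq\, [(\z(\k))^*]^{K/K^\circ}\, .
$$
The cases $n \,\leq\, 2$ I would dispose of directly: the only orbits are the zero orbit and, for $n=2$, the regular one, and in each the right-hand side of Theorem \ref{thm-nilpotent-orbit} vanishes because either $\z(\k)\,=\,0$ or $[\m,\,\m]\,=\,0$ (note that for $\d\,=\,[1^2]$ the summand $\z(\k)\,=\,\s\o(2)$ is killed upon intersecting with $[\m,\,\m]\,=\,0$); all of these fall under case (2), consistent with the hypothesis $n\,\geq\,3$ in case (1).

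Next I would pin down $K$ via Lemma \ref{reductive-part-comp}(2) with $\D\,=\,\R$, which gives $\ZC_{{\rm SL}_n(\R)}(X,H,Y) \,\simeq\, \{(A_d) \,\in\, \prod_{d \in \N_\d} {\rm GL}_{t_d}(\R) \,\mid\, \prod_{d}(\det A_d)^d \,=\, 1\}$. Applying the block-wise Cartan involution $g\,\mapsto\,(g^{-1})^t$, which preserves this determinant constraint, a maximal compact subgroup is
$$
K \,\simeq\, \Big\{ (A_d) \,\in\, \prod_{d \in \N_\d} {\rm O}(t_d) \ \Bigm| \ \prod_{\theta \in \O_\d} \det A_\theta \,=\, 1 \Big\}\, ,
$$
where I use that $(\det A_d)^d \,=\, 1$ automatically when $d$ is even and equals $\det A_d$ when $d$ is odd. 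Hence $K^\circ \,=\, \prod_{d}{\rm SO}(t_d)$ and $\k \,=\, \bigoplus_{d \in \N_\d}\s\o(t_d)$. Since $\z(\s\o(t_d))$ is one-dimensional when $t_d\,=\,2$ and zero otherwise (as $\s\o(1)\,=\,0$ and $\s\o(t)$ is semisimple for $t\,\geq\,3$), I get $\z(\k) \,=\, \bigoplus_{d:\, t_d = 2} \R\, Z_d$, where $Z_d$ generates the $\s\o(2)$-summand acting on $L(d-1)$.

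It then remains to analyze the $K/K^\circ$-action on $\z(\k)$. The component group is $K/K^\circ \,\simeq\, \{(\epsilon_d) \,\in\, \{\pm 1\}^{\N_\d} \,\mid\, \prod_{\theta \in \O_\d}\epsilon_\theta \,=\, 1\}$ via $\epsilon_d \,=\, \det A_d$, and conjugation by a reflection in the factor ${\rm O}(t_d)\,=\,{\rm O}(2)$ sends $Z_d\,\mapsto\,-Z_d$ while fixing the remaining generators; thus $(\epsilon_d)$ acts by $Z_d\,\mapsto\,\epsilon_d Z_d$. Therefore $Z_d^*$ is $K/K^\circ$-fixed exactly when $\epsilon_d\,=\,1$ is forced for every admissible sign vector. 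For even $d$ the sign $\epsilon_d$ is unconstrained, and for odd $d$ it can be flipped (compensating with another odd index) unless $\O_\d\,=\,\{d\}$. Hence $\dim_\R[(\z(\k))^*]^{K/K^\circ}$ equals the number of $d$ with $t_d\,=\,2$ and $\O_\d\,=\,\{d\}$; there is at most one such $d$, and it exists precisely when $\#\O_\d\,=\,1$ and its unique element $\theta$ satisfies $t_\theta\,=\,2$. This yields dimension $1$ in case (1) and $0$ in all remaining cases, giving (2).

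The main obstacle is the precise determination of $K$ together with its component group and the induced $K/K^\circ$-action on $\z(\k)$: everything hinges on correctly tracking the constraint $\prod_{\theta \in \O_\d}\det A_\theta \,=\, 1$ coming from $\bigchi_\d$ and on the elementary fact that a reflection in ${\rm O}(2)$ negates the generator of $\s\o(2)$. Once these are established the conclusion is short combinatorial bookkeeping. I would emphasize that because $M$ is semisimple for $n\,\geq\,3$, one avoids entirely the delicate description of the embedding $K\,\hookrightarrow\,M$ flagged in Remark \ref{justification}, which is exactly what makes $\s\l_n(\R)$ comparatively tractable.
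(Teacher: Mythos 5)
Your proposal is correct and takes essentially the same route as the paper's proof: reduction via Theorem \ref{thm-nilpotent-orbit} with $M\simeq {\rm SO}(n)$ (semisimple for $n\geq 3$), identification of $K\simeq \prod_{\eta\in\E_\d}{\rm O}_{t_\eta}\times S\big(\prod_{\theta\in\O_\d}{\rm O}_{t_\theta}\big)$ from Lemma \ref{reductive-part-comp}(2), and the determinant-sign analysis of the $K/K^{\circ}$-action on $\z(\k)$. The only cosmetic deviations are that you get maximal compactness of $K$ via stability under the Cartan involution where the paper uses normality of $S\big(\prod_{d}{\rm GL}_{t_d}(\R)_{\Delta}^{d}\big)$ in the ambient product, and you carry out inline the component-group computation that the paper imports as \eqref{adjoint-action-SO_n} from \cite{BC}.
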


\begin{proof}
This is obvious when $X \,=\,0$, so assume that $X \,\neq\, 0$.

The notation in Lemma \ref{reductive-part-comp} and the paragraph preceding it will be employed.
Let $\{X, \,H, \,Y\}$  $\subset \,\s\l_n(\R)$ be a $\s\l_2(\R)$-triple. 
Let $K$ be a maximal compact subgroup of $\ZC_{{\rm SL}_n(\R)}(X,H,Y)$. Let $M$ be a maximal compact subgroup of
${\rm SL}_n (\R)$ containing $K$.
As $M \,\simeq\, {\rm SO}_n$,  it follows that $\z (\k) \cap [\m,\,\m]\,=\,0$ when $n =2$, and 
$[\m,\,\m]\,=\, \m$ when $n \,\geq\, 3$. Thus using Theorem \ref{thm-nilpotent-orbit},
$$
H^2 ( {\OC}_X,\, \R) \,\simeq\, \begin{cases}
                                   0                         & \text{ if }\ n\,=\,2 \\
                                  \big[\z (\k)^*\big]^{K / K^\circ}  & \text{ if }\ n \,\geq\, 3\, .
                                  \end{cases}
$$
Treating $\R^n$ as a 
${\rm Span}_\R \{ X,H,Y\}$-module through the standard action of $\s\l_n(\R)$, construct  
a $\R$-basis $\BC$ as in \eqref{old-ordered-basis}, and consider the $\R$-algebra isomorphism
$\Lambda_\BC$ in \eqref{algebra-isom}. It now follows from
Lemma \ref{reductive-part-comp}(2) that the restriction of $\Lambda_\BC$ induces an isomorphism of Lie groups: 
\begin{equation}\label{sl-n-R-reductive-part}
\Lambda_\BC \,\colon\, \ZC_{{\rm SL}_n(\R)}(X,H,Y)  \,\stackrel{\sim}{\longrightarrow}\,
S\big( \prod_{d \in \N_\d} {\rm GL}_{t_d}(\R)_{\Delta}^d \big)\, .
\end{equation}
As $\prod_{d \in \N_\d}{\!({\rm O}_{t_d})}_{\Delta}^{d}$ is a maximal compact subgroup of 
$\prod_{d \in \N_\d} {\rm GL}_{t_{d}}(\R)_{\Delta}^{d} $,
and $ S( \prod_{d \in \N_\d} {\rm GL}_{t_{d}}(\R)_{\Delta}^{d})$ is normal in 
$\prod_{d \in \N_\d} {\rm GL}_{t_{d}}(\R)_{\Delta}^{d}$, it follows that
$S \big(\prod _{d \in \N_\d} ({\rm O}_{t_{d}})_{\Delta}^{d} \big)$ is a maximal compact subgroup of
$ S \big( \prod_{d \in \N_\d} {\rm GL}_{t_{d}}(\R)_{\Delta}^{d} \big)$. In view of the above observations 
it is now clear that for $n \,\geq\, 3$, 
\begin{equation}\label{sl-n-R-eq}
H^2 (\OC_X,\, \R)\,\simeq\, [\z (\k)^*]^{K/ K^\circ} \ \ \ \text{ where } \ \  K  \,\simeq \,
\prod_{\eta \in \E_\d} {\rm O}_{t_\eta} \times S \big(\prod _{\theta \in \O_\d} {{\rm O}_{t_\theta}} \big)\, .
\end{equation}

Consider the group $A \,:=\, S ( {\rm O}_{n_1} \times \cdots \times {\rm O}_{n_r})$ for positive integers
$n_1,\, \cdots \,,n_r$. Let $\a$ be the Lie algebra of $A$.
It is then easy to prove (see the proof of Case-2 in \cite[Theorem 5.6]{BC}) that 
\begin{equation}\label{adjoint-action-SO_n}
\dim_\R [\z (\a)]^{A/ A^\circ } \,=\, \begin{cases}
                                     1 & \text{ if\ $r\,=\,1$ ~ and ~ $n_r \,=\,2$} \\
                                     0 & \text{ otherwise}.
                                     \end{cases}
\end{equation}

It is also immediate that if $B_1, B_2$ are Lie groups, $B_3 \,:= \,B_1 \times B_2$, and
$\b_i $, $1\, \leq\, i\, \leq\, 3$, is the Lie algebra of $B_i$, then
\begin{equation}\label{adjoint-action}
 [\z (\b_3)]^{ B_3 / B^\circ_3 }\,\simeq \,[\z (\b_1)]^{ B_1 / B^\circ_1 } \oplus [\z (\b_2)]^{ B_2 / B^\circ_2 }\, . 
\end{equation}
Now the theorem follows from \eqref{adjoint-action-SO_n}, \eqref{adjoint-action} and \eqref{sl-n-R-eq}.
\end{proof}

\subsection{Second cohomology groups of nilpotent orbits in \texorpdfstring{${\s\l}_n(\H)$}{Lg}}\label{sec-sl-n-H}
In this subsection we compute the second cohomology groups of nilpotent orbits in 
${\s\l}_n(\H)$ under the adjoint action of ${\rm SL}_n (\H)$. We will follow notation as defined in \S \ref{sec-notation}. First recall a standard parametrization of $\NC({\rm SL}_n (\H))$, the set of all nilpotent orbits in $\s\l_n(\H)$.
Let $X \,\in\, \s\l_n(\H)$ be a nilpotent element with $\OC_X$ being its orbit in $\s\l_n(\H)$ under the
adjoint action of ${\rm SL}_n (\H)$. First assume that
$X\,\neq\, 0$. Let $\{X,\, H,\, Y\} \,\subset\, \s\l_n(\H)$ be a $\s\l_2(\R)$-triple.
Let $V:= \H^n$ be the right $\H$-vector space of column vectors.
Let $\{d_1, \dotsc, d_s\} \text{ with } d_1 < \cdots < d_s$ be the integers that occur as $\R$-dimensions of non-zero irreducible $\text{Span}_\R \{ X,H,Y\}$-submodules of $V$.
Recall that $M(d-1)$ is defined to be the isotypical component of $V$ containing all irreducible $\text{Span}_\R \{ X,H,Y\}$-submodules of $V$ with highest weight $(d-1)$, and as in \eqref{definition-L-d-1}, we set $L(d-1)\,:= \,V_{Y,0} \cap M(d-1)$. Recall that the space $L(d_r-1)$ is a $\H$-subspace for $r= 1 , \dotsc, s$.
Let $t_{d_r} \,:= \,\dim_\H L(d_r-1)$ for $ 1 \,\leq \,r \,\leq\, s$.  So
$[d_1^{t_{d_1} }, \,\cdots ,\,d_s^{t_{d_s} }]\,\in\, \PC(n)$ as $\sum_{r =1}^s t_{d_r} d_r \,=\,n$.
This gives a map
$\Psi_{{\rm SL}_n(\H)} \,\colon\, \NC({\rm SL} _{n}(\H)) \,\longrightarrow\,\PC(n)$. It follows easily that 
$\Psi_{{\rm SL}_n(\H)}  (\OC_X) \,\neq\, [ 1^n ]$.
By declaring $\Psi_{{\rm SL}_n(\H)}  (\OC_0) \, :=\, [ 1^n ]$ we now have a surjective map  
\begin{equation}\label{psi2}
\Psi_{{\rm SL}_n(\H)}  \,\colon\, \NC({\rm SL} _{n}(\H)) \,\longrightarrow\, \PC(n).
\end{equation}

\begin{theorem}[{\cite[Theorem 9.3.3]{CoM}}]\label{sl-H-parametrization}
The map $\Psi_{{\rm SL}_n(\H)}$ in \eqref{psi2} is a bijection. 
\end{theorem}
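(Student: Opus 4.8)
The plan is to deduce the bijectivity of $\Psi_{{\rm SL}_n(\H)}$ from its injectivity, since the construction preceding \eqref{psi2} already exhibits it as surjective. Thus I would fix two nilpotent elements $X,\,X'\in\s\l_n(\H)$ with $\Psi_{{\rm SL}_n(\H)}(\OC_X)=\Psi_{{\rm SL}_n(\H)}(\OC_{X'})=:\d$ and show that $X$ and $X'$ lie in a single ${\rm SL}_n(\H)$-orbit.

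First I would produce a ${\rm GL}_n(\H)$-conjugacy. Using Theorem \ref{Jacobson-Morozov-alg}, embed $X$ and $X'$ in $\s\l_2(\R)$-triples $\{X,\,H,\,Y\}$ and $\{X',\,H',\,Y'\}$, and apply Proposition \ref{J-basis} to obtain ordered $\H$-bases $\BC$ and $\BC'$ of $V=\H^n$. Because the associated partition is the same $\d$ in both cases, these bases carry identical index sets, and $X$ (respectively, $X'$) acts on $\BC$ (respectively, $\BC'$) by the single rule $X^l v^d_j\mapsto X^{l+1}v^d_j$, $X^{d-1}v^d_j\mapsto 0$. The right-$\H$-linear isomorphism $g\in{\rm GL}_n(\H)$ matching $\BC$ to $\BC'$ index by index then satisfies $gX=X'g$, so $X'=gXg^{-1}$.

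Next I would promote $g$ to an element of ${\rm SL}_n(\H)$. The reduced norm gives a surjection $\text{Nrd}_{{\rm M}_n(\H)}\colon{\rm GL}_n(\H)\to\R^{>0}$ with kernel ${\rm SL}_n(\H)$, so the ${\rm GL}_n(\H)$-orbit of $X'$ is a single ${\rm SL}_n(\H)$-orbit precisely when $\text{Nrd}_{{\rm M}_n(\H)}(\ZC_{{\rm GL}_n(\H)}(X'))=\R^{>0}$. Since $\ZC_{{\rm GL}_n(\H)}(X',H',Y')\subset\ZC_{{\rm GL}_n(\H)}(X')$, it is enough to check this for the reductive centralizer of the triple. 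Running the argument of Lemma \ref{reductive-part-comp}(1) without the reduced-norm-one constraint identifies $\ZC_{{\rm GL}_n(\H)}(X',H',Y')$ with $\prod_{d\in\N_\d}{\rm GL}_{t_d}(\H)$, on which the reduced norm of ${\rm GL}_n(\H)$ restricts to the character $\bigchi_\d$. Taking the first factor to be $\text{diag}(q,1,\dots,1)$ with $q\in\H^*$ and all other factors the identity gives reduced norm $(q\,\sigma_c(q))^{d_1}$, which already sweeps out all of $\R^{>0}$. Hence I may choose $c\in\ZC_{{\rm GL}_n(\H)}(X')$ with $\text{Nrd}_{{\rm M}_n(\H)}(c)=\text{Nrd}_{{\rm M}_n(\H)}(g)^{-1}$; then $g_0:=cg\in{\rm SL}_n(\H)$ and $X'=g_0Xg_0^{-1}$, so $\OC_X=\OC_{X'}$ and injectivity follows.

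The crux, and the one place the quaternionic case genuinely differs from the real case of Theorem \ref{sl-R-parametrization} where even partitions split into two orbits, is the surjectivity of the reduced norm on the centralizer. This is automatic over $\H$ because the reduced norm on each quaternionic block ${\rm GL}_{t_d}(\H)$ already covers $\R^{>0}$, leaving no obstruction; over $\R$ the determinant on the corresponding orthogonal-type factors is pinned to $\{\pm1\}$, which is exactly what creates the extra orbits there. I would therefore expect this reduced-norm computation, rather than the conjugacy construction, to be the substantive step.
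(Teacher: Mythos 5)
Your argument is correct, but note at the outset that the paper itself contains no proof of Theorem \ref{sl-H-parametrization}: it is quoted verbatim from \cite[Theorem 9.3.3]{CoM}. So there is no in-paper proof to match your proposal against; what you have produced is a self-contained derivation, and it is one assembled precisely from the quaternionic toolkit the paper develops for its own purposes. Your first step (matching the two adapted bases supplied by Proposition \ref{J-basis} to get a ${\rm GL}_n(\H)$-conjugacy) is sound, and is the same style of argument by which the paper proves Lemma \ref{reductive-part-comp} --- indeed the paper's stated reason for redoing Springer--Steinberg via Jacobson--Morozov and $\s\l_2(\R)$-theory is exactly that the $\R$, $\C$ arguments do not transfer directly to $\H$. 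Your second step is also justified: the proof of Lemma \ref{reductive-part-comp}(1) nowhere uses the reduced-norm-one constraint, so it does identify $\ZC_{{\rm GL}_n(\H)}(X',H',Y')$ with $\prod_{d\in\N_\d}{\rm GL}_{t_d}(\H)$, on which ${\rm Nrd}_{{\rm M}_n(\H)}$ restricts to $\bigchi_\d$; and since ${\rm Nrd}$ of ${\rm diag}(q,1,\dots,1)$ is $q\,\sigma_c(q)=|q|^2$, the value $(q\,\sigma_c(q))^{d_1}$ sweeps all of $\R^{>0}$, so every ${\rm GL}_n(\H)$-conjugacy can be corrected inside the centralizer to an ${\rm SL}_n(\H)$-conjugacy. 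This reduced-norm surjectivity is indeed the substantive point, and it buys a transparent explanation of why no orbit-splitting occurs over $\H$, complementing the black-box citation in the paper.

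One small inaccuracy in your closing aside (it does not affect the proof): over $\R$ the splitting for even partitions is not caused by ``orthogonal-type factors pinned to $\{\pm 1\}$'' --- orthogonal groups arise only as maximal compacts of the centralizer, as in the proof of Theorem \ref{sl-n-R}. The correct mechanism, parallel to your computation, is that the character $(A_{t_{d_1}},\dots,A_{t_{d_s}})\longmapsto \prod_i \det(A_{t_{d_i}})^{d_i}$ on $\prod_{d\in\N_\d}{\rm GL}_{t_d}(\R)$ has image $\R^{>0}$ rather than $\R^*$ exactly when every $d_i$ is even, so a conjugating matrix of negative determinant cannot be corrected, and the ${\rm GL}_n(\R)$-orbit splits in two --- consistent with Theorem \ref{sl-R-parametrization}.
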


As the Lie algebra $\s\l_1(\H)$ is isomorphic to $\s\u(2)$ which is a compact Lie algebra, we will further assume that $n\geq 2$.

\begin{theorem}\label{sl-n-H}
For every nilpotent element $X\,\in\,\s\l_n(\H)$ when $n \geq 2$, 
$$\dim_\R H^2(\OC_X, \,\R) \,=\, 0\, .$$
\end{theorem}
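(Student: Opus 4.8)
The plan is to feed the structure of the centralizer into Theorem \ref{thm-nilpotent-orbit} and reduce everything to showing that the relevant maximal compact subgroup $K$ is semisimple, so that $\z(\k)$ vanishes.

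The case $X \,=\, 0$ is immediate since $\OC_0$ is a point, so assume $X \,\neq\, 0$ and fix a $\s\l_2(\R)$-triple $\{X,\,H,\,Y\} \,\subset\, \s\l_n(\H)$. Let $K$ be a maximal compact subgroup of $\ZC_{{\rm SL}_n(\H)}(X,H,Y)$ and let $M$ be a maximal compact subgroup of ${\rm SL}_n(\H)$ containing $K$. Theorem \ref{thm-nilpotent-orbit} then gives
$$
H^2(\OC_X,\, \R) \,\simeq\, [(\z(\k) \cap [\m,\,\m])^*]^{K/K^{\circ}}\, ,
$$
so it suffices to prove $\z(\k) \cap [\m,\,\m] \,=\, 0$; in fact I would establish the stronger statement $\z(\k) \,=\, 0$, i.e. that $K$ is semisimple.

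To identify $K$, treat $V \,=\, \H^n$ as a $\text{Span}_\R\{X,H,Y\}$-module, form the ordered $\H$-basis $\BC$ as in \eqref{old-ordered-basis}, and take the algebra isomorphism $\Lambda_\BC$ of \eqref{algebra-isom}. By Lemma \ref{reductive-part-comp}(2), $\Lambda_\BC$ restricts to an isomorphism
$$
\ZC_{{\rm SL}_n(\H)}(X,H,Y) \,\stackrel{\sim}{\longrightarrow}\, S\Big( \prod_{d \in \N_\d} {\rm GL}_{t_d}(\H) \Big)\, ,
$$
the subgroup of $\prod_{d \in \N_\d} {\rm GL}_{t_d}(\H)$ defined by $\bigchi_\d \,=\, 1$. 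Here each ${\rm GL}_{t_d}(\H)$ has maximal compact subgroup ${\rm Sp}(t_d)$, and the product $\prod_{d \in \N_\d} {\rm Sp}(t_d)$ is maximal compact in $\prod_{d \in \N_\d} {\rm GL}_{t_d}(\H)$.

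The key observation, which also makes the quaternionic case cleaner than $\s\l_n(\R)$, is that the reduced norm ${\rm Nrd}_{{\rm M}_m(\H)}$ takes values in $\R_{>0}$ and hence restricts to the constant $1$ on any compact subgroup; in particular $\bigchi_\d \,\equiv\, 1$ on $\prod_{d} {\rm Sp}(t_d)$. Consequently $\prod_{d} {\rm Sp}(t_d) \,\subset\, S(\prod_{d} {\rm GL}_{t_d}(\H))$, and since this product is already maximal compact in the ambient group $\prod_{d} {\rm GL}_{t_d}(\H)$, it is maximal compact in the subgroup as well; thus $K \,\simeq\, \prod_{d \in \N_\d} {\rm Sp}(t_d)$. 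As each ${\rm Sp}(t_d)$ is a compact simple group (including ${\rm Sp}(1) \,\simeq\, {\rm SU}(2)$), the Lie algebra $\k$ is a direct sum of simple compact Lie algebras, hence semisimple, so $\z(\k) \,=\, 0$ and the displayed expression for $H^2(\OC_X,\,\R)$ vanishes. The only step requiring genuine care is this last identification of $K$: unlike the orthogonal groups that arise for $\s\l_n(\R)$, where the $\pm 1$ values of the determinant force a nontrivial $S(\cdots)$ constraint on the compact level, here the positivity of ${\rm Nrd}$ renders the norm constraint invisible to compact subgroups, and no center survives.
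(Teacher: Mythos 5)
Your proposal is correct and follows essentially the same route as the paper: reduce via Theorem \ref{thm-nilpotent-orbit}, identify $\ZC_{{\rm SL}_n(\H)}(X,H,Y)$ through Lemma \ref{reductive-part-comp}(2), and conclude that $K \simeq \prod_{d \in \N_\d} {\rm Sp}(t_d)$ is semisimple so that $\z(\k) = 0$. Your explicit justification that the positivity of the reduced norm makes $\bigchi_\d$ trivial on compact subgroups is a nice elaboration of the inclusion $\prod_d {\rm Sp}(t_d)_\Delta^d \subset S\big(\prod_d {\rm GL}_{t_d}(\H)_\Delta^d\big)$, which the paper asserts without comment, but it is the same argument.
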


\begin{proof}
We assume that $X \,\neq\, 0$ because the theorem is obvious when $X \,=\,0$.

Suppose that $\Psi_{{\rm SL}_n(\H)}( \OC_X) \,= \,\d$. Using the  
notation in Lemma \ref{reductive-part-comp} and the paragraph preceding it,
let $\{X,\, H,\, Y\} \,\subset\, \s\l_n(\H)$ be a $\s\l_2(\R)$-triple. 
Let $K$ be a maximal compact subgroup in $\ZC_{{\rm SL}_n (\H)}(X,H,Y)$. As ${\rm Sp}(n)$ is a maximal compact
subgroup of ${\rm SL}_n (\H)$, it follows from Theorem \ref{thm-nilpotent-orbit} that
\begin{equation}\label{H2-of-sl-n-H}
H^2 ( {\OC}_X,\, \R) \,\simeq\, [\z (\k)^\ast]^{K / K^\circ}
\end{equation}
for all $ X \,\neq\, 0$.
Treating $\H^n$ as a ${\rm Span}_\R \{ X,H,Y\}$-module via the standard action of $\s\l_n(\H)$, we construct a 
$\H$-basis $\BC$ as in \eqref{old-ordered-basis}, and consider the $\R$-algebra isomorphism $\Lambda_\BC$ in
\eqref{algebra-isom}. It now follows from
Lemma \ref{reductive-part-comp}(2) that the restriction of $\Lambda_\BC$ induces an isomorphism of Lie groups
$$
\Lambda_\BC \,\colon\, \ZC_{{\rm SL}_n(\H)}(X,H,Y)  \,\stackrel{\sim}{\longrightarrow}\,
S \big( \prod_{d \in \N_\d} {\rm GL}_{t_{d}}(\H)_{\Delta}^{d} \big)\,.
$$ 
  
As $ \prod_{d \in \N_\d} {\rm Sp}(t_{d})_\Delta ^{d}$  is a maximal compact subgroup of $\prod_{d \in \N_\d}
{\rm GL}_{t_{d}}(\H)_{\Delta}^{d} $, and $$\prod_{d \in \N_\d} {\rm Sp}(t_{d})_\Delta ^{d} \,\subset\, 
S( \prod_{d \in \N_\d} {\rm GL}_{t_{d}}(\H)_{\Delta}^{d})\, ,$$ it follows that $\prod_{d \in \N_\d}{\rm Sp}(t_{d})_\Delta^{d}$
is a maximal compact subgroup of
 $ S\big(\prod_{d \in \N_\d} {\rm GL}_{t_{d}}(\H)_{\Delta}^{d} \big)$. In particular we have,
$$
K \,\simeq \, \prod_{d \in \N_\d}{\rm Sp}(t_{d})\, .
$$
As $\z(\k) \,=\,0$, it now follows from \eqref{H2-of-sl-n-H} that $\dim_\R H^2(\OC_X ,\, \R)\,=\,0$.
\end{proof}

\subsection{Second cohomology groups of nilpotent orbits in \texorpdfstring{${\s\u}(p,q)$}{Lg}}\label{sec-su-pq}

Let $n$ be a positive integer and $(p,q)$ be a pair of non-negative integers such that $p + q 
\,=\,n$. The aim in this subsection is to compute the second cohomology groups of nilpotent orbits in 
${\s\u}(p,q)$ under the adjoint action of ${\rm SU} (p,q)$. As explained before,
we need to address only non-compact groups; so it will be assumed that $p \,>\,0$ and $q\,>\,0$.
Throughout this subsection $\<>$ 
denotes the Hermitian form on $\C^n$ defined by $\langle x,\, y \rangle \,:=\, \overline{x}^t{\rm 
I}_{p,q} y$, where ${\rm I}_{p,q}$ is as in \eqref{defn-I-pq-J-n}. We will follow notation as defined in \S \ref{sec-notation}.

First recall a standard parametrization of $\NC({\rm SL}_n (\C))$, the set of all nilpotent orbits in $\s\l_n(\C)$. Using
Proposition \ref{J-basis}, this can be done as
in Section \ref{sec-sl-n-R} for $\NC ({\rm SL}_n (\R))$.
Let $X' \,\in\, {\NC}_ {\s\l_n(\C)}$ be a nilpotent element. First assume that $X' \,\neq\, 0$, and
let $\{X',\, H',\, Y'\}
\,\subset\, \s\l_n(\C)$ be a $\s\l_2(\R)$-triple. 
Let $V := \C^n$ be the right $\C$-vector space of column vectors. 
Let $\{c_1,\, \cdots,\, c_l\}$, with $c_1 \,<\, \cdots \,<\, c_l$,
be the finitely many ordered integers that occur as $\R$-dimension of non-zero irreducible
$\text{Span}_\R \{ X',H',Y'\}$-submodules of $V$.
Recall that $M(c-1)$ is defined to be the isotypical component of $V$ containing all irreducible $\text{Span}_\R \{ X' ,H' ,Y' \}$-submodules of $V$ with highest weight $(c-1)$, and as in \eqref{definition-L-d-1}, we set $L(c-1)\,:= \,V_{Y',0} \cap M(c-1)$. Recall that the space $L(c_r-1)$ is a $\C$-subspace for $1\leq r \leq l$.
Let $t_{c_r} \,:=\, \dim_\C L(c_r-1)$ for $ 1\,\leq\, r \,\leq\, l$. Then as $\sum_{r=1}^l
t_{c_r} c_r \,=\,n$, we have
$[c_1^{t_{c_1}}, \,\cdots,\,c_l^{t_{c_l}}]  \,\in\, \PC(n)$. 
Define
$\Psi_{{\rm SL}_n (\C)} ( \OC_{X'})\,:=\, [c_1^{t_{c_1}},\, \cdots ,\,c_l^{t_{c_l}}]$.
It is easy to see that $\Psi_{{\rm SL}_n (\C)} (\OC_X) \,\neq\, [ 1^n ]$ as $X \,\neq\, 0$.
By declaring $\Psi_{{\rm SL}_n (\C)} (\OC_0) \,= \,[ 1^n ]$ we obtain
a bijection $\Psi_{{\rm SL}_n (\C)} \,\colon\, \NC ({\rm SL}_n (\C)) \,\longrightarrow\, \PC (n)$.
As ${\rm SU} (p,q) \,\subset\, {\rm SL}_n (\C)$, consequently $\NC_{{\s\u}(p,q)} \,\subset\, \NC_{\s\l_n(\C)}$,
we have the inclusion map $\Theta_{{\rm SU}(p,q)} \,\colon\, \NC ({\rm SU} (p,q)) \,\longrightarrow\,
\NC ( {\rm SL}_n (\C))$. Let
$$
\Psi'_{{\rm SU} (p,q)}\,:=\, \Psi_{{\rm SL}_n (\C)} \circ \Theta_{{\rm SU}(p,q)}\,
\colon\, \NC ({\rm SU} (p,q))  \,\longrightarrow\, \PC (n)
$$ be the composition.

Let now $X\, \in\, {\s\u}(p,q)$ be a nilpotent element and $\OC_X$ the nilpotent orbit of $X$ in $\s\u(p,q)$. Assume that
$X \,\neq\, 0 $, and let $\{X, H, Y\} \,\subset \,{\s\u}(p,q)$ be a $\s\l_2(\R)$-triple. 
We now apply Proposition \ref{unitary-J-basis}, Remark \ref{unitary-J-basis-rmk}(2), and follow the notation used therein. 
Let $\{d_1,\, \cdots,\, d_s\}$, with $d_1 \,<\, \cdots \,<\, d_s$, be the finite ordered set
of integers that arise as $\R$-dimension of non-zero irreducible 
$\text{Span}_\R \{ X,H,Y\}$-submodules of $V$.
Let $t_{d_r} \,:=\, \dim_\C L(d_r-1)$ for $ 1 \,\leq \,r \,\leq\, s$. Then we have
${\d}\,:=\, [d_1^{t_{d_1} },\, \cdots ,\,d_s^{t_{d_s} }]  \,\in\, \PC(n)$, and  moreover,
$\Psi'_{{\rm SU} (p,q)} (\OC_X) \,=\, {\d}$.

We next assign $\sgn_{\OC_X} \,\in\, \SC_{\d}(p,q)$ to each $\OC_X \,\in\, \NC ({\rm SU}(p,q))$; see
 \eqref{S-d-pq} for the definition of $\SC_{\d}(p,q)$. 
For each $d \,\in\, \N_\d$ (see \eqref{Nd-Ed-Od} for the definition of $\N_\d$) we will define a
$t_d \times d$ matrix $(m^d_{ij})$ in $\Ab_d$ 
which depends only on the orbit $\OC_X$ containing $X$; see \eqref{A-d} for the definition of $\Ab_d$. For this, recall that the form
$(\cdot,\cdot)_{d} \,\colon\, L(d-1) \times L(d-1) \,\longrightarrow\, \C$ defined as in
\eqref{new-form} is Hermitian (respectively, skew-Hermitian) when $d$ is odd (respectively, even). 
Denoting the  signature of $(\cdot,\,\cdot)_{d}$ by $(p_{d},\, q_{d})$ we now define
$$
  m^d_{i1} \,:= \begin{cases}
                   +1  &  \text{ if }  \ 1 \leq i \leq p_d \\
                   -1  &  \text{ if } \  p_d < i \leq t_d  
                    \end{cases};   \  d \in \N_\d,
$$
and 
\begin{align}\label{def-sign-alternate}
 m^d_{ij}  \,  &:= (-1)^{j+1}m^d_{i1} \qquad \text{if   $ 1<j \leq d , \ d\in \E_\d \cup  \O^1_\d$};\\
 m^\theta_{ij}\, &:= \begin{cases}
              (-1)^{j+1}m^\theta_{i1}  & \text{ if } \ 1<j \leq \theta-1 \\
              -m^\theta_{i1}           &    \text{ if } j = \theta
             \end{cases} , \,  \theta\in \O^3_\d.    \label{def-sign-alternate-1}       
\end{align}
The above matrices $(m^d_{ij})$ clearly satisfy the conditions in \eqref{yd-def2}.
We set $\sgn_{\OC_X} \, := \, ( (m^{d_1}_{ij}),\, \cdots , $ \,$ (m^{d_s}_{ij}))$.
It now follows from the last paragraph of Remark \ref{CM-correction} that
$\sgn_{\OC_X}\,\in \,\SC_\d(p,q)$. 
Thus we have the map
$$
\Psi_{{\rm SU} (p,q)} \,\colon\, \NC ({\rm SU} (p,q)) \,\longrightarrow \,\YC(p,q) \, , \quad
\OC_X \,\longmapsto\, \big(\Psi'_{{\rm SU} (p,q)} (\OC_X),\, \sgn_{\OC_X}  \big) \,;
$$
where $\YC(p,q) $ is as in \eqref{yd-Y-pq}.
The following theorem is standard.

\begin{theorem}\label{su-pq-parametrization}
The above map $\Psi_{{\rm SU} (p,q)} \,\colon\, \NC ({\rm SU} (p,q)) \,
\longrightarrow \,\YC(p,q)$ is a bijection.
\end{theorem}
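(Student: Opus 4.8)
The plan is to verify separately that $\Psi_{{\rm SU}(p,q)}$ is well defined, injective, and surjective; the partition component is already controlled by the bijection $\Psi_{{\rm SL}_n(\C)}$, so the work concentrates on the sign component and on passing between ${\rm U}(p,q)$- and ${\rm SU}(p,q)$-conjugacy. For \emph{well-definedness}, since $\Psi'_{{\rm SU}(p,q)} = \Psi_{{\rm SL}_n(\C)} \circ \Theta_{{\rm SU}(p,q)}$ is a composition of well-defined maps, only $\sgn_{\OC_X}$ requires attention. Its construction depends a priori on the chosen $\s\l_2(\R)$-triple $\{X,H,Y\}$ and on the orbit representative $X$, but $\sgn_{\OC_X}$ is built purely from the signatures $(p_d,q_d)$ of the forms $(\cdot,\cdot)_d$ on $L(d-1)$ via \eqref{def-sign-alternate}--\eqref{def-sign-alternate-1}. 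By Theorem \ref{Jacobson-Morozov-alg} such a triple exists, and by the classical conjugacy theorem for $\s\l_2(\R)$-triples completing a fixed nilpotent element, any two such triples in $\s\u(p,q)$ differ by an element of $\ZC_{{\rm SU}(p,q)}(X)^\circ$; likewise a second representative ${\rm Ad}(g)X$ is reached by some $g\in{\rm SU}(p,q)$. In either case the intertwining element is a $\C$-linear isometry of $\langle\cdot,\cdot\rangle$ carrying the isotypical decomposition and the forms $(\cdot,\cdot)_d$ isometrically onto those of the other datum, so the signatures $(p_d,q_d)$, and hence $\sgn_{\OC_X}$, are unchanged.

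For \emph{injectivity}, suppose nilpotent $X,X'\in\s\u(p,q)$ satisfy $\Psi_{{\rm SU}(p,q)}(\OC_X)=\Psi_{{\rm SU}(p,q)}(\OC_{X'})=(\d,\sgn)$. Completing them to triples $\{X,H,Y\}$ and $\{X',H',Y'\}$ and applying Proposition \ref{unitary-J-basis} with the normalizations of Remark \ref{unitary-J-basis-rmk}(2), I would fix ordered $\C$-bases adapted to the two isotypical decompositions of $V=\C^n$. Equality of the signed diagrams forces, for every $d\in\N_\d$, equality of the signatures $(p_d,q_d)$ of $(\cdot,\cdot)_d$, equivalently (by Corollary \ref{c-m9.3.1} under the conventions \eqref{def-sign-alternate}--\eqref{def-sign-alternate-1} corrected in Remark \ref{CM-correction}) equality of the signatures of $\langle\cdot,\cdot\rangle$ on the corresponding summands $M(d-1)$. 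The $\C$-linear map matching the two normalized bases then preserves all the Gram data recorded in Lemmas \ref{orthogonal-basis-R}--\ref{orthogonal-basis-H}, so it is a linear isometry $g$ with ${\rm Ad}(g)X=X'$, i.e.\ $g\in{\rm U}(p,q)$. Finally I would correct the determinant: multiplying $g$ by a suitable element of $\ZC_{{\rm U}(p,q)}(X,H,Y)\cong\prod_{d\in\N_\d}{\rm U}\big(L(d-1),(\cdot,\cdot)_d\big)$, whose determinant surjects onto $\{z\in\C\mid|z|=1\}$ (the ${\rm U}$-analogue of Lemma \ref{reductive-part-comp}(4), using that $z\mapsto z^d$ is onto), yields $g'\in{\rm SU}(p,q)$ with ${\rm Ad}(g')X=X'$, so $\OC_X=\OC_{X'}$.

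For \emph{surjectivity}, given $(\d,\sgn)\in\YC(p,q)$ I would build a model Hermitian space with nilpotent action realizing it. For each $d\in\N_\d$ I assemble $t_d$ standard $d$-dimensional $\s\l_2(\R)$-modules, equipping the lowest-weight space $L(d-1)$ with a form whose $(\cdot,\cdot)_d$ has the signature $(p_d,q_d)$ prescribed by the first column of $(m^d_{ij})$, and take the orthogonal direct sum over all $d$ to obtain $(W,\beta)$ with a nilpotent $X_W$ and a completing triple. By Corollary \ref{c-m9.3.1} the signature of $\beta$ on each $M(d-1)$ is $({\rm sgn}_+(m^d_{ij}),{\rm sgn}_-(m^d_{ij}))$, and summing over $d$ using the definition \eqref{S-d-pq} of $\SC_\d(p,q)$ gives $\beta$ total signature $(p,q)$. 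Hence $(W,\beta)\cong(\C^n,\langle\cdot,\cdot\rangle)$ by the classification of non-degenerate Hermitian forms; transporting $X_W$ through such an isometry produces a nilpotent $X\in\s\u(p,q)$ whose partition is $\d$ and whose forms $(\cdot,\cdot)_d$ reproduce $\sgn$, so $\Psi_{{\rm SU}(p,q)}(\OC_X)=(\d,\sgn)$.

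The main obstacle I anticipate is the completeness of the signature data as an isometry invariant inside the injectivity step: one must check that the basis produced by Proposition \ref{unitary-J-basis} is rigid enough that matching the Gram data of Lemmas \ref{orthogonal-basis-R}--\ref{orthogonal-basis-H} actually yields an isometry intertwining the full $\s\l_2(\R)$-action, where the delicate point is the anomalous sign in the last column for $d\in\O^3_\d$ isolated in Remark \ref{CM-correction}. A secondary, more routine obstacle is the reduction from ${\rm U}(p,q)$- to ${\rm SU}(p,q)$-conjugacy, which hinges on the surjectivity of the determinant on the reductive centralizer described by Lemma \ref{reductive-part-comp}(4).
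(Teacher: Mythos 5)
The paper offers no proof of Theorem \ref{su-pq-parametrization} to compare yours against: it is stated as ``standard,'' being the parametrization of \cite[Theorem 9.3.3]{CoM} (going back to the linear algebra of Springer--Steinberg \cite{SS}) modified by the sign convention of Remark \ref{CM-correction}. Judged on its own, your reconstruction is correct and follows exactly the classical three-step route that the citation implicitly invokes. For well-definedness, the needed input is the real form of Kostant's conjugacy theorem (see \cite[\S~9.2]{CoM}): two $\s\l_2(\R)$-triples in $\s\u(p,q)$ with the same nilpositive element are conjugate under $\ZC_{{\rm SU}(p,q)}(X)^\circ$, and since $(u,v)_d=\langle u, X^{d-1}v\rangle$ is defined intrinsically on $L(d-1)=V_{Y,0}\cap M(d-1)$, any conjugating isometry preserves the signatures $(p_d,q_d)$ and hence $\sgn_{\OC_X}$; you correctly identify this as the crux. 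For injectivity, matching the normalized bases of Proposition \ref{unitary-J-basis} and Remark \ref{unitary-J-basis-rmk}(2) does yield $g\in{\rm U}(p,q)$ with ${\rm Ad}(g)X=X'$ (note that intertwining $X$ alone already suffices; the full triple comes along for free since $g$ maps $X^lL(d-1)$ to $X'^lL'(d-1)$), and your determinant repair is sound: by the ${\rm U}$-analogue of Lemma \ref{reductive-part-comp}(4), the determinant character of $\ZC_{{\rm U}(p,q)}(X,H,Y)\simeq\prod_{d}{\rm U}\big(L(d-1),(\cdot,\cdot)_d\big)$ is $(A_d)_d\mapsto\prod_d(\det A_d)^d$, which is onto the unit circle because $\det$ on each indefinite unitary group is onto the circle and $z\mapsto z^d$ is onto, so $\det(g)^{-1}$ is attained. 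For surjectivity, the model construction works; the verifications you suppress are genuinely routine --- setting $\beta(X^l u, X^{d-1-l}v):=(-1)^l(u,v)_d$ and zero on other weight pairings gives a Hermitian form precisely because $(\cdot,\cdot)_d$ is Hermitian for $d$ odd and skew-Hermitian for $d$ even, and $X$, $H$, $Y$ are then $\beta$-skew by direct computation --- while the signature bookkeeping through Corollary \ref{c-m9.3.1}, with the anomalous last-column sign for $d\in\O^3_\d$ isolated in Remark \ref{CM-correction}, is exactly what makes the prescribed data land in $\SC_\d(p,q)$ and hence realizes signature $(p,q)$. In short: no gap; your argument is a legitimate filling-in of the proof the paper delegates to the literature, in its corrected form.
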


\begin{remark}
Taking into account the error in \cite[Lemma 9.3.1]{CoM} mentioned in Remark \ref{CM-correction}, the 
parametrization in Theorem \ref{su-pq-parametrization} is a modification of the one in 
\cite[Theorem 9.3.3]{CoM}.
\end{remark}

Let $0\,\not=\, X \,\in\, \NC _{\s\u(p,q)}$, and $\{X,\,H,\,Y\}$ be a $\s\l_2(\R)$-triple in $\s\u (p,q)$.
Let $\Psi_{{\rm SU} (p,q)} (\OC_X) \,=\, \big( \d ,\, \sgn_{\OC_X} \big)$.
Then   $\Psi'_{{\rm SU} (p,q)} (\OC_X) \,=\, \d$. Recall that $\sgn_{\OC_X}$ determines the signature
of $(\cdot, \,\cdot)_d$ on $L(d-1)$ for every $d \,\in\, \N_\d$; let $(p_d,\, q_d)$ be the signature
of $(\cdot, \,\cdot)_d$, for $d \,\in\, \N_\d$.
Let $( v^{d}_1,\, \cdots,\, v^{d}_{t_{d}})$ be an ordered $\C$-basis of $L(d-1)$ as in Proposition
\ref{unitary-J-basis}. It now follows from Proposition \ref{unitary-J-basis}(3)(a) that
$( v^{d}_1,\, \cdots,\, v^{d}_{t_{d}} )$ is an orthogonal basis for $(\cdot, \,\cdot)_{d}$. 
We also assume that the vectors in the ordered basis $( v^{d}_1, \,\cdots,\, v^{d}_{t_{d}} )$
satisfies the properties in Remark \ref{unitary-J-basis-rmk}(2).
In view of the signature of $(\cdot, \,\cdot)_d$ we may further assume that 
\begin{align}
\sqrt{-1}( v^{\eta}_j, v^{\eta}_j)_{\eta} & = \begin{cases}
                                               +1  & \text{if }   1 \leq j \leq p_{\eta}  \\
					       -1  &  \text{if }  p_{\eta} < j \leq t_{\eta} 
                                               \end{cases};  \text{  when } \eta  \in \E_\d, \label{orthonormal-basis-even} \\
    ( v^{\theta}_j, v^\theta_j)_{\theta} & = \begin{cases}
                                           +1  &  \text{if } 1 \leq j \leq p_{\theta}  \\
                                           -1  &  \text{if } p_{\theta} < j \leq t_{\theta}
                                          \end{cases};  \text{  when }  \theta  \in \O_\d.\label{orthonormal-basis-odd}
\end{align}

Let $\big\{ \widetilde{w}^d_{jl} \,\mid\, 1 \,\leq\, j \,\leq\, t_{d}, \ 0 \,\leq \,l
\,\leq\, d-1 \big\}$ be the $\C$-basis of $M (d-1)$ constructed using 
$( v^d_1,\, \cdots,\, v^d_{t_{d}} )$ as done in Lemma \ref{orthogonal-basis-C}. 
For each $d \,\in\, \N_\d$, $0 \,\leq\, l \,\leq\, d-1$, set 
$$
V^l (d) \,:=\, \text{ Span}_\C \{ \widetilde{w}^d_{1l}, \,\cdots ,\, \widetilde{w}^d_{t_{d}l} \}\, .
$$
The ordered basis $\big(\widetilde{w}^d_{1l},\, \cdots ,\, \widetilde{w}^d_{t_{d}l} \big)$ of
$ V^l (d) $ will be denoted by $\CC^l (d)$.

\begin{lemma}\label{reductive-part-comp-su-pq}
The following holds: 	
 $$
 \ZC_{ {\rm SU} (p,q) }(X,H,Y) = \Bigg\{ g \in { {\rm SU} (p,q) } \Biggm| 
                    \begin{array}{cc}
                    g ( V^l (d)  ) \, \subset \, V^l (d) \,  \text{ and }    \vspace{.14cm}\\
                  \!  \big[g|_{ V^l(d)}\big]_{{\CC}^l(d)} \! = \big[g |_{ V^0 (d)}\big]_{{\CC}^0 (d)} \text{ for all } d \in \N_\d, \, 0 \leq l < d \!
                   \end{array} \! \Bigg\}.
 $$
\end{lemma}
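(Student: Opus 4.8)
The plan is to reduce the claim to the description of $\ZC_{{\rm SL}(V)}(X,H,Y)$ already obtained in Lemma \ref{reductive-part-comp}(1), and then transport that description across the explicit change of basis recorded in Lemma \ref{orthogonal-basis-C}. Since by definition $\ZC_{{\rm SU}(p,q)}(X,H,Y) \,=\, {\rm SU}(p,q) \cap \ZC_{{\rm SL}(V)}(X,H,Y)$ and every element of ${\rm SU}(p,q)$ lies in ${\rm SL}(V)$, it suffices to prove that the following two conditions on an arbitrary $\C$-linear endomorphism $g$ of $V$ are equivalent. Let (A) denote the condition of Lemma \ref{reductive-part-comp}(1), namely that $g$ preserves every $X^l L(d-1)$ and that $[g|_{X^l L(d-1)}]_{\BC^l(d)} \,=\, [g|_{L(d-1)}]_{\BC^0(d)}$ for all $d \in \N_\d$ and $0 \leq l \leq d-1$, and let (B) denote the condition on the right-hand side of the present lemma, that $g$ preserves every $V^l(d)$ and that $[g|_{V^l(d)}]_{\CC^l(d)} \,=\, [g|_{V^0(d)}]_{\CC^0(d)}$. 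Granting the equivalence of (A) and (B), Lemma \ref{reductive-part-comp}(1) immediately yields the asserted equality upon intersecting with ${\rm SU}(p,q)$.

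The structural feature I would exploit is that the formulas of Lemma \ref{orthogonal-basis-C} recombine the vectors $X^l v^d_j$ only within a single isotypical component $M(d-1)$ and, for each fixed index $j$, couple only the two levels $l$ and $d-1-l$; crucially, the coefficients of this recombination do not depend on $j$. Thus $\widetilde{w}^d_{jl}$ is a fixed $\C$-linear combination of $X^l v^d_j$ and $X^{d-1-l} v^d_j$ (reducing to $X^l v^d_j$ alone when $d$ is odd and $l = (d-1)/2$), and conversely each $X^l v^d_j$ is the same fixed combination of $\widetilde{w}^d_{jl}$ and $\widetilde{w}^d_{j(d-1-l)}$.

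For the implication (A) $\Rightarrow$ (B) I would assume that $g$ acts on each $X^l L(d-1)$ by a single matrix $B_d := [g|_{L(d-1)}]_{\BC^0(d)}$, i.e.\ $g(X^l v^d_j) = \sum_i X^l v^d_i (B_d)_{ij}$ for every $l$, and then apply $g$ to the defining expression of $\widetilde{w}^d_{jl}$. Using $\C$-linearity of $g$ together with the commutativity of $\C$ (so that the scalar $\sqrt{-1}$ passes freely through the entries $(B_d)_{ij}$), the computation collapses to $g(\widetilde{w}^d_{jl}) = \sum_i \widetilde{w}^d_{il} (B_d)_{ij}$, which is exactly (B) with the same matrix $B_d$ on every level. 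The reverse implication (B) $\Rightarrow$ (A) is proved by running the identical argument with the two bases interchanged, using the inverse formulas expressing $X^l v^d_j$ through $\widetilde{w}^d_{jl}$ and $\widetilde{w}^d_{j(d-1-l)}$; again a single matrix independent of $l$ results, and by Lemma \ref{reductive-part-comp}(1) this places $g$ in $\ZC_{{\rm SL}(V)}(X,H,Y)$.

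The main obstacle is the routine but error-prone bookkeeping across the cases of Lemma \ref{orthogonal-basis-C}: the even case $\eta \in \E_\d$, and in the odd case the three ranges $l < (\theta-1)/2$, $l = (\theta-1)/2$, and $l > (\theta-1)/2$. In each range one must check that the pairing $l \leftrightarrow d-1-l$ is respected and that the $j$-independence of the coefficients is precisely what forces the same matrix $B_d$ to recur on every level. It is worth emphasizing that the subspaces $V^l(d)$ are in general not $H$-eigenspaces -- each is a sum of the two $H$-weight spaces $X^l L(d-1)$ and $X^{d-1-l} L(d-1)$ -- so (B) cannot simply be read off from $g$ commuting with $H$; it is the uniformity of the recombination in $j$ that makes the argument work. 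Finally, no appeal to the form-invariance condition $(gx,gy)_d = (x,y)_d$ of Lemma \ref{reductive-part-comp}(3) is needed here, as it is subsumed by the hypothesis $g \in {\rm SU}(p,q)$.
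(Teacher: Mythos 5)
Your proposal is correct and follows essentially the same route as the paper: both reduce via $\ZC_{{\rm SU}(p,q)}(X,H,Y)={\rm SU}(p,q)\cap\ZC_{{\rm SL}_n(\C)}(X,H,Y)$ to Lemma \ref{reductive-part-comp}(1) and then transport that description across the change of basis of Lemma \ref{orthogonal-basis-C}, exploiting exactly the two features you isolate -- the pairing of levels $l$ and $d-1-l$ within a single $M(d-1)$ and the $j$-independence (hence $\C$-commutativity-compatible) coefficients. Your remarks that the middle level $l=(\theta-1)/2$ and the form-invariance condition need no separate treatment, and that $V^l(d)$ is not an $H$-weight space, are consistent with (indeed slightly more explicit than) the paper's argument.
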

 
\begin{proof}
As $\ZC_{ {\rm SU} (p,q) } (X,H,Y) \,=\, {\rm SU} (p,q) \cap \ZC_{ {\rm SL}_n (\C)} (X,H,Y)$, using
Lemma \ref{reductive-part-comp}(1) it follows that
 $$
 \ZC_{ {\rm SU} (p,q) }(X,H,Y)= \Bigg\{ g \in { {\rm SU} (p,q) } \Biggm| \negthickspace
                       \begin{array}{c} 
                           g \big(X^l L(d-1) \big) \subset  X^l L(d-1) \ \text{ and }  \vspace{.14cm}\\
                         \!  \big[g |_{ X^l L(d-1)} \big]_{_{{\BC}^l (d)}} \negthickspace = \big[g |_{  L(d-1)}\big]_{_{{\BC}^0 (d)}}  \text{for all } d \in \N_\d,   0 \leq l< d \!
            \end{array} \!
                \Bigg\}.
 $$
For fixed $d\in \N_\d$ we consider the $t_{d} \times 1$-column matrices
$\big[\widetilde{w}^d_{j\,l}\big]_{ 1 \leq j \leq t_{d}}$,
$\big[\widetilde{w}^d_{j\,(d-1-l)}\big]_{  1 \leq j \leq t_{d}}$
and $\big[X^l {v}^d_{j}\big]_{ 1 \leq j \leq t_{d}}$, 
 $\big[X^{d-1-l}v^d_{j}\big]_{  1 \leq j \leq t_{d}}$. Rewriting the definitions in Lemma
\ref{orthogonal-basis-C} when $\eta \in \E_\d$,
 $$
  \big[\widetilde{w}^\eta_{jl}\big] \,=\,  \Big(\big[ X^l{v}^\eta_{j}\big]  +
\big[X^{\eta-1-l}v^\eta_{j}\big]\sqrt{-1}\Big) \frac{1}{\sqrt{2}} \, ; \
 \big[\widetilde{w}^\eta_{j(\eta-1-l)}\big] \,=\,
\Big( \big[X^l{v}^\eta_{j}\big]- \big[X^{\eta-1-l}v^\eta_{j}\big]\sqrt{-1}\Big) \frac{1}{\sqrt{2}}
$$
for $ 0 \,\leq\, l \,< \,\eta/2$.

Furthermore, when $1\, \leq\, \theta \,\in \,\O_\d$, 
$$
\big[\widetilde{w}^\theta_{jl}\big] \,=\,  \Big( \big[ X^l{v}^\theta_{j}\big] +
\big[X^{\theta-1-l}v^\theta_{j}\big]\Big) \frac{1}{\sqrt{2}} \,;   \ 
\big[\widetilde{w}^\theta_{j(\theta-1-l)}\big] \,=\,
\Big( \big[X^l{v}^\theta_{j}\big] - \big[X^{\theta-1-l}v^\theta_{j}\big]\Big) \frac{1}{\sqrt{2}} ,
 $$ 
for all $ 0 \,\leq\, l \,<\, (\theta-1)/2$, while for $l\,=\, ( \theta -1)/2$,
 $$
 \big[\widetilde{w}^\theta_{j \, (\theta-1)/2}\big] \,=\,  \big[X^{(\theta-1)/2}v^\theta_{j}\big]\, .
 $$
 When $\theta \,=\,1$, then $\big[\widetilde{w}^\theta_{j \, }\big] \,=\,  \big[v^\theta_{j}\big]$.

In particular, if $d \,\in\, \N_\d$ is fixed, then for every $0 \,\leq\, l \,\leq\, d-1$
the following holds:
$$g (X^l L(d-1)) \,\subset \, X^l L(d-1)~\ \text{if and only if}~ \
g (  V^l (d)  ) \,\subset \, V^l (d)\, ,$$
and moreover, 
$$[g |_{ X^l L(d-1)}]_{{\BC}^l (d)} \,=\,   [g |_{  L(d-1)}]_{{\BC}^0 (d)}~\ \text{ if and only if }
~\  [g |_{  V^l (d) }]_{{\CC}^l (d)} \,=\,[g |_{  V^0 (d) }]_{{\CC}^0 (d)}\, .$$
In fact, for any $g$ as above,
$\big[g|_{L(d-1)}\big]_{\BC^0(d)} \,=\, \big[g|_{V^0(d) } \big]_{\CC^0(d)}$.
\end{proof}
 
For every $d \,\in\, {\N_\d}$ and $0\,\leq\, l \,\leq\, d-1$,
orderings on the sets $\{ v \,\in\, \CC^l (d) \,\mid\, \langle v, \,v \rangle \,>\, 0 \}$, 
$\{ v \,\in\, \CC^l (d) \,\mid\, \langle v,\, v \rangle \,<\, 0 \}$, will be constructed.
These ordered sets will be denoted by $\CC^l_+ (d)$ and $\CC^l_- (d)$ respectively. 
The construction will be done in three steps according as $d \,\in\, \E_\d$ or
$d \,\in\, \O^1_\d$ or $d \,\in \,\O^3_\d$; see \eqref{Nd-Ed-Od} and \eqref{Od1-Od3} for definitions of $\E_\d,\, \O^1_\d, \, \O^3_\d$. 

For each $\eta \,\in\,  \E_\d$ and $ 0\,\leq\, l \,\leq \,\eta -1$, define 
\begin{align*}
\CC_{+}^l(\eta) &:=  \begin{cases}
                 \big(\, \widetilde{w}^{\eta}_{_{1 \, l}}, \, \cdots , \,  \widetilde{w}^{\eta}_{_{p_{\eta}\,l}} \, \big) &  \text{ if } l \text{ is  even} \vspace{.1cm}\\
                 \big(\,\widetilde{w}^{\eta}_{_{(p_{\eta}+1)\,l}}, \, \cdots ,\,  \widetilde{w}^{\eta}_{_{t_{\eta}\, l}}  \big) &    \text{ if } l \text{ is  odd},
                     \end{cases}\vspace{.1cm} \\
\CC_{-}^l(\eta) &:=  \begin{cases}
                    \big( \, \widetilde{w}^{\eta}_{_{(p_{\eta}+1)\,l}},\  \cdots ,\ \widetilde{w}^{\eta}_{_{t_{\eta}\, l}}  \big) & \text{if } l \text{ is  even} \vspace{.1cm}\\
               \big(\, {\widetilde{w}}^{\eta}_{_{1\, l}}, \  \cdots ,\  {\widetilde {w}}^{\eta}_{_{p_{\eta}\,l}} \, \big) & \text{if } l \text{ is  odd}.
                       \end{cases} 
\end{align*}

For each $\theta \,\in\,  \O^1_\d$, define
\begin{equation}\label{C+theta}
\CC_+^l(\theta) := \begin{cases}
            \big(\, \widetilde{w}^{\theta}_{_{1 \, l}}, \  \cdots ,\  \widetilde{w}^{\theta}_{_{p_{\theta} \, l}} \, \big)   & \text{ if } l \text{ is  even and } 0 \leq l <  (\theta-1)/2 \vspace{.15cm}\\
              \big(\, \widetilde{w}^{\theta}_{_{(p_{\theta} +1)\, l}},\  \cdots ,\  \widetilde{w}^{\theta}_{_{t_{\theta} \, l}} \, \big) &  \text{ if } l \text{ is  odd and } 0 \leq l < (\theta-1)/2 \vspace{.15cm}\\
            \big(\, \widetilde{w}^{\theta}_{_{1 \, l}}, \  \cdots ,\  \widetilde{w}^{\theta}_{_{p_{\theta} \, l}} \, \big)   & \text{ if } l =  (\theta-1)/2 \vspace{.15cm}\\
            \big(\, \widetilde{w}^{\theta}_{_{1 \, l}}, \  \cdots ,\  \widetilde{w}^{\theta}_{_{p_{\theta} \, l}} \, \big)    & \text{ if } l \text{ is  odd and } (\theta+1)/2 \leq l \leq (\theta-1) \vspace{.15cm}\\
             \big(\, \widetilde{w}^{\theta}_{_{(p_{\theta} +1)\, l}},\  \cdots ,\ \widetilde{w}^{\theta}_{_{t_{\theta} \, l}} \, \big)  &  \text{ if } l \text{ is  even and } (\theta+1)/2 \leq l \leq (\theta-1)
               \end{cases}
\end{equation}
and
\begin{equation}\label{C-theta}
\CC_-^l(\theta) := \begin{cases}
              \big(\, \widetilde{w}^{\theta}_{_{(p_{\theta}  +1)\, l}},\  \cdots ,\ \widetilde{w}^{\theta}_{_{t_{\theta} \, l}} \, \big) & \text{ if } l \text{ is  even and } 0 \leq l <  (\theta-1)/2 \vspace{.15cm}\\
                 \big(\, \widetilde{w}^{\theta}_{_{1 \, l}}, \  \cdots ,\  \widetilde{w}^{\theta}_{_{p_{\theta} \, l}} \, \big)   &   \text{ if } l \text{ is  odd and } 0 \leq l < (\theta-1)/2 \vspace{.15cm}\\
         \big(\, \widetilde{w}^{\theta}_{_{(p_{\theta}  +1)\, l}},\  \cdots ,\ \widetilde{w}^{\theta}_{_{t_{\theta} \, l}} \, \big) &  \text{ if }  l =  (\theta-1)/2 \vspace{.15cm}\\         
              \big(\, \widetilde{w}^{\theta}_{_{(p_{\theta} +1)\, l}},\  \cdots ,\ \widetilde{w}^{\theta}_{_{t_{\theta} \, l}} \, \big)  &  \text{ if }   l \text{ is  odd and } (\theta+1)/2 \leq l \leq (\theta-1) \vspace{.15cm}\\
              \big(\, \widetilde{w}^{\theta}_{_{1 \, l}}, \  \cdots ,\  \widetilde{w}^{\theta}_{_{p_{\theta} \, l}} \, \big)  & \text{ if } l \text{ is  even and } (\theta+1)/2 \leq l \leq (\theta-1) .
               \end{cases}
\end{equation}
Similarly, for each $\zeta \,\in\,  \O^3_\d$, define 
\begin{equation}\label{C+zeta}
\CC_+^l(\zeta) := \begin{cases}
            \big(\, \widetilde{w}^{\zeta}_{_{1 \, l}}, \  \cdots ,\  \widetilde{w}^{\zeta}_{_{p_{\zeta} \, l}} \, \big)   & \text{ if } l \text{ is  even and } 0 \leq l <  (\zeta-1)/2 \vspace{.15cm}\\
              \big(\, \widetilde{w}^{\zeta}_{_{(p_{\zeta} +1)\, l}},\  \cdots ,\  \widetilde{w}^{\zeta}_{_{t_{\zeta} \, l}} \, \big) &  \text{ if } l \text{ is  odd and } 0 \leq l < (\zeta-1)/2 \vspace{.15cm}\\
         \big(\, \widetilde{w}^{\zeta}_{_{(p_{\zeta} +1)\, l}},\  \cdots ,\  \widetilde{w}^{\zeta}_{_{t_{\zeta} \, l}} \, \big) & \text{ if } l =  (\zeta-1)/2\vspace{.15cm}\\                 
                 \big(\, \widetilde{w}^{\zeta}_{_{(p_{\zeta} +1)\, l}},\  \cdots ,\ \widetilde{w}^{\zeta}_{_{t_{\zeta} \, l}} \, \big) & \text{ if } l \text{ is  even and } (\zeta+1)/2 \leq l \leq (\zeta-1) \vspace{.15cm}\\
            \big(\, \widetilde{w}^{\zeta}_{_{1 \, l}}, \  \cdots ,\  \widetilde{w}^{\zeta}_{_{p_{\zeta} \, l}} \, \big)    & \text{ if } l \text{ is  odd and } (\zeta+1)/2 \leq l \leq (\zeta-1) 
               \end{cases}
\end{equation}
and
\begin{equation}\label{C-zeta}
 \CC_-^l(\zeta) := \begin{cases}
              \big(\, \widetilde{w}^{\zeta}_{_{(p_{\zeta}  +1)\, l}},\  \cdots ,\ \widetilde{w}^{\zeta}_{_{t_{\zeta} \, l}} \, \big) &  \text{ if } l \text{ is  even and } 0 \leq l <  (\zeta-1)/2 \vspace{.15cm}\\
                 \big(\, \widetilde{w}^{\zeta}_{_{1 \, l}}, \  \cdots ,\  \widetilde{w}^{\zeta}_{_{p_{\zeta} \, l}} \, \big) &  \text{ if } l \text{ is  odd and } 0 \leq l < (\zeta-1)/2 \vspace{.15cm}\\       
      \big(\, \widetilde{w}^{\zeta}_{_{1 \, l}}, \  \cdots ,\  \widetilde{w}^{\zeta}_{_{p_{\zeta} \, l}} \, \big)  & \text{ if } l =  (\zeta-1)/2 \vspace{.15cm}\\        
             \big(\, \widetilde{w}^{\zeta}_{_{1 \, l}}, \  \cdots ,\  \widetilde{w}^{\zeta}_{_{p_{\zeta} \, l}} \, \big) & \text{ if } l \text{ is  even and } (\zeta+1)/2 \leq l \leq (\zeta-1) \vspace{.15cm}\\
             \big(\, \widetilde{w}^{\zeta}_{_{(p_{\zeta} +1)\, l}},\  \cdots ,\ \widetilde{w}^{\zeta}_{_{t_{\zeta} \, l}} \, \big)  &   \text{ if }   l \text{ is  odd and } (\zeta+1)/2 \leq l \leq (\zeta-1) .
               \end{cases}
\end{equation}

For all $d \,\in\, \N_\d$ and $0 \,\leq\, l \,\leq\, d-1$, define
$$
V^l_+ (d) \,:=\, \text{ Span}_\C \{v \,\in\,  \CC^l (d) \,\mid \, \langle v,\, v \rangle\, >\, 0 \},
\ \
V^l_- (d) \,:=\, \text{ Span}_\C \{ v \,\in\,\CC^l (d)\,\mid\, \langle v,\, v \rangle \,<\, 0 \}.
$$
It can be verified using \eqref{orthonormal-basis-even}, \eqref{orthonormal-basis-odd}
together with the orthogonality relations in Lemma \ref{orthogonal-basis-C} that
$\CC^l_+ (d)$ (respectively, $ \CC^l_- (d)$) is indeed an ordered set based on
the (unordered) set
$$\{ v \,\in\, \CC^l (d) \,\mid\, \langle v,\, v \rangle \,>\, 0 \}$$ (respectively,
$\{ v \,\in\, \CC^l (d) \,\mid\, \langle v,\, v \rangle \,<\, 0 \}$) for all
$d \,\in\, {\N_\d}$ and $ 0 \,\leq\, l \,\leq\, d-1$.
In particular, $\CC^l_+ (d)$ and $ \CC^l_- (d)$ are ordered bases of
$V^l_+ (d)$ and $V^l_- (d)$ respectively, for all $d \,\in\, \N_\d$, $0 \,\leq\, l \,\leq\, d-1$.

In the next lemma we specify a maximal compact subgroup of $\ZC_{ {\rm SU} (p,q) } (X,H,Y)$ which will be used in Proposition \ref{max-cpt-su-pq-wrt-onb}.
For notational convenience, we will use $ (-1)^l$ to denote the sign `$+$' or the sign `$-$'
depending on whether $l$ is an even integer or an odd integer.

\begin{lemma}\label{max-cpt-su-pq}
Let $K$ be the subgroup of $\ZC_{ {\rm SU} (p,q) } (X,H,Y)$ consisting of all 
$g\,\in\,
\ZC_{{\rm SU}(p,q)}(X,H,Y)$ satisfying the following conditions:

\begin{enumerate}
 \item $g ( V^l_+ (d)) \subset \, V^l_+ (d)$ and $g ( V^l_- (d)  ) \subset \, V^l_-(d)$,  for all  $ d \in \N_\d \, \text{ and } \, 0 \leq l \leq d -1$.

 \item When $ \eta \in \E_\d $,
 $$\begin{array}{cc}
            \Big[g |_{ V_+^0 (\eta) }\Big]_{{\CC}^0 _+(\eta)} = \Big[g |_{  V^l_{(-1)^{l}} (\eta)}\Big]_{{\CC}^l_{(-1)^{l} } (\eta)}  \qquad \vspace{.12cm}\\
      \Big[g |_{ V_-^0 (\eta) }\Big]_{{\CC}^0_- (\eta)} = \Big[g |_{  V^l_{(-1)^{l+1}} (\eta)}\Big]_{{\CC}^l_{(-1)^{l+1} } (\eta)}   
     \end{array}; \text{ for all }\, 0 \leq l \leq \eta-1.
$$
  \item When $ \theta \in \O^1_\d $,
$$
\Big[g |_{ V_+^0 (\theta) }\Big]_{{\CC}^0_+ (\theta)} = 
             \begin{cases}                                                            
                 \Big[g |_{  V^l_{(-1)^{l}} (\theta)}\Big]_{{\CC}^l_{(-1)^{l}} (\theta)} & \text{ for all } 0 \leq l < (\theta-1)/2 \vspace{.15cm}\\
            \Big[g |_{  V^{(\theta-1)/2}_{+} (\theta)}\Big]_{{\CC}^{(\theta-1)/2}_{+} (\theta)} \vspace{.15cm}  \\
                 \Big[g |_{  V^l_{(-1)^{l+1}} (\theta)}\Big]_{{\CC}^l_{(-1)^{l+1} } (\theta)} &  \text{ for all } (\theta-1)/2 < l \leq \theta-1,
                 \end{cases}                 
$$
$$
  \Big[g |_{ V_-^0 (\theta) }\Big]_{{\CC}^0_- (\theta)} =
     \begin{cases}
      \Big[g |_{  V^l_{(-1)^{l+1}} (\theta)}\Big]_{{\CC}^l_{(-1)^{l+1} } (\theta)} &  \text{ for all } 0 \leq l < (\theta-1)/2 \vspace{.15cm}\\
       \Big[g |_{  V^{(\theta-1)/2}_{-} (\theta)}\Big]_{{\CC}^{(\theta-1)/2}_{-} (\theta)} \vspace{.15cm}  \\
      \Big[g |_{  V^l_{(-1)^{l}} (\theta)}\Big]_{{\CC}^l_{(-1)^{l}} (\theta)} & \text{ for all } (\theta-1)/2 < l \leq \theta-1 .     
     \end{cases}
$$
\item When $ \zeta \in \O^3_\d $,
$$
\Big[g |_{ V_+^0 (\zeta) }\Big]_{{\CC}^0_+ (\zeta)} = 
             \begin{cases}                                                            
                 \Big[g |_{  V^l_{(-1)^{l}} (\zeta)}\Big]_{{\CC}^l_{(-1)^{l}} (\zeta)} &  \text{ for all } 0 \leq l < (\zeta-1)/2 \vspace{.15cm}\\            
      \Big[g |_{  V^{(\zeta-1)/2}_{-} (\zeta)}\Big]_{{\CC}^{(\zeta-1)/2}_{-} (\zeta)} \vspace{.15cm}  \\
                 \Big[g |_{  V^l_{(-1)^{l+1}} (\zeta)}\Big]_{{\CC}^l_{(-1)^{l+1} } (\zeta)} & \text{ for all } (\zeta-1)/2 < l \leq \zeta-1,
                 \end{cases}                 
$$
$$
  \Big[g |_{ V_-^0 (\zeta) }\Big]_{{\CC}^0_- (\zeta)} =
     \begin{cases}
      \Big[g |_{  V^l_{(-1)^{l+1}} (\zeta)}\Big]_{{\CC}^l_{(-1)^{l+1} } (\zeta)} &  \text{ for all } 0 \leq l < (\zeta-1)/2 \vspace{.15cm}\\
      \Big[g |_{  V^{(\zeta-1)/2}_{+} (\zeta)}\Big]_{{\CC}^{(\zeta-1)/2}_{+} (\zeta)} \vspace{.15cm}  \\
            \Big[g |_{  V^l_{(-1)^{l}} (\zeta)}\Big]_{{\CC}^l_{(-1)^{l}} (\zeta)} &  \text{ for all } (\zeta-1)/2 < l \leq \zeta-1.    
     \end{cases}
$$ 
\end{enumerate}
Then $K$ is a maximal compact subgroup of $\ZC_{ {\rm SU}(p,q)}(X,H,Y)$.
\end{lemma}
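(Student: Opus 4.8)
The plan is to transport the problem, via the isomorphism of Lemma \ref{reductive-part-comp}(4), to a product of unitary groups where the maximal compact subgroups are completely understood, and then to unwind the resulting conditions back through Lemma \ref{reductive-part-comp-su-pq} into the geometric conditions (1)--(4).

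First I would recall from Lemma \ref{reductive-part-comp}(4) that $\ZC_{{\rm SU}(p,q)}(X,H,Y)$ is isomorphic to $\{ g \in \prod_{d \in \N_\d} {\rm U}(L(d-1),(\cdot,\cdot)_d) \mid \bigchi_\d(g) = 1 \}$, and identify each factor: since $(\cdot,\cdot)_d$ is (after multiplying by $\sqrt{-1}$ when $d$ is even) a Hermitian form of signature $(p_d,q_d)$, one has ${\rm U}(L(d-1),(\cdot,\cdot)_d) \cong {\rm U}(p_d,q_d)$, and hence $\ZC_{{\rm SU}(p,q)}(X,H,Y) \cong S\big(\prod_{d \in \N_\d} {\rm U}(p_d,q_d)\big)$. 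For the bookkeeping below it is more convenient to realise this isomorphism concretely as $g \mapsto (B_d)_{d \in \N_\d}$ with $B_d := \big[g|_{V^0(d)}\big]_{\CC^0(d)}$, using Lemma \ref{reductive-part-comp-su-pq}: by the normalisations \eqref{orthonormal-basis-even}, \eqref{orthonormal-basis-odd} together with Lemma \ref{orthogonal-basis-C}, the Gram matrix of $\langle\cdot,\cdot\rangle$ on $V^0(d)$ in the basis $\CC^0(d)$ is ${\rm I}_{p_d,q_d}$, so $B_d$ ranges over ${\rm U}(p_d,q_d)$ subject to the single relation $\prod_d (\det B_d)^d = 1$.

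Next I would identify the candidate maximal compact subgroup on the product side. A maximal compact subgroup of ${\rm U}(p_d,q_d)$ is ${\rm U}(p_d)\times{\rm U}(q_d)$, the stabiliser of the orthogonal splitting into positive- and negative-definite parts; hence $\prod_d\big({\rm U}(p_d)\times{\rm U}(q_d)\big)$ is maximal compact in $\prod_d {\rm U}(p_d,q_d)$. Since $S\big(\prod_d {\rm U}(p_d,q_d)\big)$ is a closed normal subgroup of $\prod_d {\rm U}(p_d,q_d)$ (the kernel of the character $\bigchi_\d$), the intersection $S\big(\prod_d ({\rm U}(p_d)\times{\rm U}(q_d))\big)$ is a maximal compact subgroup of $S\big(\prod_d {\rm U}(p_d,q_d)\big)$; this is the same normal-subgroup argument already employed in the proofs of Theorems \ref{sl-n-R} and \ref{sl-n-H}. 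It therefore remains only to verify that $K$ is precisely the preimage of $S\big(\prod_d ({\rm U}(p_d)\times{\rm U}(q_d))\big)$ under the isomorphism above.

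Finally I would carry out this last identification by unwinding the definitions. The condition $B_d \in {\rm U}(p_d)\times{\rm U}(q_d)$ is exactly the requirement that $g$ preserve both $V^0_+(d)$ and $V^0_-(d)$, which is the $l=0$ part of (1). For the remaining levels one uses that, by Lemma \ref{reductive-part-comp-su-pq}, $g$ acts on $V^l(d)$ by the \emph{same} matrix $B_d$ in the basis $\CC^l(d)$; the orthogonality relations of Lemma \ref{orthogonal-basis-C} show that the self-pairing $\langle \widetilde{w}^d_{jl},\widetilde{w}^d_{jl}\rangle$ carries the factor $(-1)^l$, so that for odd $l$ the roles of the positive and negative basis vectors are interchanged relative to level $0$. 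Tracking this sign alternation converts the block-diagonality of $B_d$ into the preservation statements $g\big(V^l_{(-1)^l}(d)\big)\subset V^l_{(-1)^l}(d)$ together with the matrix identities recorded in (2)--(4). I expect the main obstacle to be the careful case analysis for $d \in \O^3_\d$: there the modified sign in the last column (cf.\ \ref{yd-def2} and Remark \ref{CM-correction}) swaps the signature of $(\cdot,\cdot)_d$ against that of $\langle\cdot,\cdot\rangle|_{M(d-1)}$, as in Corollary \ref{c-m9.3.1}(3), which is exactly why the $\pm$ subscripts at the middle level $l=(\zeta-1)/2$ in (4) are opposite to those in the $\O^1_\d$ case (3); confirming that the conditions (1)--(4) cut out precisely the block-diagonal matrices in each of these cases is the delicate part of the argument.
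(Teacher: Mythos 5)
Your proposal is correct and follows essentially the same route as the paper's proof: the paper likewise takes as reference the compact subgroup of elements of $\ZC_{{\rm SU}(p,q)}(X,H,Y)$ that preserve each $V^l_\pm(d)$ and act by the single matrix $\big[g|_{V^0(d)}\big]_{\CC^0(d)}$ on every level, and then establishes the equivalence with conditions (1)--(4) through exactly the sign-alternation bookkeeping you outline (the juxtaposition relations among $\CC^l(d)$, $\CC^l_{(-1)^l}(d)$, $\CC^l_{(-1)^{l+1}}(d)$, including the middle-level swap for $\zeta \in \O^3_\d$). The only difference is cosmetic: where you justify maximal compactness of the reference subgroup explicitly via the isomorphism of Lemma \ref{reductive-part-comp}(4) with $S\big(\prod_{d}({\rm U}(p_d)\times{\rm U}(q_d))_\Delta^d\big)$ and the normal-subgroup argument, the paper simply asserts it from Lemma \ref{reductive-part-comp-su-pq}, relying implicitly on the same identification it spells out in the neighbouring proofs.
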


\begin{proof}
In view of the description of  $\ZC_{ {\rm SU} (p,q) } (X,H,Y)$ in the Lemma
\ref{reductive-part-comp-su-pq} we see that its subgroup
$$
K:= \! \Bigg\{g \in {\rm SU} (p,q) \!   \Biggm| \! \begin{array}{cc}
                               g ( V^l_+ (d)) \subset \, V^l_+ (d) \, , \  \  g ( V^l_- (d)  ) \subset \, V^l_-(d) \,  \text{ and} \vspace{.14cm}\\
                               \!\!\!   \big[g |_{  V^l (d) }\big]_{{\CC}^l (d)} =   \big[g |_{ V^0 (d) }\big]_{{\CC}^0 (d)} \, \text{ for all } d \in \N_\d, \, 0 \leq l < d   \!\!\!
                                        \end{array}
\Bigg\} \! \subset  \ZC_{ {\rm SU} (p,q) } (X,H,Y)
$$
is maximal compact.  
Thus it suffices show that if $g \,\in\, {\rm SU} (p,q)$ and $ g ( V^l_+ (d)) \,\subset \, 
V^l_+ (d)  , \,  g ( V^l_- (d) )  $ \, $\subset \, V^l_-(d)$, then 
$\big[g |_{  V^l (d) }\big]_{{\CC}^l (d)} \,= \,\big[g |_{ V^0 (d) }\big]_{{\CC}^0 (d)} $ for all $0 \leq l \leq d-1$, $ d \in \N_\d$ if and only if $g$ satisfies the conditions (2), (3) and
(4) in the statement of the lemma. To do this, we first record the following relations among the
ordered sets $\CC^l (d), \CC^l_{(-1)^{l+1}} (d)$ and $\CC^l_{(-1)^l} (d)$ for all $d \in \N_\d$:
When $\eta \,\in \E_\d$,
\begin{equation}\label{eqn-max-cpt-su-pq-1}
\CC^l (\eta) = \CC^l_{(-1)^l} (\eta) \vee \CC^l_{(-1)^{l+1}} (\eta) \, \text{~ for } 0 \leq l \leq \eta -1.
\end{equation}
When $\theta \,\in\, \O^1_\d$,
\begin{equation}\label{eqn-max-cpt-su-pq-2}
 \CC^l (\theta) = \begin{cases}
                   \CC^l_{(-1)^{l}} (\theta) \vee \CC^l_{(-1)^{l+1}} (\theta)  &   \text{ for all } 0 \leq l < (\theta -1)/2 \vspace{.2cm} \\
                   \CC^{(\theta-1)/2} _{+1} (\theta) \vee \CC^{(\theta-1)/2} _{-1} (\theta) & \text{ for } l = (\theta -1)/2  \vspace{.2cm}  \\
                   \CC^l_{(-1)^{l+1}} (\theta) \vee \CC^l_{(-1)^{l}} (\theta) & \text{ for all }  (\theta -1)/2 < l \leq \theta-1.
                  \end{cases}
\end{equation}
When $\zeta \,\in\, \O^3_\d$,
\begin{equation}\label{eqn-max-cpt-su-pq-3}
  \CC^l (\zeta) = \begin{cases}
                   \CC^l_{(-1)^{l}} (\zeta) \vee \CC^l_{(-1)^{l+1}} (\zeta) & \text{ for all } 0 \leq l < (\zeta -1)/2 \vspace{.2cm}\\
                   \CC^{(\zeta-1)/2} _{-1} (\zeta) \vee \CC^{(\zeta-1)/2} _{+1} (\zeta) &  \text{ for } l = (\zeta -1)/2 \vspace{.2cm} \\
                   \CC^l_{(-1)^{l+1}} (\zeta) \vee \CC^l_{(-1)^{l}} (\zeta) & \text{ for all }  (\zeta -1)/2 < l \leq \zeta-1.
                  \end{cases}
\end{equation}
Assuming that $g \,\in\, {\rm SU} (p,q), \,g ( V^l_+ (d)) \,\subset \, V^l_+ (d)  , \,
g ( V^l_- (d)  ) \,\subset\,  V^l_-(d)$ and
$$\big[g |_{  V^l (d) }\big]_{{\CC}^l (d)} \,=\,   \big[g |_{ V^0 (d) }\big]_{{\CC}^0 (d)}$$
for all $0 \,\leq\, l \,\leq d-1$, $ d \,\in \,\N_\d$, we next show that $g$ satisfies
the conditions (2), (3) and (4) in the lemma.

In view of \eqref{eqn-max-cpt-su-pq-1}, for all $\eta \,\in\, \E_\d$,
$$
 \Big[g |_{ V^l (\eta) }\Big]_{{\CC}^l_{(-1)^{l}} (\eta)\, \vee \, \CC^l_{(-1)^{l+1}}(\eta)}
= \Big[g |_{ V^l (\eta) }\Big]_{{\CC}^l (\eta)} = \Big[g |_{ V^0 (\eta) }\Big]_{{\CC}^0 (\eta)}
$$
$$ = \begin{pmatrix}
     \big[g |_{ V^0_{+1} (\eta) }\big]_{{\CC}^0_{+1} (\eta)}   &  0  \vspace{.1cm} \\
      0 & \big[g |_{ V^0_{-1} (\eta) }\big]_{{\CC}^0_{-1} (\eta)} 
  \end{pmatrix}.
$$
Thus for all $\eta \,\in\, \E_\d$ and $ 0 \,\leq\, l \,\leq\, \eta-1$,
$$
\Big[g |_{ V^l_{(-1)^{l}} (\eta) }\Big]_{{\CC}^l_{(-1)^{l}} (\eta)} =  \Big[g |_{ V^0_{+1} (\eta) }\Big]_{{\CC}^0_{+1} (\eta)}  \text{ and~ } \Big[g |_{ V^l_{(-1)^{l+1}} (\eta) }\Big]_{{\CC}^l_{(-1)^{l+1}} (\eta)} = \Big[g |_{ V^0_{-1} (\eta) }\Big]_{{\CC}^0_{-1} (\eta)}.
$$
Hence, (2) of the lemma holds.

In view of \eqref{eqn-max-cpt-su-pq-2}, for all $\theta \in \O^1_\d$ and $0 \leq l < (\theta-1)/2$,
$$
 \Big[g |_{ V^l (\theta) }\Big]_{{\CC}^l_{(-1)^{l}} (\theta)\, \vee \, \CC^l_{(-1)^{l+1}}(\theta)}
= \Big[g |_{ V^l (\theta) }\Big]_{{\CC}^l (\theta)} = \Big[g |_{ V^0 (\theta) }\Big]_{{\CC}^0 (\theta)}\\
$$
$$
=
{\left(
 \begin{array}{cc}
     \big[g |_{ V^0_{+1} (\theta) }\big]_{{\CC}^0_{+1} (\theta)} &  0 \vspace{.1cm}  \\
       0 &  \big[g |_{ V^0_{-1} (\theta) }\big]_{{\CC}^0_{-1} (\theta)} 
  \end{array}
  \right)}.
$$
Therefore if $\theta \in \O^1_\d$, then for all  $0 \leq l < (\theta-1)/2$,
$$
\Big[g |_{ V^l_{(-1)^l} (\theta) }\Big]_{{\CC}^l_{(-1)^{l}} (\theta)} = \Big[g |_{ V^0_{+1} (\theta) }\Big]_{{\CC}^0_{+1} (\theta)} \text{ and }  \Big[g |_{ V^l_{(-1)^{l+1}} (\theta) }\Big]_{ \CC^l_{(-1)^{l+1}}(\theta)} = \Big[g |_{ V^0_{-1} (\theta) }\Big]_{{\CC}^0_{-1} (\theta)}. 
$$
  From \eqref{eqn-max-cpt-su-pq-2}, we have 
$$
 \Big[g |_{ V^{(\theta-1)/2} (\theta) }\Big]_{{\CC}^{(\theta-1)/2}_{+} (\theta)\, \vee \, \CC^{(\theta-1)/2}_{-}(\theta)}
= \Big[g |_{ V^{(\theta-1)/2} (\theta) }\Big]_{{\CC}^{(\theta-1)/2} (\theta)} = \Big[g |_{ V^0 (\theta) }\Big]_{{\CC}^0 (\theta)}\\
$$
$$
= \begin{pmatrix}
 \begin{array}{cc}
     \big[g |_{ V^0_{+1} (\theta) }\big]_{{\CC}^0_{+1} (\theta)} &  0 \vspace{.1cm}  \\
       0 &  \big[g |_{ V^0_{-1} (\theta) }\big]_{{\CC}^0_{-1} (\theta)} 
  \end{array}
  \end{pmatrix}.
$$
Thus, 
$$
\Big[g |_{ V^{(\theta-1)/2}_{+} (\theta) }\Big]_{{\CC}^{(\theta-1)/2}_{+} (\theta)} = \Big[g |_{ V^0_{+} (\theta) }\Big]_{{\CC}^0_{+} (\theta)}
, \,   \Big[g |_{ V^{(\theta-1)/2}_{-} (\theta) }\Big]_{{\CC}^{(\theta-1)/2}_{-} (\theta)} = \Big[g |_{ V^0_{-} (\theta) }\Big]_{{\CC}^0_{-} (\theta)}.
$$
When $(\theta-1)/2 < l \leq \theta-1 $, we have
$$
  \Big[g |_{ V^l (\theta) }\Big]_{{\CC}^l_{(-1)^{l+1}} (\theta)\, \vee \, \CC^l_{(-1)^{l}}(\theta)}
= \Big[g |_{ V^l (\theta) }\Big]_{{\CC}^l (\theta)} = \Big[g |_{ V^0 (\theta) }\Big]_{{\CC}^0 (\theta)}\\
$$
$$
= \begin{pmatrix}
     \big[g |_{ V^0_{+1} (\theta) }\big]_{{\CC}^0_{+1} (\theta)} &  0 \vspace{.1cm} \\
       0 &  \big[g |_{ V^0_{-1} (\theta) }\big]_{{\CC}^0_{-1} (\theta)} 
  \end{pmatrix}.
$$
Thus if $\theta \in \O^1_\d$, then for all $ (\theta-1)/2 < l \leq \theta-1$,
$$
\Big[g |_{ V^l _{(-1)^{l+1}}(\theta) }\Big]_{{\CC}^l_{(-1)^{l+1}} (\theta)} = \Big[g |_{ V^0_{+1} (\theta) }\Big]_{{\CC}^0_{+1} (\theta)} \text{ and } \ \Big[g |_{ V^l _{(-1)^{l}}(\theta) }\Big]_{{\CC}^l_{(-1)^{l}} (\theta)} = \Big[g |_{ V^0_{-1} (\theta) }\Big]_{{\CC}^0_{-1} (\theta)} .
$$
Hence, (3) of the lemma holds.

When $ g ( V^l_+ (\zeta)) \subset  V^l_+ (\zeta) , \  g ( V^l_- (\zeta)  ) \subset
V^l_-(\zeta)$ and $\big[g |_{  V^l (\zeta) }\big]_{{\CC}^l (\zeta)} 
=   \big[g |_{ V^0 (\zeta) }\big]_{{\CC}^0 (\zeta)}$ for all $0 \leq l \leq \zeta -1$,
$ \zeta \in \O^3_\d$, using \eqref{eqn-max-cpt-su-pq-3} 
it follows, similarly as above, that (4) of the lemma holds. 

To prove the opposite implication, we assume that $g$ satisfies the conditions
$ g ( V^l_+ (d)) \subset  V^l_+ (d) $, $  g ( V^l_- (d)  ) \subset  V^l_-(d)$ as well as the
conditions
(2), (3), (4) of the lemma. Using the relations \eqref{eqn-max-cpt-su-pq-1}, \eqref{eqn-max-cpt-su-pq-2} and \eqref{eqn-max-cpt-su-pq-3}
it is now straightforward to check that $\big[g |_{  V^l (d) }\big]_{{\CC}^l (d)} =   \big[g |_{ V^0 (d) }\big]_{{\CC}^0 (d)} $  for all $ 0 \leq l \leq d -1, d\in \N_\d$. This completes the proof of the lemma.
\end{proof}

We now introduce some notation which will be required to state Proposition \ref{max-cpt-su-pq-wrt-onb}.
For $d \,\in\, \N_\d$, define
$$
\CC_+ (d) := \CC^0_+ (d) \vee \cdots \vee  \CC^{d-1}_+ (d) \ \text{ and } \ 
\CC_- (d) := \CC^0_- (d) \vee \cdots \vee  \CC^{d-1}_- (d).
$$
Let $\alpha \,:=\, \# \E_\d $,  $\beta \,:=\, \# \O^1_\d$ and $ \gamma \,:=\, \# \O^3_\d $.
We enumerate $$\E_\d =\{ \eta_i \,\mid\, 1 \,\leq\, i \,\leq\, \alpha \}$$ such that
$\eta_i \,<\, \eta_{i+1}$, 
$$\O^1_\d \,=\,\{ \theta_j \,\mid\, 1 \,\leq\, j \,\leq\, \beta \}$$ such that $\theta_j
\,<\, \theta_{j+1}$ and similarly
$$\O^3_\d \,=\,\{ \zeta_j \,\mid\, 1 \,\leq \,j \,\leq\, \gamma \}$$ such that
$\zeta_j \,<\, \zeta_{j+1}$. Now define
$$
\EC_+ := \CC_+ (\eta_1) \vee \cdots \vee \CC_+ (\eta_{\alpha})\, ; \ \  \,
\OC^1_+ := \CC_+ (\theta_1) \vee \cdots \vee \CC_+ (\theta_{\beta})\, ; \ \ \,\OC^3_+ := \CC_+ (\zeta_1) \vee \cdots \vee \CC_+ (\zeta_{\gamma});
$$
$$
\EC_- := \CC_- (\eta_1) \vee \cdots \vee \CC_- (\eta_{\alpha}) ; \ 
\OC^1_- := \CC_- (\theta_1) \vee \cdots \vee  \CC_- (\theta_{\beta})\, \text{ and } \,\OC^3_- := \CC_- (\zeta_1) \vee \cdots \vee  \CC_- (\zeta_{\gamma})  .
$$
Finally we define
\begin{equation}\label{orthogonal-basis-su-pq-final}
 \HC_+ := \EC_+ \vee \OC^1_+ \vee\OC^3_+ , \ \ \HC_- := \EC_- \vee \OC^1_- \vee\OC^3_- \ \text{ and } \
\HC := \HC_+ \vee \HC_-.
\end{equation}
It is clear that $\HC$ is a standard orthogonal basis with
$\HC_+ \,=\, \{ v \,\in\, \HC \,\mid \langle v,\, v \rangle \,=\,1 \}$ and 
$\HC_- \,=\, \{ v \,\in\, \HC \,\mid\, \langle v,\, v \rangle \,=\,-1 \}$. In
particular, $\# \HC_+ \,=\, p$ and $\# \HC_- \,=\,q$.
From the definition of the $\HC_+$ and $\HC_-$ we have the following relations:
$$
\sum_{i=1}^\alpha \frac{\eta_i}{2}t_{\eta_i}  + \sum_{j=1}^\beta \big( \frac{\theta_j+1}{2}p_{\theta_j} +  \frac{\theta_j-1}{2}q_{\theta_j}\big)  + \sum_{k=1}^\gamma\big( \frac{\zeta_k-1}{2}p_{\zeta_k} +  \frac{\zeta_k+1}{2}q_{\zeta_k}\big) =p
$$
and 
$$
\sum_{i=1}^\alpha \frac{\eta_i}{2}t_{\eta_i}  + \sum_{j=1}^\beta \big( \frac{\theta_j-1}{2}p_{\theta_j} +  \frac{\theta_j+1}{2}q_{\theta_j}\big)  + \sum_{k=1}^\gamma\big( \frac{\zeta_k+1}{2}p_{\zeta_k} +  \frac{\zeta_k-1}{2}q_{\zeta_k}\big) =q.
$$

The $\C$-algebra
$$\prod_{i=1}^\alpha \big( {\rm M}_{p_{\eta_i}} (\C) \times
{\rm M}_{q_{\eta_i}} (\C) \big) \times \prod_{j=1}^\beta
\big( {\rm M}_{p_{\theta_j}} (\C)\times {\rm M}_{q_{\theta_j}} (\C) \big)
\times \prod_{k=1}^\gamma \big( {\rm M}_{p_{\zeta_k}} (\C) \times {\rm M}_{q_{\zeta_k}} (\C) \big)$$
is embedded into ${\rm M}_p(\C)$ and ${\rm M}_q(\C)$ in the following two ways:
$$
\Db_p \colon \prod_{i=1}^\alpha  \big( {\rm M}_{p_{\eta_i}} (\C) \times {\rm M}_{q_{\eta_i}} (\C) \big) \times \prod_{j=1}^\beta \big( {\rm M}_{p_{\theta_j}} (\C) \times {\rm M}_{q_{\theta_j}} (\C) \big) \times \prod_{k=1}^\gamma \big( {\rm M}_{p_{\zeta_k}} (\C) \times {\rm M}_{q_{\zeta_k}} (\C) \big) \longrightarrow {\rm M}_p(\C)  
$$
is defined by
$$
\big (A_{\eta_1}, B_{\eta_1},  \cdots , A_{\eta_{\alpha}},  B_{\eta_{\alpha}}; C_{\theta_1}, D_{\theta_1} , \cdots,  C_{\theta_\beta}, D_{\theta_\beta}; E_{\zeta_1}, F_{\zeta_1}, \cdots ,  E_{\zeta_\gamma}, F_{\zeta_\gamma}   \big)$$
$$\longmapsto\, \bigoplus_{i=1}^\alpha  \big( A_{\eta_i} \oplus B_{\eta_i} \big)_\blacktriangle^{\eta_i /2}
$$
$$
\oplus \bigoplus_{j=1}^\beta \Big( \big( C_{\theta_j} \oplus D_{\theta_j}\big)_\blacktriangle ^{\frac{\theta_j-1}{4}} \oplus C_{\theta_j} \oplus \big( C_{\theta_j} \oplus D_{\theta_j}\big)_\blacktriangle ^{\frac{\theta_j-1}{4}} \Big) \\ 
  \oplus \bigoplus_{k=1}^\gamma \Big( \big( E_{\zeta_k}\oplus F_{\zeta_k} \big) _\blacktriangle ^{\frac{\zeta_k + 1}{4}}  \oplus \big( F_{\zeta_k}\oplus E_{\zeta_k} \big) _\blacktriangle ^{\frac{\zeta_k -3}{4}}  \oplus F_{\zeta_k} \Big) ,
$$
and
$$
\Db_q \colon \prod_{i=1}^\alpha \big( {\rm M}_{p_{\eta_i}} (\C) \times {\rm M}_{q_{\eta_i}} (\C) \big) \times \prod_{j=1}^\beta \big( {\rm M}_{p_{\theta_j}} (\C) \times {\rm M}_{q_{\theta_j}} (\C) \big) \times \prod_{k=1}^\gamma \big( {\rm M}_{p_{\zeta_k}} (\C) \times {\rm M}_{q_{\zeta_k}} (\C) \big) \longrightarrow {\rm M}_q(\C)  
$$
is defined by
\begin{align*}
\big(  A_{\eta_1}, B_{\eta_1},  \cdots , & A_{\eta_{\alpha}},  B_{\eta_{\alpha}}; C_{\theta_1}, D_{\theta_1} , \cdots,  C_{\theta_\beta}, D_{\theta_\beta}; E_{\zeta_1}, F_{\zeta_1}, \cdots ,  E_{\zeta_\gamma}, F_{\zeta_\gamma}     \big)\\
\longmapsto\, & \bigoplus_{i=1}^\alpha  \big( B_{\eta_i} \oplus A_{\eta_i} \big)_\blacktriangle^{\eta_i /2}  
 \oplus  \bigoplus_{j=1}^\beta \Big( \big( D_{\theta_j} \oplus C_{\theta_j}\big)_\blacktriangle ^{\frac{\theta_j-1}{4}} \oplus D_{\theta_j} \oplus \big( D_{\theta_j} \oplus C_{\theta_j}\big)_\blacktriangle ^{\frac{\theta_j-1}{4}} \Big) \\ 
 \oplus & \bigoplus_{k=1}^\gamma \Big( \big( F_{\zeta_k}\oplus E_{\zeta_k} \big) _\blacktriangle ^{\frac{\zeta_k + 1}{4}}  \oplus \big( E_{\zeta_k}\oplus F_{\zeta_k} \big) _\blacktriangle ^{\frac{\zeta_k -3}{4}}  \oplus E_{\zeta_k} \Big). 
\end{align*}

Define the characters
$$
\bigchi_p \colon \prod_{i=1}^\alpha \big( {\rm GL}_{p_{\eta_i}} (\C) \times {\rm GL}_{q_{\eta_i}} (\C) \big) \times \prod_{j=1}^\beta \big( {\rm GL}_{p_{\theta_j}} (\C) \times {\rm GL}_{q_{\theta_j}} (\C) \big) \times \prod_{k=1}^\gamma \big( {\rm GL}_{p_{\zeta_k}} (\C) \times {\rm GL}_{q_{\zeta_k}} (\C) \big) \longrightarrow \C^*
$$

$$
\big(   A_{\eta_1}, B_{\eta_1},  \cdots ,  A_{\eta_{\alpha}},  B_{\eta_{\alpha}}; C_{\theta_1}, D_{\theta_1} , \cdots,  C_{\theta_\beta}, D_{\theta_\beta}; E_{\zeta_1}, F_{\zeta_1}, \cdots ,  E_{\zeta_\gamma}, F_{\zeta_\gamma}    \big) 
$$
$$
\longmapsto\, \prod_{i=1}^\alpha( \det  A_{\eta_i}^{\eta_i/2}  \det  B_{\eta_i}^{\eta_i/2} ) \prod_{j=1}^\beta( \det  C_{\theta_j}^{\frac{\theta_j+1}{2} }  \det  D_{\theta_j}^{\frac{\theta_j-1}{2} } ) \prod_{k=1}^\gamma( \det  E_{\zeta_k}^{\frac{\zeta_k-1}{2} } \det  F_{\zeta_k}^{\frac{\zeta_k+1}{2} } )
 $$
and
$$
\bigchi_q \colon \prod_{i=1}^\alpha \big( {\rm GL}_{p_{\eta_i}} (\C) \times {\rm GL}_{q_{\eta_i}} (\C) \big) \times \prod_{j=1}^\beta \big( {\rm GL}_{p_{\theta_j}} (\C) \times {\rm GL}_{q_{\theta_j}} (\C) \big) \times \prod_{k=1}^\gamma \big( {\rm GL}_{p_{\zeta_k}} (\C) \times {\rm GL}_{q_{\zeta_k}} (\C) \big) \longrightarrow \C^*
$$
$$
\big(  A_{\eta_1}, B_{\eta_1}, \cdots ,  A_{\eta_{\alpha}},  B_{\eta_{\alpha}}; C_{\theta_1}, D_{\theta_1} , \cdots ,  C_{\theta_\beta}, D_{\theta_\beta}; E_{\zeta_1}, F_{\zeta_1}, \cdots ,  E_{\zeta_\gamma}, F_{\zeta_\gamma}     \big) 
$$
$$
\longmapsto\, \prod_{i=1}^\alpha( \det  A_{\eta_i}^{\eta_i/2}  \det  B_{\eta_i}^{\eta_i/2} ) \prod_{j=1}^\beta( \det  C_{\theta_j}^{\frac{\theta_j-1}{2} }  \det  D_{\theta_j}^{\frac{\theta_j+1}{2} } ) \prod_{k=1}^\gamma( \det  E_{\zeta_k}^{\frac{\zeta_k+1}{2} } \det  F_{\zeta_k}^{\frac{\zeta_k-1}{2}} ). 
$$

Let $\Lambda_\HC \,\colon\, {\rm End}_\C \C^n \,\longrightarrow\, {\rm M}_n (\C)$ be the
isomorphism of $\C$-algebras induced by the ordered basis $\HC$ defined in \eqref{orthogonal-basis-su-pq-final}. Let $M$ be the maximal compact subgroup of ${\rm SU}(p,q)$ which leaves invariant simultaneously the two subspace spanned by $\HC_+$ and $\HC_-$. Clearly, $\Lambda_\HC (M) = {\rm S} ({\rm U} (p) \times {\rm U} (q))$. 

\begin{proposition}\label{max-cpt-su-pq-wrt-onb} 
Let $X \,\in\, \NC _{\s\u (p,q)}$, $\Psi_{{\rm SU} (p,q)}(\OC_X) \,=\, ( \d ,\, \sgn_{\OC_X} )$.
Let $\alpha \,:=\, \# \E_\d $,  $\beta \,:=\, \# \O^1_\d$ and $ \gamma \,:=\, \# \O^3_\d $.
Let $\{X,H,Y\}$ be a $\s\l_2(\R)$-triple in $\s\u (p,q)$ and 
$(p_d,\, q_d)$ the signature of the form $(\cdot, \cdot)_d$, $d \,\in\, \N_\d$, as defined in \eqref{new-form}.
Let $K$ be the maximal compact subgroup of $\ZC_{{\rm SU} (p,q)} ( X, H, Y)$ as in Lemma
\ref{max-cpt-su-pq}. Then  $\Lambda_\HC (K) \,\subset \,
{\rm S} ({\rm U} (p) \times {\rm U} (q))$ is given by 
\begin{align*} 
\Lambda_\HC(K)\,=\,
 \Bigg\{ \Db_p (g) \oplus \Db_q(g) \Biggm| \begin{array}{c}
                       g \in \prod_{i=1}^\alpha \big( {\rm U}(p_{\eta_i}) \times {\rm U}(q_{\eta_i}) \big) \times \prod_{j=1}^\beta \big( {\rm U}(p_{\theta_j}) \times {\rm U}(q_{\theta_j}) \big) \vspace{.12cm}\\
                                         \times \prod_{k=1}^\gamma \big( {\rm U}(p_{\zeta_k}) \times {\rm U}(q_{\zeta_k})\big) \, , \text{ and } \  ~ \bigchi_p(g) \bigchi_q(g) =1
                                        \end{array} \Bigg\}.
\end{align*}
\end{proposition}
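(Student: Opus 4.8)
The plan is to read off the matrix of an arbitrary $g \in K$ in the ordered standard orthogonal basis $\HC$ of \eqref{orthogonal-basis-su-pq-final}, using the defining conditions of $K$ recorded in Lemma \ref{max-cpt-su-pq}. First I would note that, since every $g \in K$ preserves each $V^l_+(d)$ and $V^l_-(d)$, it preserves the spans of $\HC_+$ and $\HC_-$ separately; hence $K$ lies in the maximal compact subgroup $M$ with $\Lambda_\HC(M) = {\rm S}({\rm U}(p)\times{\rm U}(q))$, so $\Lambda_\HC(K) \subseteq {\rm S}({\rm U}(p)\times{\rm U}(q))$ as asserted. By Lemma \ref{max-cpt-su-pq} such a $g$ is completely determined by its restrictions to the level-zero spaces $V^0_+(d)$ and $V^0_-(d)$. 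Writing $A_{\eta_i} := [g|_{V^0_+(\eta_i)}]_{\CC^0_+(\eta_i)}$, $B_{\eta_i} := [g|_{V^0_-(\eta_i)}]_{\CC^0_-(\eta_i)}$, and analogously $C_{\theta_j}, D_{\theta_j}$ for $\theta_j \in \O^1_\d$ and $E_{\zeta_k}, F_{\zeta_k}$ for $\zeta_k \in \O^3_\d$, these blocks are unitary for the positive (respectively negative) definite forms on the corresponding level-zero spaces, so they range exactly over $\prod_i({\rm U}(p_{\eta_i})\times{\rm U}(q_{\eta_i}))\times\prod_j({\rm U}(p_{\theta_j})\times{\rm U}(q_{\theta_j}))\times\prod_k({\rm U}(p_{\zeta_k})\times{\rm U}(q_{\zeta_k}))$, and this assignment sets up a bijection between $g$ and the tuple of blocks.

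The combinatorial heart of the argument is to determine, for each $d \in \N_\d$, how these blocks get replicated as the level $l$ runs from $0$ to $d-1$ and the positive and negative parts are collected, in the prescribed order, into $\HC_+$ and $\HC_-$. For $\eta \in \E_\d$, conditions (2) of Lemma \ref{max-cpt-su-pq} say that $g$ acts by $A_\eta$ on $V^l_{(-1)^l}(\eta)$ and by $B_\eta$ on $V^l_{(-1)^{l+1}}(\eta)$ for every $l$; tracking which of these is the positive part $V^l_+(\eta)$ as the parity of $l$ alternates, one finds that along $\HC_+$ the blocks occur as $A_\eta, B_\eta, A_\eta, B_\eta, \ldots$, i.e. as $(A_\eta\oplus B_\eta)^{\eta/2}_\blacktriangle$, and along $\HC_-$ as $(B_\eta\oplus A_\eta)^{\eta/2}_\blacktriangle$; these are precisely the $\eta$-summands of $\Db_p$ and $\Db_q$. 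The cases $\theta\in\O^1_\d$ and $\zeta\in\O^3_\d$ follow identically from conditions (3) and (4), the only new feature being the distinguished middle level $l=(\theta-1)/2$, which is even for $\theta\in\O^1_\d$ and odd for $\zeta\in\O^3_\d$ by the residues $1$ and $3\pmod 4$. At this middle level the positive part receives a single unpaired block---namely $C_\theta$ when $\theta\in\O^1_\d$ but $F_\zeta$ when $\zeta\in\O^3_\d$---which is exactly the swap that produces the asymmetric summand $(C_\theta\oplus D_\theta)^{(\theta-1)/4}_\blacktriangle\oplus C_\theta\oplus(C_\theta\oplus D_\theta)^{(\theta-1)/4}_\blacktriangle$ of $\Db_p$ against the summand $(E_\zeta\oplus F_\zeta)^{(\zeta+1)/4}_\blacktriangle\oplus(F_\zeta\oplus E_\zeta)^{(\zeta-3)/4}_\blacktriangle\oplus F_\zeta$. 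Assembling these over all $d\in\N_\d$ in the order fixed by \eqref{orthogonal-basis-su-pq-final} yields $\Lambda_\HC(g)=\Db_p(g)\oplus\Db_q(g)$.

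Finally I would pin down the ${\rm SL}$-constraint. Every element of ${\rm SU}(p,q)\subset {\rm SL}_n(\C)$ has determinant $1$, and under $\Lambda_\HC$ this determinant equals $\det\Db_p(g)\cdot\det\Db_q(g)$. A direct block-by-block computation, using the replication pattern just established, gives $\det\Db_p(g)=\bigchi_p(g)$ and $\det\Db_q(g)=\bigchi_q(g)$; for instance the $\theta\in\O^1_\d$ factor of $\det\Db_p(g)$ is $(\det C_\theta\det D_\theta)^{(\theta-1)/4}\cdot\det C_\theta\cdot(\det C_\theta\det D_\theta)^{(\theta-1)/4}=\det C_\theta^{(\theta+1)/2}\det D_\theta^{(\theta-1)/2}$, matching the definition of $\bigchi_p$. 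Hence $\det=1$ translates precisely into $\bigchi_p(g)\bigchi_q(g)=1$, and conversely any tuple $g$ in the product of unitary groups satisfying this relation reconstructs, via the conditions of Lemma \ref{max-cpt-su-pq}, a unique element of $K$ (unitarity on each $V^l(d)$ following from unitarity on $V^0(d)$ together with the orthogonality relations of Lemma \ref{orthogonal-basis-C}). The step I expect to be the main obstacle is the bookkeeping in the second paragraph: keeping straight, across the several parity-and-level regimes for each of $\E_\d$, $\O^1_\d$ and $\O^3_\d$, which block lands in the positive versus the negative part, so that the somewhat intricate definitions of $\Db_p$ and $\Db_q$ are reproduced verbatim; everything else is essentially formal once Lemma \ref{max-cpt-su-pq} is in hand.
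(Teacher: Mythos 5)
Your proposal is correct and takes exactly the paper's route: the paper proves Proposition \ref{max-cpt-su-pq-wrt-onb} by the one-line observation that the result follows from writing the matrices of the elements of the subgroup $K$ of Lemma \ref{max-cpt-su-pq} with respect to the basis $\HC$ of \eqref{orthogonal-basis-su-pq-final}, which is precisely what you carry out. Your bookkeeping checks out in all regimes --- the alternation $A_\eta,B_\eta,\dots$ for $\eta\in\E_\d$, the unpaired middle block landing in the positive part as $C_\theta$ for $\theta\in\O^1_\d$ but as $F_\zeta$ for $\zeta\in\O^3_\d$ (so that $(E\oplus F)^{(\zeta-3)/4}_\blacktriangle\oplus E\oplus F$ regroups to $(E\oplus F)^{(\zeta+1)/4}_\blacktriangle$), and the determinant identities $\det\Db_p(g)=\bigchi_p(g)$, $\det\Db_q(g)=\bigchi_q(g)$ translating $\det g=1$ into $\bigchi_p(g)\bigchi_q(g)=1$ --- and your surjectivity step is justified since, by the equivalence established in the proof of Lemma \ref{max-cpt-su-pq}, any block-diagonal $g$ built from such a tuple satisfies the conditions of Lemma \ref{reductive-part-comp-su-pq} and hence lies in $\ZC_{{\rm SU}(p,q)}(X,H,Y)$.
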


\begin{proof}
This follows by writing the matrices of the elements of the maximal compact subgroup $K$ in Lemma \ref{max-cpt-su-pq} with respect to
the basis $\HC$ as in \eqref{orthogonal-basis-su-pq-final}.
\end{proof}

\begin{theorem}\label{su-pq}
 Let $X \,\in\, \s \u(p,q)$ be a nilpotent element.
 Let $(\d, \,\sgn_{\OC_X}) \,\in\, \YC(p,q)$ be the signed Young diagram 
 of the orbit  $\OC_X$ (that is, $\Psi_{{\rm SU}(p,q)}  (\OC_X) \,=\, (\d,\, \sgn_{\OC_X})$ as in
the notation of Theorem \ref{su-pq-parametrization}). Let $$l\,:=\,
\# \{d\in \N_\d\,\mid\,  p_d \,\neq \,0\} + \# \{d\in \N_\d,\,\mid\, q_d \,\neq\, 0\} \, .$$
Then the following hold:
 \begin{enumerate}
  \item If $\N_\d \,=\, \E_\d$, then\, $\dim_\R H^2(\OC_X, \,\R) \, = \, l-1.$

 \item If $l\,=\,1$ and $\N_\d \,= \,\O_\d$, then \, $\dim_\R H^2(\OC_X, \,\R)  \,= \, 0.$

 \item If $l \,\geq\, 2$ and  $\#\O_\d \,\geq\, 1$, then \, $\dim_\R H^2(\OC_X,\, \R) \, = \, l-2.$
\end{enumerate}
\end{theorem}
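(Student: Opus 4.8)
The plan is to feed the explicit maximal compact subgroup from Proposition \ref{max-cpt-su-pq-wrt-onb} into Theorem \ref{thm-nilpotent-orbit}, so that the entire computation collapses to determining the rank of two linear functionals on a center. First I would dispose of $X=0$ (then $\OC_X$ is a point) and assume $X\neq0$. Since ${\rm SU}(p,q)$ is $\R$-simple and $\Lambda_\HC(M)={\rm S}({\rm U}(p)\times{\rm U}(q))$, one has $[\m,\,\m]=\s\u(p)\oplus\s\u(q)$ and $\dim_\R\z(\m)=1$, so Theorem \ref{thm-nilpotent-orbit} yields
$$H^2(\OC_X,\,\R)\,\simeq\,[(\z(\k)\cap[\m,\,\m])^*]^{K/K^\circ}.$$
Writing $P:=\prod_{d\in\N_\d}({\rm U}(p_d)\times{\rm U}(q_d))$ and $\phi:=\Db_p\oplus\Db_q$, Proposition \ref{max-cpt-su-pq-wrt-onb} identifies $K$, through the injective homomorphism $\phi$, with $K':=\ker(\bigchi_p\bigchi_q)\subset P$.

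Next I would compute $\z(\k)$. As $\p=[\p,\,\p]\oplus\z(\p)$ with $[\p,\,\p]=\bigoplus_d(\s\u(p_d)\oplus\s\u(q_d))$ semisimple and $\dim_\R\z(\p)=l$, and since the differential of a character annihilates commutators, one gets ${\rm Lie}(K')=[\p,\,\p]\oplus\z_0$ with $\z_0=\ker\big(d(\bigchi_p\bigchi_q)|_{\z(\p)}\big)$, whence $\z(\k)\cong\z_0\subset\z(\p)$. A direct check that $\det\Db_p=\bigchi_p$ and $\det\Db_q=\bigchi_q$ gives $(\bigchi_p\bigchi_q)(g)=\prod_{d\in\N_\d}(\det g_{p_d})^d(\det g_{q_d})^d$, so its differential is nonzero and $\dim_\R\z(\k)=l-1$. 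Because $K'\subset P$ with $P$ connected and $\z(\p)$ central, ${\rm Ad}(P)$, hence ${\rm Ad}(K')$, fixes $\z(\p)$ pointwise; transporting this through $\phi$ shows that $K$ acts trivially on $\z(\k)$, and in particular $K/K^\circ$ acts trivially on the $K$-stable subspace $\z(\k)\cap[\m,\,\m]$. Thus the problem reduces to $\dim_\R H^2(\OC_X,\,\R)=\dim_\R(\z(\k)\cap[\m,\,\m])$.

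Finally, the membership $Z\in\z(\k)\cap[\m,\,\m]=\z(\k)\cap(\s\u(p)\oplus\s\u(q))$ translates, via $\phi$, into the two trace conditions $T_p(Z)=0=T_q(Z)$, where $T_p:={\rm tr}\,\Db_p$ and $T_q:={\rm tr}\,\Db_q$ on $\z(\p)$; hence $\dim_\R(\z(\k)\cap[\m,\,\m])=l-{\rm rank}\{T_p,\,T_q\}$. Evaluating the traces of the block maps in the coordinates $x_d$ (present iff $p_d\neq0$) and $y_d$ (present iff $q_d\neq0$) of $\z(\p)$, I expect to obtain
$$(T_p+T_q)(Z)=\sqrt{-1}\sum_{d\in\N_\d}d\,(p_d x_d+q_d y_d),\qquad (T_p-T_q)(Z)=\sqrt{-1}\Big[\sum_{\theta\in\O^1_\d}(p_\theta x_\theta-q_\theta y_\theta)-\sum_{\zeta\in\O^3_\d}(p_\zeta x_\zeta-q_\zeta y_\zeta)\Big].$$
Since $T_p+T_q\neq0$, the rank is read off from $T_p-T_q$: when $\N_\d=\E_\d$ one has $T_p-T_q=0$, so the rank is $1$ and $\dim H^2=l-1$, proving (1); when $l=1$ the center $\z(\p)$ is one-dimensional, forcing rank $1$ and $\dim H^2=0$, proving (2); and when $\#\O_\d\geq1$ and $l\geq2$ one has $T_p-T_q\neq0$, and comparing coefficients shows $T_p-T_q$ is not a multiple of $T_p+T_q$, since each present variable forces the hypothetical proportionality constant to be $0$ on an even part and $\pm1/d$ on an odd part $d$, and these cannot all coincide once two distinct variables occur, so the rank is $2$ and $\dim H^2=l-2$, proving (3). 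The only genuinely delicate point, and the step I would treat most carefully, is this last coefficient comparison establishing the linear independence of $T_p$ and $T_q$ in case (3); the remaining ingredients are structural bookkeeping already prepared in Lemma \ref{reductive-part-comp-su-pq}, Lemma \ref{max-cpt-su-pq} and Proposition \ref{max-cpt-su-pq-wrt-onb}.
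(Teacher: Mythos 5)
Your proposal is correct and follows essentially the same route as the paper: both feed the maximal compact subgroup $K \simeq S\big(\prod_{d\in\N_\d}({\rm U}(p_d)\times{\rm U}(q_d))_\Delta^d\big)$ of Proposition \ref{max-cpt-su-pq-wrt-onb} into Theorem \ref{thm-nilpotent-orbit}, establish triviality of the $K$-action on $\z(\k)$ via the connected overgroup $\prod_{d\in\N_\d}({\rm U}(p_d)\times{\rm U}(q_d))_\Delta^d$, and then reduce everything to $\dim_\R(\z(\k)\cap[\m,\m])$. The only difference is presentational: where you compute the rank of the explicit trace functionals $T_p,\,T_q$ on $\z(\p)$ (including the coefficient comparison showing $T_p-T_q$ is not proportional to $T_p+T_q$ in case (3)), the paper argues qualitatively that $\k\subset[\m,\m]$ when $\N_\d=\E_\d$ while $\z(\k)\not\subset[\m,\m]$ when $\#\O_\d\geq 1$ and $l\geq 2$, and then invokes $\dim_\R\z(\m)=1$ to conclude the intersection has codimension exactly one in $\z(\k)$ --- which sidesteps your proportionality analysis but encodes the same computation.
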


\begin{proof}
This is clear when $X \,=\,0$. So assume that $X \,\neq\, 0$. 

Let $\{X,\, H,\, Y\} \,\subset\, \s\u(p,q)$ be a $\s\l_2(\R)$-triple. 
Let $K$ be the maximal compact subgroup of $\ZC_{{\rm SU}(p,q)}(X,H,Y)$ as in 
Lemma \ref{max-cpt-su-pq}, and let $\HC$ be as in \eqref{orthogonal-basis-su-pq-final}.
Let $M$ be the maximal compact subgroup of ${\rm SU}(p,q)$ which leaves invariant
simultaneously the two subspace spanned by $\HC_+$ and $\HC_-$. Then $M$ contains $K$.
It follows either from Proposition \ref{max-cpt-su-pq-wrt-onb} or from Lemma \ref{reductive-part-comp} (4) that
$$
K \,\simeq\, K' \,:=\,
S \big( \prod_{ d \in \N_\d} ( {\rm U} (p_d) \times {\rm U } (q_d))_\Delta^d \big)\, .
$$
This implies that $\dim_\R \z(\k) \,=\, l-1$. We now appeal to Proposition \ref{max-cpt-su-pq-wrt-onb} to make the following observations :
\begin{enumerate}
\item If $\N_\d \,=\, \E_\d$, then $\k \,\subset\, [\m,\,\m] $.
\item If $\# \O_\d \,\geq\, 1$ and $l\,\geq\, 2$, then $\k,\not\subset\, [\m,\,\m]$.
 \end{enumerate} 

 Since $K$ is not necessarily connected, we need to show that the adjoint action of $K$ on
$\z(\k)$ is trivial. For this, first denote
 $$
 \mathbf{L}:= \prod_{ d \in \N_\d} ( {\rm U} (p_d) \times {\rm U } (q_d))_\Delta^d 
 $$
 and identify $K$ with $K'$.
Let $\l$ be the Lie algebra of $\mathbf{L}$. Then $$[{\mathbf L} ,\, {\mathbf L}] \,=\, \prod_{ d \in \N_\d} ( {\rm SU} (p_d) \times {\rm SU } (q_d))_\Delta^d\, .$$
In particular $[{\mathbf L} ,\, {\mathbf L}] \,\subset\, K \subset \mathbf{L}$. Thus $[\l,\, \l]
\,= \,[\k ,\, \k]$, and hence
$ \z(\k) = \k \cap \z(\l)$. Since $\mathbf{L}$ is connected, the adjoint action of
$\mathbf{L}$ is trivial on $\z(\l)$. So the adjoint action of $K$ on $\z(\k)$ is trivial. 
 
{\it Proof of (1)}: 
{}From the above observations it follow that $\k \,\subset\, [\m,\,\m]$ when $\N_\d \,= \,\E_\d$.
As the adjoint action of $K$ on $\z(\k)$ is trivial, we have 
$ \big[(\z(\k)\cap [\m,\,\m])\big]^{K /K^\circ} \,=\,  \z(\k) \cap [\m,\,\m] \,=\,\z(\k)$. In view
of Theorem \ref{thm-nilpotent-orbit} we now have $ \dim_\R H^2(\OC_X,\, \R)\,=\,l-1 $.

{\it Proof of (2)}: Suppose $\d \,= \,[ d^{t_d}] $ where $t_d d\,=\,p+q$. Since $l\,=\,1$,
it follows that either $p_d \,=\, t_d$ or $q_d \,=\, t_d$. In both cases we have
$K \,\simeq\, S\big({\rm U}(t_d)_{\Delta }^{d}\big)$. So $\z(\k)$ is trivial.
Hence, in view of Theorem \ref{thm-nilpotent-orbit} we have $\dim_\R H^2(\OC_X,\, \R)\,=\,0$. 

{\it Proof of (3)}:  
{}From the above observations we have $\z(\k) \not \,\subset\, [\m,\,\m]$ when $\# \O_\d \,\geq 
\,1$ and $l \,\geq\, 2$. Since $\dim_\R \z(\m) \,=\,1$, it follows that $\dim_\R(\z(\k) \cap 
[\m,\,\m])\,=\, \dim_\R \z(\k) -1$.
By Theorem \ref{thm-nilpotent-orbit}, and the fact that the adjoint action of $K$ on $\z(\k)$ is
trivial, we conclude that
$$
  \dim_\R H^2(\OC_X,\, \R) \,=\,  \dim_\R  [(\z(\k)\cap [\m,\,\m])^*]^{K /K^\circ} \,=\, l-2\, .
$$
This completes the proof of the theorem.
\end{proof}

\subsection{Second cohomology groups of nilpotent orbits in \texorpdfstring{${\s\o}(p,q)$}{Lg}}\label{sec-so-pq}

Let $n$ be a positive integer and $(p,\, q)$ be a pair of non-negative integers such that $p + q 
\,=\,n$. The aim in this subsection is to compute the second cohomology groups of nilpotent orbits in the simple Lie algebra ${\s\o}(p,q)$ 
under the adjoint action of ${\rm SO}(p,q)^\circ$. We will further assume that $p,\, q\, >\,0$
as we deal with non-compact groups. Throughout this subsection $\<>$ denotes the symmetric
form on  $\R^n$ defined by $\langle x,\, y \rangle \,:=\, x^t{\rm I}_{p,q} y$, $x,\, y \,\in\, \R^n$, where
${\rm  I}_{p,q}$ is as in \eqref{defn-I-pq-J-n}. In this subsection we will follow 
notation as defined in \S \ref{sec-notation}.

We first need to describe a suitable parametrization of $\NC ({\rm SO} (p,q)^\circ)$, the set of all nilpotent orbits in ${\s\o}(p,q)$ under the adjoint action of ${\rm SO} (p,q)^\circ$.
Let $\Psi_{{\rm SL}_n (\R)} \,:\,  \NC ({\rm SL}_n(\R)) \,\longrightarrow\,\PC (n)$ be the  parametrization of $\NC ({\rm SL}_n (\R))$ as in Theorem \ref{sl-R-parametrization}.
Since ${\rm SO} (p,q) \,\subset\, {\rm SL}_n (\R)$ (consequently as, the set of nilpotent elements
$\NC_{{\s\o}(p,q)} \,\subset\, \NC_{\s\l_n(\R)}$)
we have the inclusion map
$\Theta_{{\rm SO}(p,q)^\circ}\,:\, \NC ({\rm SO} (p,q)^\circ) \,\longrightarrow\,\NC ( {\rm SL}_n (\R))$. Let
$$
\Psi'_{{\rm SO}(p,q)^\circ}\,:= \,\Psi_{{\rm SL}_n (\R)}\circ
\Theta_{{\rm SO}(p,q)^\circ}\,\colon\, \NC ({\rm SO} (p,q)^\circ)
\,\longrightarrow\, \PC (n)
$$ be the composition.
Recall that $\Psi'_{{\rm SO}(p,q)^\circ} ( \NC ({\rm SO} (p,q)^\circ))\,\subset \, \PC_1 (n)$ (this
follows form the first paragraph of Remark \ref{unitary-J-basis-rmk}).
Let $X \,\in\, {\s\o}(p,q)$ be a non-zero nilpotent element and $\OC_X$
the corresponding nilpotent orbit in $\s\o(p,q)$ under the adjoint action of 
${\rm SO} (p,q)^\circ$. Let $\{X,\, H,\, Y\} \,\subset\, {\s\o}(p,q)$ be a $\s\l_2(\R)$-triple. 
We now apply Proposition \ref{unitary-J-basis}, Remark \ref{unitary-J-basis-rmk} (1), and
follow the notation used therein. 
Let $V:= \R^n$ be the right $\R$-vector space of column vectors.
Let $\{d_1,\, \cdots,\, d_s\}$, with $d_1 \,<\, \cdots \,<\, d_s$, be ordered finite set of natural numbers
that occur as dimension of non-zero irreducible 
$\text{Span}_\R \{ X,H,Y\}$-submodules of $V$. Recall that $M(d-1)$ is defined to be the isotypical component of $V$ containing all irreducible Span$_\R \{X,H,Y\}$-submodules of $V$ with highest weight $d-1$ and as in \eqref{definition-L-d-1} we set
$L(d-1)\,:= \,V_{Y,0} \cap M(d-1)$. Let $t_{d_r} \,:=\, \dim_\R L(d_r-1)$,
$1 \,\leq \,r \,\leq\, s$. Then 
$\d\,:=\, [d_1^{t_{d_1}}, \,\cdots ,\, d_s^{t_{d_s}}]\,\in\, \PC_1(n)$, and  moreover,
$\Psi'_{{\rm SO}(p,q)^\circ} (\OC_X) \,=\, {\d}$.

We next assign $\sgn_{\OC_X} \in \SC^{\rm even}_{\d}(p,q)$ to each $\OC_X \,\in\,
 \NC({\rm SO}(p,q)^\circ)$; see \eqref{S-d-pq-even} for the definition of
$\SC^{\rm even}_{\d}(p,q)$.
For each $d\,\in\, \N_\d$ (see \eqref{Nd-Ed-Od} for the definition of $\N_\d$) we will define a $t_d \times  d$ matrix $(m^d_{ij})$ in
$ \Ab_{d}$ that depends only on the orbit $\OC_X$; see \eqref{A-d} for the definition of $\Ab_d$.
For this, recall that the form $(\cdot,\,\cdot)_{d} \,\colon\, L(d-1) \times L(d-1) \,
\longrightarrow\, \R$, defined in \eqref{new-form}, is symmetric or symplectic
according as $d$ is odd or even. 
Denoting the  signature of $(\cdot,\,\cdot)_{d}$ by $(p_{d},\, q_{d})$ when  $d\,\in\, \O_\d$, we now define
\begin{align*}
 m^\eta_{i1} &:= +1 \qquad \text{if } \  1 \leq i \leq t_{\eta}, \quad \eta \in \E_\d\,; \\
 m^\theta_{i1} &:= \begin{cases}
                   +1  & \text{ if } \  1 \leq i \leq p_{\theta} \\
                   -1  &  \text{ if } \ p_\theta < i \leq t_\theta  
                 \end{cases}\,  , \theta \in \O_\d\,;
\end{align*}
and for $j \,>\,1$, define $(m^d_{ij})$ as in \eqref{def-sign-alternate} and \eqref{def-sign-alternate-1}.
 Then the matrices $(m^d_{ij})$ clearly verify \eqref{yd-def2}. Set $\sgn_{\OC_X} \,:=\,
((m^{d_1}_{ij}),\, \cdots,\, (m^{d_s}_{ij}))$. It now follows from the last paragraph of Remark \ref{CM-correction} and the above definition of $m^\eta_{i1}$ for $\eta \,\in\, \E_\d$ that $\sgn_{\OC_X}\,\in\, \SC^{\rm even}_{\d}(p,q)$.
Thus we have the map 
$$
\Psi_{{\rm SO} (p,q)^\circ} \,\colon\, \NC({\rm SO}(p,q)^\circ)\,
\longrightarrow\, \YC^{\rm even}_1(p,q)\, , \ \
\OC_X \,\longmapsto\, \big(\Psi'_{{\rm SO} (p,q)^\circ} (\OC_X),\, \sgn_{\OC_X}  \big) \,;
$$
where  $\YC^{\rm even}_1(p,q)$ is as in \eqref{yd-1-Y-pq}.
The map $\Psi_{{\rm SO} (p,q)^\circ}$ is surjective.  

\begin{theorem}\label{so-pq-parametrization}
For the above map $\Psi_{{\rm SO}(p,q)^\circ}$,
$$ \# \Psi_{{\rm SO}(p,q)^\circ}^{-1} (\d, \sgn)  =
\begin{cases}
  4 &    \text{ for all } \,  \d \in   \PC_{\rm v.even} (n) \\
  2 &    \text{ for all  } \,  \d \in \PC_1(n) \setminus  \PC_{\rm v.even} (n) ,  \, \sgn \in \SC'_{\d}(p,q)   \\  
  1 &    \text{ otherwise}. 
\end{cases}$$ 
\end{theorem}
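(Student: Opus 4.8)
The plan is to reduce the fiber count to a component-group computation inside $(\Z/2\Z)^2$. First I would recall that the signed Young diagram $(\d,\sgn)$ is a complete invariant for the action of the full isometry group ${\rm O}(p,q)$: given two nilpotents with the same $(\d,\sgn)$, Proposition \ref{unitary-J-basis} together with Remark \ref{unitary-J-basis-rmk} produces normal forms in which the forms $(\cdot,\cdot)_d$ on the spaces $L(d-1)$ have equal signatures $(p_d,q_d)$, and Witt's theorem then yields an isometry of $(\R^n,\<>)$ carrying one $\s\l_2(\R)$-triple to the other. Hence the fiber $\Psi_{{\rm SO}(p,q)^\circ}^{-1}(\d,\sgn)$ is exactly the set of ${\rm SO}(p,q)^\circ$-orbits contained in a single ${\rm O}(p,q)$-orbit $\OC_X$. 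Writing $G={\rm O}(p,q)$ and $G^\circ={\rm SO}(p,q)^\circ$, the deformation retraction of $G$ onto its maximal compact subgroup ${\rm O}(p)\times{\rm O}(q)$ identifies $G/G^\circ$ with $\pi_0({\rm O}(p))\times\pi_0({\rm O}(q))\simeq(\Z/2\Z)^2$, recorded by the two block determinants; and the number of $G^\circ$-orbits in $\OC_X$ equals $|(G/G^\circ)/{\rm im}\,\ZC_G(X)|$ since $G^\circ$ is normal. Thus everything reduces to identifying the image of $\ZC_{{\rm O}(p,q)}(X)$ in $(\Z/2\Z)^2$.

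Next I would replace $\ZC_{{\rm O}(p,q)}(X)$ by its reductive part $\ZC_{{\rm O}(p,q)}(X,H,Y)$, a deformation retract of it (the unipotent radical being contractible, as in Lemma \ref{reductive-part}), so the two have the same image. Dropping the determinant-one condition of Lemma \ref{reductive-part-comp} and using Remark \ref{unitary-J-basis-rmk} to recognize the forms $(\cdot,\cdot)_d$, this reductive part is isomorphic to $\prod_{\theta\in\O_\d}{\rm O}(p_\theta,q_\theta)\times\prod_{\eta\in\E_\d}{\rm Sp}(t_\eta,\R)$. Each symplectic factor is connected, so $\pi_0$ is carried entirely by the orthogonal factors indexed by $\O_\d$. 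In particular, when $\N_\d=\E_\d$ (equivalently, since $\d\in\PC_1(n)$, when $\d\in\PC_{\rm v.even}(n)$) the centralizer is connected, its image is trivial, and the fiber has $4$ elements.

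The core computation is the image of a single generator. For $\theta\in\O_\d$ and $1\le i\le t_\theta$, the reflection $r_{\theta,i}$ negating $v^\theta_i$ and fixing the other basis vectors of $L(\theta-1)$ is an isometry of $(\cdot,\cdot)_\theta$; by the centralizer description it negates $X^l v^\theta_i$ for every $l$, hence negates exactly the vectors $w^\theta_{il}$ of the orthogonal basis of Lemma \ref{orthogonal-basis-R} in the $i$-th string. Since these $r_{\theta,i}$ preserve the spans $V_+$ and $V_-$ of the positive- and negative-definite basis vectors, their block determinants are literal, and counting via the sign rules \ref{yd-def1}--\ref{yd-def2} gives that the image of $r_{\theta,i}$ in $(\Z/2\Z)^2$ is $\big((-1)^{l^+_{\theta,i}},(-1)^{l^-_{\theta,i}}\big)$, with $l^\pm_{\theta,i}$ as in Section \ref{sec-partition-Young-diagram}. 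These reflections generate $\pi_0$ of each ${\rm O}(p_\theta,q_\theta)$, hence generate ${\rm im}\,\ZC_G(X)$. The decisive arithmetic remark is that $l^+_{\theta,i}+l^-_{\theta,i}=\theta$ is odd, so each generator is either $(-1,+1)$ or $(+1,-1)$, never $(+1,+1)$ nor $(-1,-1)$.

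Finally I would read off the three cases. If $\O_\d=\emptyset$ the image is trivial and the fiber is $4$. Otherwise the image has order $2$ precisely when all generators coincide, that is, when either $l^+_{\theta,i}$ is even for all $\theta\in\O_\d$ and all $i$, or $l^-_{\theta,i}$ is even for all $\theta\in\O_\d$ and all $i$ — which is exactly the defining condition of $\SC'_\d(p,q)$ — yielding fiber $2$; and the image is all of $(\Z/2\Z)^2$, so fiber $1$, in the remaining case where both types of generator occur. I expect the main obstacle to be the determinant bookkeeping in the third step: verifying the formula for the image of $r_{\theta,i}$ uniformly over $\theta\in\O^1_\d$ and $\theta\in\O^3_\d$ and at the middle level $l=(\theta-1)/2$, and checking consistency with the central element $-I$, whose image must already lie in the subgroup generated by the reflections — this last point is where the hypothesis that even parts have even multiplicity, i.e. $\d\in\PC_1(n)$, is used.
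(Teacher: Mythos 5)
You should note at the outset that the paper contains no proof of Theorem \ref{so-pq-parametrization}: it is stated as a corrected form of \cite[Theorem 9.3.4]{CoM} (see the remark immediately following it and Remark \ref{CM-correction}), so the only comparison possible is with the standard argument that citation encodes — and your proposal is exactly that argument, correctly executed with the paper's own machinery. The reduction of the fiber to the set of ${\rm SO}(p,q)^\circ$-orbits inside a single ${\rm O}(p,q)$-orbit (completeness of the invariant via Proposition \ref{unitary-J-basis} and Remark \ref{unitary-J-basis-rmk}(1)), the identification of the count with the index of the image of $\ZC_{{\rm O}(p,q)}(X)$ in $\pi_0({\rm O}(p,q))\simeq(\Z/2\Z)^2$, the passage to the reductive part via Lemma \ref{reductive-part} and Lemma \ref{reductive-part-comp} with the determinant condition dropped, and the parity argument for the reflections $r_{\theta,i}$, whose images $\big((-1)^{l^+_{\theta,i}},(-1)^{l^-_{\theta,i}}\big)$ can never equal $(-1,-1)$ because $l^+_{\theta,i}+l^-_{\theta,i}=\theta$ is odd, together yield precisely the trichotomy $4/2/1$ with the $\SC'_\d(p,q)$ condition; I verified the case analysis against the boundary cases $\d=[1^n]$ (fiber $1$) and $\s\o(2,1)$ with $\d=[3^1]$ (fiber $2$), both consistent. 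One notational slip: in the paper's convention ${\rm Sp}(m,\R)$ acts on $\R^{2m}$, so the connected symplectic factors of the reductive centralizer are ${\rm Sp}(t_\eta/2,\R)$, as in \eqref{description-of-ZSO-Ve}, not ${\rm Sp}(t_\eta,\R)$; this does not affect anything since only their connectedness is used.

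Two fine points on the step you flagged as the main obstacle. First, the signs $\langle w^\theta_{il}, w^\theta_{il}\rangle$ along the $i$-th string given by Lemma \ref{orthogonal-basis-R}(2) do \emph{not} literally follow the alternation pattern of \ref{yd-def2} — the alternation reverses after the middle index $l=(\theta-1)/2$ — but only the multiset of signs enters the two block determinants, and Corollary \ref{c-m9.3.1} together with Remark \ref{CM-correction} guarantee that this multiset is exactly the one recorded by the $i$-th row of $M_\theta$; so your formula for the image of $r_{\theta,i}$ is correct uniformly over $\theta\in\O^1_\d$ and $\theta\in\O^3_\d$. Second, your closing worry about $-{\rm I}$ is moot: under the isomorphism of the reductive centralizer with $\prod_{\theta\in\O_\d}{\rm O}(p_\theta,q_\theta)\times\prod_{\eta\in\E_\d}{\rm Sp}(t_\eta/2,\R)$, the element $-{\rm I}$ is the product over all $\theta,i$ of the reflections $r_{\theta,i}$ times an element of the connected symplectic factors, so its class in $\pi_0$ automatically lies in the subgroup generated by the reflection classes. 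The hypothesis $\d\in\PC_1(n)$ plays no role in that consistency check; it is forced a priori by the non-degeneracy of the symplectic form $(\cdot,\cdot)_\eta$ on $L(\eta-1)$ for $\eta\in\E_\d$, and its only function is to delimit which partitions occur (and to give the equivalence $\O_\d=\emptyset \Leftrightarrow \d\in\PC_{\rm v.even}(n)$ that your first case uses).
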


\begin{remark}
Taking into account the error in \cite[Lemma 9.3.1]{CoM} pointed out in Remark \ref{CM-correction},
the above parametrization
in Theorem \ref{so-pq-parametrization} is a modification of Theorem 9.3.4 in \cite{CoM}.
\end{remark}
 
Let $0\,\not=\, X \in \NC _{\s\o (p,q)}$ and $\{X,H,Y\}\,\subset\, \s\o (p,q)$ be a 
$\s\l_2(\R)$-triple. Let $\Psi_{{\rm SO} (p,q)^\circ} (\OC_X) \,=\, \big( \d, \sgn_{\OC_X} 
\big)$. Then $\Psi'_{{\rm SO} (p,q)^\circ} (\OC_X) \,=\, \d$. Recall that $\sgn_{\OC_X}$ determines 
the signature of $(\cdot, \,\cdot)_\theta$ on $L(\theta -1)$, $\theta \,\in\, \O_\d$; let 
$(p_\theta, q_\theta)$ be the signature of $(\cdot,\, \cdot)_\theta$.

First assume that $\N_\d \,=\, \O_\d$.
Let $( v^{\theta}_1,\, \cdots,\, v^{\theta}_{t_{\theta}})$ be an ordered $\R$-basis of
$L(\theta-1)$ as in Proposition \ref{unitary-J-basis}. 
It now follows from Proposition \ref{unitary-J-basis}(3)(b) that $( v^{\theta}_1,\, \cdots,\,
v^{\theta}_{t_{\theta}} )$ is an orthogonal basis for $(\cdot,\, \cdot)_{\theta}$ when
$\theta \,\in \,\O_\d$. 
We also assume that the vectors in the ordered basis $( v^{\theta}_1, \,\cdots,\,
v^{\theta}_{t_{\theta}} )$ satisfies the properties in Remark \ref{unitary-J-basis-rmk}(1).
In view of the signature of $(\cdot, \,\cdot)_\theta$, $\theta \,\in\, \O_\d$, we may further
assume that
\begin{equation}\label{orthonormal-basis-odd-so-pq}
 ( v^{\theta}_j, v^\theta_j)_{\theta}  =
 \begin{cases}
    +1  &  \text{ if }   1 \leq j \leq p_{\theta}  \\
    -1  &  \text{ if }  p_{\theta} < j \leq t_{\theta}. 
 \end{cases}
\end{equation}

For $\theta \,\in\, \O_\d$,
let $\big\{  {w}^\theta_{jl} \,\mid\, 1 \,\leq\, j \,\leq\, t_{\theta}, \, 0 \,\leq\, l \,
\leq\, \theta-1 \big\}$ be the $\R$-basis of $M (\theta-1)$  as in Lemma \ref{orthogonal-basis-R}.
For each $0 \,\leq\, l \,\leq\, \theta-1$, define
$$
  V^l (\theta) \,:=\, \text{ Span}_\R \{  {w}^\theta_{1l}, \,\cdots ,\,  {w}^\theta_{t_{\theta}l} \}\, .
$$
The ordered basis $\big( {w}^\theta_{1l},\, \cdots ,\,  {w}^\theta_{t_{\theta}l} \big)$ of
$ V^l (\theta) $ is denoted by $\CC^l (\theta)$.

\begin{lemma}\label{reductive-part-comp-so-pq}
For $\N_\d\,= \,\O_\d$,
 $$
 \ZC_{ {\rm SO} (p,q) }(X,H,Y) = \Bigg\{ g \in { {\rm SO} (p,q) } \Biggm| 
                                        \begin{array}{cc}
                             g (  V^l (\theta)  ) \, \subset \, V^l (\theta) \,  \text{ and }    \vspace{.14cm}\\
                             \! \big[g|_{ V^l(\theta)}\big]_{{\CC}^l(\theta)} = \big[g |_{ V^0 (\theta)}\big]_{{\CC}^0 (\theta)} \text{ for all } \theta \in \O_\d, \, 0 \leq l < \theta \!
                                       \end{array}
                                           \Bigg\}.
                                           $$
 \end{lemma}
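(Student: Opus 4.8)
The plan is to realize $\ZC_{{\rm SO}(p,q)}(X,H,Y)$ as the intersection ${\rm SO}(p,q) \cap \ZC_{{\rm SL}_n(\R)}(X,H,Y)$ and then to rewrite the conditions supplied by Lemma \ref{reductive-part-comp}(1), which are phrased in terms of the spaces $X^l L(\theta-1)$ and the ordered bases $\BC^l(\theta)$, as the asserted conditions in terms of the spaces $V^l(\theta)$ and the ordered bases $\CC^l(\theta)$. The whole point is that, since $\N_\d \,=\, \O_\d$, for each $\theta \,\in\, \O_\d$ the basis $\{w^\theta_{jl}\}$ of $M(\theta-1)$ produced in Lemma \ref{orthogonal-basis-R}(2) is obtained from the basis $\{X^l v^\theta_j\}$ by the fixed, $l$-independent invertible mixing $w^\theta_{jl} = (X^l v^\theta_j + X^{\theta-1-l}v^\theta_j)/\sqrt 2$ and $w^\theta_{j(\theta-1-l)} = (X^l v^\theta_j - X^{\theta-1-l}v^\theta_j)/\sqrt 2$ for $0 \,\leq\, l \,<\, (\theta-1)/2$, together with $w^\theta_{j,(\theta-1)/2} = X^{(\theta-1)/2}v^\theta_j$ at the middle index. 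This is precisely the real, odd-parts-only specialization of the computation already carried out in the proof of Lemma \ref{reductive-part-comp-su-pq}, so I expect the argument to go through with essentially only notational changes.

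For the inclusion $\subseteq$, I would take $g \,\in\, \ZC_{{\rm SO}(p,q)}(X,H,Y)$. By Lemma \ref{reductive-part-comp}(1) it preserves each $X^l L(\theta-1)$ and acts there by a matrix $(a^\theta_{ij})$ that does not depend on $l$; in particular $g$ acts by the same matrix on $X^l L(\theta-1)$ and on $X^{\theta-1-l}L(\theta-1)$. Applying $g$ to the displayed linear combinations then gives $g(w^\theta_{jl}) = \sum_i w^\theta_{il}\,a^\theta_{ij}$, so $g$ preserves every $V^l(\theta)$ and $[g|_{V^l(\theta)}]_{\CC^l(\theta)} = (a^\theta_{ij}) = [g|_{V^0(\theta)}]_{\CC^0(\theta)}$, which is exactly the required condition.

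For the reverse inclusion I would start with $g \,\in\, {\rm SO}(p,q)$ satisfying the $V^l(\theta)$-conditions and invert the mixing: $X^l v^\theta_j = (w^\theta_{jl} + w^\theta_{j(\theta-1-l)})/\sqrt 2$ and $X^{\theta-1-l}v^\theta_j = (w^\theta_{jl} - w^\theta_{j(\theta-1-l)})/\sqrt 2$. Because the prescribed matrix is the \emph{same} on $V^l(\theta)$ and on $V^{\theta-1-l}(\theta)$, substituting shows that $g$ preserves each $X^l L(\theta-1)$ and acts there by an $l$-independent matrix. By Lemma \ref{reductive-part-comp}(1) this places $g$ in $\ZC_{{\rm SL}_n(\R)}(X,H,Y)$, and since $g \,\in\, {\rm SO}(p,q)$ we conclude $g \,\in\, \ZC_{{\rm SO}(p,q)}(X,H,Y)$; note that commutation with $H$ is automatic once $g$ preserves the graded pieces, and commutation with $Y$ comes for free from Lemma \ref{comm-XH} once commutation with $X$ and $H$ is known.

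The only delicate bookkeeping — and the step I would treat most carefully — is the middle index $l = (\theta-1)/2$, where the two-term mixing degenerates to $V^{(\theta-1)/2}(\theta) = X^{(\theta-1)/2}L(\theta-1)$ and there is no partner to pair it with; here the $V^l(\theta)$-condition and the $X^l L(\theta-1)$-condition coincide outright, so it causes no genuine trouble but must be handled separately from the generic case $l \,\neq\, (\theta-1)/2$. Everything else is the routine verification that the fixed, invertible, $l$-independent transition between $\{X^l v^\theta_j,\, X^{\theta-1-l}v^\theta_j\}$ and $\{w^\theta_{jl},\, w^\theta_{j(\theta-1-l)}\}$ intertwines the two sets of conditions, exactly as in the proof of Lemma \ref{reductive-part-comp-su-pq}.
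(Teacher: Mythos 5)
Your proof is correct and is essentially the paper's intended argument: the paper omits the proof of this lemma as ``identical to that of Lemma \ref{reductive-part-comp-su-pq}'', and your route --- writing $\ZC_{{\rm SO}(p,q)}(X,H,Y) = {\rm SO}(p,q) \cap \ZC_{{\rm SL}_n(\R)}(X,H,Y)$, invoking Lemma \ref{reductive-part-comp}(1), and transporting the conditions through the fixed $l$-independent two-term mixing between $\{X^l v^\theta_j,\, X^{\theta-1-l}v^\theta_j\}$ and $\{w^\theta_{jl},\, w^\theta_{j(\theta-1-l)}\}$ from Lemma \ref{orthogonal-basis-R}(2), treating the degenerate middle index $l=(\theta-1)/2$ separately --- is exactly that omitted argument specialized to $\D = \R$ with $\N_\d = \O_\d$. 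Your bookkeeping (the inversion of the mixing, the equality of matrices on the paired spaces $V^l(\theta)$ and $V^{\theta-1-l}(\theta)$, and the appeal to Lemma \ref{comm-XH} for commutation with $Y$) all checks out.
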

\begin{proof}
We omit the proof as it is identical to that of Lemma \ref{reductive-part-comp-su-pq}. 
\end{proof}
 
We next impose orderings on the sets $\{v \in \CC^l (\theta) \mid \langle v, v \rangle > 0 \}$,
$\{ v \in \CC^l (\theta) \mid \langle v, v \rangle < 0 \}$.
Define the ordered sets by 
$\CC^l_+ (\theta)$, $\CC^l_- (\theta)$, $\CC^l_+ (\zeta)$ and $\CC^l_- (\zeta)$ as in \eqref{C+theta}, \eqref{C-theta}, \eqref{C+zeta}, \eqref{C-zeta}, respectively according as  $\theta \in \O^1_\d$ or $\zeta \in \O^3_\d$. 
For all $\theta \,\in\, \O_\d$ and $0 \,\leq\, l \,\leq\, \theta-1$, set
$$
V^l_+ (\theta) : = \text{ Span}_\R \{ v \mid  v \in  \CC^l (\theta),\langle v, v \rangle > 0 \}, \quad
V^l_- (\theta) : = \text{ Span}_\R \{ v \mid  v \in  \CC^l (\theta),\langle v, v \rangle < 0 \}.
$$
It is straightforward from \eqref{orthonormal-basis-odd-so-pq}, and the orthogonality relations
in Lemma \ref{orthogonal-basis-R}, that
$\CC^l_+ (\theta)$ and $ \CC^l_- (\theta)$ are indeed ordered bases of $V^l_+ (\theta)$ and
$V^l_- (\theta)$, respectively.

In the next lemma we specify a maximal compact subgroup of $\ZC_{ {\rm SO} (p,q) } (X,H,Y)$ which will be used in Proposition \ref{max-cpt-so-pq-wrt-onb}.
As before, the notation $ (-1)^l $ stands for the sign `$+$' or the sign `$-$' according as $l$
is an even or odd integer.

\begin{lemma}\label{max-cpt-so-pq}
Suppose that $\N_\d \,=\, \O_\d$. Let $K$ be the subgroup of $\ZC_{ {\rm SO} (p,q)}(X,H,Y)$ consisting
of all $g\,\in\, \ZC_{{\rm SO}(p,q)}(X,H,Y)$ such that the following hold:
\begin{enumerate}
 \item $g ( V^l_+ (\theta)) \,\subset \, V^l_+ (\theta)$ and $g ( V^l_- (\theta)  ) \,
\subset \, V^l_-(\theta)$, for all  $ \theta \in \O_\d$ and $ 0 \,\leq\, l \,\leq\, \theta -1$.

\item When $ \theta \in \O^1_\d$,
$$
\Big[g |_{ V_+^0 (\theta) }\Big]_{{\CC}^0_+ (\theta)} = 
             \begin{cases}                                                            
                 \Big[g |_{  V^l_{(-1)^{l}} (\theta)}\Big]_{{\CC}^l_{(-1)^{l}} (\theta)} & \text{ for all } 0 \leq l < (\theta-1)/2 \vspace{.15cm}\\
            \Big[g |_{  V^{(\theta-1)/2}_{+} (\theta)}\Big]_{{\CC}^{(\theta-1)/2}_{+} (\theta)} \vspace{.15cm}  \\
                 \Big[g |_{  V^l_{(-1)^{l+1}} (\theta)}\Big]_{{\CC}^l_{(-1)^{l+1} } (\theta)} &  \text{ for all } (\theta-1)/2 < l \leq \theta-1,
                 \end{cases}                 
$$
$$
  \Big[g |_{ V_-^0 (\theta) }\Big]_{{\CC}^0_- (\theta)} =
     \begin{cases}
      \Big[g |_{  V^l_{(-1)^{l+1}} (\theta)}\Big]_{{\CC}^l_{(-1)^{l+1} } (\theta)} &  \text{ for all } 0 \leq l < (\theta-1)/2 \vspace{.15cm}\\
       \Big[g |_{  V^{(\theta-1)/2}_{-} (\theta)}\Big]_{{\CC}^{(\theta-1)/2}_{-} (\theta)} \vspace{.15cm}  \\
      \Big[g |_{  V^l_{(-1)^{l}} (\theta)}\Big]_{{\CC}^l_{(-1)^{l}} (\theta)} &  \text{ for all } (\theta-1)/2 < l \leq \theta-1 .     
     \end{cases}
$$ 

\item When $ \zeta \in \O^3_\d $,
$$
\Big[g |_{ V_+^0 (\zeta) }\Big]_{{\CC}^0_+ (\zeta)} = 
             \begin{cases}                                                            
                 \Big[g |_{  V^l_{(-1)^{l}} (\zeta)}\Big]_{{\CC}^l_{(-1)^{l}} (\zeta)} &  \text{ for all } 0 \leq l < (\zeta-1)/2 \vspace{.15cm}\\            
      \Big[g |_{  V^{(\zeta-1)/2}_{-} (\zeta)}\Big]_{{\CC}^{(\zeta-1)/2}_{-} (\zeta)} \vspace{.15cm}  \\
                 \Big[g |_{  V^l_{(-1)^{l+1}} (\zeta)}\Big]_{{\CC}^l_{(-1)^{l+1} } (\zeta)} & \text{ for all } (\zeta-1)/2 < l \leq \zeta-1,
                 \end{cases}                 
$$
$$
  \Big[g |_{ V_-^0 (\zeta) }\Big]_{{\CC}^0_- (\zeta)} =
     \begin{cases}
      \Big[g |_{  V^l_{(-1)^{l+1}} (\zeta)}\Big]_{{\CC}^l_{(-1)^{l+1} } (\zeta)} &  \text{ for all } 0 \leq l < (\zeta-1)/2 \vspace{.15cm}\\
      \Big[g |_{  V^{(\zeta-1)/2}_{+} (\zeta)}\Big]_{{\CC}^{(\zeta-1)/2}_{+} (\zeta)} \vspace{.15cm}  \\
            \Big[g |_{  V^l_{(-1)^{l}} (\zeta)}\Big]_{{\CC}^l_{(-1)^{l}} (\zeta)} &  \text{ for all } (\zeta-1)/2 < l \leq \zeta-1.    
     \end{cases}
$$ 
\end{enumerate}
Then $K$ is a maximal compact subgroup of $\ZC_{{\rm SO} (p,q)} (X,H,Y)$.
\end{lemma}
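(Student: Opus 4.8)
The plan is to mirror, line for line, the proof of Lemma \ref{max-cpt-su-pq}; since here $\N_\d=\O_\d$ there are no even parts, so only the two families of relations attached to $\O^1_\d$ and $\O^3_\d$ will be needed. Write $V_+:=\bigoplus_{\theta\in\O_\d}\bigoplus_{0\le l\le\theta-1}V^l_+(\theta)$ and $V_-:=\bigoplus_{\theta\in\O_\d}\bigoplus_{0\le l\le\theta-1}V^l_-(\theta)$; by \eqref{orthonormal-basis-odd-so-pq} and the orthogonality relations of Lemma \ref{orthogonal-basis-R} these are, respectively, a maximal positive-definite and a maximal negative-definite subspace of $V$ for $\langle\cdot,\,\cdot\rangle$, of dimensions $p$ and $q$. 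First I would let $M$ be the maximal compact subgroup of ${\rm SO}(p,q)$ preserving $V_+$ and $V_-$ (so $M\cong S({\rm O}(p)\times{\rm O}(q))$) and set $K_0:=M\cap\ZC_{{\rm SO}(p,q)}(X,H,Y)$. Exactly as in the ${\s\l}_n(\R)$ and $\s\u(p,q)$ cases --- using Lemma \ref{reductive-part-comp}(4) to realise $\ZC_{{\rm SO}(p,q)}(X,H,Y)$ as a group of type $S\big(\prod_{\theta\in\O_\d}{\rm O}(p_\theta,q_\theta)_\Delta^\theta\big)$, whose product of maximal compacts is maximal compact, as in the argument following \eqref{sl-n-R-reductive-part} --- the group $K_0$ is a maximal compact subgroup of $\ZC_{{\rm SO}(p,q)}(X,H,Y)$. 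Since every $g\in\ZC_{{\rm SO}(p,q)}(X,H,Y)$ preserves each $V^l(\theta)$ by Lemma \ref{reductive-part-comp-so-pq}, and $V^l(\theta)\cap V_\pm=V^l_\pm(\theta)$, an element of $\ZC_{{\rm SO}(p,q)}(X,H,Y)$ lies in $K_0$ if and only if it satisfies condition (1). Thus $K_0$ is precisely the set of $g\in\ZC_{{\rm SO}(p,q)}(X,H,Y)$ obeying (1), and it remains only to prove $K=K_0$.

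To this end I would record the juxtaposition relations that decompose $\CC^l(\theta)$ into its sign-ordered pieces: for $\theta\in\O^1_\d$ and $\zeta\in\O^3_\d$ these are exactly \eqref{eqn-max-cpt-su-pq-2} and \eqref{eqn-max-cpt-su-pq-3}, whose proofs use only the orderings in \eqref{C+theta}--\eqref{C-zeta} together with \eqref{orthonormal-basis-odd-so-pq}, and hence transfer without change. Now fix $g\in\ZC_{{\rm SO}(p,q)}(X,H,Y)$ satisfying (1). Being in the centralizer, $g$ already obeys $\big[g|_{V^l(\theta)}\big]_{\CC^l(\theta)}=\big[g|_{V^0(\theta)}\big]_{\CC^0(\theta)}$ for all $\theta,\,l$. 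Writing both sides in block form along the juxtapositions \eqref{eqn-max-cpt-su-pq-2} and \eqref{eqn-max-cpt-su-pq-3}, condition (1) makes each such matrix block-diagonal in its $+$ and $-$ pieces; reading off the diagonal blocks level by level then yields exactly the equalities (2) for $\theta\in\O^1_\d$ and (3) for $\zeta\in\O^3_\d$. Running the same block computation in reverse shows that (1) together with (2) and (3) forces the centralizer block identity, so $g$ satisfying (1)--(3) indeed lies in $K_0$. Hence $K=K_0$, which is maximal compact. This bookkeeping is formally identical to the $\s\u(p,q)$ computation, so I would merely indicate it.

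The hard part will be the sign bookkeeping separating $\O^1_\d$ from $\O^3_\d$. Because of the modified choice of $m^d_{id}$ for $d\in\O^3_\d$ in \ref{yd-def2} --- repeating rather than alternating the sign in the last column, as forced by Corollary \ref{c-m9.3.1}(3) and explained in Remark \ref{CM-correction} --- the pairing between a level $l$ and the $+$ or $-$ sign piece is reversed at the middle level $l=(\zeta-1)/2$ for $\zeta\in\O^3_\d$ compared with $\theta\in\O^1_\d$. This is precisely the source of the interchange of the $+$ and $-$ blocks at the middle level seen on comparing (2) with (3), and the one place where one must check that this single swap propagates consistently through \eqref{eqn-max-cpt-su-pq-3}; no idea beyond the unitary case is required.
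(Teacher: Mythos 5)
Your proposal is correct and is essentially the paper's own argument: the paper omits the proof of Lemma \ref{max-cpt-so-pq} precisely because it is identical to that of Lemma \ref{max-cpt-su-pq}, and your write-up is exactly that adaptation, specialized to $\N_\d=\O_\d$ (so only the relations \eqref{eqn-max-cpt-su-pq-2} and \eqref{eqn-max-cpt-su-pq-3} are used, with Lemma \ref{orthogonal-basis-R} in place of Lemma \ref{orthogonal-basis-C}). Your minor repackaging of the maximal-compactness step --- realizing $K_0=M\cap \ZC_{{\rm SO}(p,q)}(X,H,Y)$ via Lemma \ref{reductive-part-comp}(4) and the $S(\prod {\rm O}(p_\theta,q_\theta))$ description, rather than asserting it directly from Lemma \ref{reductive-part-comp-so-pq} --- is sound and does not constitute a different route.
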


\begin{proof}
 We omit the proof as it is identical to the proof of Lemma \ref{max-cpt-su-pq}. 
\end{proof}

The following lemma is required in the proof of Theorem \ref{so-pq-2} (2)(iv).
Recall that $\BC^0(d)$ is an ordered basis of $L(d-1)$ as in 
\eqref{old-ordered-basis-part} with $l\,=\,0$ and satisfying Remark \ref{unitary-J-basis-rmk} (1).

\begin{lemma}\label{max-cpt-so-p2}
   Suppose that $\Psi_{{\rm SO}(p,2)^\circ}(\OC_X)\,=\, ( [1^{p-2},\, 2^2],\, ((m^1_{ij}), \,
(m^2_{ij})))$, where $(m^1_{ij})$ and $(m^2_{ij})$ are $(p-2) \times 1$ and $2\times 2$
matrices, respectively, satisfying $m^1_{i1} = +1$ with $1 \,\leq\, i \,\leq\, p-2$, $m^2_{i1} 
\,= \,+1$ with $1 \,\leq\, i \,\leq\, 2$, and \ref{yd-def2}.          
Let $K$ be the subgroup of $\ZC_{{\rm SO}(p,2)}(X,H,Y)$ consisting of all $g\,\in\,
\ZC_{{\rm SO}(p,2)}(X,H,Y)$ such that the following hold:
\begin{enumerate}
  \item $g(L(1)) \subset L(1)$, $g(XL(1)) \subset XL(1)$, $\big[g|_{L(1)} \big]_{\BC^0(2)} = \big[g|_{XL(1)} \big]_{\BC^1(2)}$ and \\
  $\big[g|_{L(1)} \big]_{\BC^0(2)}  \begin{pmatrix}
                                         0 & -1 \\
                                         1 & 0
                                          \end{pmatrix}
                                          =\begin{pmatrix}
                                         0 & -1 \\
                                         1 &  0
                                          \end{pmatrix} \big[g|_{L(1)} \big]_{\BC^0(2)}$.
 \item $g(L(0)) \subset L(0)$.
 \end{enumerate}
Then $K$ is a maximal compact subgroup of $\ZC_{{\rm SO}(p,2)}(X,H,Y)$. 
\end{lemma}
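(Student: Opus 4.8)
The plan is to identify the centralizer $\ZC_{{\rm SO}(p,2)}(X,H,Y)$ explicitly as a direct product with a single non-compact factor, and then to recognize $K$ as its maximal compact subgroup. First I would record the combinatorial data attached to $\d = [1^{p-2},\,2^2]$: here $\N_\d = \{1,2\}$, $\E_\d = \{2\}$, $\O_\d = \{1\}$, with $t_1 = p-2$ and $t_2 = 2$. The hypothesis $m^1_{i1} = +1$ forces $(\cdot,\,\cdot)_1$ to be positive definite on $L(0)$, while on $L(1)$ the form $(\cdot,\,\cdot)_2$ is symplectic of rank $2$ by Remark \ref{unitary-J-basis-rmk}; via Corollary \ref{c-m9.3.1} this reproduces the signatures $(p-2,0)$ on $M(0)$ and $(2,2)$ on $M(1)$. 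Since the symmetric form makes ${\rm SO}(p,2) = {\rm SU}(\R^{p+2},\, \langle \cdot,\, \cdot \rangle)$, Lemma \ref{reductive-part-comp}(3) applies directly: any $g \in \ZC_{{\rm SO}(p,2)}(X,H,Y)$ preserves $L(0)$, $L(1)$ and $XL(1)$, acts by the same matrix on $L(1)$ and $XL(1)$ in the bases $\BC^0(2),\BC^1(2)$, and preserves $(\cdot,\,\cdot)_1$ and $(\cdot,\,\cdot)_2$. Thus $g$ is determined by $g_1 := g|_{L(0)} \in {\rm O}(p-2)$ and $g_2 := [g|_{L(1)}]_{\BC^0(2)} \in {\rm Sp}(2,\R) = {\rm SL}_2(\R)$.

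Next I would pin down the determinant constraint. As $g$ acts as $g_2$ on both $L(1)$ and $XL(1)$, its determinant on $M(1)$ is $(\det g_2)^2 = 1$; since $g$ has determinant $1$ on $\R^{p+2}$, it follows that $\det g_1 = 1$, so $g_1 \in {\rm SO}(p-2)$. This yields $\ZC_{{\rm SO}(p,2)}(X,H,Y) \simeq {\rm SO}(p-2) \times {\rm SL}_2(\R)$, whose unique non-compact factor is ${\rm SL}_2(\R)$. I would then translate the defining conditions of $K$ into this picture: all of condition (2) and all of condition (1) except its last relation hold automatically for every centralizer element, by the block description above, so the only genuine restriction is that $g_2$ commute with $J = \begin{pmatrix} 0 & -1 \\ 1 & 0 \end{pmatrix}$.

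A short linear-algebra computation, using $\langle v^2_1,\, X v^2_2 \rangle = 1$ from Remark \ref{unitary-J-basis-rmk}(1), shows that $-J$ is exactly the matrix of the symplectic form $(\cdot,\,\cdot)_2$ in the basis $\BC^0(2)$. Hence a matrix $g_2 \in {\rm SL}_2(\R)$ that already preserves $(\cdot,\,\cdot)_2$ and commutes with $J$ satisfies both $g_2^t J g_2 = J$ and $g_2 J = J g_2$, whence $g_2^t g_2 = I$; together with commuting with $J$ this characterizes the rotation matrices, so $\{g_2 \in {\rm SL}_2(\R) \mid g_2 J = J g_2\} = {\rm SO}(2)$. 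Therefore $K \simeq {\rm SO}(p-2) \times {\rm SO}(2)$. Finally, since ${\rm SO}(p-2)$ is compact (hence its own maximal compact subgroup) and ${\rm SO}(2)$ is a maximal compact subgroup of ${\rm SL}_2(\R) = {\rm Sp}(2,\R)$, the product $K$ is a maximal compact subgroup of $\ZC_{{\rm SO}(p,2)}(X,H,Y)$, as claimed.

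The main obstacle I anticipate is the bookkeeping of the first step together with the small identity identifying $J$ with the symplectic form $(\cdot,\,\cdot)_2$: it is this identification that makes the commutation condition with $J$ cut the symplectic factor ${\rm SL}_2(\R)$ down precisely to its maximal compact ${\rm SO}(2)$, which is the one genuinely new feature here (the even part $d=2$) compared with Lemma \ref{max-cpt-so-pq}, where $\N_\d = \O_\d$ and all the relevant forms are symmetric. The remaining verifications parallel the proofs of Lemmas \ref{max-cpt-su-pq} and \ref{max-cpt-so-pq}.
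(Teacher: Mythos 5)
Your proof is correct and follows essentially the same route as the paper: both identify the centralizer via Lemma \ref{reductive-part-comp} as ${\rm SO}(p-2) \times {\rm Sp}(1,\R)$ (your ``${\rm Sp}(2,\R)$'' is the paper's ${\rm Sp}(1,\R) = {\rm SL}_2(\R)$), observe that $\BC^0(2)$ is a symplectic basis for $(\cdot,\cdot)_2$, and recognize the commutation condition with $J$ as cutting the symplectic factor down to its maximal compact subgroup. The only difference is cosmetic: where the paper cites Lemma \ref{max-cpt-sp-n-R}(1) for maximal compactness of the commutant of the complex structure, you verify $\{g \in {\rm SL}_2(\R) \mid gJ = Jg\} = {\rm SO}(2)$ by a direct $2\times 2$ computation, which is equivalent.
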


  \begin{proof}
  Note that the form $(\cdot,\, \cdot)_1$ defined as in \eqref{new-form} is symmetric
on $L(1-1)\times L(1-1)$ with signature $(p-2,0)$, and 
 the form $(\cdot,\,\cdot)_2$ defined as in \eqref{new-form} is symplectic
on $L(2-1)\times L(2-1)$. Moreover, it follows from Proposition \ref{unitary-J-basis} that
$\BC^0(2) \,=\, (v_1^2 ~;~ v_2^2)$ is a symplectic basis of $L(2-1)$ for $(\cdot,\,\cdot)_2$. 
Now the lemma follows from Lemma \ref{reductive-part-comp}(4) and
Lemma \ref{max-cpt-sp-n-R}(1).
 \end{proof}

We next introduce some notation which will be needed in Proposition \ref{max-cpt-so-pq-wrt-onb} and in Proposition \ref{max-cpt-so-pq-0-wrt-onb}. 
We assume that $\N_\d= \O_\d$. For $\theta \,\in\, \O_\d$, define
$$
\CC_+ (\theta) \,:=\, \CC^0_+ (\theta) \vee \cdots \vee  \CC^{\theta-1}_+ (\theta) \ \ \text{ and } \ \
\CC_- (\theta) \,:=\, \CC^0_- (\theta) \vee \cdots \vee  \CC^{\theta-1}_- (\theta)\, .
$$
Let $\beta \,:=\, \# \O^1_\d$ and $ \gamma \,:=\, \# \O^3_\d $.
We enumerate 
 $\O^1_\d \,=\,\{ \theta_j \,\mid\, 1 \,\leq\, j \,\leq\, \beta \}$ such that $\theta_j 
\,<\, \theta_{j+1}$ and similarly $\O^3_\d \,=\,\{ \zeta_j \,\mid\, 1 \,\leq\, j \,\leq\, \gamma \}$
such that $\zeta_j \,<\, \zeta_{j+1}$. Set
$$
\OC^1_+ \,:=\, \CC_+ (\theta_1) \vee \cdots \vee \CC_+ (\theta_{\beta})\, ; \quad  \OC^3_+ := \CC_+ (\zeta_1) \vee \cdots \vee \CC_+ (\zeta_{\gamma})\,;
$$
$$ 
\OC^1_- := \CC_- (\theta_1) \vee \cdots \vee  \CC_- (\theta_{\beta})\ \ \text{ and } \ \ \OC^3_- := \CC_- (\zeta_1) \vee \cdots \vee  \CC_- (\zeta_{\gamma}).
$$
Now define
\begin{equation}\label{orthogonal-basis-so-pq-final}
 \HC_+ \,:=\,  \OC^1_+ \vee\OC^3_+ , \ \ \HC_- := \OC^1_- \vee\OC^3_- \ \text{ and } \
\HC := \HC_+ \vee \HC_-\, .
\end{equation}
It is clear that $ \HC$ is a standard orthogonal basis of $V$ such that 
$\HC_+\, =\, \{ v \,\in\, \HC \,\mid\, \langle v,\, v \rangle \,=\,1 \}$ and 
$\HC_- \,= \,\{ v \,\in\, \HC \,\mid\, \langle v,\, v \rangle \,=\,-1 \}$. In particular,
$\# \HC_+ \,=\, p$ and $\# \HC_- \,=\,q$. From the definition of $\HC_+$ and $\HC_-$ as given
in \eqref{orthogonal-basis-so-pq-final} we have the following relations:
$$
 \sum_{j=1}^\beta \big( \frac{\theta_j+1}{2}p_{\theta_j} +  \frac{\theta_j-1}{2}q_{\theta_j}\big)  + \sum_{k=1}^\gamma\big( \frac{\zeta_k-1}{2}p_{\zeta_k} +  \frac{\zeta_k+1}{2}q_{\zeta_k}\big) =p
$$
and 
$$
 \sum_{j=1}^\beta \big( \frac{\theta_j-1}{2}p_{\theta_j} +  \frac{\theta_j+1}{2}q_{\theta_j}\big)  + \sum_{k=1}^\gamma\big( \frac{\zeta_k+1}{2}p_{\zeta_k} +  \frac{\zeta_k-1}{2}q_{\zeta_k}\big) =q.
$$

The $\R$-algebra $\prod_{j=1}^\beta \big( {\rm M}_{p_{\theta_j}} (\R) \times
{\rm M}_{q_{\theta_j}} (\R) \big) \times \prod_{k=1}^\gamma
\big( {\rm M}_{p_{\zeta_k}} (\R) \times {\rm M}_{q_{\zeta_k}} (\R) \big)$ is embedded
in ${\rm M}_p(\R)$ and in ${\rm M}_q(\R)$ as follows:
$$
\Db_{p} \colon   \prod_{j=1}^\beta \big( {\rm M}_{p_{\theta_j}} (\R) \times {\rm M}_{q_{\theta_j}} (\R) \big) \times \prod_{k=1}^\gamma \big( {\rm M}_{p_{\zeta_k}} (\R) \times {\rm M}_{q_{\zeta_k}} (\R) \big) \longrightarrow {\rm M}_{p}(\R)  
$$
 \begin{align*}
 \big( C_{\theta_1}, D_{\theta_1} ,& \cdots,  C_{\theta_\beta}, D_{\theta_\beta}; E_{\zeta_1}, F_{\zeta_1}, \cdots ,  E_{\zeta_\gamma}, F_{\zeta_\gamma}   \big) \,\longmapsto
\end{align*}
 $$
      \bigoplus_{j=1}^\beta \Big( \big( C_{\theta_j} \oplus D_{\theta_j}\big)_\blacktriangle ^{\frac{\theta_j-1}{4}} \oplus C_{\theta_j} \oplus \big( C_{\theta_j} \oplus D_{\theta_j}\big)_\blacktriangle ^{\frac{\theta_j-1}{4}} \Big) \\ 
  \oplus  \bigoplus_{k=1}^\gamma \Big( \big( E_{\zeta_k}\oplus F_{\zeta_k} \big) _\blacktriangle ^{\frac{\zeta_k + 1}{4}}  \oplus \big( F_{\zeta_k}\oplus E_{\zeta_k} \big) _\blacktriangle ^{\frac{\zeta_k -3}{4}}  \oplus F_{\zeta_k} \Big)
  $$
and
$$
\Db_{q}  \colon   \prod_{j=1}^\beta \big( {\rm M}_{p_{\theta_j}} (\R) \times {\rm M}_{q_{\theta_j}} (\R) \big) \times \prod_{k=1}^\gamma \big( {\rm M}_{p_{\zeta_k}} (\R) \times {\rm M}_{q_{\zeta_k}} (\R) \big) \longrightarrow {\rm M}_{q}(\R)
$$
$$
\big( C_{\theta_1}, D_{\theta_1} , \cdots,  C_{\theta_\beta}, D_{\theta_\beta}; E_{\zeta_1}, F_{\zeta_1}, \cdots ,  E_{\zeta_\gamma}, F_{\zeta_\gamma} \big) \,\longmapsto
$$
$$
  \bigoplus_{j=1}^\beta \Big( \big( D_{\theta_j} \oplus C_{\theta_j}\big)_\blacktriangle ^{\frac{\theta_j-1}{4}} \oplus D_{\theta_j} \oplus \big( D_{\theta_j} \oplus C_{\theta_j}\big)_\blacktriangle ^{\frac{\theta_j-1}{4}} \Big) \\ 
 \oplus \bigoplus_{k=1}^\gamma \Big( \big( F_{\zeta_k}\oplus E_{\zeta_k} \big) _\blacktriangle ^{\frac{\zeta_k + 1}{4}}  \oplus \big( E_{\zeta_k}\oplus F_{\zeta_k} \big) _\blacktriangle ^{\frac{\zeta_k -3}{4}}  \oplus E_{\zeta_k} \Big). 
 $$
Define two characters
$$
\bigchi_{p} \colon \prod_{j=1}^\beta \big( {\rm O}_{p_{\theta_j}} \times {\rm O}_{q_{\theta_j}}  \big) \times \prod_{k=1}^\gamma \big( {\rm O}_{p_{\zeta_k}}  \times {\rm O}_{q_{\zeta_k}}  \big) \longrightarrow \R\setminus\{0\}
$$
$$
\big( C_{\theta_1}, D_{\theta_1} , \cdots,  C_{\theta_\beta}, D_{\theta_\beta};\, E_{\zeta_1}, F_{\zeta_1}, \cdots ,  E_{\zeta_\gamma}, F_{\zeta_\gamma}   \big) \longmapsto 
$$
$$
   \prod_{j=1}^\beta( \det  C_{\theta_j}^{\frac{\theta_j+1}{2} }  \det  D_{\theta_j}^{\frac{\theta_j-1}{2} } ) \prod_{k=1}^\gamma( \det  E_{\zeta_k}^{\frac{\zeta_k-1}{2} } \det  F_{\zeta_k}^{\frac{\zeta_k+1}{2} } ) =  \prod_{j=1}^\beta \det  C_{\theta_j} \prod_{k=1}^\gamma \det  E_{\zeta_k}
 $$
and
$$
\bigchi_{q}  \colon \prod_{j=1}^\beta \big( {\rm O}_{p_{\theta_j}} \times {\rm O}_{q_{\theta_j}}  \big) \times \prod_{k=1}^\gamma \big( {\rm O}_{p_{\zeta_k}}  \times {\rm O}_{q_{\zeta_k}}  \big) \longrightarrow \R\setminus\{0\}
$$
$$
\big( C_{\theta_1}, D_{\theta_1} , \cdots ,  C_{\theta_\beta}, D_{\theta_\beta};\, E_{\zeta_1}, F_{\zeta_1}, \cdots ,  E_{\zeta_\gamma}, F_{\zeta_\gamma} \big) \,\longmapsto
$$
$$
 \prod_{j=1}^\beta( \det  C_{\theta_i}^{\frac{\theta_i-1}{2} }  \det  D_{\theta_i}^{\frac{\theta_i+1}{2} } ) \prod_{k=1}^\gamma( \det  E_{\zeta_k}^{\frac{\zeta_k+1}{2} } \det  F_{\zeta_k}^{\frac{\zeta_k-1}{2}}) =  \prod_{j=1}^\beta \det D_{\theta_j} \prod_{k=1}^\gamma \det  F_{\zeta_k}.  
$$

Let $\Lambda_\HC \,\colon\, {\rm End}_\R \R^{n} \,\longrightarrow\, {\rm M}_n (\R)$ be the 
isomorphism of $\R$-algebras induced by the ordered basis $\HC$ in 
\eqref{orthogonal-basis-so-pq-final}. Let $M$ be the maximal compact subgroup of ${\rm 
SO}(p,q)$ which leaves invariant simultaneously the two subspaces spanned by $\HC_+$ and 
$\HC_-$. Clearly, $\Lambda_\HC (M) \,=\, {\rm S} ({\rm O} (p) \times {\rm O} (q))$.

\begin{proposition}\label{max-cpt-so-pq-wrt-onb} 
Let $X \,\in \,\NC_{\s\o(p,q)}$, $\Psi_{{\rm SO}(p,q)^\circ}(\OC_X)\, =\,(\d, \,\sgn_{\OC_X})$.
Assume that  $\N_\d\,=\, \O_\d$. Let $\beta \,:=\, \# \O^1_\d$ and $ \gamma \,:=\, \# \O^3_\d $. Let
$\{X,\,H,\,Y\}\,\subset\, \s\o(p,q)$ be a $\s\l_2(\R)$-triple, and let $(p_\theta, \,q_\theta)$ be
the signature of the form $(\cdot,\, \cdot)_\theta$ for all $\theta \,\in \,\O_\d$ as defined
in \eqref{new-form}. Let $K$ be the maximal compact subgroup of $\ZC_{{\rm SO} (p,q)} ( X, H, Y)$
as in Lemma \ref{max-cpt-so-pq}. Then  $\Lambda_\HC (K) \,\subset\,
{\rm S}({\rm O}(p)\times {\rm O}(q))$ is given by
\begin{align*} 
\Lambda_\HC(K) = 
 \Bigg\{ \Db_p (g) \oplus \Db_q(g)  \Biggm| 
                                         \begin{array}{c}
                                         g \in                               
                                       \prod_{j=1}^\beta \big( {\rm O}_{p_{\theta_j}} \times {\rm O}_{q_{\theta_j}} \big) 
                                         \times \prod_{k=1}^\gamma \big( {\rm O}_{p_{\zeta_k}} \times {\rm O}_{q_{\zeta_k}} \big) \vspace{.1cm} \\  \text{ and } \  ~ \bigchi_{p}(g) \bigchi_{q}(g) =1
                                        \end{array} \Bigg\}.
\end{align*}
\end{proposition}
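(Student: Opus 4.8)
The plan is to follow the template of Proposition \ref{max-cpt-su-pq-wrt-onb}, reading off the matrix of a typical element of $K$ in the ordered basis $\HC$ of \eqref{orthogonal-basis-so-pq-final}. Fix $g \in K$, where $K$ is the maximal compact subgroup described in Lemma \ref{max-cpt-so-pq}. Since $g$ preserves each $V^l_+(\theta)$ and $V^l_-(\theta)$ and commutes with $X, H, Y$, its entire action is governed by the four families of block matrices obtained from the level $l = 0$ pieces: for $\theta \in \O^1_\d$ set $C_\theta := [g|_{V^0_+(\theta)}]_{\CC^0_+(\theta)}$ and $D_\theta := [g|_{V^0_-(\theta)}]_{\CC^0_-(\theta)}$, and for $\zeta \in \O^3_\d$ set $E_\zeta := [g|_{V^0_+(\zeta)}]_{\CC^0_+(\zeta)}$ and $F_\zeta := [g|_{V^0_-(\zeta)}]_{\CC^0_-(\zeta)}$. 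Because $\langle \cdot, \cdot\rangle$ is positive definite on the $V^0_+(\cdot)$ and negative definite on the $V^0_-(\cdot)$, while the relevant bases are standard orthogonal by Lemma \ref{orthogonal-basis-R} and \eqref{orthonormal-basis-odd-so-pq}, each of these blocks is orthogonal; thus $C_\theta \in {\rm O}_{p_\theta}$, $D_\theta \in {\rm O}_{q_\theta}$, $E_\zeta \in {\rm O}_{p_\zeta}$ and $F_\zeta \in {\rm O}_{q_\zeta}$.

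The second step is to propagate these blocks across all levels $0 \le l \le \theta-1$. Using the matching conditions (2) and (3) of Lemma \ref{max-cpt-so-pq} together with the sign formula of Lemma \ref{orthogonal-basis-R}(2), which gives $\langle w^\theta_{jl}, w^\theta_{jl}\rangle = (-1)^l (v^\theta_j, v^\theta_j)_\theta$ for $l \le (\theta-1)/2$ and $(-1)^{l+1}(v^\theta_j, v^\theta_j)_\theta$ for $l > (\theta-1)/2$, I would determine level by level whether the positive definite subspace $V^l_+(\theta)$ carries the block $C_\theta$ (resp.\ $E_\zeta$) or the block $D_\theta$ (resp.\ $F_\zeta$), and symmetrically for $V^l_-(\theta)$. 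Running $l$ from $0$ to $\theta-1$ and collecting the positive pieces in the order fixed by $\HC_+$ in \eqref{orthogonal-basis-so-pq-final} produces, for each $\theta \in \O^1_\d$, the block $(C_\theta \oplus D_\theta)_\blacktriangle^{(\theta-1)/4} \oplus C_\theta \oplus (C_\theta \oplus D_\theta)_\blacktriangle^{(\theta-1)/4}$, and for each $\zeta \in \O^3_\d$ the block $(E_\zeta \oplus F_\zeta)_\blacktriangle^{(\zeta+1)/4} \oplus (F_\zeta \oplus E_\zeta)_\blacktriangle^{(\zeta-3)/4} \oplus F_\zeta$; these are exactly the defining formulas for $\Db_p$. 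Collecting the negative pieces according to $\HC_-$ yields $\Db_q$ by the same count with the two signs interchanged. Hence $\Lambda_\HC(g) = \Db_p(g) \oplus \Db_q(g)$, and in particular $\Lambda_\HC(g) \in {\rm S}({\rm O}(p) \times {\rm O}(q))$ since $g$ preserves the spans of $\HC_+$ and $\HC_-$.

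Finally I would impose the determinant constraint. As $g \in {\rm SO}(p,q) \subset {\rm SL}_n(\R)$, the block-diagonal matrix $\Lambda_\HC(g)$ satisfies $\det \Db_p(g)\cdot \det \Db_q(g) = 1$. A direct multiplicity count in the block patterns above shows $\det \Db_p(g) = \bigchi_p(g)$ and $\det \Db_q(g) = \bigchi_q(g)$: each $C_\theta$ occurs $\tfrac{\theta+1}{2}$ times in $\Db_p$ and $\tfrac{\theta-1}{2}$ times in $\Db_q$, and dually the $\O^3$ blocks $F_\zeta$ occur $\tfrac{\zeta+1}{2}$ times in $\Db_p$ and $\tfrac{\zeta-1}{2}$ times in $\Db_q$. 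Thus the constraint becomes precisely $\bigchi_p(g)\bigchi_q(g) = 1$. The reverse inclusion is immediate, since any $g$ assembled from orthogonal blocks $C_\theta, D_\theta, E_\zeta, F_\zeta$ with $\bigchi_p(g)\bigchi_q(g) = 1$ lies in ${\rm SO}(p,q)$ and satisfies the conditions of Lemma \ref{max-cpt-so-pq}; this gives the stated equality.

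The main obstacle I anticipate is the combinatorial bookkeeping: correctly tracking, level by level, the alternation of the two definite subspaces and verifying that the reordering imposed by $\HC_\pm$ reproduces the exact block patterns of $\Db_p$ and $\Db_q$. The delicate point is the distinction between $\O^1_\d$ and $\O^3_\d$. Because the sign convention in \ref{yd-def2} repeats the sign in the last column when $\zeta \equiv 3 \pmod 4$, the middle level $l = (\zeta-1)/2$ contributes to the negative block rather than the positive one; this is exactly what shifts the $\O^3$ pattern to $(E_\zeta \oplus F_\zeta)_\blacktriangle^{(\zeta+1)/4} \oplus (F_\zeta \oplus E_\zeta)_\blacktriangle^{(\zeta-3)/4} \oplus F_\zeta$ and accounts for the asymmetry in the exponents appearing in $\bigchi_p$ and $\bigchi_q$.
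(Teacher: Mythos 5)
Your proposal is correct and is essentially the paper's own argument: the paper's proof of Proposition \ref{max-cpt-so-pq-wrt-onb} consists precisely of writing the matrices of the elements of $K$ from Lemma \ref{max-cpt-so-pq} with respect to the basis $\HC$ of \eqref{orthogonal-basis-so-pq-final}, which is what you carry out, with the level-by-level bookkeeping (including the $\O^1_\d$ versus $\O^3_\d$ middle-level flip and the multiplicity counts matching $\bigchi_p$, $\bigchi_q$) spelled out explicitly and accurately.
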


\begin{proof}
This follows by writing the matrices of the elements of the maximal compact subgroup $K$ 
in Lemma \ref{max-cpt-so-pq} with respect to the basis $\HC$ as in 
\eqref{orthogonal-basis-so-pq-final}.
\end{proof}

As the subgroup ${\rm SO} (p,q)^\circ$ is normal in ${\rm SO} (p,q)$, so is $\ZC_{ 
{\rm SO} (p,q)^\circ} (X,H,Y)$ in $\ZC_{ {\rm SO} (p,q)} (X,H,Y)$. In particular, 
as $K$ is a maximal compact subgroup in $\ZC_{ {\rm SO} (p,q)}(X,H,Y)$, it follows 
that $$K_\O \,:=\, K \cap \ZC_{ {\rm SO}(p,q)^\circ}(X,H,Y) \,=\, K\cap {\rm SO} 
(p,q)^\circ$$ is a maximal compact subgroup of $\ZC_{ {\rm SO}(p,q)^\circ}(X,H,Y)$. 
The next proposition gives an explicit description of $\Lambda_\HC (K_\O)$ in $ 
{\rm SO} (p) \times {\rm SO} (q)$.

\begin{proposition}\label{max-cpt-so-pq-0-wrt-onb}
Let $X \in \NC_{\s\o(p,q)}$,  $\Psi_{{\rm SO}(p,q)^\circ}(\OC_X)$ $ =(\d, \sgn_{\OC_X})$.
We assume that $\N_\d= \O_\d$. Let $\beta := \# \O^1_\d$ and $ \gamma := \# \O^3_\d $.  Let $\{X,H,Y\}\subset \s\o(p,q)$ be a $\s\l_2(\R)$-triple, and let $(p_\theta, q_\theta)$ be the signature of the form $(\cdot, \cdot)_\theta$ for all $\theta \in \O_\d$ as defined in \eqref{new-form}. 
Let $K_\O$ be the maximal compact subgroup of $\ZC_{{\rm SO} (p,q)^\circ} ( X, H, Y)$ as in the preceding paragraph. Then  $\Lambda_\HC (K_\O) \subset {\rm SO}(p)\times {\rm SO}(q)$ is given by
\begin{align*}  
 \Bigg\{ \Db_p (g) \oplus \Db_q(g)  \Biggm| 
                                         \begin{array}{c}
                                         g \in                               
                                       \prod_{j=1}^\beta \big( {\rm O}_{p_{\theta_j}} \times {\rm O}_{q_{\theta_j}} \big) 
                                         \times \prod_{k=1}^\gamma \big( {\rm O}_{p_{\zeta_k}} \times {\rm O}_{q_{\zeta_k}} \big) \vspace{.1cm} \\  \text{ and } \  ~ \bigchi_{p}(g) =1 , \ \bigchi_{q}(g) =1
                                        \end{array} \Bigg\}.
\end{align*}
Moreover, the above group is isomorphic to 
$$
S\Big(\prod_{j=1}^\beta {\rm O}_{p_{\theta_j}} \times  \prod_{k=1}^\gamma {\rm O}_{p_{\zeta_k}} \Big) \times S\Big(\prod_{j=1}^\beta {\rm O}_{q_{\theta_j}} \times  \prod_{k=1}^\gamma {\rm O}_{q_{\zeta_k}} \Big).
$$
\end{proposition}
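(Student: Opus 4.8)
The plan is to derive this from the already-established description of $\Lambda_\HC(K)$ in Proposition \ref{max-cpt-so-pq-wrt-onb} by intersecting with the identity component ${\rm SO}(p,q)^\circ$. Recall from the paragraph preceding the statement that $K_\O = K \cap {\rm SO}(p,q)^\circ$ is a maximal compact subgroup of $\ZC_{{\rm SO}(p,q)^\circ}(X,H,Y)$. Since $\Lambda_\HC$ is the algebra isomorphism induced by the standard orthonormal basis $\HC$, it carries ${\rm SO}(p,q)$, together with its identity component, isomorphically onto the corresponding matrix groups, and $\Lambda_\HC(M) = {\rm S}({\rm O}(p) \times {\rm O}(q))$. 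Thus $\Lambda_\HC(K_\O) = \Lambda_\HC(K) \cap {\rm SO}(p,q)^\circ$, and the first task is to locate this intersection inside the compact group ${\rm S}({\rm O}(p) \times {\rm O}(q))$. Here I would invoke the standard fact that the maximal compact subgroup ${\rm S}({\rm O}(p) \times {\rm O}(q))$ of ${\rm SO}(p,q)$ meets the identity component exactly in ${\rm SO}(p) \times {\rm SO}(q)$; equivalently, an element $\Db_p(g) \oplus \Db_q(g)$ lies in ${\rm SO}(p,q)^\circ$ precisely when both of its diagonal blocks have determinant $+1$.

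The key computation is then to verify that $\det \Db_p(g) = \bigchi_p(g)$ and $\det \Db_q(g) = \bigchi_q(g)$. This is a direct determinant count on the block-diagonal matrices defining $\Db_p$ and $\Db_q$: for $\theta_j \in \O^1_\d$ the three blocks contribute $(\det C_{\theta_j}\det D_{\theta_j})^{(\theta_j-1)/4}$, $\det C_{\theta_j}$, and $(\det C_{\theta_j}\det D_{\theta_j})^{(\theta_j-1)/4}$, whose product is $(\det C_{\theta_j})^{(\theta_j+1)/2}(\det D_{\theta_j})^{(\theta_j-1)/2}$, while for $\zeta_k \in \O^3_\d$ the product of the $\Db_p$-blocks is $(\det E_{\zeta_k})^{(\zeta_k-1)/2}(\det F_{\zeta_k})^{(\zeta_k+1)/2}$. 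These match the defining formula for $\bigchi_p$ term by term (and symmetrically for $\Db_q$ and $\bigchi_q$). Consequently $\Db_p(g)\oplus\Db_q(g)$ lies in ${\rm SO}(p) \times {\rm SO}(q)$ if and only if $\bigchi_p(g) = \bigchi_q(g) = 1$; combined with the constraint $\bigchi_p(g)\bigchi_q(g) = 1$ already present in the description of $\Lambda_\HC(K)$, this yields the asserted formula for $\Lambda_\HC(K_\O)$.

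For the final isomorphism I would exploit the congruences defining $\O^1_\d$ and $\O^3_\d$. Since $\theta_j \equiv 1 \pmod 4$ the exponent $(\theta_j+1)/2$ is odd and $(\theta_j-1)/2$ is even, and since $\zeta_k \equiv 3 \pmod 4$ the exponent $(\zeta_k-1)/2$ is odd and $(\zeta_k+1)/2$ is even; as every block is orthogonal with determinant $\pm 1$, these parities collapse the characters to $\bigchi_p(g) = \prod_j \det C_{\theta_j} \prod_k \det E_{\zeta_k}$ and $\bigchi_q(g) = \prod_j \det D_{\theta_j} \prod_k \det F_{\zeta_k}$. The decisive observation is that the condition $\bigchi_p(g) = 1$ constrains only the blocks $C_{\theta_j} \in {\rm O}_{p_{\theta_j}}$ and $E_{\zeta_k} \in {\rm O}_{p_{\zeta_k}}$, whereas $\bigchi_q(g) = 1$ constrains only $D_{\theta_j} \in {\rm O}_{q_{\theta_j}}$ and $F_{\zeta_k} \in {\rm O}_{q_{\zeta_k}}$. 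These constrain disjoint sets of blocks, so the parameter group splits as $S(\prod_j {\rm O}_{p_{\theta_j}} \times \prod_k {\rm O}_{p_{\zeta_k}}) \times S(\prod_j {\rm O}_{q_{\theta_j}} \times \prod_k {\rm O}_{q_{\zeta_k}})$. Finally, since $g \mapsto \Db_p(g) \oplus \Db_q(g)$ is an injective group homomorphism (each block of $g$ occurs among its diagonal blocks), this product is isomorphic to $\Lambda_\HC(K_\O)$.

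I expect the only genuinely delicate point to be the component analysis of the second paragraph, namely confirming that $\Lambda_\HC(K) \cap {\rm SO}(p,q)^\circ$ equals $\Lambda_\HC(K) \cap ({\rm SO}(p) \times {\rm SO}(q))$, i.e.\ correctly identifying which elements of the compact group ${\rm S}({\rm O}(p) \times {\rm O}(q))$ fall into the identity component of ${\rm SO}(p,q)$. The determinant bookkeeping, although notationally heavy, is entirely routine once the block structures of $\Db_p$ and $\Db_q$ are written out.
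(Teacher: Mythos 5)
Your proposal is correct and takes essentially the same route as the paper: the paper likewise observes that $K_\O = K\cap M^\circ$, where $M^\circ = {\rm SO}(p,q)^\circ\cap M$ corresponds under $\Lambda_\HC$ to ${\rm SO}(p)\times{\rm SO}(q)$ (justified there via maximality of the compact subgroup $M^\circ$), and then intersects the description of $\Lambda_\HC(K)$ from Proposition \ref{max-cpt-so-pq-wrt-onb} with it, the conditions $\bigchi_p(g)=1$, $\bigchi_q(g)=1$ arising exactly because $\det \Db_p(g)=\bigchi_p(g)$ and $\det \Db_q(g)=\bigchi_q(g)$. Your explicit determinant bookkeeping, the parity computation collapsing the characters, and the disjoint-block splitting giving the final product isomorphism merely spell out steps the paper's terse ``The proposition now follows'' leaves implicit.
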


\begin{proof}
Let $V_+$ and $V_-$ be the $\R$-spans of $\HC_+$ and $\HC_-$ respectively.
 Let $M$ be the maximal compact subgroup in  ${\rm SO}(p,q)$ which simultaneously leaves the
subspaces $V_+$ and $V_-$ invariant. 
It is clear that $M^\circ$ is a maximal compact subgroup of ${\rm SO}(p,q)^\circ $. Hence
 $$M^\circ \,=\, {\rm SO}(p,q)^\circ \cap M \,=\,
 \{ g \,\in\, {\rm SO}(p,q) \,\mid\, \det g|_{V_+} \,=\,1,\ \det g|_{V_-} \,=\,1 \}\, .$$
 As $K \,\subset\, M$, we have that $ K \cap {\rm SO} (p,q)^\circ  \,=\, K \cap M^\circ$. The
proposition now follows.
\end{proof}

Set
   $$
   \O_\d^-:= \{\theta \mid \theta \in \O_\d, (\cdot, \cdot)_\theta \text{ is negative definite}\} \text{ and } \O_\d^+:= \{\theta \mid \theta \in \O_\d, (\cdot, \cdot)_\theta \text{ is positive definite}\}.
   $$
 
  \begin{lemma}\label{so-pq-odd}
  Let $X \,\in\, \s \o(p,q)$ be a nilpotent element. 
  Let $(\d, \,\sgn_{\OC_X}) \,\in\, \YC^{\rm even}_1(p,q)$ be the signed Young diagram of the
orbit $\OC_X$ (that is, $\Psi_{{\rm SO}(p,q)^\circ}  (\OC_X) \,= \,(\d, \,\sgn_{\OC_X})$ as in the
 notation of Theorem \ref{so-pq-parametrization}). We moreover assume that $\N_\d \,= \,\O_\d$.
  Let $K_\O$ be the maximal compact subgroup of $\ZC_{{\rm SO} (p,q)^\circ}( X, H, Y)$ as in
Proposition \ref{max-cpt-so-pq-0-wrt-onb}. Let $\k_\O$ be the Lie algebra of $K_\O$. Then the
following hold:
\begin{enumerate}
 \item 
 If $\# (\O_\d \setminus \O_\d^-)=1$,  $\# (\O_\d \setminus \O_\d^+)=1$ and
$p_{\theta_1} = q_{\theta_2} = 2$ for $\theta_1 \in \O_\d \setminus \O_\d^-$, $\theta_2 \in \O_\d \setminus \O_\d^+$, then  $\dim_\R \big[\z(\k_\O)\big]^{K_\O/K^\circ_\O} = 2$.  \vspace{.1cm}

  \item 
 Suppose that either $\# (\O_\d \setminus \O_\d^-)=1$, $p_{\theta_1} = 2$ for
 $\theta_1 \in \O_\d \setminus \O_\d^-$, or 
 $\# (\O_\d \setminus \O_\d^+)=1$, $ q_{\theta_2} = 2$ for $\theta_2 \in \O_\d \setminus \O_\d^+$. Moreover, suppose that both the conditions do not hold simultaneously. Then 
  $\dim_\R \big[\z(\k_\O)\big]^{K_\O/K^\circ_\O} = 1$. \vspace{.1cm}
  
\item 
  In all other cases, $~\dim_\R \big[\z(\k_\O)\big]^{K_\O/K^\circ_\O} = 0$.
\end{enumerate}
 \end{lemma}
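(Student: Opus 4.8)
The plan is to deduce the result directly from the explicit description of $K_\O$ furnished by Proposition \ref{max-cpt-so-pq-0-wrt-onb}, combined with the two elementary computations \eqref{adjoint-action-SO_n} and \eqref{adjoint-action} already established in the proof of Theorem \ref{sl-n-R}. First I would record, using Proposition \ref{max-cpt-so-pq-0-wrt-onb} and the fact that $\O_\d = \O^1_\d \sqcup \O^3_\d = \{\theta_j\} \sqcup \{\zeta_k\}$, that
$$
K_\O \,\cong\, S\Big(\prod_{\theta \in \O_\d} {\rm O}_{p_\theta}\Big) \times S\Big(\prod_{\theta \in \O_\d} {\rm O}_{q_\theta}\Big) \,=:\, K_\O^+ \times K_\O^-\, .
$$
Writing $\k_\O = \k_\O^+ \oplus \k_\O^-$ for the corresponding Lie algebra decomposition, the product formula \eqref{adjoint-action} gives
$$
\big[\z(\k_\O)\big]^{K_\O/K_\O^\circ} \,\cong\, \big[\z(\k_\O^+)\big]^{K_\O^+/(K_\O^+)^\circ} \oplus \big[\z(\k_\O^-)\big]^{K_\O^-/(K_\O^-)^\circ}\, ,
$$
so it suffices to treat the two summands separately.

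For the first summand I would discard the trivial orthogonal factors: since ${\rm O}_0$ is trivial, the factors with $p_\theta = 0$ (that is, $\theta \in \O_\d^-$) may be dropped, giving $K_\O^+ \cong S\big(\prod_{\theta \in \O_\d \setminus \O_\d^-} {\rm O}_{p_\theta}\big)$ in which every surviving factor has size $p_\theta \geq 1$. Now \eqref{adjoint-action-SO_n} applies verbatim with $r = \#(\O_\d \setminus \O_\d^-)$ and the $n_i$ equal to the surviving $p_\theta$, yielding $\dim_\R \big[\z(\k_\O^+)\big]^{K_\O^+/(K_\O^+)^\circ} = 1$ precisely when $\#(\O_\d \setminus \O_\d^-) = 1$ and the unique surviving index $\theta_1$ satisfies $p_{\theta_1} = 2$, and $0$ otherwise (the degenerate case $\O_\d \setminus \O_\d^- = \emptyset$, in which $\z(\k_\O^+) = 0$, being subsumed in the second alternative). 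An identical computation for the second summand, with $p_\theta$ replaced by $q_\theta$ and $\O_\d^-$ replaced by $\O_\d^+$, shows that $\dim_\R \big[\z(\k_\O^-)\big]^{K_\O^-/(K_\O^-)^\circ} = 1$ exactly when $\#(\O_\d \setminus \O_\d^+) = 1$ and $q_{\theta_2} = 2$ for the unique $\theta_2 \in \O_\d \setminus \O_\d^+$, and $0$ otherwise.

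Adding the two contributions then yields all three assertions simultaneously: both summands contribute $1$ exactly under the hypotheses of (1), giving total dimension $2$; exactly one summand contributes $1$ under the hypotheses of (2), giving $1$; and in every remaining situation both contributions vanish, giving $0$. I do not anticipate any genuine obstacle, since the substantive content is already packaged in \eqref{adjoint-action-SO_n}; the only point demanding care is the bookkeeping that translates the abstract condition ``$r=1$ with the unique factor of size $2$'' from \eqref{adjoint-action-SO_n} into the stated conditions on $\O_\d \setminus \O_\d^{\mp}$ and on $p_{\theta_1},\, q_{\theta_2}$, and in particular the reduction deleting the trivial ${\rm O}_0$ factors before that formula is invoked.
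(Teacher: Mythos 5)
Your proposal is correct and is essentially the paper's own argument: the paper proves Lemma \ref{so-pq-odd} in one line by citing exactly the three ingredients you use, namely the description of $K_\O$ in Proposition \ref{max-cpt-so-pq-0-wrt-onb}, the product formula \eqref{adjoint-action}, and the computation \eqref{adjoint-action-SO_n}. Your write-up merely makes explicit the bookkeeping the paper leaves implicit (deleting the trivial ${\rm O}_0$ factors so that \eqref{adjoint-action-SO_n} applies with positive $n_i$, and translating ``$r=1$, $n_1=2$'' into the conditions on $\O_\d\setminus\O_\d^{\mp}$ and $p_{\theta_1}$, $q_{\theta_2}$), which is exactly the care the paper's ``clear'' glosses over.
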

  
\begin{proof} 
In view of \eqref{adjoint-action-SO_n},  \eqref{adjoint-action} and Proposition
\ref{max-cpt-so-pq-0-wrt-onb}, the lemma is clear.
\end{proof}

\begin{lemma}\label{centralizer-of-id-comp}
Let $W$ be a finite dimensional vector space over $\R$, and let $\<>'$ be a non-degenerate
symmetric bilinear form on $W$. Let $W_1, W_2 \,\subset\, W$ be subspaces such that $ W_1 \perp W_2$
and $W \,= \,W_1 \oplus W_2$. Let $\<>'_2$ be the restriction of $\<>'$ to $W_2$. 
Then
\begin{align*}
&{\rm SO} (W, \<>')^\circ \cap \{ g \in {\rm SO} (W, \<>') \mid g(W_1) \subset W_1, g (W_2) \subset W_2, g|_{ W_1} = {\rm Id}_{W_1} \, \} \vspace{.17cm}\\
 = \,& \{ g \in {\rm SO} (W, \<>') \mid g(W_1) \subset W_1, g (W_2) \subset W_2, g|_{W_1} = {\rm Id}_{W_1}, g|_{W_2} \in {\rm SO}(W_2, \<>'_2)^\circ \, \}.
\end{align*}
In particular,
$${\rm SO} (W, \<>')^\circ \cap \{ g \,\in\, {\rm SO} (W, \<>') \,\mid\, g(W_1) \,\subset\, W_1,\
g (W_2) \,\subset\, W_2,\ g|_{W_1} \,=\, {\rm Id}_{W_1} \, \}$$
is isomorphic to ${\rm SO}(W_2, \<>'_2)^\circ $.
\end{lemma}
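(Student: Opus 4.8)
The plan is to realize the subgroup on the left-hand side as the identity component of a group isomorphic to ${\rm SO}(W_2,\langle\cdot,\cdot\rangle'_2)$, and to control connected components by means of the sign of the determinant of the ``positive block'' of an orthogonal transformation. First I would set
$$H := \{ g \in {\rm SO}(W,\langle\cdot,\cdot\rangle') \mid g(W_1)\subset W_1,\ g(W_2)\subset W_2,\ g|_{W_1} = {\rm Id}_{W_1} \},$$
so that the left-hand side of the asserted identity is precisely ${\rm SO}(W,\langle\cdot,\cdot\rangle')^\circ \cap H$. Since $W_1 \perp W_2$ and the form on $W$ is non-degenerate, its restrictions $\langle\cdot,\cdot\rangle'_2$ to $W_2$ (and to $W_1$) are non-degenerate. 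For $g \in H$ one has $g|_{W_2} \in {\rm O}(W_2,\langle\cdot,\cdot\rangle'_2)$ and $\det g = \det(g|_{W_1})\det(g|_{W_2}) = \det(g|_{W_2})$, so $g \in H$ forces $g|_{W_2} \in {\rm SO}(W_2,\langle\cdot,\cdot\rangle'_2)$. The restriction map $\phi\colon H \to {\rm SO}(W_2,\langle\cdot,\cdot\rangle'_2)$, $g \mapsto g|_{W_2}$, is then an isomorphism of (algebraic, hence Lie) groups, with inverse $h \mapsto {\rm Id}_{W_1}\oplus h$. Under $\phi$ the right-hand side of the identity is exactly $\phi^{-1}({\rm SO}(W_2,\langle\cdot,\cdot\rangle'_2)^\circ) = H^\circ$; so the ``in particular'' clause is immediate once the main identity is proved, and the whole lemma reduces to showing ${\rm SO}(W,\langle\cdot,\cdot\rangle')^\circ \cap H = H^\circ$, that is, that the natural map $\pi_0(H) \to \pi_0({\rm SO}(W,\langle\cdot,\cdot\rangle'))$ is injective.

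The inclusion $H^\circ \subseteq {\rm SO}(W,\langle\cdot,\cdot\rangle')^\circ \cap H$ is automatic, since $H^\circ$ is connected and contains the identity, hence lies in ${\rm SO}(W,\langle\cdot,\cdot\rangle')^\circ$. For the reverse inclusion I would invoke the standard component criterion for an indefinite orthogonal group. Fix an orthogonal decomposition $W = W^+\oplus W^-$ into a maximal positive definite and a maximal negative definite subspace, and for $g \in {\rm O}(W)$ let $A(g)\colon W^+ \to W^+$ be the compression ${\rm pr}^{+}\circ g|_{W^+}$. A one-line argument using $\langle gv,gv\rangle' = \langle v,v\rangle'$ shows that $A(g)$ is always invertible, so $g \mapsto \operatorname{sgn}\det A(g)$ is locally constant on ${\rm O}(W)$; exhibiting on the non-identity component of ${\rm SO}(W)$ a representative (for instance $-{\rm Id}$ on a hyperbolic plane $\langle e^+,e^-\rangle$ and ${\rm Id}$ elsewhere, which lies in ${\rm SO}(W)$ and has $\operatorname{sgn}\det A = -1$) identifies, when $W$ is indefinite, the two components by $g \in {\rm SO}(W)^\circ \iff \operatorname{sgn}\det A(g) = +1$.

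Finally I would compute $A(g)$ for $g \in H$. Choosing $W^{\pm} = W_1^{\pm}\oplus W_2^{\pm}$ compatibly with $W = W_1\oplus W_2$, and using $g|_{W_1} = {\rm Id}$ together with $g(W_2)\subset W_2$, one sees that with respect to $W^+ = W_1^+\oplus W_2^+$ the block $A(g)$ is block-diagonal, equal to ${\rm Id}_{W_1^+}\oplus A_2$, where $A_2$ is exactly the positive block of $g|_{W_2} \in {\rm SO}(W_2,\langle\cdot,\cdot\rangle'_2)$. Hence $\operatorname{sgn}\det A(g) = \operatorname{sgn}\det A_2$, and therefore $g \in {\rm SO}(W)^\circ$ if and only if $g|_{W_2} \in {\rm SO}(W_2,\langle\cdot,\cdot\rangle'_2)^\circ$. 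This yields both inclusions at once and completes the proof.

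I expect the genuine obstacle to be precisely this component-tracking step: one must be sure that the nontrivial component of $H \cong {\rm SO}(W_2,\langle\cdot,\cdot\rangle'_2)$ is \emph{not} absorbed into ${\rm SO}(W)^\circ$, equivalently that $\pi_0(H)\to\pi_0({\rm SO}(W))$ is injective, and the positive-block determinant is the invariant that makes this transparent. The degenerate situations in which $W_2$ (and hence possibly $W$) is definite are handled uniformly by the same formula, since then $W_2^- = 0$, $A_2 = g|_{W_2}$, and $\operatorname{sgn}\det A_2 = +1$ automatically, consistent with ${\rm SO}(W_2,\langle\cdot,\cdot\rangle'_2)$ being connected.
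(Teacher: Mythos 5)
Your proof is correct, and while it turns on the same underlying fact as the paper's argument, the mechanics are genuinely different. The paper splits into cases by the signature $(p_2,q_2)$ of $\langle\cdot,\cdot\rangle'_2$: when the form on $W_2$ is definite the lemma is immediate from connectedness of ${\rm SO}(W_2,\langle\cdot,\cdot\rangle'_2)$, and when $p_2,q_2>0$ it constructs an explicit involution $A$ with $A|_{W_1}={\rm Id}_{W_1}$ and $A|_{W_2}\in{\rm SO}(W_2,\langle\cdot,\cdot\rangle'_2)\setminus{\rm SO}(W_2,\langle\cdot,\cdot\rangle'_2)^\circ$, \emph{asserts} that $A\notin{\rm SO}(W,\langle\cdot,\cdot\rangle')^\circ$, and finishes by the coset decompositions ${\rm SO}(W,\langle\cdot,\cdot\rangle')=\Gamma\,{\rm SO}(W,\langle\cdot,\cdot\rangle')^\circ$ and ${\rm SO}(W_2,\langle\cdot,\cdot\rangle'_2)=\Gamma'\,{\rm SO}(W_2,\langle\cdot,\cdot\rangle'_2)^\circ$. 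You instead reduce, via the restriction isomorphism $\phi\colon H\to{\rm SO}(W_2,\langle\cdot,\cdot\rangle'_2)$, to the injectivity of $\pi_0(H)\to\pi_0({\rm SO}(W,\langle\cdot,\cdot\rangle'))$, and settle it with the locally constant invariant $g\mapsto\operatorname{sgn}\det A(g)$ given by compression to a maximal positive definite subspace, computing $A(g)={\rm Id}_{W_1^+}\oplus A_2$ for $g\in H$ so that the invariants of $g$ in ${\rm SO}(W,\langle\cdot,\cdot\rangle')$ and of $g|_{W_2}$ in ${\rm SO}(W_2,\langle\cdot,\cdot\rangle'_2)$ coincide. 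Both routes ultimately rest on the standard fact that ${\rm SO}(p,q)$ has exactly two components when $p,q\geq 1$ (your criterion needs this to conclude that invariant $+1$ forces membership in the identity component), and the element you exhibit on a plane spanned by $e^+$ and $e^-$ is essentially the paper's $A$. What the paper's route buys is brevity and a purely group-theoretic finish; what yours buys is self-containedness at the one delicate step --- that the nontrivial component of $H\cong{\rm SO}(W_2,\langle\cdot,\cdot\rangle'_2)$ is not absorbed into ${\rm SO}(W,\langle\cdot,\cdot\rangle')^\circ$, which you correctly identified as the crux and which the paper passes over with ``it is then clear'' --- together with a uniform treatment of the definite cases instead of a separate case split.
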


\begin{proof}
Let $(p_2,\, q_2)$ be the signature of $\<>'_2$. If either $p_2 \,=\,0$ or $q_2\,=\,0$, then as
${\rm SO}(W_2, \<>'_2)$ $  =\,{\rm SO}(W_2, \<>'_2)^\circ$ the lemma follows immediately.

Assumption that $p_2 \,>\, 0$ and $q_2 \,>\,0$. In this case, considering an orthogonal basis of
$W_2$ for the form $\<>'_2$ we easily construct a linear map $A \,\colon\, W \,\longrightarrow\, W$
such that $A|_{W_1}\,=\, {\rm Id}_{W_1}$, $A(W_2) \,\subset\, W_2$, $(A|_{W_2})^2\,=\, {\rm Id}_{W_2}$, 
and $A|_{W_2} \,\in\, {\rm SO}(W_2, \<>'_2) \setminus {\rm SO}(W_2, \<>'_2)^\circ$. It is then clear
that $$A\,\in\, {\rm SO}(W, \<>') \setminus {\rm SO}(W, \<>')^\circ\, .$$
Let $\Gamma \,\subset \,{\rm GL}(W)$ be the subgroup generated by $A$ and  $\Gamma' \,\subset\,
{\rm GL}(W_2)$ the subgroup generated by $A|_{W_2}$.
It then follows that ${\rm SO}(W, \<>') \,=\, \Gamma \, {\rm SO}(W, \<>')^\circ$ and 
${\rm SO}(W_2, \<>'_2) \,=\, \Gamma' \, {\rm SO}(W_2, \<>'_2)^\circ$. Now the lemma follows.
\end{proof}

We now describe the second cohomology groups of nilpotent orbits in $\s\o(p,q)$ when $p >0, \, q >0$. 
As we will consider only simple Lie algebras, to ensure simplicity of $\s\o(p,q)$, 
in view of \cite[Theorem 6.105, p. 421]{K} and 
isomorphisms (iv), (v), (vi), (ix), (x) in \cite[Chapter X, \S 6, pp. 519-520]{He},
we need the additional restriction that $(p,q) \not\in \{(1,1), (2,2)\}$.

\begin{theorem}\label{so-pq}
Let $p\,\ne\, 2$, $q \,\ne\, 2$ and $(p,q) \neq (1,1)$. Let $X \,\in\, \s \o(p,q)$ be a
nilpotent element. Let $(\d, \,\sgn_{\OC_X}) \,\in\, \YC^{\rm even}_1(p,q)$ be the signed Young
diagram of the orbit $\OC_X$ (that is, $\Psi_{{\rm SO}(p,q)^\circ}  (\OC_X) \,=\, (\d, \,\sgn_{\OC_X})$
as in the notation of Theorem \ref{so-pq-parametrization}). Then the following hold:
\begin{enumerate}
 \item 
 If $\# (\O_\d \setminus \O_\d^-)\,=\,1$,  $\# (\O_\d \setminus \O_\d^+)\,=\,1$ and $p_{\theta_1}
\,=\, q_{\theta_2} \,=\, 2$ when $\theta_1 \,\in\, \O_\d \setminus \O_\d^-$ and $\theta_2 \,\in\,
\O_\d \setminus \O_\d^+$, then $\dim_\R H^2(\OC_X,\, \R) \,= \, (\#\E_\d + 2)$. \vspace{.1cm}  

\item 
   Suppose that either $\# (\O_\d \setminus \O_\d^-)\,=\,1$ and $p_{\theta_1} \,=\, 2$ for
$\theta_1 \,\in\, \O_\d \setminus \O_\d^-$, or 
 $\# (\O_\d \setminus \O_\d^+)\,=\,1$ and $ q_{\theta_2} \,=\, 2$ for
$\theta_2 \,\in\, \O_\d \setminus \O_\d^+$. Moreover, suppose that
the above two conditions do not hold simultaneously. Then $\dim_\R H^2(\OC_X,\, \R) \,=\, (\#\E_\d + 1)$.
\vspace{.1cm} 

\item
 In all other cases $~\dim_\R H^2(\OC_X, \,\R) \,=\, \#\E_\d$.
\end{enumerate}
\end{theorem}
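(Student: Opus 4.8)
The plan is to apply Theorem~\ref{thm-nilpotent-orbit} and then to separate the contributions of the even and the odd parts of the partition $\d$. The assertion is obvious when $X=0$, so assume $X\neq 0$ and fix a $\s\l_2(\R)$-triple $\{X,\,H,\,Y\}\subset\s\o(p,q)$. Under the hypotheses $p\neq 2$, $q\neq 2$ and $(p,q)\neq(1,1)$ the Lie algebra $\s\o(p,q)$ is simple, so Theorem~\ref{thm-nilpotent-orbit} applies with ambient group ${\rm SO}(p,q)^\circ$. A maximal compact subgroup $M$ of ${\rm SO}(p,q)^\circ$ has $\m\cong\s\o(p)\oplus\s\o(q)$; since $p\neq 2$ and $q\neq 2$, neither summand has a non-trivial centre, so $\m$ is semisimple and $[\m,\,\m]=\m$. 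Letting $K_\O$ denote a maximal compact subgroup of $\ZC_{{\rm SO}(p,q)^\circ}(X,H,Y)$ contained in $M$, we have $\z(\k_\O)\subset\m=[\m,\,\m]$, whence Theorem~\ref{thm-nilpotent-orbit} reduces the problem to
$$
H^2(\OC_X,\,\R)\;\simeq\;[\z(\k_\O)^*]^{K_\O/K_\O^\circ}\, .
$$

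Next I would decompose $V$ orthogonally as $V=V_\E\oplus V_\O$, where $V_\E:=\bigoplus_{\eta\in\E_\d}M(\eta-1)$ and $V_\O:=\bigoplus_{\theta\in\O_\d}M(\theta-1)$, the orthogonality being Lemma~\ref{ortho-isotypical}. By Lemma~\ref{reductive-part-comp}(4), together with the fact that $(\cdot,\,\cdot)_\eta$ is symplectic for $\eta\in\E_\d$ while $(\cdot,\,\cdot)_\theta$ is symmetric for $\theta\in\O_\d$ (Remark~\ref{unitary-J-basis-rmk}), the group $K_\O$ is built from the connected factors $\prod_{\eta\in\E_\d}{\rm U}(t_\eta/2)$ acting on $V_\E$ and an odd part acting on $V_\O$. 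Writing $\k_\E$ and $\k'$ for the respective Lie algebras, I would verify that each ${\rm U}(t_\eta/2)$ lies in $K_\O^\circ$, that it commutes with the odd part (the two act on orthogonal subspaces), and that the reflection components arising from the odd orthogonal factors fix $V_\E$ pointwise and hence centralise it. The determinant constraint $\bigchi_\d=1$ cutting out $K_\O$ is identically $1$ on the symplectic factors, so it reduces to a sign condition among the odd factors only; moreover each ${\rm U}(t_\eta/2)$ acts on $V_+$ and on $V_-$ through orientation-preserving (complex-linear) transformations, so the identity-component conditions $\det(\,\cdot\,|_{V_\pm})=1$ hold automatically on it. Consequently $\z(\k_\O)=\z(\k_\E)\oplus\z(\k')$, the group $K_\O/K_\O^\circ$ acts trivially on $\z(\k_\E)$, and the additivity formula \eqref{adjoint-action} gives
$$
\dim_\R[\z(\k_\O)^*]^{K_\O/K_\O^\circ}\;=\;\#\E_\d\;+\;\dim_\R[\z(\k')^*]^{K_\O/K_\O^\circ}\, ,
$$
since $\dim_\R\z(\u(t_\eta/2))=1$ for every $\eta\in\E_\d$.

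It then remains to compute the odd contribution, which is exactly the situation of the pure case $\N_\d=\O_\d$: the constraints defining $K_\O$, when restricted to the odd factors, reduce to the two independent conditions $\bigchi_p=\bigchi_q=1$ of Proposition~\ref{max-cpt-so-pq-0-wrt-onb}, the even factors contributing nothing because their restrictions to $V_+$ and $V_-$ have determinant $1$. Thus Lemma~\ref{so-pq-odd} (through \eqref{adjoint-action-SO_n} and \eqref{adjoint-action}) shows that $\dim_\R[\z(\k')^*]^{K_\O/K_\O^\circ}$ equals $2$, $1$ or $0$ in cases (1), (2), (3) respectively, yielding $\#\E_\d+2$, $\#\E_\d+1$ and $\#\E_\d$, as claimed.

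The main obstacle I anticipate is the rigorous justification of this product decomposition of $K_\O$: one must show that its component group $K_\O/K_\O^\circ$ is generated entirely by reflections inside the orthogonal factors attached to $\O_\d$, that it acts trivially on $\z(\k_\E)$, and that neither $\bigchi_\d$ nor the intersection with ${\rm SO}(p,q)^\circ$ reduces the even centre. This is precisely where the explicit embeddings of Propositions~\ref{max-cpt-so-pq-wrt-onb} and \ref{max-cpt-so-pq-0-wrt-onb}, and the careful disentangling of the global $\det=1$ conditions from the block structure on $V_\E\oplus V_\O$, must be invoked.
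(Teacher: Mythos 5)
Your proposal is correct and follows essentially the same route as the paper's proof: the orthogonal splitting $V = V_\E \oplus V_\O$, the identification $\ZC_{{\rm SO}(V_\E, \<>_\E)}(X_\E,H_\E,Y_\E) \simeq \prod_{\eta \in \E_\d} {\rm Sp}(t_\eta/2,\R)$ whose connected maximal compact $\prod_{\eta \in \E_\d}{\rm U}(t_\eta/2)$ contributes $\#\E_\d$ with trivial component-group action, and Lemma \ref{so-pq-odd} (via Proposition \ref{max-cpt-so-pq-0-wrt-onb}) giving the odd contribution of $2$, $1$ or $0$ in the three cases. The one step you flag as delicate --- that passing to ${\rm SO}(p,q)^\circ$ constrains only the odd factors --- is precisely what the paper isolates in Lemma \ref{centralizer-of-id-comp} and \eqref{reductive-part-odd-even-connected}, and your determinant argument (connectedness of the symplectic factors, sign conditions living only on the odd orthogonal factors) is the same idea in direct form.
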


\begin{proof}
Let $p+q \,=\, n$. As the theorem is evident when $X = 0$, we assume that $X \,\neq\, 0$.

Let $\{X,\, H,\, Y\} \,\subset\, \s\o(p,q)$ be a $\s\l_2(\R)$-triple. 
Let $V\,:=\, \R^{n}$ be the right $\R$-vector space of column vectors. We consider $V$ as a
$\text{Span}_\R \{ X,H,Y\}$-module via its natural $\s\o(p,q)$-module structure. Let 
$$
V_\E \,:=\, \bigoplus_{\eta \in \E_\d} M(\eta -1) ; \quad V_\O \,:=\, \bigoplus_{\theta \in \O_\d} M(\theta -1).
$$
Using Lemma \ref{ortho-isotypical} it follows that $V\,=\,V_\E \oplus V_\O$ is an orthogonal
decomposition of $V$ with respect to $\<>$.  
Let $\<>_{\E}\, :=\, \<> |_{V_\E \times V_\E  }$ and $\<>_{\O} \,:=\, \<> |_{V_\O \times V_\O}$.
Let $ X_{\E} \,:=\, X|_{V_\E}$,  $X _\O \,:=\, X|_{V_\O}$, $H_\E \,:=\, H|_{V_\E}$, $H_\O \,:=\,
 H|_{V_\O}$, $Y_\E\,:=\, Y|_{V_\E}$ and $Y_\O \,:=\, Y|_{V_\O}$.
Then we have the following natural isomorphism
\begin{equation}\label{reductive-part-odd-even}
 \ZC_{{\rm SO}(p,q)}(X,H,Y)~ \simeq~ \ZC_{{\rm SO}(V_\E, \<>_\E)}(X_\E,H_\E,Y_\E) \times \ZC_{{\rm SO}(V_\O, \<>_\O)}(X_\O,H_\O,Y_\O).
\end{equation}
As, the form $(\cdot, \cdot)_\eta$ on $L (\eta-1)$ is non-degenerate and symplectic for all $\eta \in \E_\d$, it follows
 from Lemma \ref{reductive-part-comp} (4) that
 \begin{equation}\label{description-of-ZSO-Ve}
  \ZC_{{\rm SO}(V_\E, \<>_\E)}(X_\E,H_\E,Y_\E) \simeq \prod_{\eta \in \E_\d} {\rm Sp}(t_\eta/2,\, \R).
 \end{equation}
In particular, $\ZC_{{\rm SO}(V_\E, \<>_\E)}(X_\E,H_\E,Y_\E) $ is connected, and hence
using Lemma \ref{centralizer-of-id-comp}, \eqref{reductive-part-odd-even} and \eqref{description-of-ZSO-Ve} it follows that
\begin{equation}\label{reductive-part-odd-even-connected}
 \ZC_{{\rm SO}(p,q)^\circ}(X,H,Y) \simeq  
                              \ZC_{{\rm SO}(V_\E, \<>_\E)}(X_\E,H_\E,Y_\E)  \times \ZC_{{\rm SO}(V_\O, \<>_\O)^\circ}(X_\O,H_\O,Y_\O).
\end{equation}
 Let $K_\E$ be a maximal compact subgroup of $ \ZC_{{\rm SO}(V_\E, \<>_\E)}(X_\E,H_\E,Y_\E) \simeq \prod_{\eta \in \E_\d} {\rm Sp}(t_\eta/2, \R)$.
Setting $\# \O_\d \,:=\, r$, enumerate $\O_\d \,=\,\{ a_1,\, \cdots,\, a_r \}$ such that
$ a_i \,<\, a_{i+1}$ for all $i$. We next set
$\d_\O \,:=\, [ a_1^{t_{a_1}}, \,\cdots, \,a_r^{t_{a_r}}]$. As $\sum_{d \in \O_\d} t_d d
\,=\, \dim_\R V_\O$, we have $\d_\O \,\in\, \PC(\dim_\R V_{\O})$.
We recall that
$ K_\O\,:=\, K \cap \ZC_{{\rm SO}(p,q)^\circ } ( X, H, Y) \,=\, K \cap {\rm SO}(p,q)^\circ$ is
a maximal compact subgroup of $\ZC_{{\rm SO}(p,q)^\circ }(X, H, Y)$, where
$K$ is the maximal compact subgroup of $\ZC_{{\rm SO} (p,q)} ( X, H, Y)$ as in Lemma
\ref{max-cpt-so-pq}.
Let $\widetilde{K}$ be the image of $K_\O \times K_\E$ under the isomorphism
in \eqref{reductive-part-odd-even-connected}.  
It is evident that $\widetilde{K}$ is a maximal compact subgroup of $\ZC_{{\rm SO}(p,q)^\circ}(X,H,Y)$. 
Let $\widetilde{M}$ be a maximal compact subgroup of ${\rm SO}(p,q)^\circ$ containing $\widetilde{K}$. 
Let $\widetilde{\k}$ and $ \widetilde\m$ be the Lie algebras of $\widetilde{K}$ and
$\widetilde{M}$ respectively.
As $p\,\neq\, 2,\ q \,\neq\, 2$, we have $\widetilde\m \,=\, [\widetilde{\m},\, \widetilde{\m}]$.
Then using Theorem \ref{thm-nilpotent-orbit} it follows that, for all $X \,\neq\, 0$,   
$$
H^2 ( {\OC}_X,\, \R) ~ \simeq ~ [\z (\widetilde{\k})^*]^{\widetilde{K} / {\widetilde{K}}^\circ}\, .
$$
Let $\k_\E , \k_\O$ be the Lie algebras of $K_\E, K_\O$ respectively.
As  $K_\E$ is connected, in view of \eqref{adjoint-action} we conclude that $$ [\z
(\widetilde{\k})^*]^{\widetilde{K} / {\widetilde{K}}^\circ} \,\simeq\, \z(\k_\E) \oplus [\z (\k_\O)]^{K_\O / K_\O^\circ}\, .$$
From \eqref{description-of-ZSO-Ve} we have $\k_\E
\,\simeq\, \bigoplus_{\eta \in \E_\d} \u(t_\eta/2)$. In particular, $\dim_\R \z(\k_\E) \,=\, \#\E_\d$.
As $\N_{\d_\O} \,=\, \O_{\d_\O}$, we use Lemma \ref{so-pq-odd} to compute the dimension
of $[\z (\k_\O)]^{K_\O / K_\O^\circ}$. This completes the  proof.
\end{proof}

We will next consider the remaining cases which are not covered in Theorem \ref{so-pq}. These cases are:  
 $(p,q)\,\in\, \{ (2,1),\,(1,2)\}$; $p \,>\, 2,\ q\,=\,2$ and $p\,=\,2, q\, >\, 2$.
Recall the definition of $\YC^{\rm even}_1(p,q)$ as given in \eqref{yd-1-Y-pq}.
If $p \,>\,2$, then we list below the set of signed Young diagrams $\YC^{\rm even}_1(p,2)$ which correspond to non-zero nilpotent orbits in $\s\o(p,2)$.
 \begin{enumerate}[label = {{\bf a}.\arabic*}]
  \item\label{a.1} $\big( [1^{p-1}, 3^1],\, ((m^1_{ij}), (m^3_{ij}))\big)$, where
$(m^1_{ij})$ and $(m^3_{ij})$ are $(p-1) \times 1$ and $ 1 \times 3$ matrices respectively, satisfying $m^1_{i1} = +1, 1 \leq i \leq p-1; \,\, m^3_{i1} = +1, i =1$ and \ref{yd-def2}. \vspace{.2cm}
  
  \item\label{a.2} $\big( [1^{p-1}, 3^1],\, ((m^1_{ij}), (m^3_{ij}))\big)$, where $ (m^1_{ij})$
and $(m^3_{ij})$ are $(p-1) \times 1$ and $ 1 \times 3$ matrices respectively, satisfying $m^1_{i1} = +1, 1 \leq i \leq p-2,~  m^1_{i1} = -1,i=p-1; \,\, m^3_{i1} = -1, i =1$ and \ref{yd-def2}.
  
  \item\label{a.3} $\big( [1^{p-3}, 5^1],\, ((m^1_{ij}), (m^5_{ij}))\big)$, where $(m^1_{ij})$
and $(m^5_{ij})$ are $(p-3) \times 1$ and $ 1 \times 5$ matrices respectively, satisfying $m^1_{i1} = +1, 1 \leq i \leq p-3 ; \,\, m^5_{i1} = +1, i =1$ and \ref{yd-def2}.\vspace{.2cm}
  
   \item\label{a.4} $\big( [1^{p-2}, 2^2],\, ((m^1_{ij}), (m^2_{ij}))\big)$, where
$(m^1_{ij})$ and $(m^2_{ij})$ are $(p-2) \times 1$ and $2\times 2$ matrices respectively, satisfying $m^1_{i1} = +1, 1 \leq i \leq p-2 ; \,\, m^2_{i1} = +1, 1 \leq i \leq2$ and \ref{yd-def2}.
   \end{enumerate}
 
Similarly as above, if $q >2$, then set $\YC^{\rm even}_1(2,q)$ consists of four elements which correspond to non-zero nilpotent orbits in $\s\o(2,q)$.
These are listed below:
 \begin{enumerate}[label = {{\bf b}.\arabic*}]
 \item\label{b.1} $\big( [1^{q-1}, 3^1],\, ((m^1_{ij}), (m^3_{ij}))\big)$, where $ (m^1_{ij})$ and
$(m^3_{ij})$ are $(q-1) \times 1$ and $ 1 \times 3$ matrices respectively, satisfying $m^1_{i1} = -1, 1 \leq i \leq q-1; \,\, m^3_{i1} = -1, i =1$ and \ref{yd-def2}.\vspace{.2cm}
  
  \item\label{b.2} $\big( [1^{q-1}, 3^1],\, ((m^1_{ij}), (m^3_{ij}))\big)$, where $(m^1_{ij})$ and
$(m^3_{ij})$ are $(q-1) \times 1$ and $ 1 \times 3$ matrices respectively, satisfying $m^1_{i1} = +1, i=1,~  m^1_{i1} = -1, 2 \leq i \leq q-1; \,\, m^3_{i1} = +1, i =1$ and \ref{yd-def2}.
  
  \item\label{b.3} $\big( [1^{q-3}, 5^1],\, ((m^1_{ij}), (m^5_{ij}))\big)$, where $(m^1_{ij})$ and
$(m^5_{ij})$ are $(q-3) \times 1$ and $ 1 \times 5$ matrices respectively, satisfying $m^1_{i1} = -1, 1 \leq i \leq q-3 ; \,\, m^5_{i1} = -1, i =1$ and \ref{yd-def2}.\vspace{.2cm}
  
   \item\label{b.4} $\big( [1^{q-2}, 2^2],\, ((m^1_{ij}), (m^2_{ij}))\big)$, where $(m^1_{ij})$ and
$(m^2_{ij})$ are $(q-2) \times 1$ and $2\times 2$ matrices respectively, satisfying $m^1_{i1} = -1, 1 \leq i \leq q-2 ; \,\, m^2_{i1} = +1, 1 \leq i \leq2$ and \ref{yd-def2}.
 \end{enumerate}

\begin{theorem}\label{so-pq-2} 
Let $ \Psi_{{\rm SO}(p,q)^\circ} \,:\, \NC({\rm SO}(p,q)^\circ) \,\longrightarrow\,
\YC^{\rm even}_1(p,q)$ be the parametrization in Theorem \ref{so-pq-parametrization}.
Let $\OC_X \,\in\,\NC({\rm SO}(p,q)^\circ) $. Then the following hold:
\begin{enumerate}
 \item  Suppose $(p,\,q)\,\in\, \{ (2,1),\,(1,2)\}$, then $H^2({\OC}_X,\, \R) \,=\,0$.
 
 \item  Assume that $p> 2,\, q =2$.     \\
 (i)   If $ \Psi_{{\rm SO}(p,2)^\circ} (\OC_X)$ is as in \eqref{a.1}, then $\dim_\R H^2(\OC_X, \,\R)
\,=\, 0$.  \\
 (ii)  If $ \Psi_{{\rm SO}(p,2)^\circ} (\OC_X)$ is as in \eqref{a.2}, then  
 $ \dim_\R H^2(\OC_X, \,\R)= \begin{cases}
                 1 & \text{if } p=4\\
                 0 & \text{otherwise}.
                \end{cases}$\\
 (iii) If $ \Psi_{{\rm SO}(p,2)^\circ} (\OC_X)$ is as in \eqref{a.3}, then $\dim_\R H^2(\OC_X,\, \R)
\,= \,0$.\\
 (iv)   If $ \Psi_{{\rm SO}(p,2)^\circ} (\OC_X)$ is as in \eqref{a.4}, then 
  $ \dim_\R H^2(\OC_X, \,\R)= \begin{cases}
                 1 & \text{if } p=4\\
                 0 & \text{otherwise}.
                \end{cases}$
 
 \item Assume $p= 2 $ and $q>2$. \\
 (i)   If $ \Psi_{{\rm SO}(2,q)^\circ} (\OC_X)$ is as in \eqref{b.1}, then  $\dim_\R H^2(\OC_X,\, \R)
\,=\, 0$.\\
 (ii) If $ \Psi_{{\rm SO}(2,q)^\circ} (\OC_X)$ is as in \eqref{b.2}, then 
  $\dim_\R H^2(\OC_X,\, \R)= \begin{cases}
                 1 & \text{if } q=4\\
                 0 & \text{otherwise}.
                \end{cases} $\\
  (iii) If $\Psi_{{\rm SO}(2,q)^\circ}(\OC_X)$ is as in \eqref{b.3}, then  $ \dim_\R H^2(\OC_X,\, \R)
\,=\, 0$. \\
  (iv)  If $\Psi_{{\rm SO}(2,q)^\circ}(\OC_X)$ is as in \eqref{b.4}, then 
  $\dim_\R H^2(\OC_X,\, \R)= \begin{cases}
                 1 & \text{if } q=4\\
                 0 & \text{otherwise}.
                \end{cases}$
\end{enumerate}
\end{theorem}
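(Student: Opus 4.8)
The plan is to apply Theorem~\ref{thm-nilpotent-orbit} in every case. For $G(\R)^\circ \,=\, {\rm SO}(p,2)^\circ$ a maximal compact subgroup $M$ satisfies $\Lambda_\HC(M) \,=\, {\rm S}({\rm O}(p)\times{\rm O}(2))$, so $\m \,\simeq\, \s\o(p)\oplus\s\o(2)$ with $[\m,\,\m] \,=\, \s\o(p)$ and $\z(\m) \,=\, \s\o(2)$ one-dimensional; the case ${\rm SO}(2,q)^\circ$ is symmetric. Thus for a maximal compact $K$ of $\ZC_{{\rm SO}(p,2)^\circ}(X,H,Y)$ lying in $M$, Theorem~\ref{thm-nilpotent-orbit} gives $H^2(\OC_X,\,\R) \,\simeq\, [(\z(\k)\cap[\m,\,\m])^*]^{K/K^\circ}$, and the computation splits into three tasks: identify $K$ and its image under $\Lambda_\HC$; compute $\z(\k)$ and decide, for each of its directions, whether it lies in $[\m,\,\m] \,=\, \s\o(p)$ or has a nonzero component in $\z(\m) \,=\, \s\o(2)$ (since $\dim_\R\z(\m) \,=\, 1$, the intersection $\z(\k)\cap[\m,\,\m]$ has codimension $0$ or $1$ in $\z(\k)$); and determine the action of $K/K^\circ$.

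For part~(1), $\s\o(2,1) \,\simeq\, \s\o(1,2)$ is simple of rank one and its unique nonzero orbit corresponds to $\d \,=\, [3]$ with $3 \,\in\, \O^3_\d$ and signature $(p_3,\,q_3) \,=\, (0,1)$; Proposition~\ref{max-cpt-so-pq-0-wrt-onb} then forces $K$ to be trivial, so $\z(\k) \,=\, 0$ and $H^2(\OC_X,\,\R) \,=\, 0$. For the remaining cases, I would use the isometry interchanging the positive and negative parts of the form, which identifies ${\rm SO}(2,q)$ with ${\rm SO}(q,2)$ and carries the diagrams \eqref{b.1}--\eqref{b.4} to \eqref{a.1}--\eqref{a.4}; hence part~(3) follows from part~(2), and it suffices to treat $p \,>\, 2,\ q \,=\, 2$.

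In the diagrams \eqref{a.1}, \eqref{a.2}, \eqref{a.3} one has $\N_\d \,=\, \O_\d$, so $K$ is described by Proposition~\ref{max-cpt-so-pq-0-wrt-onb}. First I would read the signatures $(p_d,\,q_d)$ of $(\cdot,\,\cdot)_d$ off the prescribed signs: $(p_1,q_1)\,=\,(p-1,0),\ (p_3,q_3)\,=\,(1,0)$ for \eqref{a.1}; $(p_1,q_1)\,=\,(p-2,1),\ (p_3,q_3)\,=\,(0,1)$ for \eqref{a.2}; and $(p_1,q_1)\,=\,(p-3,0),\ (p_5,q_5)\,=\,(1,0)$ for \eqref{a.3}. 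Substituting these into Proposition~\ref{max-cpt-so-pq-0-wrt-onb} and simplifying the determinant conditions $\bigchi_p \,=\, \bigchi_q \,=\, 1$ yields $K \,\simeq\, {\rm O}_{p-1}$ in \eqref{a.1}, $K \,\simeq\, {\rm SO}_{p-2}\times(\Z/2\Z)$ in \eqref{a.2}, and $K \,\simeq\, {\rm O}_{p-3}$ in \eqref{a.3}; in each the semisimple factor ${\rm SO}_m$ acts through $\Db_p$ on a positive definite subspace of $V$, so its Lie algebra embeds in $[\m,\,\m] \,=\, \s\o(p)$ with zero $\z(\m)$-component. Hence $\z(\k) \,\neq\, 0$ only when $m \,=\, 2$, that is $p \,=\, 3$ in \eqref{a.1}, $p \,=\, 4$ in \eqref{a.2} and $p \,=\, 5$ in \eqref{a.3}, in which case $\z(\k) \,=\, \s\o(2) \,\subset\, [\m,\,\m]$ is one-dimensional. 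The final step is the component group: in \eqref{a.1} and \eqref{a.3} the nontrivial class of $K/K^\circ \,\simeq\, \Z/2\Z$ is a reflection in ${\rm O}_m$ acting on $\z(\k) \,=\, \s\o(2)$ by inversion, so the invariants vanish; in \eqref{a.2} the class of $K/K^\circ$ comes from the factor ${\rm S}({\rm O}_1\times{\rm O}_1)$, which acts only on the negative part of $V$ and therefore trivially on $\z(\k)$, so the invariants survive. This produces $0$ in \eqref{a.1} and \eqref{a.3} for all $p$, and $0$ except the value $1$ at $p \,=\, 4$ in \eqref{a.2}.

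The diagram \eqref{a.4}, where $\d \,=\, [1^{p-2},\,2^2]$ contains the even part $\eta \,=\, 2$, is the main obstacle, since now $\N_\d \,\neq\, \O_\d$ and one cannot invoke Proposition~\ref{max-cpt-so-pq-0-wrt-onb} directly. I would instead use the splitting $V \,=\, V_\E\oplus V_\O$ with $V_\E \,=\, M(1)$ and $V_\O \,=\, M(0)$ from the proof of Theorem~\ref{so-pq}, and describe $K$ by Lemma~\ref{max-cpt-so-p2}. By Corollary~\ref{c-m9.3.1}(1) the form has signature $(2,2)$ on $V_\E$ and $(p-2,0)$ on $V_\O$, so $V_- $ is the whole negative part of $V_\E$; the centralizer contributes the maximal compact ${\rm U}(1)$ of Lemma~\ref{max-cpt-so-p2}(1) on $V_\E$ (coming from ${\rm Sp}(1,\R) \,=\, {\rm SL}_2(\R)$) and ${\rm SO}_{p-2}$ on the definite $V_\O$, whence $K \,\simeq\, {\rm U}(1)\times{\rm SO}_{p-2}$ is connected and $K/K^\circ$ is trivial. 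The decisive point, which I expect to require the explicit basis of Lemma~\ref{orthogonal-basis-R}, is the location of $\z(\k)$: the $\u(1)$ summand acts on $V_\E$ by a nonzero rotation of its two negative basis vectors, which span $V_-$, so its image in $\m$ is a line with nonzero component in $\z(\m) \,=\, \s\o(V_-)$ and is deleted by intersecting with $[\m,\,\m]$; the summand $\z(\s\o_{p-2})$, acting on the positive definite $V_\O$, lies entirely in $[\m,\,\m]$. Therefore $\z(\k)\cap[\m,\,\m] \,=\, \z(\s\o_{p-2})$, which is one-dimensional exactly when $p-2 \,=\, 2$ and zero otherwise; as $K$ is connected there is no component-group obstruction, giving $\dim_\R H^2(\OC_X,\,\R) \,=\, 1$ for $p \,=\, 4$ and $0$ otherwise. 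This finishes part~(2), and part~(3) follows from the symmetry noted above.
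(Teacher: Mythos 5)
Your proposal is correct and follows essentially the same route as the paper: Theorem \ref{thm-nilpotent-orbit} combined with Proposition \ref{max-cpt-so-pq-0-wrt-onb} for the diagrams \eqref{a.1}--\eqref{a.3} (your identifications of the signatures $(p_d,q_d)$, of $K$ as ${\rm O}_{p-1}$, ${\rm SO}_{p-2}\times S({\rm O}_1\times{\rm O}_1)$, ${\rm O}_{p-3}$, and of the ${\rm O}_2/{\rm SO}_2$ versus trivial component-group actions all match the paper's \eqref{max-cpt-embdd-so-B-1}--\eqref{max-cpt-embdd-so-B-3}), and for \eqref{a.4} the same $V_\E\oplus V_\O$ splitting with the ${\rm U}(1)$-generator acquiring a nonzero $\z(\m)$-component through its rotation of the negative two-plane, exactly as in the paper's basis $\AC_\pm(2)$ and \eqref{description-of-K}. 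Two cosmetic deviations: for part (1) the paper simply observes $[\m,\m]=0$, which is quicker than your computation of $K$; and for part (3) your reduction via the isometry negating the form (which swaps $(2,q)$ and $(q,2)$ and flips all signs in the diagrams, after renormalizing the even rows to the convention $m^\eta_{i1}=+1$) is a clean substitute for the paper's omitted "similar" computation. Two harmless slips worth flagging: in part (1) the fiber of $\Psi_{{\rm SO}(2,1)^\circ}$ over $([3],\sgn)$ has two elements by Theorem \ref{so-pq-parametrization}, so the nonzero orbit is not unique, though both orbits yield the same $K$; and in case \eqref{a.2} the generator of $K/K^\circ$ does \emph{not} act only on the negative part of $V$ --- it also acts by $-1$ on a two-dimensional positive subspace (the two $F$-blocks in $\Db_p$) --- but since it is the identity on the ${\rm O}_{p-2}$-block carrying $\z(\k)$, its conjugation action on $\z(\k)$ is trivial and your conclusion stands.
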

  
\begin{proof}
As $X \,\neq\, 0$, we may assume that $X$ lies in a $\s\l_2(\R)$-triple, say $\{ X,H,Y\}$, in
$\s\o(p,q)$. 

{\it Proof of (1):}
Let $\m$ be the Lie algebra of a maximal compact subgroup of ${\rm SO}(p,q)^\circ$. As $(p,q)\in \{ (2,1),(1,2) \}$, we have $[\m,\,\m] = 0$. 
Thus using Theorem \ref{thm-nilpotent-orbit} it follows that $H^2 ( {\OC}_X, \,\R)\, =\,0$.
 
 {\it Proof of (2):}
As $\N_\d \,=\, \O_\d$ in each of the cases (i), (ii) and (iii), 
we will use Proposition \ref{max-cpt-so-pq-0-wrt-onb}.
Let $K$ be the maximal compact subgroup of $\ZC_{{\rm SO}(p,2)}(X,H,Y)$ as given in Lemma
\ref{max-cpt-so-pq}. Let $M$ be the maximal compact subgroup of ${\rm SO}(p,2)$ which
leaves invariant simultaneously the two subspaces spanned by $\HC_+$ and $\HC_-$, where $\HC_+$
and $\HC_-$ are as in \eqref{orthogonal-basis-so-pq-final} with $q=2$. Then 
$M^\circ \,=\, M \cap {\rm SO}(p,2)$ is a maximal compact subgroup of ${\rm SO}(p,2)^\circ$.
Recall that $$K_\O \,:=\, K \cap M^\circ \,=\, K \cap {\rm SO}(p,2)^\circ$$ is a maximal
compact subgroup of  $\ZC_{{\rm SO}(p,2)^\circ}(X,H,Y)$. Then, in the notation of
Proposition \ref{max-cpt-so-pq-0-wrt-onb}, $\Lambda_\HC(K_\O) \,\subset\,
{\rm SO}(p) \times {\rm SO}(2)$. Let $\k_\O $ and $\m$ be the Lie algebras of $K_\O$ and $M^\circ$
respectively. 
 
We now prove (i) of (2).
Suppose $ \Psi_{{\rm SO}(p,2)^\circ} (\OC_X)$ is as in \eqref{a.1}. Using Proposition
\ref{max-cpt-so-pq-0-wrt-onb} it follows that
\begin{equation}\label{max-cpt-embdd-so-B-1}
\begin{split}
 \Lambda_\HC(K_\O) & = \big\{ \Db_p(g) \oplus \Db_q(g)  \bigm| g \in {\rm O}_{p-1} \times {\rm O}_{1}, ~ \bigchi_p(g) =1, \bigchi_q(g)=1 \big\} \\
                   & = \big\{ C \oplus E \bigoplus E \oplus E ~\bigm| ~ C \in {\rm O}_{p-1}, E \in{\rm O}_{1},~ \det C \det E =1   \big\}.
\end{split}
\end{equation}
Therefore, $\z(\k_\O) \cap [\m,\,\m] \,=\, \s\o_2$ when $p\,=\,3$, and $\z(\k_\O)\,=\, 0$ when
$p\,>\,3$. From \eqref{max-cpt-embdd-so-B-1} it follows that $K_\O \,\simeq\, S({\rm O}_2 \times
{\rm O}_1)$ when $p\,=\,3$. Since ${\rm O}_2 /{\rm SO}_2$ acts non-trivially on $\s\o_2$, when
$p\,=\,3$ we have $\big[\z(\k_\O)\cap[\m,\,\m]\big]^{K/K^\circ}\, =\,0$. Thus using Theorem
\ref{thm-nilpotent-orbit}, 
$$H^2(\OC_X,\, \R)\,=\,0$$
for all $p\,>\,2$.

We next give a proof of (ii) of (2).
Assume that $ \Psi_{{\rm SO}(p,2)^\circ} (\OC_X)$ is as in \eqref{a.2}. Using Proposition
\ref{max-cpt-so-pq-0-wrt-onb} and notation therein,
  \begin{align}
  \Lambda_\HC(K_\O) & = \big\{\Db_p(g) \oplus \Db_q(g)  \bigm| g \in  {\rm O}_{p-2} \times {\rm O}_1 \times {\rm O}_1, ~ \bigchi_p(g) =1, \bigchi_q(g) =1 \big\} \nonumber \\
        & = \big\{ C \oplus F \oplus F \bigoplus D \oplus F ~ \bigm| ~ C \in {\rm O}_{p-2}; D ,  F \in{\rm O}_{1};~ \det C =1, \det D \det F =1   \big\}. \label{max-cpt-embdd-so-B-2}
 \end{align} 
It is clear from above that $\z(\k_\O)\cap [\m,\,\m]=\s\o_2$ when $p = 4$ and $\z(\k_\O) = 0$
when $p \neq 4$, $p>2$. When $p=4$, then
$K_\O \simeq {\rm SO}_2 \times S({\rm O}_1 \times {\rm O}_1)$ from \eqref{max-cpt-embdd-so-B-2}. As
${\rm SO}_2$ acts trivially on $\s\o_2$, using Theorem \ref{thm-nilpotent-orbit} we conclude that
$$
 \dim_\R H^2(\OC_X,\, \R)= \begin{cases}
                 1 & \text{ if } p=4\\
                 0 & \text{ otherwise}.
             \end{cases}
$$
 
We now give a proof of (iii) of (2).
Assume that $\Psi_{{\rm SO}(p,2)^\circ} (\OC_X)$ is as in \eqref{a.3}. Using Proposition
\ref{max-cpt-so-pq-0-wrt-onb} and notation therein,
\begin{equation}\label{max-cpt-embdd-so-B-3}
\begin{split}
 \Lambda_\HC(K_\O) & = \big\{ \Db_p(g) \oplus \Db_q(g)  \bigm| g \in {\rm O}_{p-3} \times {\rm O}_{1}, ~ \bigchi_p(g) =1, \bigchi_q(g)=1 \big\} \\
                   & = \big\{ C \oplus E  \oplus E \oplus E \bigoplus E \oplus E  ~\bigm| ~ C \in {\rm O}_{p-3}, E \in{\rm O}_{1},~ \det C \det E =1 \big\}.
\end{split}
\end{equation}
 Therefore, we have $\z(\k_\O) \cap [\m,\,\m] \,= \,\s\o_2$ when $p=5$, and
$\z(\k_\O)= 0$ for $p>2, p\neq 5$. It follows from \eqref{max-cpt-embdd-so-B-3} that
$K_\O \,\simeq\, S({\rm O}_2 \times {\rm O}_1)$ when $p=5$. Since ${\rm O}_2 /{\rm SO}_2$ acts
 non-trivially on $\s\o_2$, in the case when $p=5$ we have
$\big[\z(\k_\O)\cap[\m,\,\m]\big]^{K/K^\circ} =0$. Thus in view of
Theorem \ref{thm-nilpotent-orbit},
$$H^2(\OC_X, \,\R)\,=\,0$$
for all $p\,>\,2$.
  
We now give a proof of (iv) of (2). 
Let $n\,=\,p+2$. Suppose $\Psi_{{\rm SO}(p,2)^\circ} (\OC_X)$ is as in \eqref{a.4}.
We need to construct a standard orthogonal basis as done before. We follow the notation as
in Lemma \ref{max-cpt-so-p2}.
Define, $\AC_+(2)\,:=\, \big( (v_1^2 + Xv^2_2) / \sqrt{2} ,\, (v_2^2 - Xv^2_1)/ \sqrt{2} \big)$
and $$\AC_-(2)\,:=\, \big( (v_1^2 - Xv^2_2) / \sqrt{2},\, (v_2^2 + Xv^2_1)/
\sqrt{2} \big)\, .$$
Finally set $\HC_+:= \BC^0(1) \vee \AC_+(2)$, 
$\HC_-:=\AC_-(2)$ and $\HC := \HC_+ \vee \HC_-$. Then it is clear that $\HC$ is a standard
orthogonal basis of $V$ such that $$\HC_+\,=\, \{v \,\in\, \HC \,\mid\, \langle v,\,v
\rangle\,=\,1\}$$ and
$\HC_-\,=\, \{v\,\in\, \HC \,\mid\, \langle v,\,v \rangle \,=\,-1\}$. In particular, $\# \HC_+\,=\, p$
and $\# \HC_- \,=\, 2$. Let $V_+(2)$, $V_-(2)$ be the spans of $\AC_+(2)$, $\AC_-(2)$ respectively.
Let $K$ be the maximal compact subgroup of $\ZC_{{\rm SO}(p,2)}(X,H,Y)$ as in
Lemma \ref{max-cpt-so-p2}. We observe that if $g \,\in\, K$, then $g(V_+(2))
\,\subset\, V_+(2)$, $g(V_-(2)) \,\subset\, V_-(2)$ and
 $$
 \big[g|_{V_+(2)} \big]_{\AC_+(2)} = \big[g|_{V_-(2)} \big]_{\AC_-(2)} = \big[g|_{L(1)} \big]_{\BC^0(2)}.
 $$ 
Let $\Lambda_\HC\,\colon\,{\rm End}_\R \R^{n}\,\longrightarrow\, {\rm M}_n(\R)$ be the isomorphism
of $\R$-algebras induced by the above ordered basis $\HC$.
Let $M$ be the maximal compact subgroup in ${\rm SO}(p,2)$ which simultaneously leaves the subspaces
spanned by $\HC_+$ and $\HC_-$ invariant. 
Then $M^\circ\,=\, M \cap {\rm SO}(p,2)^\circ$ is a maximal compact subgroup of
${\rm SO}(p,2)^\circ$, and 
$\widetilde{K}\,:=\, K \cap M^\circ$ is a maximal compact subgroup of
$\ZC_{{\rm SO}(p,2)^\circ}(X,H,Y)$. We have the following explicit description of
$\Lambda_\HC(\widetilde{K}) \,\subset\, {\rm SO}(p) \times {\rm SO}(2)$:
\begin{equation}\label{description-of-K}
\Lambda_\HC(\widetilde{K}) = \big\{ A\oplus B \bigoplus B  \bigm|  A \in  {\rm O}_{p-2}, B \in {\rm O}_2; ~ \det A \det B =1 \text{ and } \det B=1  \big\}.
\end{equation}
In particular, $\widetilde{K} \simeq {\rm SO}_{p-2}\times {\rm SO}_2$. Let $\widetilde{\k}$ and
$\m$ be the Lie algebras of $ \widetilde{K}$ and $M^\circ$ respectively. From
\eqref{description-of-K},
 $$
 \z(\widetilde{\k})\cap[\m,\,\m] = \begin{cases}
                                 \s\o_2 & \text{ if } p=4\\
                                  0     & \text{ otherwise}.
                                 \end{cases}
 $$ 
As $\widetilde{K}$ is connected, the conclusion follows from Theorem \ref{thm-nilpotent-orbit}.
This completes the proof of (2).

The proofs of (3)(i), (3)(ii), (3)(iii) and (3)(iv) are similar to those of (2)(i), (2)(ii), 
(2)(iii) and (2)(iv) respectively and hence the details are omitted.
\end{proof}

\subsection{Second cohomology groups of nilpotent orbits in 
\texorpdfstring{${\s\o^*}(2n)$}{Lg}}\label{sec-so*}
Let $n$ be a positive integer. The aim in this subsection is to compute the second cohomology groups of nilpotent 
orbits in the simple real Lie algebra ${\s\o^*}(2n)$ under the adjoint action of ${\rm SO}^*(2n)$.
Throughout this subsection $\<>$ denotes the skew-Hermitian form on $\H^n$ defined by $\langle x, y \rangle \,:=\, 
\overline{x}^t \jb{\rm I}_{n} y$, for $x,y \in \H^n$. We will follow notation as defined in \S \ref{sec-notation}.

We first need to describe a suitable parametrization of $\NC ({\rm SO}^*(2n))$, the set of all nilpotent orbits in ${\s\o^*}(2n)$. 
Let $\Psi_{{\rm SL}_n (\H)} \,:\, \NC ({\rm SL}_n (\H)) \,\longrightarrow\, \PC (n)$ be the 
parametrization as in Theorem \ref{sl-H-parametrization}. As ${\rm SO}^*(2n) \,\subset\, {\rm SL}_n 
(\H)$ (consequently, $\NC_{{\s\o}^*(2n)} \,\subset \,\NC_{\s\l_n(\H)}$) we have the  inclusion 
map $ \Theta_{{\rm SO}^*(2n)} \,:\, \NC ({\rm SO}^*(2n)) \,\longrightarrow\,
\NC ( {\rm SL}_n (\H) )$. Let
$$
\Psi'_{{\rm SO}^*(2n)}\,:=\, \Psi_{{\rm SL}_n (\H)} \circ \Theta_{{\rm 
SO}^*(2n)}\, \colon\, \NC ({\rm SO}^*(2n)) \,\longrightarrow\, \PC (n)
$$ be the composition.
Let $X \in {\s\o^*}(2n)$ be a non-zero nilpotent element and $\OC_X$ be the 
corresponding nilpotent orbit in $\s\o^*(2n)$. Let $\{X,\, H,\, Y\} \,\subset\, {\s\o^*}(2n)$ be a 
$\s\l_2(\R)$-triple. We now apply Proposition \ref{unitary-J-basis}, Remark 
\ref{unitary-J-basis-rmk}(3), and follow the notation used therein. Let $V:= \H^n$ be the 
right $\H$-vector space of column vectors. Let $\{d_1,\, \cdots,\, d_s\}$, with $d_1 \,<\, \cdots \,<\,
d_s$, be the ordered finite set of natural numbers that arise as $\R$-dimension of
non-zero irreducible $\text{Span}_\R \{ X,H,Y\}$-submodules of $V$. 
Recall that $M(d-1)$ is defined to be the isotypical component of $V$ containing all irreducible $\text{Span}_\R \{ X,H,Y\}$-submodules of $V$ with highest weight $(d-1)$, and as in \eqref{definition-L-d-1}, we set $L(d-1)\,:= \,V_{Y,0} \cap M(d-1)$. Recall that the space $L(d_r-1)$ is a $\H$-subspace for $1 \leq r \leq s$.
Let $t_{d_r} \,:=\, \dim_\H L(d_r-1)$, $1 \,\leq\, r \,\leq\, s$.
Then $\d\,:=\, [d_1^{t_{d_1}},\, \cdots 
,\, d_s^{t_{d_s}}] \,\in\, \PC(n)$, and moreover, $\Psi'_{{\rm SO}^*(2n)} (\OC_X) \,=\, \d$.

We next assign $\sgn_{\OC_X} \,\in\, \SC^{\rm odd}_{\d}(n)$ to each $\OC_X \,\in \,\NC({\rm SO}^*(2n))$; see
 \eqref{S-d-pq-odd} for the definition of $\SC^{\rm odd}_{\d}(n)$.
For each $d\,\in\, \N_\d$ (see \eqref{Nd-Ed-Od} for the definition of $\N_\d$) we will define a $t_d \times  d$ matrix $ (m^d_{ij})$ in $\Ab_d$ that
depends only on the orbit $\OC_X$; see \eqref{A-d} for the definition of $\Ab_d$.
For this, recall that the form $(\cdot,\,\cdot)_{d} \,\colon\, L(d-1) \times L(d-1) \,
\longrightarrow\, \H$ defined as in \eqref{new-form} is skew-Hermitian or Hermitian according as $d$
is odd or even.
Denoting the  signature of $(\cdot,\,\cdot)_{\eta}$ by $(p_{\eta},\, q_{\eta})$ when
$\eta\,\in\, \E_\d$, we now define
\begin{align*}
m^\theta_{i1} &:= +1 \qquad   \text{if } \  1 \leq i \leq t_{\theta}, \quad \theta \in \O_\d; \\ 
m^\eta_{i1}  &:= \begin{cases}
                   +1  & \text{ if } \,  1 \leq i \leq p_{\eta} \\
                   -1  & \text{ if } \,  p_\eta < i \leq t_\eta  
                 \end{cases} , \, \eta \in \E_\d\,;
\end{align*}                         
and for $j >1$ we define $(m^d_{ij})$ as in \eqref{def-sign-alternate} and \eqref{def-sign-alternate-1}.
Then the matrices $(m^d_{ij})$ clearly verify \eqref{yd-def2}.
Set $ \sgn_{\OC_X} \,:=\, ((m^{d_1}_{ij}),\, \cdots,\, (m^{d_s}_{ij}))$. 
It now follows from the above definitions of $m^\theta_{i1}, \, \theta \,\in\, \O_\d $ that
$\sgn_{\OC_X} \,\in\, \SC^{\rm odd}_{\d}(n)$.
Thus we have the map 
$$
\Psi_{{\rm SO}^* (2n)} \,\colon \,\NC({\rm SO}^* (2n))
\,\longrightarrow\, \YC^{\rm odd}(n)\, ,\ \
\OC_X \,\longmapsto\, \big(\Psi'_{{\rm SO}^* (2n)} (\OC_X),\, \sgn_{\OC_X}  \big)\, ;
$$
where $ \YC^{\rm odd}(n)$ is as in \eqref{yd-odd-Y-pq}.
The following theorem is standard.

\begin{theorem}[{\rm  \cite[Theorem 9.3.4]{CoM}}]\label{so*-parametrization}
The above map $\Psi_{{\rm SO}^* (2n)}$ is a bijection. 
\end{theorem}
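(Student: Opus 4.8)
The plan is to verify that $\Psi_{{\rm SO}^*(2n)}$ is well defined on orbits and then to establish injectivity and surjectivity separately, using Proposition \ref{unitary-J-basis} as the source of a normal form for the data $(V,\, X,\, H,\, Y,\, \<>)$. The whole argument is linear algebra over $\H$ organized around the Jacobson--Morozov correspondence, so the scheme mirrors the classical one in \cite[Theorem 9.3.4]{CoM}, but with the signs chosen as in \ref{yd-def2} to account for the correction in Remark \ref{CM-correction}.

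First I would check well-definedness, that is, that $\big(\Psi'_{{\rm SO}^*(2n)}(\OC_X),\, \sgn_{\OC_X}\big)$ depends only on the orbit $\OC_X$ and not on the chosen representative $X$ or on the chosen $\s\l_2(\R)$-triple $\{X,\,H,\,Y\}$. Changing $X$ within its orbit transports all the data by an element of ${\rm SO}^*(2n)$, which is an isometry of $\<>$; and by the Kostant--Mal'cev conjugacy of $\s\l_2(\R)$-triples, any two triples through the same $X$ are conjugate by an element of $\ZC_{{\rm SO}^*(2n)}(X)^\circ$, with Lemma \ref{comm-XH} supplying the relevant local statement. Since the partition $\d$ records the $\R$-dimensions of the irreducible summands, and the sign datum records, via Remark \ref{unitary-J-basis-rmk}(3) and Corollary \ref{c-m9.3.1}, only the signatures $(p_\eta,\, q_\eta)$ of the Hermitian forms $(\cdot,\,\cdot)_\eta$ on $L(\eta-1)$ for $\eta \in \E_\d$, both are manifestly invariant under isometric module isomorphisms. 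Here it is crucial that for $d \in \O_\d$ the form $(\cdot,\,\cdot)_d$ is skew-Hermitian over $\H$ and hence carries no signature; this is exactly why $\SC^{\rm odd}_\d(n)$ forces $M_\theta \in \Ab_{\theta,1}$ for $\theta \in \O_\d$.

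For injectivity, suppose $X,\,X' \in \NC_{\s\o^*(2n)}$ have the same image. Applying Proposition \ref{unitary-J-basis} together with the normalization in Remark \ref{unitary-J-basis-rmk}(3) to triples $\{X,\,H,\,Y\}$ and $\{X',\,H',\,Y'\}$ produces $\H$-bases of $V$ in which the operators and the form $\<>$ take identical block shapes on each $M(d-1)$: the even blocks match because the signatures $(p_\eta,\,q_\eta)$ agree, and the odd blocks match because non-degenerate skew-Hermitian $\H$-forms of equal rank are isometric. The $\H$-linear map sending one basis to the other is then an isometry of $\<>$ carrying $X$ to $X'$ and $H$ to $H'$; since any isometry of a non-degenerate skew-Hermitian $\H$-form automatically has reduced norm $1$, it lies in ${\rm SU}(V,\<>) = {\rm SO}^*(2n)$, so $\OC_X = \OC_{X'}$. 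For surjectivity, given $(\d,\,\sgn) \in \YC^{\rm odd}(n)$ I would build $V = \bigoplus_{d \in \N_\d} M(d-1)$ blockwise, placing the standard nilpotent $X$ on each isotypical component and equipping $L(d-1)$ with a Hermitian form (for $d$ even, of the signature dictated by $\sgn$) or a skew-Hermitian form (for $d$ odd) $(\cdot,\,\cdot)_d$; the induced $\<>$ is non-degenerate skew-Hermitian of rank $n$, hence isometric to the standard form on $\H^n$, and by transport $X$ becomes a nilpotent element of $\s\o^*(2n)$ whose invariants are precisely $(\d,\,\sgn)$ by Corollary \ref{c-m9.3.1}.

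The main obstacle I anticipate is the bookkeeping, in the well-definedness and surjectivity steps, connecting the signatures $(p_d,\,q_d)$ of $(\cdot,\,\cdot)_d$ to the entries of the sign matrices $(m^d_{ij})$ under the modified convention for $\O^3_\d$ introduced in \ref{yd-def2}; this is exactly the subtlety corrected in Remark \ref{CM-correction}, and it must be applied uniformly so that $\sgn_{\OC_X}$ genuinely lands in $\SC^{\rm odd}_\d(n)$ and so that distinct signatures yield distinct admissible sign matrices. The quaternionic setting makes the verification that the matching map is $\H$-linear (rather than merely $\R$-linear) and has reduced norm $1$ the other delicate point, though both follow from the block structure already recorded in Proposition \ref{unitary-J-basis} and Remark \ref{unitary-J-basis-rmk}(3).
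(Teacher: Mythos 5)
The paper offers no proof of this statement: Theorem \ref{so*-parametrization} is quoted as standard, with the argument delegated entirely to \cite[Theorem 9.3.4]{CoM}. So there is no in-paper proof to compare against, and the only question is whether your reconstruction is sound --- it is. Your three-step scheme (well-definedness via conjugacy of $\s\l_2(\R)$-triples through a fixed nilpotent together with Lemma \ref{comm-XH}; injectivity via the normal form of Proposition \ref{unitary-J-basis} and the normalization of Remark \ref{unitary-J-basis-rmk}(3); surjectivity via a blockwise model transported to the standard form $\jb\,{\rm I}_n$) is exactly the standard argument, and the two points you flag as delicate are handled correctly. In particular your reduced-norm claim is genuine and worth its one-line proof: under $\H^n \simeq \C^{2n}$ the $\jb$-component of a quaternionic skew-Hermitian form is a non-degenerate complex \emph{symmetric} bilinear form, so any $\H$-linear isometry lies in a copy of ${\rm O}(2n,\C)$, where its determinant is $\pm 1$, while every element of ${\rm GL}_n(\H)$ has complex determinant equal to ${\rm Nrd} > 0$; hence ${\rm Nrd} = 1$ and the full isometry group ${\rm U}(V, \<>)$ coincides with ${\rm SO}^*(2n)$, which is what injectivity needs.

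Two corrections of emphasis, neither a logical gap. First, Corollary \ref{c-m9.3.1} is proved only under the hypothesis $\epsilon = 1$, i.e.\ for symmetric or Hermitian $\<>$, so it cannot be invoked for the skew-Hermitian form of $\s\o^*(2n)$, neither in your well-definedness step nor to read off the invariants of your surjectivity model; those invariants must be (and easily are) verified directly from the block construction, since the form $(\cdot,\,\cdot)_d$ there is prescribed by fiat. Second, the $\O^3_\d$ bookkeeping of \ref{yd-def2} and Remark \ref{CM-correction} that you anticipate as the main obstacle is in fact vacuous in this case: a quaternionic skew-Hermitian form carries no signature, $\YC^{\rm odd}(n)$ imposes no signature constraint on $(p,q)$, and the matrices $M_\theta \in \Ab_{\theta,1}$ for $\theta \in \O_\d$ are completely determined by $\d$ alone --- the correction of Remark \ref{CM-correction} matters for ${\rm SU}(p,q)$, ${\rm SO}(p,q)^\circ$ and ${\rm Sp}(p,q)$, where the signature of $\<>$ on $M(d-1)$ must match the sign matrices, but here the only genuine invariants beyond $\d$ are the signatures $(p_\eta,\,q_\eta)$ for $\eta \in \E_\d$, exactly as your argument uses them.
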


Let $0\,\not=\, X \,\in\, \NC _{\s\o^* (2n)}$ and $\{X,\,H,\,Y\}$ be a $\s\l_2(\R)$-triple in
$\s\o^*(2n)$.
Let $\Psi_{{\rm SO}^* (2n)} (\OC_X) \,=\, \big( \d ,\, \sgn_{\OC_X} \big)$. Then
$\Psi'_{{\rm SO}^* (2n)} (\OC_X) \,=\, \d$. 
Recall that $\sgn_{\OC_X}$ determines the signature of $(\cdot,\, \cdot)_\eta$ on $L(\eta -1)$ for
all $\eta \,\in\, \E_\d$; 
let $(p_\eta, \,q_\eta)$ be the signature of $(\cdot,\, \cdot)_\eta$ on $L(\eta-1)$.
Let $( v^{d}_1,\, \cdots,\, v^d_{t_d} )$ be an ordered $\H$-basis of $L(d-1)$ as in
Proposition \ref{unitary-J-basis}. 
It now follows from Proposition \ref{unitary-J-basis}(3)(a) that $( v^d_1, \,\cdots,\, v^d_{t_d})$
is an orthogonal basis of $L(d-1)$ for the form $(\cdot,\, \cdot)_d$ for all $d \,\in\, \N_\d$.
We also assume that the vectors in the ordered basis $( v^d_1, \,\cdots,\, v^d_{t_d})$ satisfy
the properties in Remark \ref{unitary-J-basis-rmk}(3).
Since $(\cdot,\, \cdot)_\theta$ is skew-Hermitian for all $\theta \,\in \,\O_\d$, using Lemma
\ref{H-conjugate}, we may assume that $( v^\theta_1,\, \cdots,\, v^\theta_{t_\theta} )$ is a
standard orthogonal basis for all $\theta \,\in\, \O_\d$. Thus 
\begin{equation}\label{orthonormal-basis-theta-so*}
( v^{\theta}_j, v^\theta_j)_{\theta}  = \jb \quad \text{ for all } \, 1 \leq j \leq t_\theta, \, \theta \in \O_\d.
\end{equation}
In view of the signature of $(\cdot,\, \cdot)_\eta$, $\eta \,\in\, \E_\d$, we may assume that
\begin{equation}\label{orthonormal-basis-eta-so*}
(v^{\eta}_j, v^\eta_j)_{\eta}  =
 \begin{cases}
  +1  &  \text{ if }   1 \leq j \leq p_{\eta}   \\
  -1  &  \text{ if }  p_{\eta} < j \leq t_{\eta}. 
 \end{cases}
\end{equation}
For $\eta \,\in\, \E_\d,~ 1 \leq r \leq p_\eta$, define 
\begin{equation}\label{orthogonal-basis-p-eta-so*}
w^\eta_{rl} :=
\begin{cases}
   \big(X^l v^{\eta}_r + X^{\eta-1-l} v^{\eta}_r \jb \big)/{\sqrt{2}} & \text{ if } l \text{ is even, } 0 \leq l \leq \eta/2-1 \vspace*{.1cm}\\   
   \big(X^l v^{\eta}_r + X^{\eta-1-l} v^{\eta}_r\jb\big){\ib}/{\sqrt{2}} & \text{ if } l \text{ is odd, } 0 \leq l \leq \eta/2-1 \vspace*{0.1cm}\\  
     \big(X^{\eta-1-l} v^{\eta}_r - X^l v^{\eta}_r \jb \big) {\ib}/{\sqrt{2}}  & \text{ if $l$ is odd, } \eta/2 \leq l \leq \eta -1 \vspace*{.1cm}\\  
  \big( X^{\eta-1-l} v^{\eta}_r - X^l v^{\eta}_r \jb \big)/{\sqrt{2}}  & \text{ if $l$ is even, } \eta/2 \leq l \leq \eta -1.    
\end{cases} 
\end{equation}
Similarly for $\eta \in \E_\d,~ p_\eta < r \leq t_\eta$, define 
\begin{equation}\label{orthogonal-basis-q-eta-so*}
w^\eta_{rl} :=
\begin{cases}
   \big(X^l v^{\eta}_r + X^{\eta-1-l} v^{\eta}_r \jb \big) {\ib}/{\sqrt{2}} & \text{ if } l \text{ is even, } 0 \leq l \leq \eta/2-1 \vspace*{.1cm}\\   
   \big(X^l v^{\eta}_r + X^{\eta-1-l} v^{\eta}_r\jb\big)/{\sqrt{2}} & \text{ if } l \text{ is odd, } 0 \leq l \leq \eta/2-1 \vspace*{0.1cm}\\   
   \big(X^{\eta-1-l} v^{\eta}_r - X^l v^{\eta}_r  \jb\big) /{\sqrt{2}}  & \text{ if $l$ is odd, } \eta/2 \leq l \leq \eta -1 \vspace*{.1cm}\\
   \big(X^{\eta-1-l} v^{\eta}_r - X^l v^{\eta}_r  \jb \big) {\ib}/{\sqrt{2}}  & \text{ if $l$ is even, } \eta/2 \leq l \leq \eta -1.    
\end{cases} 
\end{equation}

Using \eqref{orthonormal-basis-eta-so*} we observe that for all $\eta \,\in \,\E_\d$,
$$\{{w}^{\eta}_{rl} \,\mid\, 0 \,\le\, l \,\le\, \eta-1, \ 1 \,\le\, r \,\le\, t_\eta \}$$ is an
orthogonal basis of $M(\eta -1)$ with respect to $\<>$, where $\langle {w}^{\eta}_{rl},\,
 {w}^{\eta}_{rl} \rangle \,=\, \jb$ for $0 \,\le\, l \,\le \,\eta-1$, $1 \,\le\, r \,\le\, t_\eta$.
For $\eta \,\in\, \E_\d,~ 0 \,\leq\, l \,\leq\, \eta/2 -1$, set 
\begin{equation}\label{defn-W-eta-so*}
W^l(\eta) := \text{Span}_\H \{w^\eta_{r\,l}, w^\eta_{r\,\eta-1-l} \mid 1 \leq r \leq t_\eta\}. 
\end{equation}
Moreover, we define a standard orthogonal basis $\DC^l(\eta)$ of $W^l(\eta)$ with respect to $\<>$ as follows:
\begin{equation}\label{standard-basis-even-so*}
 \DC^l(\eta ):=\begin{cases} 
            \big({w}^{\eta}_{_{1 \, l}}, \cdots,  {w}^{\eta}_{_{p_{\eta}\, l}}\big) \vee 
            \big({w}^{\eta}_{_{1\, (\eta-1-l)}}, \cdots , {w}^{\eta}_{_{p_{\eta} \, (\eta-1-l)}}  \big)\\
            \bigvee  \big(  {w}^{\eta}_{_{(p_{\eta} +1)\, (\eta-1-l)}},  \cdots , {w}^{\eta}_{_{t_{\eta} \, (\eta-1-l)}} \big) \vee  \big({w}^{\eta}_{_{(p_{\eta} +1) \, l}},  \cdots , {w}^{\eta}_{_{t_{\eta} \, l}}  \big)
            & \text{ if } l \text{ is  even} \vspace{.25cm}
            \\
         \big({w}^{\eta}_{_{1\, (\eta-1-l)}}, \cdots , {w}^{\eta}_{_{p_{\eta} \, (\eta-1-l)}}  \big) \vee 
         \big({w}^{\eta}_{_{1 \, l}}, \cdots,  {w}^{\eta}_{_{p_{\eta}\, l}}\big)\\
         \bigvee  \big({w}^{\eta}_{_{(p_{\eta} +1) \, l}},  \cdots , {w}^{\eta}_{_{t_{\eta} \, l}}  \big)  \vee \big({w}^{\eta}_{_{(p_{\eta} +1) \, (\eta-1-l)}},  \cdots , {w}^{\eta}_{_{t_{\eta} \, (\eta-1-l)}}  \big) 
          &  \text{ if } l \text{ is  odd}.
      \end{cases} 
\end{equation}
Now fixing $\theta \in \O^1_\d$, for all $1 \leq r \leq t_\theta$, define 
\begin{equation*}
{w}^{\theta}_{rl}:=
\begin{cases}
   \big(X^l v^{\theta}_r + X^{\theta-1-l} v^{\theta}_r \big)/{\sqrt{2}} & \text{ if } l \text{ is even, } 0 \leq l < (\theta-1)/2 \vspace*{.1cm}\\   
   \big(X^l v^{\theta}_r + X^{\theta-1-l} v^{\theta}_r \big){\ib}/{\sqrt{2}} & \text{ if } l \text{ is odd, } 0 \leq l < (\theta-1)/2 \vspace*{0.05cm}\\   
     ~ X^l v^{\theta}_r    &  \text{ if }  l = (\theta-1)/2 \vspace*{.05cm}\\        
   \big(X^{\theta-1-l} v^{\theta}_r - X^l v^{\theta}_r \big) /{\sqrt{2}}  & \text{ if $l$ is odd, } (\theta + 1)/2 \leq l \leq \theta -1 \vspace*{.1cm}\\
   \big(X^{\theta-1-l} v^{\theta}_r - X^l v^{\theta}_r \big){\ib}/{\sqrt{2}}  & \text{ if $l$ is even, } (\theta+1)/2 \leq l \leq \theta -1.    
\end{cases} 
\end{equation*}
For all $\zeta \in \O^3_\d $ and $ 1 \,\leq\, r \,\leq\, t_\zeta$, define 
\begin{equation*}
{w}^{\zeta}_{rl}:=
\begin{cases}
   \big(X^l v^{\zeta}_r + X^{\zeta-1-l} v^{\zeta}_r \big)/{\sqrt{2}} & \text{ if } l \text{ is even, } 0 \leq l < (\zeta-1)/2 \vspace*{.1cm}\\   
   \big(X^l v^{\zeta}_r + X^{\zeta-1-l} v^{\zeta}_r \big){\ib}/{\sqrt{2}} & \text{ if } l \text{ is odd, } 0 \leq l < (\zeta-1)/2 \vspace*{0.05cm}\\   
       ~ X^l v^{\zeta}_r \ib  &  \text{ if }  l = (\zeta-1)/2 \vspace*{.05cm}\\        
   \big(X^{\zeta-1-l} v^{\zeta}_r - X^l v^{\zeta}_r \big) /{\sqrt{2}}  & \text{ if $l$ is odd, } (\zeta + 1)/2 \leq l \leq \zeta -1 \vspace*{.1cm}\\
   \big(X^{\zeta-1-l} v^{\zeta}_r - X^l v^{\zeta}_r \big) {\ib}/{\sqrt{2}}  & \text{ if $l$ is even, } (\zeta+1)/2 \leq l \leq \zeta -1.    
\end{cases} 
\end{equation*}
Using \eqref{orthonormal-basis-theta-so*} we observe that for all $\theta \in \O_\d$,
$$\{{w}^{\theta}_{rl} \,\mid\, 0 \,\le\, l \,\le\, \theta-1, \ 1 \,\le\, r \,\le\, t_\theta \}$$
is an orthogonal basis of $M(\theta -1)$ with respect to $\<>$, where $\langle {w}^{\theta}_{rl},\,
 {w}^{\theta}_{rl} \rangle \,=\, \jb$ for $0 \,\le\, l \,\le\, \theta-1$, $1\,\le\, r\,\le\, t_\theta$.  
For each $\theta \,\in\, \O_\d$, $0\,\le\, l \,\le\, \theta-1$, set 
\begin{align}\label{defn-V-theta-so*}
V^l(\theta):= \text{Span}_\H\{ w^{\theta}_{rl} \mid 1\le r \le t_\theta \}. 
\end{align}
The standard orthogonal ordered basis $( {w}^\theta_{1l},\, \cdots ,\,  {w}^\theta_{t_{\theta}l} )$
of $ V^l (\theta) $  with respect to $\<>$ is denoted by $\CC^l (\theta)$. 
 
Let $W$ be a right $\H$-vector space and $\<>'$ a non-degenerate skew-Hermitian form on $W$.
Let $\dim_\H W \,=\, m$, and let $\BC'\,:=\, (v_1,\, \cdots, \,v_m)$ be a standard orthogonal basis
of $W$ such that $\langle v_r,\, v_r \rangle' \,=\, \jb$ for all $1\,\leq\, r \,\leq\, m$.
Define $${\rm J}_{\BC'} \,\colon\, W \,\longrightarrow\, W\, ,\ \
\sum_r v_r z_r\,\longmapsto\, \sum_r v_r \jb z_r$$ for all column vectors
$(z_1, \,\cdots,\, z_m)^t \,\in\, \H^m$. Set
\begin{align*}
K_{\BC'} :=& \big\{ g \in {\rm SO}^* (W, \<>') \bigm|   g {\rm J}_{\BC'} = {\rm J}_{\BC'} g  \big\}.
\end{align*}
The following lemma is standard; its proof is omitted.

\begin{lemma} \label{max-cpt-so*-2n}
Let $W$, $\<>'$ and $\BC'$  be as above. Then the following hold:
\begin{enumerate}
 \item $K_{\BC'}$ is a maximal compact subgroup of ${\rm SO}^*(W, \<>')$.
 
 \item $K_{\BC'} = \big\{g \in {\rm SL}(W) \bigm| [g]_{\BC'} = A+\jb B ~ \text{ with }~ A,B \in {\rm M}_m(\R), A+\sqrt{-1}B \in {\rm U}(m) \big\} $.
\end{enumerate}
\end{lemma}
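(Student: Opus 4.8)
The plan is to prove (2) by a direct matrix computation in the basis $\BC'$, and then to deduce (1) from (2) together with the structure theory of ${\rm SO}^*(2m)$. Throughout I would pass to coordinates via the $\R$-algebra isomorphism $\Lambda_{\BC'}\colon {\rm End}_\H(W)\,\longrightarrow\,{\rm M}_m(\H)$, $g\mapsto [g]_{\BC'}$, and write $G:=[g]_{\BC'}$. Since ${\rm J}_{\BC'}(v_r)=v_r\jb$, its matrix is $[{\rm J}_{\BC'}]_{\BC'}=\jb I_m$, so the relation $g\,{\rm J}_{\BC'}={\rm J}_{\BC'}\,g$ becomes $G(\jb I_m)=(\jb I_m)G$; entrywise this says each $G_{ik}$ commutes with $\jb$, i.e. $G_{ik}\in\R+\R\jb$. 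Writing $G=A+\jb B$ with $A,B\in{\rm M}_m(\R)$, the substitution $\jb\mapsto\sqrt{-1}$ identifies the centralizer $\{G\in{\rm M}_m(\H)\mid G\jb=\jb G\}$ with ${\rm M}_m(\C)$ as $\R$-algebras through $G=A+\jb B\,\longmapsto\, C:=A+\sqrt{-1}B$, because $\R+\R\jb\cong\C$ as $\R$-algebras.

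Next I would impose the two remaining defining conditions of ${\rm SO}^*(W,\langle\cdot,\cdot\rangle')={\rm SU}(W,\langle\cdot,\cdot\rangle')$. As $\BC'$ is standard orthogonal, the Gram matrix of $\langle\cdot,\cdot\rangle'$ is $\jb I_m$, so the isometry condition reads $\overline{G}^t\,\jb I_m\,G=\jb I_m$. Substituting $G=A+\jb B$ and using $\overline{\jb}=-\jb$, a short expansion gives $\overline{G}^t\,\jb I_m\,G=(B^tA-A^tB)+(A^tA+B^tB)\jb$; hence the isometry condition is equivalent to $A^tA+B^tB=I_m$ together with $A^tB=B^tA$, which is precisely $C^*C=I_m$, that is $C\in{\rm U}(m)$. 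It then remains to see that the condition ${\rm Nrd}_{{\rm M}_m(\H)}(G)=1$ defining ${\rm SL}(W)$ is automatic: under the embedding ${\rm M}_m(\C)\hookrightarrow{\rm M}_m(\H)$ above one has the standard identity ${\rm Nrd}_{{\rm M}_m(\H)}(G)=|\det_\C C|^2$, which equals $1$ whenever $C\in{\rm U}(m)$ (for $m=1$ this is just $|a+b\jb|^2=|a+b\sqrt{-1}|^2$). Combining these equivalences yields the description in (2), and simultaneously shows that $G\mapsto C$ restricts to a group isomorphism $K_{\BC'}\,\stackrel{\sim}{\longrightarrow}\,{\rm U}(m)$.

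Finally, for (1) I would argue maximality. The isomorphism just produced shows $K_{\BC'}$ is compact and connected of real dimension $m^2$. By the theorem that every compact subgroup lies in a maximal compact one, $K_{\BC'}$ is contained in some maximal compact subgroup $K^{\max}$ of the connected simple group ${\rm SO}^*(2m)$; since every maximal compact subgroup of ${\rm SO}^*(2m)$ is isomorphic to ${\rm U}(m)$, and is therefore connected of dimension $m^2$, the inclusion $K_{\BC'}\subseteq K^{\max}$ of connected closed subgroups of equal dimension forces equality of Lie algebras and hence $K_{\BC'}=K^{\max}$. A more self-contained route, which I would sketch as an alternative, is to note that $h(x,y):=-\langle x,\,{\rm J}_{\BC'}y\rangle'$ is a positive-definite Hermitian form on $W$ preserved by $K_{\BC'}$ (one computes $\langle x,{\rm J}_{\BC'}y\rangle'=-\overline{z}^t w$ in $\BC'$-coordinates), and that $\theta(g):={\rm J}_{\BC'}\,g\,{\rm J}_{\BC'}^{-1}$ is an involutive automorphism of ${\rm SO}^*(W,\langle\cdot,\cdot\rangle')$ whose fixed-point subgroup is exactly $K_{\BC'}$; recognizing $\theta$ as a Cartan involution then gives maximality directly. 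The one step genuinely needing care is this last one, namely passing from compactness (which the computation in (2) already furnishes) to maximality, since everything else reduces to the routine matrix manipulation above.
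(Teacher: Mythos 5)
Your proposal is correct, and in fact the paper offers no proof to compare against: it declares the lemma ``standard'' and omits the argument, recording only (in the paragraph that follows) the isomorphism $\Lambda'_{\BC'}(K_{\BC'})\,=\,{\rm U}(m)$ --- which is exactly what your part (2) establishes. Your computation checks out in detail: $[{\rm J}_{\BC'}]_{\BC'}\,=\,\jb\,{\rm I}_m$, the centralizer condition forces entries in $\R+\R\jb$, and the expansion $\overline{G}^t\jb G \,=\, (B^tA-A^tB)+\jb(A^tA+B^tB)$ translates the isometry condition into $C^*C={\rm I}_m$ for $C=A+\sqrt{-1}B$; the reduced-norm identity ${\rm Nrd}_{{\rm M}_m(\H)}(A+\jb B)=\det(A+\sqrt{-1}B)\det(A-\sqrt{-1}B)=|\det C|^2$ is also right (it follows from the embedding $\wp_{m,\H}$ the paper itself uses), so the ${\rm SL}(W)$ condition is indeed automatic. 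Your first route to (1) is complete as stated: containment in some maximal compact subgroup, plus the fact that every maximal compact subgroup of ${\rm SO}^*(2m)$ is a connected group isomorphic to ${\rm U}(m)$, forces equality of Lie algebras and then of groups.

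Two small remarks. First, you call ${\rm SO}^*(2m)$ simple, which fails for $m\leq 2$ (${\rm SO}^*(2)$ is abelian and ${\rm SO}^*(4)$ is not simple); this is harmless since your argument only uses the structure of the maximal compact subgroups, which is ${\rm U}(m)$ for every $m\geq 1$, but the adjective should be dropped. Second, your alternative route is also sound and can be closed cleanly: on the Lie algebra, $X\in\s\o^*(W,\<>')$ satisfies $\overline{X}^t\jb+\jb X=0$, i.e.\ ${\rm J}_{\BC'}X{\rm J}_{\BC'}^{-1}=-\overline{X}^t$, so conjugation by ${\rm J}_{\BC'}$ is precisely the negative adjoint with respect to the positive-definite Hermitian form $h$ you exhibit --- the standard Cartan involution --- which settles the one step you flagged as needing care.
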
 
  
 Recall that $\big\{x \in {\rm End}_\H W \mid x {\rm J}_{\BC'} = {\rm J}_{\BC'} x \big\} = \big\{ x \in {\rm End}_\H W \bigm| [x]_{\BC'} \in {\rm M}_m(\R) + \jb {\rm M}_m(\R)  \big\} $. 
 We now consider the $\R$-algebra isomorphism 
 \begin{equation}\label{R-algebra-isomorphism-so*}
  \Lambda'_{\BC'}  \,\colon\, \big\{x \,\in\, {\rm End}_\H W \,\mid\, x {\rm J}_{\BC'}
\,= \,{\rm J}_{\BC'} x \big\} \,\longrightarrow \,{\rm M}_m(\C)\, , \ \
x\,\longmapsto\, A + \sqrt{-1} B\, ,
\end{equation}
where $A,\,B \,\in\, {\rm M}_m(\R)$ are the unique elements such that $[x]_\BC \,=\, A + \jb B$.
In view of the above lemma it is clear that $\Lambda'_{\BC'}(K_{\BC'})\,=\,{\rm U}(m)$, and
hence $\Lambda'_{\BC'} \,\colon\, K_{\BC'} \,\longrightarrow\, {\rm U}(m)$ is an isomorphism
of Lie groups.

In the next lemma we specify a maximal compact subgroup of $\ZC_{{\rm SO}^* (2n) } (X,H,Y)$ 
which will be used in Proposition \ref{max-cpt-so*-wrt-basis}.
Recall that $\overline{Z}\,:=\, (\sigma_c(z_{rl})) \,\in\, {\rm 
M}_m(\H)$; see Section \ref{sec-epsilon-sigma-forms}.

\begin{lemma}\label{max-cpt-so*-reductive-part}
 Let $K$ be the subgroup of $\ZC_{{\rm SO}^* (2n)}(X,H,Y)$ consisting of elements $g$ in \\
$\ZC_{{\rm SO}^*(2n)}(X,H,Y)$ satisfying the following conditions:
\begin{enumerate}
 \item $g \big( V^l (\theta)\big) \,\subset\, V^l (\theta)$
for all $\theta \,\in\, \O_\d$ and  $0 \leq l \leq \theta -1$.

\item
 For all $ \theta \in \O^1_\d$, there exist  $A_\theta,B_\theta \in {\rm M}_{t_\theta}(\R)$ with $A_\theta + \sqrt{-1}B_\theta \in {\rm U}(t_\theta) $ such that 
  $$
  \big[g|_{V^l(\theta)}\big]_{\CC^l(\theta)} =
  \begin{cases}
   A_\theta + \jb B_\theta  & \text{ if $l$ is even, } 0 \leq l < (\theta -1)/2 \vspace*{0.05cm}\\
   A_\theta - \jb B_\theta  & \text{ if } l \text{ is odd, } 0 \leq l < (\theta-1)/2 \vspace*{0.05cm}\\  
   A_\theta + \jb B_\theta  & \text{ if }  l = (\theta-1)/2 \vspace*{.05cm}\\ 
   A_\theta + \jb B_\theta  &  \text{ if $l$ is odd, } (\theta + 1)/2 \leq l \leq \theta -1 \vspace*{.05cm}\\
   A_\theta - \jb B_\theta  & \text{ if $l$ is even, } (\theta+1)/2 \leq l \leq \theta -1.    
  \end{cases}
 $$ 

 \item For all $ \zeta \in \O^3_\d$, there exist  $A_\zeta,B_\zeta \in {\rm M}_{t_\zeta}(\R)$ with $A_\zeta + \sqrt{-1} B_\zeta \in {\rm U}(t_\zeta) $ such that 
  $$
  \big[g|_{V^l(\zeta)}\big]_{\CC^l(\zeta)} =
  \begin{cases}
   A_\zeta + \jb B_\zeta  & \text{ if $l$ is even, } 0 \leq l < (\zeta -1)/2 \vspace*{0.05cm}\\
   A_\zeta - \jb B_\zeta  & \text{ if } l \text{ is odd, } 0 \leq l < (\zeta-1)/2 \vspace*{0.05cm}\\  
   A_\zeta - \jb B_\zeta  & \text{ if }  l = (\zeta-1)/2 \vspace*{.05cm}\\ 
   A_\zeta + \jb B_\zeta  & \text{ if $l$ is odd, } (\zeta + 1)/2 \leq l \leq \zeta -1 \vspace*{.05cm}\\
   A_\zeta - \jb B_\zeta  & \text{ if $l$ is even, } (\zeta+1)/2 \leq l \leq \zeta -1.    
  \end{cases}
 $$  

 \item 
$g( W^l (\eta)) \,\subset\, W^l (\eta)$
for all $\eta \in \E_\d$ and $0 \leq l \leq \eta/2 -1$.

 \item
 For all $\eta \in \E_\d$, there exist $A_{p_\eta},B_{p_\eta}, C_{p_\eta}, D_{p_\eta} \in {\rm M}_{p_\eta}(\R) $, $A'_{q_\eta},B'_{q_\eta}, C'_{q_\eta}, D'_{q_\eta} \in {\rm M}_{q_\eta}(\R)$  with $A_{p_\eta}+ \jb B_{p_\eta} + \ib (C_{p_\eta}+ \jb D_{p_\eta}) \in {\rm Sp}(p_\eta)$ and  $A'_{q_\eta}+ \jb B'_{q_\eta} + \ib( C'_{q_\eta}+ \jb D'_{q_\eta}) \in {\rm Sp}(q_\eta)$ such that  
$$ 
\big[g|_{W^l(\eta)} \big]_{\DC^l(\eta)}= \begin{pmatrix}
                                         A_{p_\eta}+ \jb B_{p_\eta} & & -C_{p_\eta}+\jb D_{p_\eta} \vspace{.4cm} \\
                                         C_{p_\eta}+ \jb D_{p_\eta} & &  A_{p_\eta}-\jb B_{p_\eta} \\
                                           & &  & A'_{q_\eta} +\jb B'_{q_\eta} && -C'_{q_\eta}+ \jb D'_{q_\eta} \vspace{.4cm} \\
                                           &  & & C'_{q_\eta}+ \jb D'_{q_\eta} &&  A'_{q_\eta}- \jb B'_{q_\eta}
                                           \end{pmatrix}.
 $$
 \end{enumerate}
Then $K$ is a maximal compact subgroup of $\ZC_{ {\rm SO}^* (2n)} (X,H,Y)$.
\end{lemma}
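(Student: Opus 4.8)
The plan is to identify a maximal compact subgroup of $\ZC_{{\rm SO}^*(2n)}(X,H,Y)$ abstractly using the isotypical decomposition, and then show that the group $K$ described by conditions (1)--(5) is exactly such a subgroup by a direct matrix computation. First I would invoke the orthogonal decomposition $V = \bigoplus_{d \in \N_\d} M(d-1)$ from Lemma \ref{ortho-isotypical}, which splits along even and odd $d$, so that $\ZC_{{\rm SO}^*(2n)}(X,H,Y)$ factors as a product over the isotypical components. By Lemma \ref{reductive-part-comp}(4), this centralizer is isomorphic to $\big\{ g \in \prod_{d \in \N_\d} {\rm U}(L(d-1),(\cdot,\cdot)_d) \mid \bigchi_\d(g) = 1 \big\}$, where $(\cdot,\cdot)_d$ is skew-Hermitian for $d$ odd and Hermitian for $d$ even (Remark \ref{unitary-J-basis-rmk}). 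Thus the factor at each $\theta \in \O_\d$ is ${\rm SO}^*(t_\theta)$-type (unitary group of a skew-Hermitian $\H$-form), while the factor at each $\eta \in \E_\d$ is an ${\rm Sp}(p_\eta,q_\eta)$-type group (unitary group of a Hermitian $\H$-form of signature $(p_\eta,q_\eta)$). The reduced-norm-one condition $\bigchi_\d(g)=1$ is automatic on the maximal compact level for these factors, so a maximal compact subgroup is obtained by taking maximal compacts in each factor: ${\rm U}(t_\theta)$ for odd $\theta$ (via Lemma \ref{max-cpt-so*-2n}) and ${\rm Sp}(p_\eta)\times {\rm Sp}(q_\eta)$ for even $\eta$.

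Next I would translate these abstract maximal compact subgroups into the explicit matrix conditions (1)--(5). The key device is the family of standard orthogonal bases $\DC^l(\eta)$, $\CC^l(\theta)$ constructed in \eqref{standard-basis-even-so*}, \eqref{defn-V-theta-so*} and the preceding displays, which are built from the vectors $w^d_{rl}$ so that $\<>$ takes the value $\jb$ on each diagonal pair. The point of conditions (1) and (4) is that an element of the centralizer must preserve each $X^l L(d-1)$, hence preserve the subspaces $V^l(\theta)$ and $W^l(\eta)$; and since $g$ commutes with $X$, the action of $g$ on the $l$-th layer is determined by its action on the $0$-th layer. I would verify that, upon expressing $g$ restricted to the highest-weight space in the basis $\BC^0(d)$ and then conjugating by the appropriate power of $X$ to reach the $l$-th layer, the defining relations of $w^d_{rl}$ force precisely the alternation between $A_d + \jb B_d$ and $A_d - \jb B_d$ recorded in conditions (2), (3) and (5). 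Here the isomorphism $\Lambda'_{\BC'}$ of \eqref{R-algebra-isomorphism-so*} identifies the commutant of $\mathrm{J}_{\BC'}$ with ${\rm M}_m(\C)$, so the condition that $g$ lands in ${\rm U}(t_\theta)$ (resp.\ ${\rm Sp}(p_\eta)$, ${\rm Sp}(q_\eta)$) becomes the matrix membership asserted in the lemma.

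I would then confirm that conditions (1)--(5) indeed cut out a compact group: compactness follows because each layer-action is governed by a single unitary/symplectic block, and the full element is reconstructed from these blocks, so $K$ is a closed subgroup of the compact group $K_{\BC'}$-type product; maximality follows because $K$ surjects onto the product of maximal compacts of the individual isotypical factors identified in the first step, and a subgroup of a reductive Lie group that maps onto a maximal compact subgroup under the projection to each almost-direct factor is itself maximal compact. The distinction between $\O^1_\d$ and $\O^3_\d$, and the sign choices at the middle layer $l=(\theta-1)/2$ versus $l=(\zeta-1)/2$, must be handled carefully: this is dictated by whether $X^{(\theta-1)/2}v^\theta_r$ or $X^{(\zeta-1)/2}v^\zeta_r\ib$ appears in the middle-column basis vector, which in turn reflects the $\pmod 4$ behaviour flagged in Remark \ref{CM-correction}.

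The hard part will be the bookkeeping in the even case $\eta \in \E_\d$, where the Hermitian form $(\cdot,\cdot)_\eta$ has a genuine signature $(p_\eta,q_\eta)$ and the basis $\DC^l(\eta)$ interleaves the positive-norm and negative-norm vectors with an order that itself depends on the parity of $l$ (see \eqref{standard-basis-even-so*}). Tracking how $g$ acts simultaneously on the $l$-th and $(\eta-1-l)$-th layers—which together span $W^l(\eta)$—and showing that the block structure collapses to the two ${\rm Sp}$-matrices $A_{p_\eta}+\jb B_{p_\eta}+\ib(C_{p_\eta}+\jb D_{p_\eta})$ and its $q_\eta$-counterpart, with the precise off-diagonal sign pattern displayed in condition (5), is where the computation is most delicate. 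Since the authors state the proof is standard and omit it, I would present this as a verification that the stated matrix conditions are the faithful transcription, through the bases $\DC^l(\eta)$ and $\CC^l(\theta)$, of the abstract maximal compact subgroup $\prod_{\theta \in \O_\d}{\rm U}(t_\theta) \times \prod_{\eta \in \E_\d}\big({\rm Sp}(p_\eta)\times {\rm Sp}(q_\eta)\big)$ sitting inside $\ZC_{{\rm SO}^*(2n)}(X,H,Y)$.
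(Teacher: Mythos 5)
Your proposal follows essentially the same route as the paper's proof: the paper introduces an auxiliary group $K'$ cut out by layer-preservation together with commutation with the complex structures ${\rm J}_{\CC^0(\theta)}$ and ${\rm J}_{\DC^0(\eta)}$ --- which is maximal compact by Lemma \ref{max-cpt-so*-2n}(1) combined with the centralizer description underlying Lemma \ref{reductive-part-comp}(4) --- and then proves $K=K'$ by exactly the layer-by-layer matrix bookkeeping you outline, including the step where commutation with ${\rm J}_{\DC^0(\eta)}$ forces $[g|_{L(\eta-1)}]_{\BC^0(\eta)}$ to commute with ${\rm I}_{p_\eta,q_\eta}$, hence to be block-diagonal with blocks landing in ${\rm Sp}(p_\eta)\times{\rm Sp}(q_\eta)$ via $\Lambda'_{\DC^0(\eta)}(g|_{W^0(\eta)})\in {\rm U}(2t_\eta)$. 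One small caution: your stated general principle that a subgroup surjecting onto a maximal compact of each almost-direct factor is itself maximal compact is false as phrased (a diagonally embedded compact group in a product is a counterexample), but this is harmless here because your transcription identifies $K$ with the full product $\prod_{\theta\in\O_\d}{\rm U}(t_\theta)\times\prod_{\eta\in\E_\d}\big({\rm Sp}(p_\eta)\times{\rm Sp}(q_\eta)\big)$ under the isomorphism of Lemma \ref{reductive-part-comp}(4), which suffices.
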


\begin{proof}
Let $\BC^l(d)\,=\, (X^lv^d_1,\, \cdots,\, X^lv^d_{t_d})$ be the ordered basis of $X^lL(d-1)$ for
$0\,\leq \,l \,\leq\, d -1$, $d \in \N_\d$, as in \eqref{old-ordered-basis-part}.
Let $K'$ be the subgroup consisting of all $g \,\in\, \ZC_{{\rm SO}^* (2n)}(X,H,Y)$
satisfying the following properties:
\begin{align}
   & \text{For }\theta \in \O_\d, 0\leq l \leq \theta-1 ,~ g \big(V^l (\theta)\big) \subset \ V^l (\theta),\label{orthogonal-odd-subspace-so*} \\
   & \text{For all } \theta \in \O^1_\d,    
   \big[g|_{ V^l(\theta)}\big]_{{\CC}^l(\theta)} = 
   \begin{cases}
   \big[g |_{ V^0 (\theta)}\big]_{{\CC}^0 (\theta)}   & \text{ if $l$ is even, } 0 \leq l < (\theta -1)/2 \vspace*{0.05cm}\\
   \overline{\big[g |_{ V^0 (\theta)}\big]}_{{\CC}^0 (\theta)}  & \text{ if } l \text{ is odd, } 0 \leq l < (\theta-1)/2 \vspace*{0.1cm}\\  
   \big[g |_{ V^0 (\theta)}\big]_{{\CC}^0 (\theta)}  & \text{ if }  l = (\theta-1)/2 \vspace*{0.1cm}\\ 
   \big[g |_{ V^0 (\theta)}\big]_{{\CC}^0 (\theta)}  &  \text{ if $l$ is odd, } (\theta + 1)/2 \leq l \leq \theta -1 \vspace*{0.1cm}\\
   \overline{\big[g |_{ V^0 (\theta)}\big]}_{{\CC}^0 (\theta)}  & \text{ if $l$ is even, } (\theta+1)/2 \leq l \leq \theta -1,    
   \end{cases}\label{matrix-wrt-orthogonal-odd-1-subspace-so*}\\
  & \text{For all } \zeta \in \O^3_\d,    
  \big[g|_{ V^l(\zeta)}\big]_{{\CC}^l(\zeta)} = 
  \begin{cases}
  \big[g |_{ V^0 (\zeta)}\big]_{{\CC}^0 (\zeta)}   & \text{ if $l$ is even, } 0 \leq l < (\zeta -1)/2 \vspace*{00.1cm}\\
   \overline{ \big[g |_{ V^0 (\zeta)}\big]}_{{\CC}^0 (\zeta)}  & \text{ if } l \text{ is odd, } 0 \leq l < (\zeta-1)/2 \vspace*{00.1cm}\\  
  \overline{\big[g |_{ V^0 (\zeta)}\big]}_{{\CC}^0 (\zeta)}  & \text{ if }  l = (\zeta-1)/2 \vspace*{0.1cm}\\ 
  \big[g |_{ V^0 (\zeta)}\big]_{{\CC}^0 (\zeta)}  &  \text{ if $l$ is odd, } (\zeta + 1)/2 \leq l \leq \zeta -1 \vspace*{.1cm}\\
   \overline{\big[g |_{ V^0 (\zeta)}\big]}_{{\CC}^0 (\zeta)}  & \text{ if $l$ is even, } (\zeta+1)/2 \leq l \leq \zeta -1,    
  \end{cases}  
  \label{matrix-wrt-orthogonal-odd-3-subspace-so*} \vspace{.3cm}\\
   &  g|_{ V^0(\theta)} \text{ commutes with } {\rm J}_{\CC^0(\theta)}, \label{commute-j-odd} \\ 
 & g\big(X^l L(\eta-1)\big) \subset X^l L(\eta-1),\,
 \big[g |_{ X^l L(\eta-1)} \big]_{{\BC}^l (\eta)} = \big[g |_{ L(\eta-1)}\big]_{{\BC}^0 (\eta)} 
 \text{ if } \eta \in \E_\d,   0 \leq l\leq \eta-1;
 \label{orthogonal-even-subspace-so*} \\
 & g( W^l (\eta)) \subset W^l (\eta) \vspace{.3cm}  \text{ for }  \eta \in \E_\d, 0 \leq l \leq \eta/2 -1, \text{ and }   
   g|_{ W^0(\eta)}  \text{ commutes with }  {\rm J}_{\DC^0(\eta)}. \label{commute-j-even} 
\end{align} 
Using  Lemma \ref{max-cpt-so*-2n}(1) it is evident that $K'$ is a maximal compact subgroup of
$\ZC_{{\rm SO}^* (2n)}(X,H,Y)$.  Hence to prove the lemma it suffices to show that $K \,=\, K'$.
Let $g \,\in\, {{\rm SO}^*(2n)}$. From Lemma \ref{max-cpt-so*-2n}(2) it is straightforward that
$g$ satisfies (1), (2), (3) of Lemma \ref{max-cpt-so*-reductive-part} if and only if $g$ satisfies
\eqref{orthogonal-odd-subspace-so*}, \eqref{matrix-wrt-orthogonal-odd-1-subspace-so*}, \eqref{matrix-wrt-orthogonal-odd-3-subspace-so*}
 and \eqref{commute-j-odd}.
 Now suppose that $g \,\in\, {{\rm SO}^*(2n)}$ and $g$ satisfying (4), (5) of
Lemma \ref{max-cpt-so*-reductive-part}. It is clear that \eqref{commute-j-even} holds. We observe
that $$\big[g|_{L(\eta-1)} \big]_{\BC^0(\eta)} = 
 \begin{pmatrix}
   A_{p_\eta}+ \jb B_{p_\eta} + \ib (C_{p_\eta}+ \jb D_{p_\eta}) & 0 \vspace{.4cm}\\
    0 & A'_{q_\eta}+ \jb B'_{q_\eta} + \ib( C'_{q_\eta}+ \jb D'_{q_\eta})                                                                                                                                                                                                                   \end{pmatrix}.
$$
This proves that \eqref{orthogonal-even-subspace-so*} holds.

Now we assume that  $g$ satisfies \eqref{orthogonal-even-subspace-so*} and \eqref{commute-j-even}.
Let $A:= \big[g|_{L(\eta-1)} \big]_{\BC^0(\eta)}$. Then $A=\big[g |_{ X^l L(\eta-1)}\big]_{{\BC}^l (\eta)}$ for $1\leq l \leq \eta -1$.
We observe that 
$$\big[ {\rm J}_{{\DC}^0 (\eta)}\big]_{\BC^0 (\eta) \vee \BC^{\eta -1} (\eta)} = \begin{pmatrix}
                                        & {\rm I}_{p_\eta,q_\eta} \\
                                       -{\rm I}_{p_\eta, q_\eta}  
                                      \end{pmatrix} 
\text{ and }~
\big[ g |_{W^0(\eta)}\big]_{\BC^0 (\eta) \vee \BC^{\eta -1} (\eta)}  = \begin{pmatrix}
                                       A &  \\
                                       & A
                                      \end{pmatrix}.
$$
From \eqref{commute-j-even} it follows that the above two matrices commute, which in turn
implies that $A$ commutes with $ \begin{pmatrix}
         {\rm I}_{p_\eta} &  \\
         & -{\rm I}_{q_\eta}
         \end{pmatrix}$.
Thus $A$ is of the form $A =  \begin{pmatrix}
                                      E_{p_\eta} & 0 \\
                                      0 &  F_{q_\eta}
                                      \end{pmatrix} $
for some matrices $ E_{p_\eta} \in {\rm GL}_{p_\eta}(\H)$ and $F_{q_\eta} \in {\rm GL}_{q_\eta}(\H)$. Write $E_{p_\eta}= A_{p_\eta}+ \jb B_{p_\eta} + \ib (C_{p_\eta}+ \jb D_{p_\eta})  $ and $ F_{q_\eta}= A'_{q_\eta}+ \jb B'_{q_\eta} + \ib( C'_{q_\eta}+ \jb D'_{q_\eta})$  where  $A_{p_\eta},B_{p_\eta}, C_{p_\eta}, D_{p_\eta} \in {\rm M}_{p_\eta}(\R) $, $A'_{q_\eta},B'_{q_\eta}, C'_{q_\eta}, D'_{q_\eta} \in {\rm M}_{q_\eta}(\R)$. We now observe that 
$$
\big[g|_{W^l(\eta)} \big]_{\DC^l(\eta)}=\begin{pmatrix}
					  A_{p_\eta}+ \jb B_{p_\eta} & &-C_{p_\eta}+\jb D_{p_\eta} \vspace{.4cm} \\
                                          C_{p_\eta}+ \jb D_{p_\eta} & & A_{p_\eta}-\jb B_{p_\eta} \vspace{.02cm} \\
                                           & &  & A'_{q_\eta} +\jb B'_{q_\eta} & &-C'_{q_\eta}+ \jb D'_{q_\eta}  \vspace{.4cm}\\
                                           & &  & C'_{q_\eta}+ \jb D'_{q_\eta} & & A'_{q_\eta}- \jb B'_{q_\eta}                                             
                                          \end{pmatrix}
$$
where $\DC^l(\eta)$ is defined as in \eqref{standard-basis-even-so*}.

Recall that $M(\eta -1) = \bigoplus_{l=0}^{\eta/2} W^l(\eta)$ is an orthogonal decomposition of
$M(\eta-1)$ with respect to $\<>$; see \eqref{defn-W-eta-so*} and the paragraph preceding it. As
$\DC^0(\eta)$ is a standard orthogonal basis of $W^0(\eta)$, and 
$g|_{W^0(\eta)}$ commutes with ${\rm J}_{\DC^0(\eta)}$, it follows that $ \Lambda'_{\DC^0(\eta)} (g|_{W^0(\eta)}) \in {\rm U}(2t_\eta)$. In other words,
$$
    \begin{pmatrix}
					  A_{p_\eta}+ \sqrt{-1} B_{p_\eta}  & ~ -C_{p_\eta}+\sqrt{-1} D_{p_\eta} \vspace{.4cm}\\
                                          C_{p_\eta}+ \sqrt{-1} D_{p_\eta}  & ~ A_{p_\eta}-\sqrt{-1} B_{p_\eta} \vspace{.00cm} \\
                                           &   & A'_{q_\eta} +\sqrt{-1} B'_{q_\eta}  &~   -C'_{q_\eta}+ \sqrt{-1} D'_{q_\eta}  \vspace{.4cm}\\
                                           &   & C'_{q_\eta}+ \sqrt{-1} D'_{q_\eta}  &~   A'_{q_\eta}- \sqrt{-1} B'_{q_\eta}                                             
                                          \end{pmatrix} \in {\rm U}(2t_\eta).
 $$
 This implies that $E_{p_\eta} \in {\rm Sp}(p_\eta)$ and $F_{q_\eta} \in {\rm Sp}(q_\eta)$ and (5) of lemma \eqref{max-cpt-so*-reductive-part} holds. This completes the proof.
\end{proof}

We now introduce some notation which will be required to state Proposition \ref{max-cpt-so*-wrt-basis}.
For $\eta \in \E_\d$, set 
$$
\DC(\eta) := \DC^0 (\eta) \vee \cdots \vee  \DC^{\eta/2-1} (\eta)\, ,
$$
and for $\theta \in \O_\d$, set 
$$
\CC(\theta) := \CC^0 (\theta) \vee \cdots \vee  \CC^{\theta-1} (\theta).
$$
Let $\alpha := \# \E_\d $,  $\beta := \# \O^1_\d$ and $ \gamma := \# \O^3_\d $.
We enumerate
$\E_\d =\{ \eta_i \mid 1 \leq i \leq \alpha \}$ such that $\eta_i < \eta_{i+1}$, 
$\O^1_\d =\{ \theta_j \mid 1 \leq j \leq \beta \}$ such that $\theta_j < \theta_{j+1}$ and similarly  $\O^3_\d =\{ \zeta_j \mid 1 \leq j \leq \gamma \}$ such that $\zeta_j < \zeta_{j+1}$. 
Now define
$$
\EC := \DC(\eta_1) \vee \cdots \vee \DC(\eta_{\alpha}) ; \ 
\OC^1 := \CC (\theta_1) \vee \cdots \vee \CC (\theta_{\beta});\text{ and } ~ \OC^3:= \CC(\zeta_1) \vee \cdots \vee \CC (\zeta_{\gamma}).
$$
Also define
\begin{equation}\label{orthogonal-basis-so*-final}
\HC=  \EC \vee \OC^1 \vee \OC^3. 
\end{equation}

For an integer $m$ define the $\R$-algebra embedding
$$\wp_{m, \H} \,\colon\, {\rm M}_m(\H) \,\longrightarrow\, {\rm M}_{2m}(\C)\, ,\ \
R\, \longmapsto\, 
    \begin{pmatrix}
       S & -\overline{T}\\
       T & \overline{S}
     \end{pmatrix}
$$
where $S,T \in {\rm M}_m(\C)$ are the unique elements such that $R= S+ \jb T$.
The following map is an $\R$-algebra embedding of
$ \prod_{i=1}^{\alpha} \big( {\rm M}_{p_{\eta_i}}(\H) \times {\rm M}_{q_{\eta_i}}(\H) \big)
      \times \prod_{j=1}^\beta {\rm M}_{t_{\theta_j}} (\C) \times \prod_{k=1}^\gamma 
{\rm M}_{t_{\zeta_k}} (\C)$ into ${\rm M}_n(\C)$. Define
      $$
\Db \colon \prod_{i=1}^{\alpha} \big( {\rm M}_{p_{\eta_i}}(\H) \times {\rm M}_{q_{\eta_i}}(\H) \big)
  \times \prod_{j=1}^\beta {\rm M}_{t_{\theta_j}} (\C) \times \prod_{k=1}^\gamma {\rm M}_{t_{\zeta_k}} (\C)  \longrightarrow {\rm M}_n(\C)  
$$
by
\begin{align*}
  \big(  C_{\eta_1}, D_{\eta_1}, \cdots , C_{\eta_\alpha}, D_{\eta_\alpha};     
  ~& A_{\theta_1}, \cdots,  A_{\theta_\beta} ;~ B_{\zeta_1},\cdots ,  B_{\zeta_\gamma} \big) \\
\longmapsto 
  & \bigoplus_{i=1}^\alpha \Big( \wp_{p_{\eta_i},\H}(C_{\eta_i}) \oplus \wp_{{q_{\eta_i}},\H}(D_{\eta_i}) \Big)_\blacktriangle ^{\frac{\eta}{2}}\\ 
        \oplus &  \bigoplus_{j=1}^\beta \Big( \big( A_{\theta_j} \oplus \overline{A}_{\theta_j}\big)_\blacktriangle ^{\frac{\theta_j-1}{4}} \oplus A_{\theta_j} \oplus \big( A_{\theta_j} \oplus \overline{A}_{\theta_j}\big)_\blacktriangle ^{\frac{\theta_j-1}{4}} \Big) \\ 
  \oplus & \bigoplus_{k=1}^\gamma \Big( \big( B_{\zeta_k}\oplus \overline{B}_{\zeta_k} \big) _\blacktriangle ^{\frac{\zeta_k + 1}{4}}  \oplus \big( B_{\zeta_k}\oplus \overline{B}_{\zeta_k} \big)_\blacktriangle^{\frac{\zeta_k -3}{4}}  \oplus \overline{B}_{\zeta_k} \Big) .
\end{align*}

It is clear that $\HC$ in \eqref{orthogonal-basis-so*-final} is a standard orthogonal basis of
$V$ with respect to $\<>$.
Let $$\Lambda'_\HC \,\colon\, \{ x \,\in\, {\rm End}_\H \H^{n} \,\mid\,  x{\rm J}_{\HC}
\,=\, {\rm J}_{\HC} x \} \,\longrightarrow\, {\rm M}_{n} (\C)$$ be the isomorphism of $\R$-algebras
induced by the above ordered basis $\HC$. Recall that $\Lambda'_\HC \,\colon\, K_\HC $ \, $
\longrightarrow\, {\rm U}(n)$ is an 
isomorphism of Lie groups.

\begin{proposition}\label{max-cpt-so*-wrt-basis} 
Let $X \in \NC_{\s\o^*(2n)}$, $\Psi_{{\rm SO}^* (2n)} (\OC_X)$ $ = \big(\d, \sgn_{\OC_X} \big)$.
Let $\alpha := \# \E_\d $,  $\beta := \# \O^1_\d$ and $ \gamma := \# \O^3_\d $. Let $\{X,H,Y\} \subset \s\o^* (2n)$ be a $\s\l_2(\R)$-triple; let $(p_\eta, q_\eta)$ be the signature of the form $(\cdot, \cdot)_\eta$, for  $\eta \in \E_\d$, as defined in \eqref{new-form}. 
Let $K$ be the maximal compact subgroup of $\ZC_{{\rm SO}^* (2n)} (X, H, Y)$ as in
Lemma \ref{max-cpt-so*-reductive-part}.
Then $\Lambda'_\HC(K) \subset {\rm U}(n)$ is given by
$$
\Lambda'_\HC(K) = \bigg\{ \Db(g) \biggm| 
                        g \in \prod_{i=1}^\alpha \big(  {\rm Sp}(p_{\eta_i}) \times  {\rm Sp}(q_{\eta_i}) \big) \times \prod_{j=1}^\beta  {\rm U}(t_{\theta_j})  \times \prod_{k=1}^\gamma {\rm U}(t_{\zeta_k}) \bigg\}.
$$
\end{proposition}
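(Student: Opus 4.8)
The plan is to prove Proposition \ref{max-cpt-so*-wrt-basis} by directly computing the matrices $\Lambda'_\HC(g)$ for the elements $g$ of the maximal compact subgroup $K$ described in Lemma \ref{max-cpt-so*-reductive-part}, using the standard orthogonal basis $\HC$ constructed in \eqref{orthogonal-basis-so*-final}. Since Lemma \ref{max-cpt-so*-reductive-part} already gives an explicit block description of how each $g \in K$ acts on the subspaces $V^l(\theta)$ (for $\theta \in \O_\d$) and $W^l(\eta)$ (for $\eta \in \E_\d$) in terms of the bases $\CC^l(\theta)$ and $\DC^l(\eta)$, the work reduces to bookkeeping: reorganizing these block data according to how the pieces $\CC(\theta)$ and $\DC(\eta)$ are juxtaposed in the definition of $\HC$, and then passing through the $\R$-algebra isomorphism $\Lambda'_\HC$ to record the resulting matrix in ${\rm M}_n(\C)$.

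First I would treat the odd part. For $\theta \in \O^1_\d$, Lemma \ref{max-cpt-so*-reductive-part}(2) says $[g|_{V^l(\theta)}]_{\CC^l(\theta)}$ equals $A_\theta + \jb B_\theta$ or its conjugate $A_\theta - \jb B_\theta$ depending on the parity of $l$ and its position relative to $(\theta-1)/2$; applying $\Lambda'_\HC$ (which sends $A + \jb B \mapsto A + \sqrt{-1}B$ and $A - \jb B \mapsto \overline{A + \sqrt{-1}B}$) converts these into the complex matrix $A_\theta + \sqrt{-1}B_\theta =: A_{\theta} \in {\rm U}(t_\theta)$ and its complex conjugate $\overline{A_\theta}$. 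Stacking the $\theta$-many blocks $\CC^0(\theta) \vee \cdots \vee \CC^{\theta-1}(\theta)$ in the order prescribed by \eqref{orthogonal-basis-so*-final} reproduces exactly the pattern $(A_{\theta_j} \oplus \overline{A}_{\theta_j})_\blacktriangle^{(\theta_j-1)/4} \oplus A_{\theta_j} \oplus (A_{\theta_j} \oplus \overline{A}_{\theta_j})_\blacktriangle^{(\theta_j-1)/4}$ appearing in the definition of $\Db$; the case $\zeta \in \O^3_\d$ is handled identically using part (3), producing the shifted pattern ending in $\overline{B}_{\zeta_k}$. The only point requiring care is matching the exponents $\frac{\theta_j-1}{4}$ and $\frac{\zeta_k\pm 1}{4}$, $\frac{\zeta_k - 3}{4}$ against the number of indices $l$ of each type, which follows from counting the ranges $0 \le l < (\theta-1)/2$ and $(\theta-1)/2 < l \le \theta - 1$ together with the middle index.

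Next I would treat the even part. For $\eta \in \E_\d$, Lemma \ref{max-cpt-so*-reductive-part}(5) already gives $[g|_{W^l(\eta)}]_{\DC^l(\eta)}$ as an explicit $2t_\eta \times 2t_\eta$ quaternionic matrix built from $E_{p_\eta} = A_{p_\eta} + \jb B_{p_\eta} + \ib(C_{p_\eta} + \jb D_{p_\eta}) \in {\rm Sp}(p_\eta)$ and $F_{q_\eta} \in {\rm Sp}(q_\eta)$, and importantly this matrix is independent of $l$. Applying the embedding $\wp_{m,\H}$ implicit in $\Lambda'_\HC$ to $E_{p_\eta}$ and $F_{q_\eta}$ and using that the block $\DC(\eta) = \DC^0(\eta) \vee \cdots \vee \DC^{\eta/2-1}(\eta)$ repeats this pattern $\eta/2$ times yields precisely $\big(\wp_{p_{\eta_i},\H}(C_{\eta_i}) \oplus \wp_{q_{\eta_i},\H}(D_{\eta_i})\big)_\blacktriangle^{\eta/2}$ in the formula for $\Db$. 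I would note that the identification $E_{p_\eta} \in {\rm Sp}(p_\eta)$ is exactly what Lemma \ref{max-cpt-so*-reductive-part}(5) asserts, so the indexing group $\prod_i ({\rm Sp}(p_{\eta_i}) \times {\rm Sp}(q_{\eta_i})) \times \prod_j {\rm U}(t_{\theta_j}) \times \prod_k {\rm U}(t_{\zeta_k})$ is precisely the parameter space.

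The main obstacle I anticipate is bookkeeping accuracy rather than any conceptual difficulty: one must verify that the specific interleaving of conjugated and unconjugated blocks dictated by the parity conditions in Lemma \ref{max-cpt-so*-reductive-part}(2)--(3), once reassembled in the order given by $\HC$, matches the block-diagonal template in $\Db$ slot-for-slot, including the placement of the single middle block at $l = (\theta-1)/2$ or $(\zeta-1)/2$. Since all the genuine structural content (the block form of $g$, the identification of the relevant compact groups, and the fact that $\Lambda'_\HC \colon K_\HC \to {\rm U}(n)$ is a Lie group isomorphism) has been established in Lemma \ref{max-cpt-so*-reductive-part} and Lemma \ref{max-cpt-so*-2n}, the proof itself is short: it amounts to the sentence that the stated description follows by writing the matrices of the elements of $K$ with respect to $\HC$, exactly parallel to the proofs of Proposition \ref{max-cpt-su-pq-wrt-onb} and Proposition \ref{max-cpt-so-pq-wrt-onb}.
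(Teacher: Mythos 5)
Your proposal is correct and is essentially the paper's own proof: the paper disposes of Proposition \ref{max-cpt-so*-wrt-basis} in one sentence, saying exactly that it follows by writing the matrices of the elements of $K$ from Lemma \ref{max-cpt-so*-reductive-part} with respect to the basis $\HC$ of \eqref{orthogonal-basis-so*-final}, just as in Propositions \ref{max-cpt-su-pq-wrt-onb} and \ref{max-cpt-so-pq-wrt-onb}. Your elaboration — converting $A_\theta \pm \jb B_\theta$ to $A_\theta \pm \sqrt{-1}B_\theta$ via $\Lambda'_\HC$ on the odd blocks, applying $\wp_{m,\H}$ to $E_{p_\eta} \in {\rm Sp}(p_\eta)$, $F_{q_\eta} \in {\rm Sp}(q_\eta)$ on the even blocks, and checking the block counts against the exponents in $\Db$ — is precisely the bookkeeping the paper leaves implicit.
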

 
\begin{proof}
This follows by writing the matrices of the elements of the maximal compact subgroup $K$
with respect to the basis $\HC$ in \eqref{orthogonal-basis-so*-final}.
\end{proof}

As we only consider simple Lie algebras, to ensure simplicity of $\s \o^*(2n)$, 
in view of \cite[Theorem 6.105, p. 421]{K} and the isomorphisms (vii), (xi) in \cite[Chapter X, \S6, pp.519-520]{He}, we will further need to assume that $n \geq 3$.

\begin{theorem}\label{so*}
Let $X \in \s \o^*(2n)$ be a nilpotent element when $n\geq 3$. Let $(\d, \sgn_{\OC_X}) \in
\YC^{\rm odd}(n)$ be the signed Young diagram of the orbit
$\OC_X$ (that is, $\Psi_{{\rm SO}^*(2n)} (\OC_X) = (\d, \sgn_{\OC_X})$ in the notation of
Theorem \ref{so*-parametrization}). Then
$$\dim_\R H^2(\OC_X,\R) =
\begin{cases}
 0   &  \text{ if }\, \# \O_\d = 0 \\
 \# \O_\d-1 &  \text{ if }\, \# \O_\d \geq 1 \, .
\end{cases}
$$
\end{theorem}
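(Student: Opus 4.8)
The plan is to reduce everything to Theorem \ref{thm-nilpotent-orbit}, so the entire task becomes the computation of $\dim_\R [(\z(\k)\cap[\m,\,\m])^*]^{K/K^\circ}$, where $K$ is a maximal compact subgroup of $\ZC_{{\rm SO}^*(2n)}(X,H,Y)$ and $M$ a maximal compact subgroup of ${\rm SO}^*(2n)$ containing $K$. First I would dispose of the trivial case $X=0$. For $X\neq0$, since ${\rm SO}^*(2n)$ is simple (this is why we impose $n\geq3$) with maximal compact subgroup ${\rm U}(n)$, I note that $\dim_\R\z(\m)=1$, so by Corollary \ref{1dim} the first summand $\Omega^2(\m/([\m,\,\m]+\k))$ in Theorem \ref{thm-i1} vanishes and we are left with $H^2(\OC_X,\,\R)\simeq[(\z(\k)\cap[\m,\,\m])^*]^{K/K^\circ}$.

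Next I would extract the isomorphism type of $K$ directly from Proposition \ref{max-cpt-so*-wrt-basis}: under $\Lambda'_\HC$ the group $K$ is the image $\Db(g)$ of
$$
\prod_{i=1}^\alpha\big({\rm Sp}(p_{\eta_i})\times{\rm Sp}(q_{\eta_i})\big)\times\prod_{j=1}^\beta{\rm U}(t_{\theta_j})\times\prod_{k=1}^\gamma{\rm U}(t_{\zeta_k})\, ,
$$
so that $K\simeq\prod_{\eta\in\E_\d}\big({\rm Sp}(p_\eta)\times{\rm Sp}(q_\eta)\big)\times\prod_{\theta\in\O_\d}{\rm U}(t_\theta)$. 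Since ${\rm Sp}(m)$ is semisimple, only the unitary factors contribute to $\z(\k)$; hence $\dim_\R\z(\k)=\#\O_\d$, one central $\sqrt{-1}$-direction per $\theta\in\O_\d$. In particular $\z(\k)=0$ when $\#\O_\d=0$, which immediately gives the first case $\dim_\R H^2(\OC_X,\,\R)=0$.

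For $\#\O_\d\geq1$ I would first argue that the adjoint action of $K$ on $\z(\k)$ is trivial, so that the $K/K^\circ$-invariants can be dropped. The argument should mirror the one in the proof of Theorem \ref{su-pq}: identifying $K$ with its image, set $\mathbf{L}:=\prod_{\eta}({\rm Sp}(p_\eta)\times{\rm Sp}(q_\eta))\times\prod_\theta{\rm U}(t_\theta)$, observe that $[\mathbf{L},\mathbf{L}]\subset K\subset\mathbf{L}$ forces $\z(\k)=\k\cap\z(\l)$, and since $\mathbf L$ is connected it acts trivially on $\z(\l)$; hence $K$ acts trivially on $\z(\k)$. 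Then it remains to locate $\z(\k)$ relative to $[\m,\,\m]$. Here $\m=\u(n)$ and $[\m,\,\m]=\s\u(n)$ is the codimension-one traceless part; the single scalar direction $\z(\m)$ is spanned by $\sqrt{-1}\,{\rm I}_n$. The condition that an element of $\z(\k)$ lies in $[\m,\,\m]=\s\u(n)$ is exactly the trace-zero condition on the block-diagonal matrix $\Db(g)$, which is a single linear equation on the $\#\O_\d$ central coordinates; I would verify using the explicit multiplicities in $\Db$ that this equation is genuinely nontrivial (it is the analogue of $\bigchi$ being a nontrivial character), so $\z(\k)\not\subset[\m,\,\m]$ and $\dim_\R(\z(\k)\cap[\m,\,\m])=\dim_\R\z(\k)-1=\#\O_\d-1$.

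Combining, $H^2(\OC_X,\,\R)\simeq(\z(\k)\cap[\m,\,\m])^*$ has dimension $\#\O_\d-1$, completing the second case. The main obstacle I anticipate is the last step: confirming that the trace-zero hyperplane cuts $\z(\k)$ transversally rather than containing it. This requires reading off from the definition of $\Db$ the multiplicity with which each ${\rm U}(t_\theta)$ factor is repeated along the diagonal and checking that the resulting coefficients of the trace functional are not all zero on the central directions; since each $\theta\in\O_\d$ contributes a unitary block embedded with positive multiplicity, the functional restricted to $\z(\k)\simeq\R^{\#\O_\d}$ is nonzero, which is precisely what guarantees the drop of exactly one dimension.
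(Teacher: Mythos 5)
Your proposal is correct and follows essentially the same route as the paper: reduce to Theorem \ref{thm-nilpotent-orbit}, read off $K \simeq \prod_{\eta\in\E_\d}({\rm Sp}(p_\eta)\times{\rm Sp}(q_\eta))\times\prod_{\theta\in\O_\d}{\rm U}(t_\theta)$ from Proposition \ref{max-cpt-so*-wrt-basis}, get $\dim_\R\z(\k)=\#\O_\d$, and check via the multiplicities in $\Db$ (where for each $\theta$ the blocks $A_\theta$ and $\overline{A}_\theta$ occur with multiplicities differing by exactly one, so the trace functional has coefficient $\pm t_\theta\neq 0$ on each central coordinate) that $\z(\k)\not\subset[\k_\HC,\k_\HC]$ when $\#\O_\d\geq 1$, whence the intersection with the codimension-one subalgebra $[\m,\m]$ drops exactly one dimension. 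The only difference is cosmetic: your $\mathbf{L}$-argument for triviality of the $K/K^\circ$-action is unnecessary here, since $K$ is a product of the connected groups ${\rm Sp}(m)$ and ${\rm U}(m)$ and is therefore itself connected, which is how the paper dispenses with the invariants.
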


\begin{proof}
As the theorem is evident when $X \,=\, 0$, we assume that $X \,\neq\, 0$.

In the proof we will use the notation established above. Let $\{X, H, Y\} \subset \s \o^*(2n)$ 
be a $\s\l_2(\R)$-triple.
Let $K$ be the maximal compact subgroup of $\ZC_{{\rm SO}^* (2n)} (X, H, Y)$ as in
Lemma \ref{max-cpt-so*-reductive-part}.
Let $\HC$ be as in \eqref{orthogonal-basis-so*-final}, and let 
$K_{\HC}$ be the maximal compact subgroup of ${\rm SO}^*(2n)$ as in the Lemma 
\ref{max-cpt-so*-2n}(1). Then $K\subset K_\HC$. It follows either from Proposition 
\ref{max-cpt-so*-wrt-basis} or from Lemma \ref{reductive-part-comp}(4) that
$$
K \simeq \prod_{\eta \in \E_\d} \big({\rm Sp}(p_\eta)\times{\rm Sp}(q_\eta) \big) \times \prod_{\theta \in \O_\d} {\rm U}(t_\theta).
$$
In particular, $K$ is connected and $\dim_\R \z(\k)= \# \O_\d$. Let $\k_\HC$ be the Lie algebra of
$K_\HC$. We now appeal to Proposition \ref{max-cpt-so*-wrt-basis} to conclude that
$\z(\k)\,\subset\, [\k_\HC, \,\k_\HC]$ when $\# \O_\d =0$, and $\z(\k)\,\not\subset\, [\k_\HC,\, \k_\HC]$ when $\# \O_\d>0$.
As $\dim_\R \z(\k_\HC)=1$, in view of Theorem \ref{thm-nilpotent-orbit} we have that for all
$X \neq 0$,
$$\dim_\R H^2(\OC_X,\R) = \dim_\R \z(\k) \cap [\k_\HC, \k_\HC] =
\begin{cases}
 0   &  \text{ if }\, \# \O_\d = 0 \\
 \# \O_\d-1 &  \text{ if }\, \# \O_\d \geq 1 \, .
\end{cases}
$$
This completes the proof of the theorem.
\end{proof}

\subsection{Second cohomology groups of nilpotent orbits in 
\texorpdfstring{${\s\p}(n,\R)$}{Lg}}\label{sec-sp-n-R}

Let $n$ be a positive integer. The aim in this subsection is to compute the second cohomology groups of nilpotent 
orbits in the simple real Lie algebra ${\s\p}(n,\R)$ under the adjoint action of ${\rm Sp}(n,\R)$. Throughout this 
subsection $\<>$ denotes the symplectic form on $\R^{2n}$ defined by $\langle x, y \rangle := 
x^t{\rm J}_{n} y$, $x,\,y \,\in\, \R^{2n}$, where ${\rm J}_{n}$ is as in \eqref{defn-I-pq-J-n}.
We will follow notation as defined in \S \ref{sec-notation}.

Let $\Psi_{{\rm SL}_{2n} (\R)} \,: \,\NC ({\rm SL}_{2n} (\R)) \,\longrightarrow\, \PC(2n)$ be
the parametrization of nilpotent orbits in $\s\l_{2n}(\R)$; see Theorem \ref{sl-R-parametrization}.
As ${\rm Sp} (n,\R) \,\subset\, {\rm SL}_{2n} (\R)$ (consequently, $\NC_{{\s\p}(n,\R)} \,\subset
\,\NC_{\s\l_{2n}(\R)}$) we have the inclusion map
 $\Theta_{{\rm Sp} (n,\R)} \,:\, \NC ({\rm Sp} (n,\R)) \,\longrightarrow\,  \NC ( {\rm SL}_{2n} (\R) )$.
Let 
$$
\Psi'_{{\rm Sp} (n,\R)}\,:= \Psi_{{\rm SL}_{2n} (\R)} \circ \Theta_{{\rm Sp} (n,\R)}
\, \colon\, \NC ({\rm Sp} (n,\R))  \,\longrightarrow\, \PC (2n)
$$
be the composition. From
Remark \ref{unitary-J-basis-rmk}(1) it follows that
$\Psi'_{{\rm Sp} (n,\R)} ( \NC ({\rm Sp} (n,\R))) \,\subset\,  \PC_{-1} (2n)$.
Let $0\,\not=\, X \in {\s\p}(n,\R)$ be a nilpotent element and $\OC_X$ be the corresponding
nilpotent orbit in $\s\p(n,\R)$. Let $\{X,\, H, \,Y\} \,\subset\, {\s\p}(n,\R)$ be a
$\s\l_2(\R)$-triple. We now apply Proposition \ref{unitary-J-basis}, Remark \ref{unitary-J-basis-rmk} (1), and
follow the notation used therein. 
Let $V:= \R^{2n}$ be the right $\R$-vector space of column vectors. Let $\{d_1,\, \cdots, \,d_s \}$, with
$d_1 \,<\, \cdots \,<\, d_s$, be the ordered finite set of natural numbers that arise as
dimension of non-zero irreducible 
$\text{Span}_\R \{ X,H,Y\}$-submodules of $V$.
Recall that $M(d-1)$ is defined to be the isotypical component of $V$ containing all irreducible submodules of $V$ with highest weight $d-1$ and as in \eqref{definition-L-d-1}, we set $L(d-1)\,:= \,V_{Y,0} \cap M(d-1)$.
Let $t_{d_r} \,:=\, \dim_\R L(d_r-1)$ for $1\,\leq\, r \,\leq\, s$. Then 
$\d\,:=\, [d_1^{t_{d_1}},\, \cdots ,\,d_s^{t_{d_s}}]  \,\in\, \PC_{-1}(2n)$, and  moreover,
$\Psi'_{{\rm Sp} (n,\R)} (\OC_X) \,=\, {\d}$.

We next assign $\sgn_{\OC_X} \,\in\, \SC^{\rm odd}_{\d}(2n)$ to each $\OC_X \,\in\,
\NC({\rm Sp} (n,\R))$; see \eqref{S-d-pq-odd} for the definition of
$\SC^{\rm odd}_{\d}(2n)$.
For each $d\,\in\, \N_\d$ (see \eqref{Nd-Ed-Od} for the definition of $\N_\d$) we will define a $t_d \times  d$ matrix $(m^d_{ij})$ in
$ \Ab_{d}$ that depends only on the orbit $\OC_X$; see \eqref{A-d} for the definition of $\Ab_d$.
For this, recall that the form $(\cdot,\,\cdot)_{d} \colon L(d-1) \times L(d-1) \,\longrightarrow\, \R$
defined as in \eqref{new-form} is symmetric or symplectic
according as $d$ is even or odd. Denoting the  signature of
$(\cdot,\,\cdot)_{\eta}$ by $(p_{\eta},\, q_{\eta})$ when $\eta\,\in\, \E_\d$, we now define
\begin{align*}
m^\theta_{i1} &:= +1 \qquad   \text{if } \  1 \leq i \leq t_{\theta}, \quad \theta \in \O_\d;\\ 
m^\eta_{i1} &:= \begin{cases}
                   +1  & \text{ if } \,  1 \leq i \leq p_{\eta} \\
                   -1  & \text{ if } \, p_\eta < i \leq t_\eta  
                 \end{cases},~ \eta \in \E_\d\,;
              \end{align*}
and for $j >1$, define $(m^d_{ij})$ as in \eqref{def-sign-alternate} and \eqref{def-sign-alternate-1}.
Then the matrices $(m^d_{ij})$ clearly verify \eqref{yd-def2}. Set $\sgn_{\OC_X} \,:=\,
((m^{d_1}_{ij}),\, \cdots, \,(m^{d_s}_{ij}))$. 
It now follows from the above definition of $m^\theta_{ij}, \theta \in \O_\d$  that
$\sgn_{\OC_X} \,\in\, \SC^{\rm odd}_{\d}(2n)$.  
Thus we have a map $$\Psi_{{\rm Sp} (n, \R)} \,\colon\, \NC({\rm Sp}(n, \R)) \,\longrightarrow \,
\YC_{-1}^{\rm odd}(2n)\, ,\ \
\OC_X \,\longmapsto\, \big(\Psi'_{{\rm Sp} (n,\R)} (\OC_X),\, \sgn_{\OC_X}  \big)\, ;
$$
where $\YC_{-1}^{\rm odd}(2n)$ is as in \eqref{yd-odd-1-Y-pq}.

\begin{theorem}[{\rm  \cite[Theorem 9.3.5]{CoM}}]\label{sp-n-R-parametrization}
The above map $\Psi_{{\rm Sp} (n,\R)}$ is a bijection. 
\end{theorem}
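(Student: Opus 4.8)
The plan is to realize this parametrization as a special case of the general principle that nilpotent orbits correspond bijectively to isometry classes of symplectic representations of $\s\l_2(\R)$, and then to read off the complete invariants of such representations. First I would invoke the Jacobson--Morozov theorem (Theorem \ref{Jacobson-Morozov-alg}) together with Kostant's conjugacy theorem for $\s\l_2$-triples: any nonzero nilpotent $X$ lies in a triple $\{X,H,Y\}$, and any two such triples with the same $X$ are conjugate by an element of $\ZC_{{\rm Sp}(n,\R)}(X)^\circ$ (the group-level analogue of Lemma \ref{comm-XH}). This reduces the classification of $\NC({\rm Sp}(n,\R))$ to that of ${\rm Sp}(n,\R)$-conjugacy classes of $\s\l_2(\R)$-triples in $\s\p(n,\R)$, equivalently to isometry classes of the symplectic space $(V,\<>)$ equipped with a compatible action of $\mathrm{Span}_\R\{X,H,Y\}$.

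Next I would decompose $V=\bigoplus_{d\in\N_\d} M(d-1)$ into isotypical components. Lemma \ref{ortho-isotypical} guarantees these are mutually $\<>$-orthogonal and that $\<>$ restricts non-degenerately to each $M(d-1)$, so the isometry problem splits over $d$. On each summand the data is captured by the form $(\cdot,\cdot)_d$ on the highest-weight space $L(d-1)$ from \eqref{new-form}; by Remark \ref{unitary-J-basis-rmk} this form is symplectic when $d$ is odd and symmetric when $d$ is even. Here the two defining conditions of $\YC^{\rm odd}_{-1}(2n)$ emerge naturally: non-degeneracy of a symplectic form forces $\dim_\R L(d-1)=t_d$ to be even for odd $d$, which is precisely $\d\in\PC_{-1}(2n)$, while a symplectic form on $L(d-1)$ carries no further invariant, which is precisely the requirement $M_\theta\in\Ab_{\theta,1}$ for $\theta\in\O_\d$. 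For even $d$ the symmetric form $(\cdot,\cdot)_d$ has a well-defined signature $(p_d,q_d)$ by Sylvester's law, and this is exactly what the sign matrix records on the even rows.

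For surjectivity I would run this construction in reverse: given $(\d,\sgn)\in\YC^{\rm odd}_{-1}(2n)$, build each $L(d-1)$ as a space carrying the prescribed form (symplectic of the given even dimension for odd $d$, symmetric of the prescribed signature for even $d$), then reconstruct $M(d-1)$ together with its symplectic structure via the basis recipe of Lemma \ref{unitary-J-basis-lemma}, and assemble the orthogonal direct sum. This produces a symplectic $\s\l_2(\R)$-module, hence a nilpotent orbit mapping to $(\d,\sgn)$. The matching between the combinatorial signs and the genuine form-signatures is supplied by Corollary \ref{c-m9.3.1} together with the explicit orthogonalizations in Lemma \ref{orthogonal-basis-R}.

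The main obstacle is injectivity. Here I must show that two triples producing the same pair $(\d,\sgn)$ are actually ${\rm Sp}(n,\R)$-conjugate, not merely ${\rm GL}$-conjugate. The strategy is to produce an explicit $\<>$-isometry of $V$ intertwining the two $\s\l_2$-actions: on each isotypical component one builds it from an isometry of the multiplicity space $L(d-1)$, which exists because the two copies of $(\cdot,\cdot)_d$ agree as forms --- symplectic forms of equal dimension are isometric, and symmetric forms of equal signature are isometric by Witt's theorem. Propagating such an isometry through the $X$-action (using Proposition \ref{unitary-J-basis} and Remark \ref{inv-form-old-new}) yields an element of ${\rm Sp}(n,\R)$ conjugating one triple to the other; since ${\rm Sp}(n,\R)$ is connected there is no component obstruction of the sort that complicates the orthogonal cases and forces the multiplicities in Theorem \ref{so-pq-parametrization}. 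The delicate bookkeeping lies in verifying that the induced identification respects the sign conventions of \ref{yd-def2} exactly, so that distinct combinatorial data genuinely separate orbits.
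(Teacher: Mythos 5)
The paper does not actually prove this statement: it is quoted as a standard parametrization, with the proof deferred to \cite[Theorem 9.3.5]{CoM} (equivalently, since here $\D\,=\,\R$, to Springer--Steinberg \cite[p.~259, 2.19]{SS}); the paper's own contribution is only the surrounding machinery (Proposition \ref{unitary-J-basis}, Remark \ref{unitary-J-basis-rmk}) and the adjusted sign convention \ref{yd-def2}, which for ${\rm Sp}(n,\R)$ is immaterial because every odd row has all first-column entries $+1$ and so carries no information beyond $t_\theta$. Your outline reconstructs the standard proof behind that citation, and its skeleton is sound: Jacobson--Morozov plus Kostant-type conjugacy of triples reduces the problem to isometry classes of symplectic $\s\l_2(\R)$-modules; the orthogonal isotypical decomposition (Lemma \ref{ortho-isotypical}) splits the problem over $d$; the form $(\cdot,\cdot)_d$ is symplectic for odd $d$ (forcing $t_d$ even, i.e.\ $\d\,\in\,\PC_{-1}(2n)$, and contributing no sign invariant) and symmetric with a signature for even $d$; surjectivity follows by reassembly and injectivity by isometry of the multiplicity spaces propagated along the $X$-action. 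Three small corrections are in order. First, Corollary \ref{c-m9.3.1} and Remark \ref{CM-correction} concern $\epsilon\,=\,1$ (symmetric or Hermitian $\<>$) and do not apply when $\<>$ is symplectic; you do not need them, because $\YC^{\rm odd}_{-1}(2n)$ --- unlike $\YC(p,q)$ or $\YC^{\rm even}_1(p,q)$ --- imposes no ambient signature constraint that the sign matrices must reproduce. Second, over $\R$ the classification of nondegenerate symmetric forms by signature is Sylvester's law of inertia rather than Witt's theorem (either suffices). Third, the clean injectivity is due not to connectedness of ${\rm Sp}(n,\R)$ per se but to the fact that ${\rm Sp}(V,\<>)$ is the \emph{full} isometry group of the form (symplectic isometries automatically have determinant one), so the isometry your argument produces already lies in the acting group; the orbit-splitting in Theorem \ref{so-pq-parametrization} arises precisely from the index of ${\rm SO}(p,q)^\circ$ in the full isometry group ${\rm O}(p,q)$, of which disconnectedness is the symptom. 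Finally, your appeal to Remark \ref{inv-form-old-new} requires the evident extension from automorphisms commuting with a single triple to intertwiners between two triples --- routine, but worth stating.
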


Let $0\,\not=\, X \,\in\, \NC _{\s\p (n,\R)}$ and $\{X,H,Y\}$ be a $\s\l_2(\R)$-triple in $\s\p (n,\R)$. 
Let $\Psi_{{\rm Sp} (n,\R)} (\OC_X) \,= \,( \d ,\, \sgn_{\OC_X} )$.
Recall that $\sgn_{\OC_X}$ determines the signature of $(\cdot,\, \cdot)_\eta$ on $L(\eta -1)$ for
all $\eta \,\in\, \E_\d$; 
let $(p_\eta, \,q_\eta)$ be the signature of $(\cdot,\, \cdot)_\eta$ on $L(\eta-1)$.
Let $( v^{d}_1,\, \cdots,\, v^d_{t_d} )$ be a $\R$-basis of $L(d-1)$ as in Proposition
\ref{unitary-J-basis}.  
It now follows from Proposition \ref{unitary-J-basis}(3)(c) that $( v^\eta_1,\, \cdots,\,
v^\eta_{t_\eta})$ is an orthogonal basis of $L(\eta-1)$ for the form $(\cdot,\, \cdot)_\eta$.
We also assume that the vectors in the basis $( v^d_1,\, \cdots,\, v^d_{t_d})$ satisfy properties
in Remark \ref{unitary-J-basis-rmk}(1). 
In view of the signature of $(\cdot,\, \cdot)_\eta$, we may further assume that
\begin{equation}\label{orthonormal-basis-eta-sp-n-R}
 ( v^{\eta}_j, v^\eta_j)_{\eta}  =
 \begin{cases}
  +1  &  \text{ if }   1 \leq j \leq p_{\eta}  \\
  -1  &  \text{ if }  p_{\eta} < j \leq t_{\eta} 
 \end{cases} ; \  \eta  \in \E_\d.
\end{equation}
For all $\theta \,\in \,\O_\d$, as $(\cdot\, ,\, \cdot)_\theta$ is a symplectic form, we may
assume that $( v^{\theta}_1,\, \cdots,\, v^{\theta}_{t_{\theta}/2}\, ; \, v^{\theta}_{t_{\theta}/2+1},\,
\cdots, $ $  v^{\theta}_{t_{\theta}} )$ is a symplectic basis of $L(\theta - 1)$; see
Section \ref{sec-epsilon-sigma-forms} for the definition of a symplectic basis.
This is equivalent to saying that, for all $\theta \,\in\, \O_\d$, 
\begin{equation}\label{symplectic-basis-theta-sp-n-R}
 (v_j^\theta, v^\theta_{ t_\theta/2 + j})_\theta = 1 \text{ for } 1 \le j \le t_\theta/2 \text{ and }  (v_j^\theta, v^\theta_{i})_\theta = 0 \text{ for all } i \neq j + t_\theta/2.
\end{equation}
Now fixing $\theta \,\in\, \O_\d $, for all $ 1\,\leq\, j \,\leq\, t_\theta$, define 
\begin{equation}\label{orthogonal-basis-V-theta-sp-n-R}
{ w}^{\theta}_{jl}:=
\begin{cases}
   \big(X^l v^{\theta}_j + X^{\theta-1-l} v^{\theta}_j \big)\frac{1}{\sqrt{2}} & \text{ if } 0 \leq l < (\theta-1)/2 \vspace*{.1cm}\\
        X^l v^{\theta}_j    &  \text{ if }  l = (\theta-1)/2 \vspace*{.1cm}\\
   \big(X^{\theta-1-l} v^{\theta}_j - X^l v^{\theta}_j \big) \frac{1}{\sqrt{2}}  & \text{ if } (\theta-1)/2 < l \leq \theta -1.
\end{cases} 
\end{equation}

For $\theta \,\in \,\O_\d$, $0 \,\le\, l \,\le\, \theta-1$, set 
\begin{equation}\label{defn-V-theta-sp-n-R}
 V^l(\theta)\,:=\, \text{Span}_\R\{ w^{\theta}_{jl} \,\mid\, 1\,\le\, j \,\le\, t_\theta \}\, .
\end{equation}
The ordered basis $\big( {w}^\theta_{1l},\, \cdots ,\,  {w}^\theta_{t_{\theta}l} \big)$ of
$ V^l (\theta)$ is denoted by $\AC^l (\theta)$.  Let $\BC^l(d)\,=\, (X^lv^d_1,\, \cdots,\,
 X^lv^d_{t_d})$ be the ordered basis of $X^l L(d-1)$ for $0\,\leq\, l \,\leq\, d -1,\, d
\,\in\, \N_\d$ as in \eqref{old-ordered-basis-part}.

\begin{lemma}\label{reductive-part-comp-sp-n-R}
The following holds:
$$
 \ZC_{ {\rm Sp} (n,\R) }(X,H,Y)\! =\! \left\{ g \in { {\rm Sp} (n,\R)} \middle\vert \! 
                                        \begin{array}{ccc}
                             g (V^l (\theta)) \subset \ V^l (\theta) \  \text{and }    \vspace{.14cm}\\
                             \big[g|_{ V^l(\theta)}\big]_{{\AC}^l(\theta)} = \big[g |_{ V^0 (\theta)}\big]_{{\AC}^0 (\theta)} \text{for all } \theta \in \O_\d, 0 \leq l < \theta ~; \vspace{.14cm}\\
                               g(X^l L(\eta-1)) \subset  X^l L(\eta-1)  \text{ and }    \vspace{.14cm}\\
                          \!   \big[g |_{ X^l L(\eta-1)} \big]_{{\BC}^l (\eta)}\! \!=\! \big[g |_{  L(\eta-1)}\big]_{{\BC}^0 (\eta)}  \text{for all } \eta \in \E_\d,   0 \leq l< \eta \!
                                       \end{array}
                                           \right\}\!.
                                           $$                                           
\end{lemma}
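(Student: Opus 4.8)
The plan is to establish the stated description of $\ZC_{{\rm Sp}(n,\R)}(X,H,Y)$ by reducing it to the already-proved Lemma \ref{reductive-part-comp}, in the same spirit as the proof of Lemma \ref{reductive-part-comp-su-pq}. The central idea is that the two defining conditions on $g$ — preservation of the subspaces $V^l(\theta)$ for $\theta\in\O_\d$ together with the matching of matrices on the $\AC^l(\theta)$, and preservation of the $X^lL(\eta-1)$ for $\eta\in\E_\d$ together with the matching on the $\BC^l(\eta)$ — are exactly the conditions from Lemma \ref{reductive-part-comp}(1) re-expressed in the new bases adapted to the symmetric/symplectic structure of the form $(\cdot,\cdot)_d$. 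Since ${\rm Sp}(n,\R) = {\rm SU}(V,\<>)$ for the symplectic form $\<>$, one has $\ZC_{{\rm Sp}(n,\R)}(X,H,Y) = {\rm Sp}(n,\R)\cap\ZC_{{\rm SL}(V)}(X,H,Y)$, so I would begin by invoking Lemma \ref{reductive-part-comp}(1) to describe the right-hand intersection.

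First I would split the analysis according to the parity of $d\in\N_\d$, because the form $(\cdot,\cdot)_d$ is symplectic when $d$ is odd and symmetric when $d$ is even (Remark \ref{unitary-J-basis-rmk}), which is why the excerpt treats $\theta\in\O_\d$ and $\eta\in\E_\d$ by two different sets of basis vectors. For the even part $\eta\in\E_\d$, the relevant basis is simply the original $\BC^l(\eta)$, so here the conditions ``$g(X^lL(\eta-1))\subset X^lL(\eta-1)$ and $[g|_{X^lL(\eta-1)}]_{\BC^l(\eta)} = [g|_{L(\eta-1)}]_{\BC^0(\eta)}$'' are literally the conditions appearing in Lemma \ref{reductive-part-comp}(1); nothing needs to be transformed. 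For the odd part $\theta\in\O_\d$, I would record the explicit linear relations expressing the $w^\theta_{jl}$ in terms of the $X^lv^\theta_j$ coming from \eqref{orthogonal-basis-V-theta-sp-n-R}, exactly paralleling the column-matrix computation done inside the proof of Lemma \ref{reductive-part-comp-su-pq}. From these relations one reads off that, for fixed $\theta$ and each $0\le l\le\theta-1$, the inclusion $g(V^l(\theta))\subset V^l(\theta)$ holds if and only if $g(X^lL(\theta-1))\subset X^lL(\theta-1)$, and moreover $[g|_{V^l(\theta)}]_{\AC^l(\theta)} = [g|_{V^0(\theta)}]_{\AC^0(\theta)}$ if and only if $[g|_{X^lL(\theta-1)}]_{\BC^l(\theta)} = [g|_{L(\theta-1)}]_{\BC^0(\theta)}$, together with the identification $[g|_{L(\theta-1)}]_{\BC^0(\theta)} = [g|_{V^0(\theta)}]_{\AC^0(\theta)}$.

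Granting these two equivalences, the set described on the right-hand side of the lemma coincides termwise with the set ${\rm Sp}(n,\R)\cap\ZC_{{\rm SL}(V)}(X,H,Y)$ furnished by Lemma \ref{reductive-part-comp}(1), and the proof is complete. I expect the main obstacle to be purely bookkeeping: verifying the ``if and only if'' at the level of the change-of-basis relations \eqref{orthogonal-basis-V-theta-sp-n-R} requires carefully tracking the three-case definition (the generic pairing of $X^lv^\theta_j$ with $X^{\theta-1-l}v^\theta_j$, and the fixed middle vector at $l=(\theta-1)/2$), and confirming that the block $2\times 2$ linear transformation relating $\{X^lv^\theta_j, X^{\theta-1-l}v^\theta_j\}$ to $\{w^\theta_{jl}, w^\theta_{j(\theta-1-l)}\}$ is invertible and $g$-equivariant. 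Because this computation is essentially identical to the one already carried out in detail in Lemma \ref{reductive-part-comp-su-pq}, with $\R$ replacing $\C$ and the symplectic structure in place of the Hermitian one, I would present only the transformed relations explicitly and then refer to the argument of Lemma \ref{reductive-part-comp-su-pq} for the routine verification, noting in particular that the case distinction for $\E_\d$ is trivial since the basis there is unchanged.
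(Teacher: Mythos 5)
Your proposal is correct and matches the paper's approach exactly: the paper proves this lemma by the one-line remark that the argument is identical to that of Lemma \ref{reductive-part-comp-su-pq}, i.e.\ intersect with $\ZC_{{\rm SL}(V)}(X,H,Y)$ via Lemma \ref{reductive-part-comp}(1) and translate the conditions through the change-of-basis relations \eqref{orthogonal-basis-V-theta-sp-n-R} pairing $X^l v^\theta_j$ with $X^{\theta-1-l}v^\theta_j$ (the $\E_\d$ part being untransformed). Your added caution about the middle vector $l=(\theta-1)/2$ and the invertibility of the $2\times 2$ block relating the old and new basis vectors is precisely the routine bookkeeping the paper omits.
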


\begin{proof}
The proof is similar to that of the Lemma \ref{reductive-part-comp-su-pq}; the details are
omitted.
\end{proof}
 
Using \eqref{symplectic-basis-theta-sp-n-R} and \eqref{orthogonal-basis-V-theta-sp-n-R} we 
observe that for each $\theta \,\in \,\O_\d$ the space $M (\theta-1)$ is a direct sum of the 
subspaces $V^l (\theta)$, $ 0 \leq l \leq \theta-1$, which are mutually orthogonal with respect 
to $\<>$. We now re-arrange the ordered basis $\AC^l(\theta)$ of 
$V^l (\theta)$ to obtain a symplectic basis $\CC^l(\theta)$ of $ V^l (\theta)$ with respect to 
$\<>$ as follows. For $\theta \,\in\, \O^1_\d$, define
$$
\CC^l(\theta) := \begin{cases}
            \big( {w}^{\theta}_{_{1 \, l}}, \  \cdots ,\   {w}^{\theta}_{_{t_{\theta}/2 \, l}}  \big) \vee  \big(  {w}^{\theta}_{_{(t_{\theta}/2 +1)\, l}},\  \cdots ,\   {w}^{\theta}_{_{t_{\theta} \, l}}  \big) &  \text{ if } l \text{ is  even and } 0 \leq l <  (\theta-1)/2 \vspace{0.1cm}\\
              \big( {w}^{\theta}_{_{(t_{\theta}/2 +1)\, l}},\  \cdots ,\  {w}^{\theta}_{_{t_{\theta} \, l}} \big)\vee  \big(  {w}^{\theta}_{_{1 \, l}}, \  \cdots ,\   {w}^{\theta}_{_{t_{\theta}/2 \, l}}  \big) &  \text{ if } l \text{ is  odd and } 0 \leq l < (\theta-1)/2 \vspace{0.1cm}\\
            \big(  {w}^{\theta}_{_{1 \, l}}, \  \cdots ,\   {w}^{\theta}_{_{t_{\theta}/2 \, l}}  \big) \vee \big( {w}^{\theta}_{_{(t_{\theta}/2 +1)\, l}},\  \cdots ,\   {w}^{\theta}_{_{t_{\theta} \, l}} \big) & \text{ if } l =  (\theta-1)/2 \vspace{0.1cm}\\
                  \big( {w}^{\theta}_{_{(t_{\theta}/2 +1)\, l}},\  \cdots ,\  {w}^{\theta}_{_{t_{\theta} \, l}} \big)\vee  \big(  {w}^{\theta}_{_{1 \, l}}, \  \cdots ,\   {w}^{\theta}_{_{t_{\theta}/2 \, l}}  \big) & \text{ if } l \text{ is  even and } (\theta+1)/2 \leq l \leq (\theta-1) \vspace{0.1cm}\\
            \big(  {w}^{\theta}_{_{1 \, l}}, \  \cdots ,\   {w}^{\theta}_{_{t_{\theta}/2 \, l}}  \big) \vee \big(  {w}^{\theta}_{_{(t_{\theta}/2 +1)\, l}},\  \cdots ,\   {w}^{\theta}_{_{t_{\theta} \, l}} \big) & \text{ if } l \text{ is  odd and } (\theta+1)/2 \leq l \leq (\theta-1) . 
               \end{cases}
$$
Similarly, for each $\zeta \,\in\,\O^3_\d$, define
$$
\CC^l(\zeta) := \begin{cases}
             \big( {w}^{\zeta}_{_{1 \, l}},  \cdots ,   {w}^{\zeta}_{_{t_{\zeta}/2 \, l}}  \big) \vee \big(  {w}^{\zeta}_{_{(t_{\zeta}/2 +1)\, l}},  \cdots , {w}^{\zeta}_{_{t_{\zeta} \, l}} \big)   &  \text{ if } l \text{ is  even and } 0 \leq l <  (\zeta-1)/2 \vspace{0.1cm}\\
             
              \big( {w}^{\zeta}_{_{(t_{\zeta}/2 +1)\, l}},  \cdots ,   {w}^{\zeta}_{_{t_{\zeta} \, l}} \big) \vee \big( {w}^{\zeta}_{_{1 \, l}},  \cdots ,   {w}^{\zeta}_{_{t_{\zeta}/2 \, l}}  \big)   &  \text{ if } l \text{ is  odd and } 0 \leq l < (\zeta-1)/2 \vspace{0.1cm}\\
              
                   \big( {w}^{\zeta}_{_{(t_{\zeta}/2 +1)\, l}},  \cdots ,   {w}^{\zeta}_{_{t_{\zeta} \, l}} \big) \vee \big( {w}^{\zeta}_{_{1 \, l}},  \cdots ,   {w}^{\zeta}_{_{t_{\zeta}/2 \, l}}  \big) & \text{ if } l =  (\zeta-1)/2 \vspace{0.1cm}\\
                   
                  \big( {w}^{\zeta}_{_{(t_{\zeta}/2 +1)\, l}},  \cdots ,   {w}^{\zeta}_{_{t_{\zeta} \, l}} \big) \vee \big( {w}^{\zeta}_{_{1 \, l}},  \cdots ,   {w}^{\zeta}_{_{t_{\zeta}/2 \, l}}  \big) & \text{ if } l \text{ is  even and } (\zeta+1)/2 \leq l \leq (\zeta-1) \vspace{.1cm}\\
                  
             \big( {w}^{\zeta}_{_{1 \, l}},  \cdots ,   {w}^{\zeta}_{_{t_{\zeta}/2 \, l}}  \big) \vee \big(  {w}^{\zeta}_{_{(t_{\zeta}/2 +1)\, l}},  \cdots , {w}^{\zeta}_{_{t_{\zeta} \, l}} \big)  & \text{ if } l \text{ is  odd and } (\zeta+1)/2 \leq l \leq (\zeta-1) . 
             \end{cases}
$$

For $\eta \,\in\, \E_\d$, $0\,\le\, l \,\le\, \eta/2 -1$, set
\begin{equation}\label{defn-W-eta-sp-n-R}
  W^l(\eta)\,:=\,  X^l L (\eta-1) + X^{\eta-1-l} L(\eta-1)\, .
\end{equation} 
We moreover re-arrange the ordered basis $\BC^l (\eta) \vee \BC^{\eta-1-l} (\eta)$ of
$W^l(\eta)$ and obtain new basis $\DC^l(\eta)$ as follows:
\begin{equation}\label{symplectic-basis-D-eta}
\DC^l(\eta):= \begin{cases}
               \big(X^l v_1, \cdots, X^l v_{p_\eta}\big) \vee \big(X^{\eta-1-l} v_{p_\eta +1}, \cdots, X^{\eta-1-l} v_{t_\eta}\big)\\
               \bigvee  \big(X^{\eta-1-l} v_{1}, \cdots, X^{\eta-1-l} v_{p_\eta}\big) \vee  \big(X^l v_{p_\eta +1}, \cdots, X^l v_{t_\eta}\big)   & \text{ if }l \text{  is even} \vspace{.25cm}\\
             
             \big(X^{\eta-1-l} v_{1}, \cdots, X^{\eta-1-l} v_{p_\eta}\big)  \vee \big(X^l v_{p_\eta +1}, \cdots, X^l v_{t_\eta}\big)\\
               \bigvee \big(X^l v_1, \cdots, X^l v_{p_\eta}\big) \vee \big(X^{\eta-1-l} v_{p_\eta +1}, \cdots, X^{\eta-1-l} v_{t_\eta}\big) & \text{ if } l \text{ is odd}.               
 \end{cases}
 \end{equation}
Using \eqref{orthonormal-basis-eta-sp-n-R} it can be easily verified that $\DC^l(\eta)$ is a symplectic basis with respect to $\<>$.
 
Let $J_{\CC^l(\theta)}$ be the complex structure on $V^l(\theta)$ associated to the basis 
$\CC^l(\theta)$ for $\theta \in \O_\d$, $0 \leq l \leq \theta-1$, and let $J_{\DC^l(\eta)}$ be 
the complex structure on $W^l(\eta)$ associated to the basis $\DC^l(\eta)$ for $\eta \in \E_\d, 
0 \leq l \leq \eta-1$; see Section \ref{sec-epsilon-sigma-forms} for the definition of such 
complex structures.

The next lemma is a standard fact where we recall, without a proof, an explicit description of a maximal compact subgroup in a symplectic group.
Let $V'$ be a $\R$-vector space, $\<>'$ be a non-degenerate symplectic form on $V'$ and ${\BC'}$ be a symplectic basis of $V'$. 
Let $J_{\BC'}$ be the complex structure on $V'$ associated to $\BC'$.
Let $2m := \dim_\R V'$. We set
$$
K_{\BC'} := \{ g \in {\rm Sp} (V', \<>') \mid   g J _{\BC'} = J_{\BC'} g  \}.
$$

\begin{lemma} \label{max-cpt-sp-n-R}
Let $V', \<>', \BC'$ and $J_{\BC'}$ be as above. Then 
 \begin{enumerate}
 \item 
 $K_{\BC'} $ is a maximal compact subgroup in ${\rm Sp} (V', \<>')$.
\item $K_{\BC'} = \Big\{ g \in {\rm SL} (V') \Bigm| \big[ g \big]_{\BC'} = \begin{pmatrix}
                                         A & -B\\
                                         B &  A
                                        \end{pmatrix}
                                        \text{ where } A + \sqrt{-1} B \in {\rm U}(m) \Big\}.$ 
\end{enumerate}                              
\end{lemma}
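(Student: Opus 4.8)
The plan is to prove the two assertions by directly identifying $K_{\BC'}$ with the unitary group $\mathrm{U}(m)$ via the complex structure $J_{\BC'}$, following the classical description of the compact real form of the symplectic group. The key conceptual point is that a symplectic basis $\BC'$ together with the associated complex structure $J_{\BC'}$ turns $V'$ into an $m$-dimensional complex vector space, and the subgroup of symplectic transformations commuting with $J_{\BC'}$ is exactly the group preserving a compatible Hermitian inner product; this is the standard $\mathrm{Sp}(2m,\R)\cap \mathrm{O}(2m) \cong \mathrm{U}(m)$ phenomenon.

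First I would prove statement (2), since it is purely computational and will feed into (1). Writing $g\in \mathrm{Sp}(V',\<>')$ in the basis $\BC'$ as a block matrix $\left(\begin{smallmatrix} A & C \\ B & D\end{smallmatrix}\right)$ with $A,B,C,D\in \mathrm{M}_m(\R)$, I would translate the condition $gJ_{\BC'}=J_{\BC'}g$ into matrix equations using $[J_{\BC'}]_{\BC'}=\left(\begin{smallmatrix} 0 & -\mathrm{I}_m \\ \mathrm{I}_m & 0\end{smallmatrix}\right)$, which is exactly ${\rm J}_m$ in the notation of \eqref{defn-I-pq-J-n}. Commuting with ${\rm J}_m$ forces $D=A$ and $C=-B$, giving the stated block form $\left(\begin{smallmatrix} A & -B \\ B & A\end{smallmatrix}\right)$. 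Then the symplectic condition $[g]_{\BC'}^t {\rm J}_m [g]_{\BC'}={\rm J}_m$ applied to this block form reduces, after a short computation, precisely to the unitarity of $A+\sqrt{-1}B$, i.e.\ $(A+\sqrt{-1}B)^*(A+\sqrt{-1}B)=\mathrm{I}_m$. The map $g\mapsto A+\sqrt{-1}B$ is visibly an injective group homomorphism onto $\mathrm{U}(m)$, establishing (2) along with the isomorphism $K_{\BC'}\cong \mathrm{U}(m)$.

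For statement (1), I would argue that $K_{\BC'}$ is compact and maximal. Compactness is immediate from (2), since $\mathrm{U}(m)$ is compact and the isomorphism is a homeomorphism onto a closed subgroup. For maximality the cleanest route is a dimension and connectedness count together with the structure theory: $\mathrm{Sp}(V',\<>')\cong \mathrm{Sp}(m,\R)$ is a connected simple (for $m\geq 1$) noncompact Lie group, and $\mathrm{U}(m)$ is a maximal compact subgroup of $\mathrm{Sp}(m,\R)$ by the standard classification of maximal compact subgroups of classical groups (this can be cited from \cite{He} as is done elsewhere in the paper, e.g.\ the use of \cite[Chapter X]{He}). Since any two maximal compact subgroups are conjugate and $K_{\BC'}$ is a compact subgroup isomorphic to such a maximal one, comparing dimensions shows $K_{\BC'}$ cannot be properly contained in a larger compact subgroup, hence is itself maximal.

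The main obstacle is purely bookkeeping rather than conceptual: one must be careful that the sign conventions in ${\rm J}_n=\left(\begin{smallmatrix} 0 & -\mathrm{I}_n \\ \mathrm{I}_n & 0\end{smallmatrix}\right)$ and in the definition of $J_{\BC'}$ (sending $v_i\mapsto v_{n+i}$, $v_{n+i}\mapsto -v_i$) are matched correctly, so that the reduction of the symplectic relation lands exactly on unitarity and not on some twisted variant. Because this lemma is explicitly flagged as standard and its proof is to be omitted, I would in fact present only a brief indication of these two steps and cite the relevant structural result for maximality, rather than carrying out the block computations in full.
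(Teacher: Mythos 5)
Your proposal is correct, and since the paper explicitly states this lemma as standard and omits its proof, your sketch supplies exactly the intended classical argument: commutation with $[J_{\BC'}]_{\BC'}={\rm J}_m$ forces the block form $\bigl(\begin{smallmatrix} A & -B\\ B & A\end{smallmatrix}\bigr)$, the relation $[g]^t{\rm J}_m[g]={\rm J}_m$ then reduces to $A^tA+B^tB={\rm I}_m$ and $A^tB=B^tA$, i.e.\ $(A+\sqrt{-1}B)^*(A+\sqrt{-1}B)={\rm I}_m$, and maximality is the classical identification of $K_{\BC'}$ with ${\rm Sp}(m,\R)\cap{\rm O}(2m)\simeq{\rm U}(m)$ citable from \cite{He}, with your dimension-plus-connectedness closing step valid because maximal compact subgroups of a connected Lie group are connected and mutually conjugate. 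Your flagged sign worry is harmless: the Gram matrix of the symplectic basis $\BC'$ is $-{\rm J}_m$ rather than ${\rm J}_m$, but the condition $g^tGg=G$ is unchanged under $G\mapsto -G$, so the computation lands on genuine unitarity as you claim.
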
            
                                        
Define the $\R$-algebra isomorphism 
\begin{equation}\label{R-algebra-isomorphism-sp}
 \widetilde{\Lambda}_{\BC'} \,\colon\, \{ x \,\in\, {\rm End}_\R V' \,\mid\, x J_{\BC'} \,=\,
 J_{\BC'} x \} \longrightarrow {\rm M}_m (\C), \quad
 x \longmapsto A + \sqrt{-1} B 
\end{equation}
where $ \big[ x \big]_{\BC'} = \begin{pmatrix}
                                         A & -B\\
                                         B &  A
                                        \end{pmatrix}$.
In view of Lemma \ref{max-cpt-sp-n-R} it is clear that $\widetilde{\Lambda}_{\BC'} ( K_{\BC'})
\,= \, {\rm U}(m)$, and thus $\widetilde{\Lambda}_{\BC'} \colon  K_{\BC'} \to {\rm U}(m)$ 
is an isomorphism of Lie groups.

In the next lemma we describe a suitable maximal compact subgroup of $\ZC_{ {\rm Sp} (n, \R) } (X,H,Y)$ which will be used in Proposition \ref{max-cpt-sp-n-R-wrt-basis}.
Define the $\R$-algebra embedding 
$$
\wp_{m, \C} \,: \,{\rm M}_m(\C) \,\longrightarrow\, {\rm M}_{2m}(\R)\, ,\ \
R\,\longmapsto\, \begin{pmatrix}
         S & -T\\
         T & S
      \end{pmatrix}
      $$
where $S,T \in {\rm M}_m(\R)$ are the unique elements such that $R= S+ \sqrt{-1} T$.

\begin{lemma}\label{max-cpt-reductive-part-sp-n-R}
Let $K$ be the subgroup of $\ZC_{ {\rm Sp} (n,\R) } (X,H,Y)$ consisting of elements $g$ in \\
$\ZC_{ {\rm Sp} (n,\R) } (X,H,Y)$ satisfying the following conditions:
 \begin{enumerate}
\item For all  $ \theta \in \O_\d $ and $ 0 \leq l \leq \theta -1$, \,  $g ( V^l (\theta)) \subset  V^l (\theta)$.
\item For all $ \theta \in \O^1_\d$, there exist  $A_\theta,B_\theta \in {\rm M}_{t_\theta/2}(\R)$ with $A_\theta + \sqrt{-1}B_\theta \in {\rm U}(t_\theta/2) $ such that 
  $$
  \big[g|_{V^l(\theta)}\big]_{\CC^l(\theta)} =
  \begin{cases}
  \wp_{t_\theta/2,\C}( A_\theta + \sqrt{-1} B_\theta)  & \text{ if $l$ is even, } 0 \leq l < (\theta -1)/2 \vspace*{0.05cm}\\
  \wp_{t_\theta/2,\C}( A_\theta -  \sqrt{-1} B_\theta)  & \text{ if } l \text{ is odd, } 0 \leq l < (\theta-1)/2 \vspace*{0.05cm}\\  
  \wp_{t_\theta/2,\C}( A_\theta +  \sqrt{-1} B_\theta ) & \text{ if }  l = (\theta-1)/2 \vspace*{.05cm}\\ 
  \wp_{t_\theta/2,\C}( A_\theta + \sqrt{-1} B_\theta  )&  \text{ if $l$ is odd, } (\theta + 1)/2 \leq l \leq \theta -1 \vspace*{.05cm}\\
  \wp_{t_\theta/2,\C}( A_\theta -  \sqrt{-1} B_\theta ) & \text{ if $l$ is even, } (\theta+1)/2 \leq l \leq \theta -1.    
  \end{cases}
 $$ 
\item For all $ \zeta \in \O^3_\d$, there exist  $A_\zeta,B_\zeta \in {\rm M}_{t_\zeta/2}(\R)$ with $A_\zeta + \sqrt{-1} B_\zeta \in {\rm U}(t_\zeta/2) $ such that 
  $$
  \big[g|_{V^l(\zeta)}\big]_{\CC^l(\zeta)} =
  \begin{cases}
   \wp_{t_\zeta/2,\C}( A_\zeta + \sqrt{-1} B_\zeta ) & \text{ if $l$ is even, } 0 \leq l < (\zeta -1)/2 \vspace*{0.05cm}\\
   \wp_{t_\zeta/2,\C}(A_\zeta - \sqrt{-1} B_\zeta )& \text{ if } l \text{ is odd, } 0 \leq l < (\zeta-1)/2 \vspace*{0.05cm}\\  
   \wp_{t_\zeta/2,\C}( A_\zeta - \sqrt{-1} B_\zeta ) & \text{ if }  l = (\zeta-1)/2 \vspace*{.05cm}\\ 
   \wp_{t_\zeta/2,\C}(A_\zeta + \sqrt{-1} B_\zeta )& \text{ if $l$ is odd, } (\zeta + 1)/2 \leq l \leq \zeta -1 \vspace*{.05cm}\\
   \wp_{t_\zeta/2,\C}( A_\zeta - \sqrt{-1} B_\zeta )& \text{ if $l$ is even, } (\zeta+1)/2 \leq l \leq \zeta -1.    
  \end{cases}
 $$  
\item For all $\eta \in \E_\d$ and $ 0 \leq l \leq \eta -1$, \, $ g ( X^l L(\eta-1)) \subset X^l L(\eta-1)$.
\item  For all $\eta \in \E_\d$, there exist $C_\eta \in {\rm O}_{p_\eta}$ and $D_\eta \in {\rm O}_{q_\eta}$ such that  \\ 
  $ \big[ g|_{X^l L(\eta-1)} \big]_{\BC^l(\eta)} = \begin{pmatrix}
                                         C_\eta & 0\\
                                         0 & D_\eta
                                        \end{pmatrix}$.
\end{enumerate}
Then $K$ is a maximal compact subgroup of $\ZC_{ {\rm Sp} (n,\R)}(X,H,Y)$.
\end{lemma}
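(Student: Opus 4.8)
The plan is to follow the pattern of the proof of Lemma \ref{max-cpt-so*-reductive-part}, with the complex structures of Lemma \ref{max-cpt-sp-n-R} playing the role of the quaternionic structures used there. First I would recall from Lemma \ref{reductive-part-comp-sp-n-R} the explicit description of $\ZC_{{\rm Sp}(n,\R)}(X,H,Y)$ and combine it with Lemma \ref{reductive-part-comp}(4). Since $\<>$ is symplectic, Remark \ref{unitary-J-basis-rmk} shows that the induced form $(\cdot,\cdot)_\theta$ on $L(\theta-1)$ is symplectic for $\theta \in \O_\d$, while $(\cdot,\cdot)_\eta$ on $L(\eta-1)$ is symmetric of signature $(p_\eta,q_\eta)$ for $\eta \in \E_\d$. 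As $\det$ is identically $1$ on each symplectic factor and $(\det)^\eta = 1$ on each orthogonal factor (because $\eta$ is even), the defining character $\bigchi_\d$ of Lemma \ref{reductive-part-comp}(4) imposes no constraint, so
$$\ZC_{{\rm Sp}(n,\R)}(X,H,Y) \,\simeq\, \prod_{\theta \in \O_\d} {\rm Sp}\big(L(\theta-1),\,(\cdot,\cdot)_\theta\big) \times \prod_{\eta \in \E_\d} {\rm O}\big(L(\eta-1),\,(\cdot,\cdot)_\eta\big)\,,$$
with ${\rm Sp}(L(\theta-1),(\cdot,\cdot)_\theta)\simeq {\rm Sp}(t_\theta/2,\R)$ and ${\rm O}(L(\eta-1),(\cdot,\cdot)_\eta)\simeq {\rm O}(p_\eta,q_\eta)$.

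Next I would introduce an abstract maximal compact subgroup $K'$ of $\ZC_{{\rm Sp}(n,\R)}(X,H,Y)$ by imposing on $g$, in addition to the conditions of Lemma \ref{reductive-part-comp-sp-n-R}, that $g|_{V^0(\theta)}$ commute with the complex structure $J_{\CC^0(\theta)}$ for every $\theta \in \O_\d$, and that $g$ preserve each of the positive- and negative-definite subspaces of $(\cdot,\cdot)_\eta$ on $L(\eta-1)$ for every $\eta \in \E_\d$. By Lemma \ref{max-cpt-sp-n-R}(1) the first family of conditions cuts out the maximal compact subgroup ${\rm U}(t_\theta/2)$ of each symplectic factor, and the standard description of the maximal compact subgroup of ${\rm O}(p_\eta,q_\eta)$ produces ${\rm O}(p_\eta)\times {\rm O}(q_\eta)$ from the second family; hence $K'$ is a maximal compact subgroup of the centralizer. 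It then suffices to prove that $K = K'$, which I would do by translating the two families of conditions into the matrix form appearing in the statement.

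For the odd parts I would apply Lemma \ref{max-cpt-sp-n-R}(2): the relation $g|_{V^0(\theta)} J_{\CC^0(\theta)} = J_{\CC^0(\theta)}\, g|_{V^0(\theta)}$ holds exactly when $\big[g|_{V^0(\theta)}\big]_{\CC^0(\theta)} = \wp_{t_\theta/2,\C}(A_\theta + \sqrt{-1}B_\theta)$ for a unique $A_\theta + \sqrt{-1}B_\theta \in {\rm U}(t_\theta/2)$. Combining this with the uniformity of $g$ across the spaces $V^l(\theta)$ from Lemma \ref{reductive-part-comp-sp-n-R}, recorded in the bases $\AC^l(\theta)$, and tracking how $\AC^l(\theta)$ is re-arranged into the symplectic basis $\CC^l(\theta)$, I would recover the alternation in (2) and (3): for odd $l$ the two halves of the symplectic basis are interchanged, and conjugation by the corresponding block swap carries $\wp_{t_\theta/2,\C}(A_\theta + \sqrt{-1}B_\theta)$ to $\wp_{t_\theta/2,\C}(A_\theta - \sqrt{-1}B_\theta)$, while the middle index $l=(\theta-1)/2$ leaves the order unchanged for $\theta\in\O^1_\d$ but swaps it for $\zeta\in\O^3_\d$, which is exactly the discrepancy between (2) and (3). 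For the even parts, relation \eqref{orthonormal-basis-eta-sp-n-R} identifies the positive- and negative-definite subspaces of $(\cdot,\cdot)_\eta$ with the spans of $\{v^\eta_1,\dots,v^\eta_{p_\eta}\}$ and $\{v^\eta_{p_\eta+1},\dots,v^\eta_{t_\eta}\}$, so the preservation requirement is precisely the block-diagonal shape with $C_\eta\in{\rm O}_{p_\eta}$, $D_\eta\in{\rm O}_{q_\eta}$ of condition (5). The main obstacle is the purely combinatorial bookkeeping of the odd-part translation — confirming that the re-indexing built into each $\CC^l(\theta)$ reproduces the prescribed sign pattern across all five ranges of $l$ — which is routine but must be verified case by case; the even-part matching and the maximality of $K'$ are immediate.
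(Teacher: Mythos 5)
Your proposal is correct, and its skeleton coincides with the paper's: define an auxiliary group $K'$ by adjoining to the centralizer conditions of Lemma \ref{reductive-part-comp-sp-n-R} a family of compactness conditions, invoke Lemma \ref{max-cpt-sp-n-R}(1) for maximality, and then show $K=K'$ by the block-swap bookkeeping (the paper encodes your ``conjugation by the block swap'' as the operation $Z^\dagger$, and your observation that the middle index $l=(\theta-1)/2$ behaves oppositely for $\O^1_\d$ and $\O^3_\d$ is exactly how the paper distinguishes \eqref{matrix-wrt-orthogonal-odd-1-subspace-sp} from \eqref{matrix-wrt-orthogonal-odd-3-subspace-sp}). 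Where you genuinely diverge is on the even parts: the paper does \emph{not} impose preservation of the definite subspaces of $(\cdot,\cdot)_\eta$; instead it requires $g|_{W^0(\eta)}$ to commute with the complex structure $J_{\DC^0(\eta)}$ on the symplectic subspace $W^0(\eta)=X^lL(\eta-1)+X^{\eta-1-l}L(\eta-1)$ at $l=0$, and then proves by a two-way matrix computation that, together with the uniformity condition \eqref{orthogonal-even-subspace}, this forces $[g|_{L(\eta-1)}]_{\BC^0(\eta)}$ to commute with ${\rm I}_{p_\eta,q_\eta}$ and the resulting blocks to be orthogonal. Your route — reading off the standard maximal compact ${\rm O}_{p_\eta}\times{\rm O}_{q_\eta}\subset {\rm O}(p_\eta,q_\eta)$ directly on $L(\eta-1)$ via Remark \ref{inv-form-old-new} and the normalization \eqref{orthonormal-basis-eta-sp-n-R} — is shorter and perfectly valid, since the centralizer factor for $\eta$ is the full ${\rm O}(L(\eta-1),(\cdot,\cdot)_\eta)$ (your determinant-character observation that $\bigchi_\d\equiv 1$ here is right and is implicit in the paper). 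What the paper's formulation buys is that \emph{every} defining condition of $K'$ is a commutation with a complex structure compatible with the global $J_\HC$, so the containment $K\subset K_\HC$ — used immediately afterwards in Proposition \ref{max-cpt-sp-n-R-wrt-basis} and Theorem \ref{sp-n-R} — falls out for free from \eqref{commute-J-odd} and \eqref{commute-J-even}; on your route that containment needs the same small computation as in the paper's converse step, so you would have to supply it separately. One minor imprecision to fix in your write-up: the half-swap does not occur at all odd $l$ — its parity reverses across the middle index (odd $l$ in the range $0\le l<(\theta-1)/2$, but even $l$ in the range $(\theta+1)/2\le l\le\theta-1$), which is visible in the statement itself and is part of the case-by-case check you already flag.
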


\begin{proof}
For our convenience we begin by introducing a new notation. Let $m$ be an integer. For a matrix $Z$ in ${\rm M}_{2m} (\R)$, define 
$$                     
                           Z^\dagger :={\begin{pmatrix}
                                         0 & {\rm I}_m\\
                                         {\rm I}_m & 0
                                     \end{pmatrix}}
                                 Z {\begin{pmatrix}
                                         0 & {\rm I}_m\\
                                         {\rm I}_m & 0
                                         \end{pmatrix}}^{-1}.   
                                         $$
Note that 
$ \begin{pmatrix}
      P & -R\\
      R &  P
  \end{pmatrix}^{\! \! \dagger}  =  {\begin{pmatrix}
                                    P & R\\
                                   -R & P
                              \end{pmatrix}}$ for matrices $P,R \in {\rm M}_m (\R)$.

 Let $K'\, \subset\, \ZC_{ {\rm Sp} (n,\R) } (X,H,Y)$ be the subgroup consisting of
all elements $g$ satisfying the following conditions:
\begin{align}
   & \text{ For all } \theta \in \O_\d,  0 \leq l \leq \theta-1 ,~  g \big(V^l (\theta)\big) \subset \ V^l (\theta).
   \label{orthogonal-odd-subspace} \vspace{.3cm} \\
   & \text{For all } \theta \in \O^1_\d,    
  \big[g|_{ V^l(\theta)}\big]_{{\CC}^l(\theta)} = 
  \begin{cases}
  \big[g |_{ V^0 (\theta)}\big]_{{\CC}^0 (\theta)}   & \text{ if $l$ is even, } 0 \leq l < (\theta -1)/2 \vspace*{0.1cm}\\
    \big[g |_{ V^0 (\theta)}\big]^{ \dagger}_{{\CC}^0 (\theta)}  & \text{ if } l \text{ is odd, } 0 \leq l < (\theta-1)/2 \vspace*{0.1cm}\\  
  \big[g |_{ V^0 (\theta)}\big]_{{\CC}^0 (\theta)}  & \text{ if }  l = (\theta-1)/2 \vspace*{.05cm}\\ 
  \big[g |_{ V^0 (\theta)}\big]_{{\CC}^0 (\theta)}  &  \text{ if $l$ is odd, } (\theta + 1)/2 \leq l \leq \theta -1 \vspace*{.05cm}\\
   \big[g |_{ V^0 (\theta)}\big]^{\dagger}_{{\CC}^0 (\theta)}  & \text{ if $l$ is even, } (\theta+1)/2 \leq l \leq \theta -1,    
  \end{cases}\label{matrix-wrt-orthogonal-odd-1-subspace-sp} \\
   & \text{For all } \zeta \in \O^3_\d,    
  \big[g|_{ V^l(\zeta)}\big]_{{\CC}^l(\zeta)} = 
  \begin{cases}
  \big[g |_{ V^0 (\zeta)}\big]_{{\CC}^0 (\zeta)}   & \text{ if $l$ is even, } 0 \leq l < (\zeta -1)/2 \vspace*{0.1cm}\\
    \big[g |_{ V^0 (\zeta)}\big]^{\dagger}_{{\CC}^0 (\zeta)}  & \text{ if } l \text{ is odd, } 0 \leq l < (\zeta-1)/2 \vspace*{0.1cm}\\  
  \big[g |_{ V^0 (\zeta)}\big]^{\dagger}_{{\CC}^0 (\zeta)}  & \text{ if }  l = (\zeta-1)/2 \vspace*{.05cm}\\ 
  \big[g |_{ V^0 (\zeta)}\big]_{{\CC}^0 (\zeta)}  &  \text{ if $l$ is odd, } (\zeta + 1)/2 \leq l \leq \zeta -1 \vspace*{.05cm}\\
   \big[g |_{ V^0 (\zeta)}\big]^{\dagger}_{{\CC}^0 (\zeta)}  & \text{ if $l$ is even, } (\zeta+1)/2 \leq l \leq \zeta -1,    
  \end{cases} \label{matrix-wrt-orthogonal-odd-3-subspace-sp} \\ 
   & g|_{ V^0(\theta)} \text{ commutes with } J_{{\CC}^0 (\theta)}, \label{commute-J-odd} \\
   & g\big(X^l L(\eta-1)\big) \subset  X^l L(\eta-1), ~~~ \big[g |_{ X^l L(\eta-1)} \big]_{{\BC}^l (\eta)} =  \big[g |_{  L(\eta-1)}\big]_{{\BC}^0 (\eta)}  \text{ for all } \eta \in \E_\d, \label{orthogonal-even-subspace} \\  
   &  0 \leq l\leq \eta-1,   \vspace{.3cm} \nonumber\\  
   &  g|_{ W^0(\eta)}  \text{ commutes with }   J_{{\DC}^0 (\eta)}. \label{commute-J-even} 
 \end{align}
Using Lemma \ref{reductive-part-comp-sp-n-R} and Lemma \ref{max-cpt-sp-n-R}(1) it is clear
that $K'$ is a maximal compact subgroup of $\ZC_{ {\rm Sp} (n,\R) } (X,H,Y)$. 
Hence to prove the lemma it suffices to show that $K \,= \,K'$. Let $g \,\in\,  {\rm Sp} (n,\R) $.
Using Lemma \ref{max-cpt-sp-n-R} (2) it is straightforward to check that $g$ satisfies
(1), (2), (3) in the statement of the lemma if and only if $g$ satisfies
\eqref{orthogonal-odd-subspace}, \eqref{matrix-wrt-orthogonal-odd-1-subspace-sp},
\eqref{matrix-wrt-orthogonal-odd-3-subspace-sp} and \eqref{commute-J-odd}.
Now suppose that $g \,\in\, {\rm Sp} (n,\R)$ and $g$ satisfies (4) and (5) in the statement
of the lemma. It is clear that \eqref{orthogonal-even-subspace} holds. 
We observe that  
$$
\big[ J_{{\DC}^0 (\eta)}\big]_{{\DC}^0(\eta)} = \begin{pmatrix}
                                        0  & -{\rm I}_{t_\eta}\\
                                       {\rm I}_{t_\eta} &0 \\
                                      \end{pmatrix} ~
\text{~ and }~
\big[ g|_{W^0(\eta)} \big]_{{\DC}^0(\eta)} = \begin{pmatrix}
                                        C_\eta &   \\
                                        & D_\eta  \\
                                        & & C_\eta\\
                                        & & & D_\eta
                                      \end{pmatrix}
                                      $$ 
where $\DC^0(\eta)$ is defined by setting $l=0$ in \eqref{symplectic-basis-D-eta}.
  From the matrix representations as above, it is clear that $J_{{\DC}^0 (\eta)}$ and  $g|_{W^0(\eta)}$ 
  commute. This proves that \eqref{commute-J-even} holds.

Now we assume that $g$ satisfies \eqref{orthogonal-even-subspace} and \eqref{commute-J-even}.
It is clear that (4) in the statement of the lemma holds.
Note that $A\,:=\,\big[g |_{  L(\eta-1)}\big]_{_{{\BC}^0 (\eta)}}\,=\,
\big[g |_{ X^l L(\eta-1)}\big]_{{\BC}^l (\eta)}$ for $1\leq l \leq \eta -1$.
We observe that 
$$\big[ J_{{\DC}^0 (\eta)}\big]_{\BC^0 (\eta) \vee \BC^{\eta -1} (\eta)} = \begin{pmatrix}
                                         0 & -{\rm I}_{p_\eta,q_\eta} \\
                                       {\rm I}_{p_\eta,q_\eta} &0 
                                      \end{pmatrix} ~
\text{~ and }~
\big[ g |_{W^0(\eta)}\big]_{\BC^0 (\eta) \vee \BC^{\eta -1} (\eta)}  = \begin{pmatrix}
                                       A &  \\
                                       & A
                                      \end{pmatrix}.
                                      $$                                     
{}From  \eqref{commute-J-even} it follows that the above two matrices commute, which in turn implies that $A$ commutes with $ \begin{pmatrix}
         {\rm I}_{p_\eta} &  \\
         & -{\rm I}_{q_\eta}
\end{pmatrix}$.
Thus $A$ is of the form $A =  \begin{pmatrix}
                                      C & 0 \\
                                      0 & D
                                      \end{pmatrix} $
for some matrices $C \in {\rm GL}_{p_\eta} ( \R)$ and $D \in {\rm GL}_{q_\eta} ( \R)$.
Now observe that 
$$
\big[ g |_{W^0(\eta)}\big]_{\DC^0 (\eta)} 
= \begin{pmatrix}
   C  \\
   & D \\
   & & C\\
   & & & D
  \end{pmatrix}.
$$
As $g|_{W^0(\eta)}$ commutes with $J_{{\DC}^0 (\eta)}$, it follows that 
$$
\begin{pmatrix}
 C & 0  \\
 0 & D  
 \end{pmatrix} +\sqrt{-1} \begin{pmatrix}
 0 & 0  \\
 0 & 0  
 \end{pmatrix} \in {\rm U} (t_\eta).
$$
Thus, $C \in {\rm O}_{p_\eta} $ and $ D \in {\rm O}_{q_\eta} $ and (5) in the statement of
the lemma holds.
This completes the proof.
\end{proof}

We next introduce some notation which will be needed in Proposition \ref{max-cpt-sp-n-R-wrt-basis}.
Recall that the positive parts of the symplectic basis $\DC(\eta)$, $\CC(\theta)$ are denoted 
by $\DC_+(\eta)$, $\CC_+(\theta)$ respectively; see Section \ref{sec-epsilon-sigma-forms}. 
Similarly, the negative parts of $\DC(\eta)$, $\CC(\theta)$ are denoted by $\DC_-(\eta)$, 
$\CC_-(\theta)$ respectively. For $\eta \,\in\, \E_\d$, set
$$
\DC_+ (\eta) \,:=\, \DC^0_+ (\eta) \vee \cdots \vee  \DC^{\eta/2-1}_+ (\eta) \ \text{ and } \ 
\DC_- (\eta) \,:=\, \DC^0_- (\eta) \vee \cdots \vee  \DC^{\eta/2 -1}_- (\eta).
$$
For $\theta \,\in\, \O_\d$, set
$$
\CC_+ (\theta) \,:=\, \CC^0_+ (\theta) \vee \cdots \vee  \CC^{\theta-1}_+ (\theta) \ \text{ and } \ 
\CC_- (\theta) \,:=\, \CC^0_- (\theta) \vee \cdots \vee  \CC^{\theta-1}_- (\theta).
$$
Let $\alpha\, :=\, \# \E_\d $, $\beta \,:=\, \# \O^1_\d$ and $ \gamma \,:=\, \# \O^3_\d $.
We enumerate $\E_\d \,=\,\{ \eta_i \,\mid\, 1 \,\leq\, i \,\leq\, \alpha \}$ such that
$\eta_i \,< \,\eta_{i+1}$, and 
$\O^1_\d \,=\,\{ \theta_j \,\mid\, 1 \leq j \leq \beta \}$ such that
$\theta_j \,< \,\theta_{j+1}$; similarly enumerate $\O^3_\d \,=\,\{ \zeta_j \,\mid\, 1 \,\leq\, j
\,\leq\, \gamma \}$ such that $\zeta_j \,<\, \zeta_{j+1}$. Now define
$$
\EC_+ \,:=\, \DC_+ (\eta_1) \vee \cdots \vee \DC_+ (\eta_{\alpha})\, ; \ \  \,
\OC^1_+ \,:=\, \CC_+ (\theta_1) \vee \cdots \vee \CC_+ (\theta_{\beta})\, ; \ \ \,\OC^3_+ := \CC_+ (\zeta_1) \vee \cdots \vee \CC_+ (\zeta_{\gamma});
$$
$$
\EC_- := \DC_- (\eta_1) \vee \cdots \vee \DC_- (\eta_{\alpha}) ; \ 
\OC^1_- := \CC_-(\theta_1) \vee \cdots \vee \CC_-(\theta_{\beta})\, \text{ and } \,\OC^3_- := \CC_- (\zeta_1) \vee \cdots \vee \CC_-(\zeta_{\gamma}).
$$
Also we define
\begin{equation}\label{symplectic-basis-final}
\HC_+ := \EC_+ \vee \OC^1_+ \vee\OC^3_+ , \ \ \HC_- := \EC_- \vee \OC^1_- \vee\OC^3_- \ \text{ and } \
\HC := \HC_+ \vee \HC_-. 
\end{equation}

As before, for a matrix $A\,=\, (a_{ij}) \,\in\, {\rm M}_r(\C)$, define $\overline{A}
\,:=\, (\overline{a}_{ij}) \,\in\, {\rm M}_r(\C)$.
Let
$$
\Db \colon \prod_{i=1}^\alpha  \big( {\rm M}_{p_{\eta_i}} (\R) \times {\rm M}_{q_{\eta_i}} (\R) \big) \times \prod_{j=1}^\beta {\rm M}_{t_{\theta_j}/2} (\C) \times \prod_{k=1}^\gamma {\rm M}_{t_{\zeta_k}/2} (\C)  \longrightarrow {\rm M}_n(\C)  
$$
be the $\R$-algebra embedding defined by
\begin{align*}
\big(C_{\eta_1}, D_{\eta_1},  \cdots ,& C_{\eta_{\alpha}}, D_{\eta_{\alpha}}; A_{\theta_1}, \cdots, 
A_{\theta_\beta} ; B_{\zeta_1}, \cdots ,  B_{\zeta_\gamma} \big) \\
     \longmapsto\, & \bigoplus_{i=1}^\alpha  \big( C_{\eta_i} \oplus D_{\eta_i} \big)_\blacktriangle^{\eta_i /2} \oplus \bigoplus_{j=1}^\beta \Big( \big( A_{\theta_j} \oplus \overline{A}_{\theta_j}\big)_\blacktriangle ^{\frac{\theta_j-1}{4}} \oplus A_{\theta_j} \oplus \big( A_{\theta_j} \oplus \overline{A}_{\theta_j}\big)_\blacktriangle ^{\frac{\theta_j-1}{4}} \Big) \\ 
  \oplus & \bigoplus_{k=1}^\gamma \Big( \big( B_{\zeta_k}\oplus \overline{B}_{\zeta_k} \big) _\blacktriangle ^{\frac{\zeta_k + 1}{4}}  \oplus \big( B_{\zeta_k}\oplus \overline{B}_{\zeta_k} \big)_\blacktriangle^{\frac{\zeta_k -3}{4}}  \oplus \overline{B}_{\zeta_k} \Big) .
\end{align*}

It is clear that the basis $\HC$ in \eqref{symplectic-basis-final} is a symplectic basis of $V$ with
respect to $\<>$. Let $\widetilde{\Lambda}_\HC \,\colon\, \{ x \,\in\, {\rm End}_\R \R^{2n}
\,\mid\, x J_\HC \,=\,J_\HC x  \} \,\longrightarrow\, {\rm M}_{n} (\C)$ be the isomorphism of
$\R$-algebras induced by the above symplectic basis $\HC$. Recall that $\widetilde{\Lambda}_\HC \,:\,
K_\HC \,\longrightarrow\, {\rm U} (n)$ is an isomorphism of Lie groups. Using \eqref{commute-J-odd}
and \eqref{commute-J-even} we observe that the group $K$ defined in Lemma
\ref{max-cpt-reductive-part-sp-n-R} satisfies the condition $K \,\subset\, K_\HC$.
 
\begin{proposition}\label{max-cpt-sp-n-R-wrt-basis} 
Let $X \,\in\, \NC _{\s\p(n,\R)}$ and $\Psi_{{\rm Sp}(n,\R)} (\OC_X)\, =\, (\d,\, \sgn_{\OC_X})$.
Let $\alpha \,:=\, \# \E_\d $, $\beta \,:=\, \# \O^1_\d$ and $ \gamma \,:=\, \# \O^3_\d $. Let
 $\{X,H,Y\}$ be a $\s\l_2(\R)$-triple in $\s\p (n,\R)$, and let $(p_\eta, \,q_\eta)$ be the signature
of $(\cdot,\, \cdot)_\eta$, $\eta \,\in\, \E_\d$, as defined in \eqref{new-form}.
Let $K$ be the maximal compact subgroup of $\ZC_{{\rm Sp} (n,\R)} (X, H, Y)$ as in Lemma
\ref{max-cpt-reductive-part-sp-n-R}. Then $\widetilde{\Lambda}_\HC(K) \subset {\rm U}(n)$ is given by
$$
\widetilde{\Lambda}_\HC(K)= \bigg\{ \Db(g) \biggm| 
                        g \in   \prod_{i=1}^\alpha \big( {\rm O}_{p_{\eta_i}} \times {\rm O}_{q_{\eta_i}} \big) \times \prod_{j=1}^\beta  {\rm U}(t_{\theta_j}/2)  \times \prod_{k=1}^\gamma {\rm U}(t_{\zeta_k}/2) \bigg\}.
$$
\end{proposition}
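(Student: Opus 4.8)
The plan is to leverage the two facts recorded immediately before the statement: that $K \,\subset\, K_\HC$, and that $\widetilde{\Lambda}_\HC \colon K_\HC \,\stackrel{\sim}{\longrightarrow}\, {\rm U}(n)$ is an isomorphism of Lie groups coming from \eqref{R-algebra-isomorphism-sp} and Lemma \ref{max-cpt-sp-n-R}. Thus it suffices to take a general $g \,\in\, K$, read off its parametrizing data $(C_\eta, D_\eta)_{\eta \in \E_\d}$ and $(A_\theta + \sqrt{-1}B_\theta)_{\theta \in \O_\d}$ furnished by Lemma \ref{max-cpt-reductive-part-sp-n-R}, compute $\widetilde{\Lambda}_\HC(g)$ in the symplectic basis $\HC$ of \eqref{symplectic-basis-final}, and verify that the outcome is exactly $\Db$ applied to this data. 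The reverse inclusion is immediate, since every tuple in $\prod_{i=1}^\alpha ({\rm O}_{p_{\eta_i}} \times {\rm O}_{q_{\eta_i}}) \times \prod_{j=1}^\beta {\rm U}(t_{\theta_j}/2) \times \prod_{k=1}^\gamma {\rm U}(t_{\zeta_k}/2)$ is the data of some element of $K$ by construction of that group.

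First I would reduce the computation to a single block. By Lemma \ref{reductive-part-comp-sp-n-R} every $g \,\in\, K$ preserves each $V^l(\theta)$ for $\theta \,\in\, \O_\d$ and each $W^l(\eta)$ for $\eta \,\in\, \E_\d$, and $V$ is the $\<>$-orthogonal direct sum of these subspaces. The key observation is that $\HC$ is assembled in \eqref{symplectic-basis-final} by concatenating the positive parts $\EC_+, \OC^1_+, \OC^3_+$ and then the negative parts $\EC_-, \OC^1_-, \OC^3_-$ in strictly parallel order, so that the $i$-th vector of $\HC_+$ arising from a given block is paired by $J_\HC$ with the $i$-th vector of $\HC_-$ arising from the same block. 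Consequently $J_\HC$ restricts on each block to the block complex structure $J_{\CC^l(\theta)}$ or $J_{\DC^l(\eta)}$, and $\widetilde{\Lambda}_\HC$ splits as the direct sum of the block isomorphisms $\widetilde{\Lambda}_{\CC^l(\theta)}$ and $\widetilde{\Lambda}_{\DC^l(\eta)}$ of the shape \eqref{R-algebra-isomorphism-sp}. Establishing this alignment --- that the sign conventions and reorderings built into the definitions of $\CC^l(\theta)$, $\DC^l(\eta)$ and into \eqref{symplectic-basis-final} are mutually compatible --- is the main obstacle, being a matter of careful bookkeeping across the three families $\E_\d$, $\O^1_\d$, $\O^3_\d$.

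Granting the reduction, the odd blocks are handled by conditions (2) and (3) of Lemma \ref{max-cpt-reductive-part-sp-n-R}: there $[g|_{V^l(\theta)}]_{\CC^l(\theta)} = \wp_{t_\theta/2, \C}(A_\theta \pm \sqrt{-1}B_\theta)$, and since $\widetilde{\Lambda}_{\CC^l(\theta)}$ coincides with the inverse of $\wp_{t_\theta/2, \C}$ on its image, the block contributes $A_\theta + \sqrt{-1}B_\theta$ or its conjugate $\overline{A_\theta + \sqrt{-1}B_\theta}$ according to the prescribed parity-and-position pattern. Letting $l$ run over $0, \dotsc, \theta-1$ then reproduces, for $\theta_j \,\in\, \O^1_\d$, exactly the string $\big(A_{\theta_j} \oplus \overline{A}_{\theta_j}\big)_\blacktriangle^{\frac{\theta_j-1}{4}} \oplus A_{\theta_j} \oplus \big(A_{\theta_j} \oplus \overline{A}_{\theta_j}\big)_\blacktriangle^{\frac{\theta_j-1}{4}}$, and the analogous $\O^3_\d$-string ending in $\overline{B}_{\zeta_k}$ for $\zeta_k \,\in\, \O^3_\d$, matching the definition of $\Db$; I would check the two representative cases $\theta = 5$ and $\zeta = 3$ to confirm that the alternation $\wp(A_\theta + \sqrt{-1}B_\theta) \leftrightarrow \wp(A_\theta - \sqrt{-1}B_\theta)$ lands in the correct slots.

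For the even blocks I would use conditions (4) and (5): there $[g|_{X^l L(\eta-1)}]_{\BC^l(\eta)} = C_\eta \oplus D_\eta$ with $C_\eta \,\in\, {\rm O}_{p_\eta}$, $D_\eta \,\in\, {\rm O}_{q_\eta}$, and the symplectic basis $\DC^l(\eta)$ of \eqref{symplectic-basis-D-eta} is arranged so that, by the computation already carried out in the proof of Lemma \ref{max-cpt-reductive-part-sp-n-R}, $g|_{W^l(\eta)}$ commutes with $J_{\DC^l(\eta)}$ with $\widetilde{\Lambda}_{\DC^l(\eta)}$-image the real matrix $C_\eta \oplus D_\eta \,\in\, {\rm O}_{p_\eta} \times {\rm O}_{q_\eta} \subset {\rm U}(t_\eta)$ (no conjugate appearing, precisely because $C_\eta, D_\eta$ are real orthogonal). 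Since $l$ ranges over the $\eta/2$ values $0, \dotsc, \eta/2-1$, this block contributes $\big(C_\eta \oplus D_\eta\big)_\blacktriangle^{\eta/2}$, again matching $\Db$. Assembling the contributions of all blocks in the order fixed by \eqref{symplectic-basis-final} yields $\widetilde{\Lambda}_\HC(g) = \Db\big(C_{\eta_1}, D_{\eta_1}, \dotsc; A_{\theta_1}, \dotsc; B_{\zeta_1}, \dotsc\big)$, which together with the trivial reverse inclusion completes the proof. As noted, the only genuine difficulty is the sign-and-reordering bookkeeping of the second paragraph; once the block alignment is verified, each block computation is a direct application of \eqref{R-algebra-isomorphism-sp}.
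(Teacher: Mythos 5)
Your proposal is correct and is essentially the paper's own argument: the paper's proof consists of the single sentence that the result "follows by writing the matrices of the elements of the maximal compact subgroup $K$ in Lemma \ref{max-cpt-reductive-part-sp-n-R} with respect to the symplectic basis $\HC$ in \eqref{symplectic-basis-final}," and your block-by-block computation — the alignment of $\HC_+$ with $\HC_-$ so that $J_\HC$ restricts to the block complex structures, the odd blocks read off via $\wp_{t_\theta/2,\C}$ from conditions (2)–(3), and the even blocks contributing the real matrices $(C_\eta \oplus D_\eta)_\blacktriangle^{\eta/2}$ from conditions (4)–(5) — is precisely the bookkeeping the paper leaves implicit.
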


 \begin{proof}
  This follows by writing the matrices of the elements of the maximal compact subgroup $K$ in
Lemma \ref{max-cpt-reductive-part-sp-n-R} with respect to the symplectic basis $\HC$
in \eqref{symplectic-basis-final}.
 \end{proof}

\begin{theorem}\label{sp-n-R}
Let $X \,\in\, \s \p(n,\R)$ be a nilpotent element.
Let $(\d,\, \sgn_{\OC_X}) \,\in\, \YC^{\rm odd}_{-1}(2n)$ be the signed Young diagram of the orbit
$\OC_X$ (that is, $\Psi_{{\rm Sp}(n,\R)}  (\OC_X) \,= \,(\d,\, \sgn_{\OC_X})$ as in the notation of
Theorem \ref{sp-n-R-parametrization}). Then
$$
\dim_\R H^2(\OC_X,\, \R)=\begin{cases}
                              0    & \text{ if } \,  \# \O_\d =0 \\
                       \# \O_\d-1  & \text{ if } \,  \# \O_\d \geq 1.
                             \end{cases}
$$
\end{theorem}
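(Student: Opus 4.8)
The plan is to apply Theorem \ref{thm-nilpotent-orbit} to the simple real Lie group ${\rm Sp}(n,\R)$, reducing the computation of $H^2(\OC_X,\,\R)$ to understanding how the maximal compact subgroup $K$ of the reductive centralizer $\ZC_{{\rm Sp}(n,\R)}(X,H,Y)$ sits inside a maximal compact subgroup $M \simeq {\rm U}(n)$ of ${\rm Sp}(n,\R)$. As $X=0$ is trivial, I assume $X \neq 0$ and fix a $\s\l_2(\R)$-triple $\{X,H,Y\} \subset \s\p(n,\R)$ containing $X$. The heavy lifting for this embedding has already been assembled in the preceding propositions, so the proof itself should be short: the entire structural content is packaged in Proposition \ref{max-cpt-sp-n-R-wrt-basis}, which gives an explicit description of $\widetilde{\Lambda}_\HC(K)$ inside ${\rm U}(n)$ via the embedding $\Db$.

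First I would invoke Theorem \ref{thm-nilpotent-orbit}: since ${\rm Sp}(n,\R)$ is connected simple real, we get
$$
H^2(\OC_X,\,\R)\,\simeq\,[(\z(\k)\cap[\m,\,\m])^*]^{K/K^\circ},
$$
where $\m$ is the Lie algebra of $M\simeq {\rm U}(n)$ with $\dim_\R \z(\m)=1$. Next I would identify $K$ up to isomorphism. Either directly from Proposition \ref{max-cpt-sp-n-R-wrt-basis}, or from Lemma \ref{reductive-part-comp}(4) applied to the forms $(\cdot,\cdot)_d$ on each $L(d-1)$ (symmetric for $\eta\in\E_\d$, symplectic for $\theta\in\O_\d$), one obtains
$$
K\,\simeq\,\prod_{\eta\in\E_\d}\big({\rm O}_{p_\eta}\times{\rm O}_{q_\eta}\big)\times\prod_{\theta\in\O_\d}{\rm U}(t_\theta/2).
$$
The center contribution comes only from the unitary factors: each ${\rm U}(t_\theta/2)$ contributes a one-dimensional center, while the orthogonal factors contribute nothing to $\z(\k)$. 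Hence $\dim_\R\z(\k)=\#\O_\d$.

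The key step is then to locate $\z(\k)$ relative to $[\m,\,\m]=\s\u(n)$ inside $\m=\u(n)$. Using the explicit form of $\Db$ in Proposition \ref{max-cpt-sp-n-R-wrt-basis}, each generator of the center of a factor ${\rm U}(t_\theta/2)$ maps to a block-scalar element whose overall trace (the obstruction to lying in $\s\u(n)$) must be computed; the point is that the various multiplicities $\theta$ and the conjugate blocks $\overline{B}_{\zeta_k}$ make the total trace of a central element generically nonzero, so $\z(\k)\not\subset[\m,\,\m]$ precisely when $\#\O_\d\geq 1$. Thus $\dim_\R(\z(\k)\cap[\m,\,\m])=\#\O_\d-1$ when $\#\O_\d\geq 1$, and equals $0$ when $\#\O_\d=0$. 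I would finish by noting that the adjoint action of $K/K^\circ$ on $\z(\k)\cap[\m,\,\m]$ is trivial: since $K$ is in fact connected (the orthogonal factors do not obstruct this because the center sits inside the connected unitary factors, and the relevant argument mirrors the connectedness observation in the proof of Theorem \ref{so*}), we have $K=K^\circ$ and the invariants are the whole space. Combining these gives
$$
\dim_\R H^2(\OC_X,\,\R)=\begin{cases}0&\text{if }\#\O_\d=0\\ \#\O_\d-1&\text{if }\#\O_\d\geq 1.\end{cases}
$$
The main obstacle I anticipate is the trace bookkeeping in the last step: one must verify carefully, using the exact exponents $\tfrac{\theta_j-1}{4}$, $\tfrac{\zeta_k+1}{4}$, $\tfrac{\zeta_k-3}{4}$ appearing in $\Db$, that a nontrivial central combination of the ${\rm U}(t_\theta/2)$-factors indeed escapes $\s\u(n)$, so that the codimension-one drop is exactly accounted for; this is analogous to, and no harder than, the corresponding computation carried out for $\s\o^*(2n)$ in Theorem \ref{so*}.
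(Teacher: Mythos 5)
Your overall route coincides with the paper's: reduce via Theorem \ref{thm-nilpotent-orbit}, identify $K$ through Proposition \ref{max-cpt-sp-n-R-wrt-basis} (or Lemma \ref{reductive-part-comp}(4)), and test whether the centers of the unitary factors escape $[\k_\HC,\,\k_\HC]\,=\,\s\u(n)$ by the trace computation with the exponents in $\Db$; that part is sound and is exactly what the paper does. But your final invariance step has a genuine gap. The claim that $K$ is connected is false whenever $\E_\d \,\neq\, \emptyset$: the factors ${\rm O}_{p_\eta}\times{\rm O}_{q_\eta}$ are disconnected (already ${\rm O}_1\,=\,\{\pm 1\}$ is), so $K\,\neq\, K^\circ$ in general. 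The analogy you invoke with Theorem \ref{so*} fails at precisely this point, because in the $\s\o^*(2n)$ case the even-part compact factors are the connected groups ${\rm Sp}(p_\eta)\times{\rm Sp}(q_\eta)$, whereas here they are orthogonal groups. Relatedly, your assertion that the orthogonal factors contribute nothing to $\z(\k)$ fails whenever some $p_\eta$ or $q_\eta$ equals $2$: $\s\o_2$ is abelian, hence a central summand of $\k$, and its image under $\Db$ consists of traceless (real skew-symmetric) blocks, so it even lies inside $\s\u(n)$. In those cases $\dim_\R\z(\k)\,>\,\#\O_\d$ and $\z(\k)\cap[\m,\,\m]$ is strictly larger than you compute; combined with your (false) connectedness claim, a consistent execution of your argument would overcount $H^2$. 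Your stated answer survives only because your two errors cancel: the central directions you overlooked are precisely those on which the genuine component group $K/K^\circ$ has no nonzero invariants.

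The paper closes this gap explicitly: it keeps $K$ disconnected, uses the factorwise decomposition of invariants recorded in \eqref{adjoint-action}, and observes that ${\rm O}_2/{\rm SO}_2\,\simeq\,\Z/2\Z$ acts by $-1$ on $\s\o_2$ (killing the invariants of every $\s\o_2$-contribution to $\z(\k)\cap[\k_\HC,\,\k_\HC]$), while ${\rm U}(m)$ acts trivially on $\z(\u(m))$ (so the $\#\O_\d$-dimensional part survives, losing one dimension to the trace condition when $\#\O_\d\,\geq\,1$). To repair your proof, replace the connectedness claim by exactly this component-group bookkeeping.
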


\begin{proof} As the theorem is evident when $X \,=\,0$ we assume that $X \,\neq\, 0$.

Let $\{X,\, H,\, Y\}\,\subset\, \s\p(n,\R)$ be a $\s\l_2(\R)$-triple.
Let $K$ be the maximal compact subgroup of $\ZC_{{\rm Sp}(n,\R)}(X,H,Y)$ as in Lemma
\ref{max-cpt-reductive-part-sp-n-R}. 
Let $\HC$ be as in \eqref{symplectic-basis-final} and $K_{\HC}$ the maximal compact subgroup
of ${\rm Sp}(n,\R)$ as in Lemma \ref{max-cpt-sp-n-R}(1). Then $K \,\subset\, K_{\HC}$.
Let $\k_{\HC}$ be the Lie algebra of $K_\HC$.
Using Proposition \ref{max-cpt-sp-n-R-wrt-basis} it follows that 
$\z(\k) \,\subset\, [\k_{\HC},\, \k_{\HC}]$ when $ \# \O_\d \,=\,0$, and  $\z(\k) \,\not\subset\,
 [\k_{\HC}, \,\k_{\HC}]$ when $ \# \O_\d \,\geq\, 1$.
As $\dim_\R \z(\k_{\HC})\,=\,1$, it follows that $$\dim_\R \z(\k)\cap [\k_{\HC},\, \k_{\HC}]
\,=\, \dim_\R \z(\k) -1$$ when $ \# \O_\d \,\geq\, 1$. 
The group ${\rm O}_2 /{\rm SO}_2\,=\,\Z/2\Z$ acts non-trivially on $\s\o_2$ and the group
${\rm U}(m)$ acts trivially on $\z(\u(m))$. We next use the observation in
\eqref{adjoint-action} to conclude that
$$
\dim_\R \big[ \z(\k)\cap [\k_{\HC}, \k_{\HC}] \big]^{K/{K^\circ}} = \begin{cases}
                              0    & \text{ if } \, \# \O_\d =0 \\
                       \# \O_\d-1  & \text{ if } \, \# \O_\d \geq 1 \,.
                             \end{cases}
$$
Now the theorem follows from Theorem \ref{thm-nilpotent-orbit}.
\end{proof}

\subsection{Second cohomology groups of nilpotent orbits in 
\texorpdfstring{${\s\p}(p,q)$}{Lg}}\label{sec-sp-pq}

Let $n$ be a positive integer and $(p,q)$ be a pair of non-negative integers with $p + q \,=\,n$. 
In this subsection we compute the second cohomology groups of nilpotent orbits in ${\s\p}(p,q)$ under 
the adjoint action of ${\rm Sp}(p,q)$. As we do not need to deal with compact groups, we will further 
assume that $p \,>\,0$ and $q\,>\,0$. Throughout this subsection $\<>$ denotes the Hermitian form
on $\H^n$  defined by $\langle x,\, y \rangle \,:=\, \overline{x}^t{\rm I}_{p,q} y$, $x,\,y\, \in\,
\H^n$, where ${\rm I}_{p,q}$ is as in \eqref{defn-I-pq-J-n}. We will follow notation as defined in \S \ref{sec-notation}.

Let  $\Psi_{{\rm SL}_n (\H)} \,:\, \NC ({\rm SL}_n (\H)) \,\longrightarrow\, \PC (n)$ be the
parametrization as in Theorem \ref{sl-H-parametrization}.
As ${\rm Sp} (p,q) \,\subset\, {\rm SL}_n (\H)$ (consequently, $\NC_{{\s\p}(p,q)} \,\subset\,
\NC_{\s\l_n(\H)}$)
we have the inclusion map $\Theta_{{\rm Sp}(p,q)} \,: \,\NC ({\rm Sp} (p,q)) \,
\longrightarrow\,\NC ( {\rm SL}_n (\H) )$. Let
$$\Psi'_{{\rm Sp}(p,q)}\,:=\, \Psi_{{\rm SL}_n (\H)} \circ \Theta_{{\rm Sp}(p,q)} \,
\colon\, \NC ({\rm Sp} (p,q))  \,\longrightarrow\, \PC (n)$$ be the composition.
Let $0\,\not=\, X \,\in\, {\s\p}(p,q)$ be a nilpotent element and $\OC_X$ be the corresponding nilpotent orbit in
${\s\p}(p,q)$. Let $\{X,\, H,\, Y\} \,\subset\, {\s\p}(p,q)$ be a $\s\l_2(\R)$-triple. 
We now apply Proposition \ref{unitary-J-basis}, Remark \ref{unitary-J-basis-rmk}(3), and follow
the notation used therein. 
Let $V := \H^n$ be the right $\H$-vector space of column vectors.
Let $\{d_1,\, \cdots,\, d_s\}$, with $d_1 \,<\, \cdots \,<\, d_s$, be a ordered finite subset of
natural numbers that arise as $\R$-dimension of non-zero irreducible 
$\text{Span}_\R \{ X,H,Y\}$-submodules of $V$.
Recall that $M(d-1)$ is defined to be the isotypical component of $V$ containing all irreducible $\text{Span}_\R \{ X,H,Y\}$-submodules of $V$ with highest weight $(d-1)$, and as in \eqref{definition-L-d-1}, we set $L(d-1)\,:= \,V_{Y,0} \cap M(d-1)$. Recall that the space $L(d_r-1)$ is a $\H$-subspace for $1\leq r \leq s$.
Let $t_{d_r} \,:=\, \dim_\H L(d_r-1)$ for 
$1\,\leq\, r \,\leq\, s$. Then 
$\d\,:=\, [d_1^{t_{d_1}},\, \cdots ,\,d_s^{t_{d_s}}]\,\in\, \PC(n)$, and  moreover,
$\Psi'_{{\rm Sp}(p,q)} (\OC_X) \,=\, \d$.

We next assign $\sgn_{\OC_X} \,\in\, \SC^{\rm even}_{\d}(p,q)$ to each $\OC_X \,\in\, \NC({\rm Sp}(p,q))$; see 
\eqref{S-d-pq-even} for the definition of $\SC^{\rm even}_{\d}(p,q)$.
For each $d \,\in\, \N_\d$ (see \eqref{Nd-Ed-Od} for the definition of $\N_\d$)  we will define a $t_d \times d$ matrix $(m^d_{ij})$ in $\Ab_d$ 
which depends only on the orbit $\OC_X$ containing $X$; see \eqref{A-d} for the definition of $\Ab_d$.
For this, recall that the form
$(\cdot,\,\cdot)_{d} \,\colon\, L(d-1) \times L(d-1) \,\longrightarrow\,
\H$ defined as in \eqref{new-form} is Hermitian or skew-Hermitian
according as $d$ is odd or even.
Denoting the signature of  $(\cdot,\, \cdot)_{\theta}$ by $(p_{\theta}, \,q_{\theta})$ 
when $\theta\,\in\, \O_\d$, we now define 
\begin{align*}
 m^\eta_{i1} & := +1 \qquad   \text{if } \  1 \leq i \leq t_{\eta}, \quad \eta \in \E_\d\,; \\
 m^\theta_{i1} &:= \begin{cases}
                   +1  & \text{ if } \,  1 \leq i \leq p_{\theta} \\
                   -1  & \text{ if } \, p_\theta < i \leq t_\theta  
                 \end{cases} ,\, \theta \in \O_\d\,;
               \end{align*}
and for $j \,>\,1$, define $(m^d_{ij})$ as in \eqref{def-sign-alternate} and
\eqref{def-sign-alternate-1}. 
Then the matrices $(m^d_{ij})$ clearly verify \eqref{yd-def2}. Set $\sgn_{\OC_X} \,:=\,
((m^{d_1}_{ij}),\, \cdots ,\, (m^{d_s}_{ij}))$.
It now follows from the last paragraph of Remark \ref{CM-correction} 
that $\sgn_{\OC_X}\,\in\, \SC^{\rm even}_\d(p,q)$. Thus we have the map
$$
\Psi_{{\rm Sp} (p,q)}\,:\,\NC({\rm Sp}(p,q)) \,\longrightarrow\, \YC^{\rm even}(p,q)\, ,\ \
\OC_X \,\longmapsto\, \big(\Psi'_{{\rm Sp} (p,q)} (\OC_X),\, \sgn_{\OC_X}  \big)\, ;
$$
where $\YC^{\rm even}(p,q)$ is as in \eqref{yd-even-Y-pq}.

\begin{theorem}\label{sp-pq-parametrization}
The above map $\Psi_{{\rm Sp}(p,q)}$ is a bijection.
\end{theorem}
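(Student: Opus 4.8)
The plan is to prove that $\Psi_{{\rm Sp}(p,q)}$ is a bijection by separately establishing that it is well defined, injective, and surjective, the whole argument resting on a Witt-type conjugacy criterion for nilpotent elements phrased through $\s\l_2(\R)$-triples. A feature special to the present quaternionic Hermitian situation that I would exploit throughout is that every $\<>$-preserving transformation automatically has reduced norm one: applying $\text{Nrd}_{{\rm M}_n(\H)}$ to $\overline{g}^t {\rm I}_{p,q} g = {\rm I}_{p,q}$ and using that this reduced norm is multiplicative, takes nonnegative real values, and satisfies $\text{Nrd}(\overline{g}^t) = \text{Nrd}(g)$ forces $\text{Nrd}(g)^2 = 1$, hence $\text{Nrd}(g) = 1$. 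Thus ${\rm U}(V, \<>) = {\rm SU}(V, \<>) = {\rm Sp}(p,q)$, and the delicate determinant-correction step present in the $\s\l_n(\R)$ and $\s\u(p,q)$ cases becomes vacuous here.

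For well-definedness and injectivity I would argue as follows. First, the partition $\d = \Psi'_{{\rm Sp}(p,q)}(\OC_X)$ is the Jordan type of $X$ and so is an orbit invariant. Next, any two $\s\l_2(\R)$-triples through a fixed $X$ are conjugate by $\ZC_{{\rm Sp}(p,q)}(X)$, and such a conjugation carries the forms $(\cdot,\,\cdot)_d$ of \eqref{new-form} isometrically onto one another, so the signatures $(p_d,\,q_d)$, and hence $\sgn_{\OC_X}$, depend only on $\OC_X$. For injectivity, suppose $\Psi_{{\rm Sp}(p,q)}(\OC_X) = \Psi_{{\rm Sp}(p,q)}(\OC_{X'}) = (\d,\,\sgn)$. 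Completing $X, X'$ to triples $\{X,H,Y\}$, $\{X',H',Y'\}$ and invoking Proposition \ref{unitary-J-basis} with the normalizations of Remark \ref{unitary-J-basis-rmk}(3), I would choose $\H$-bases $\{v^d_j\}$ of $L(d-1)$ and $\{{v'}^d_j\}$ of $L'(d-1)$ diagonalizing $(\cdot,\,\cdot)_d$. Since $(\cdot,\,\cdot)_d$ and $(\cdot,\,\cdot)'_d$ have equal rank and, for $d$ odd, equal signature $(p_d,\,q_d)$ read off from $\sgn$, there is an isometry $L(d-1) \to L'(d-1)$; extending it by $X^l v^d_j \mapsto {X'}^l {v'}^d_j$ yields a $\D$-linear $g \colon V \to V$ with $gX = X'g$ and $gH = H'g$. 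By Remark \ref{inv-form-old-new}, matching $(\cdot,\,\cdot)_d$ on every $L(d-1)$ is equivalent to preserving $\<>$ on every $M(d-1)$, and as the $M(d-1)$ are $\<>$-orthogonal (Lemma \ref{ortho-isotypical}) this gives $g \in {\rm U}(V,\<>) = {\rm Sp}(p,q)$ with ${\rm Ad}(g)X = X'$, so $\OC_X = \OC_{X'}$.

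For surjectivity, given $(\d,\,\sgn) \in \YC^{\rm even}(p,q)$ I would build a model directly. For each $d \in \N_\d$, take a free right $\H$-module $L(d-1)$ of rank $t_d$ carrying a form $(\cdot,\,\cdot)_d$ that is Hermitian of signature $(p_d,\,q_d)$ prescribed by $\sgn$ when $d$ is odd and standard skew-Hermitian when $d$ is even, set $M(d-1) := \bigoplus_{l=0}^{d-1} X^l L(d-1)$ with $X$ the raising operator and $H$ acting by weight $2l+1-d$ as in Lemma \ref{D-basis}, and reconstruct an $\epsilon$-$\sigma_c$ Hermitian form on $M(d-1)$ from $(\cdot,\,\cdot)_d$ via the recipe underlying Lemma \ref{unitary-J-basis-lemma}, so that $\{X,H,Y\}$ lands in the associated $\s\u$. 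The orthogonal direct sum over $d$ produces $(V',\,\<>')$ with a nilpotent $X$ whose associated data is exactly $(\d,\,\sgn)$. Corollary \ref{c-m9.3.1} computes the signature of $\<>'$ on each $M(d-1)$ from $(p_d,\,q_d)$, and summing these — using the defining constraints $\sum_i {\rm sgn}_+ M_{d_i} = p$, $\sum_i {\rm sgn}_- M_{d_i} = q$ of $\SC^{\rm even}_\d(p,q)$ in \eqref{S-d-pq-even} — shows $\<>'$ has signature $(p,\,q)$. Since a nondegenerate quaternionic Hermitian form is determined up to isometry by rank and signature, $(V',\,\<>') \cong (\H^n,\,\<>)$, and transporting $X$ gives a nilpotent element of $\s\p(p,q)$ mapping to $(\d,\,\sgn)$.

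The main obstacle I anticipate lies in the two classification facts underpinning both directions. The first is that nondegenerate skew-Hermitian forms over $\H$ (for $\sigma_c$) are classified by rank alone, with no signature invariant — the quaternionic analogue of the symplectic case — and this is precisely what forces the first column of every even block to be $+1$, i.e. $M_\eta \in \Ab_{\eta,1}$, and hence why the target is $\YC^{\rm even}(p,q)$ rather than $\YC(p,q)$; I would verify this carefully and track its interaction with the normalizations of Remark \ref{unitary-J-basis-rmk}(3). The second is the Witt-type extension used for injectivity: upgrading isometries of the small forms $(\cdot,\,\cdot)_d$ to a single $\<>$-isometry of $V$ that intertwines the $\s\l_2(\R)$-actions, which hinges on getting the compatibility of Remark \ref{inv-form-old-new} exactly right across all weight strings $X^l L(d-1)$. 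Once these linear-algebra facts are secured, the counting is immediate, because — in contrast to $\s\o(p,q)$ and $\s\l_n(\R)$ — there is neither an outer component group nor a determinant ambiguity to create several orbits sharing one signed diagram.
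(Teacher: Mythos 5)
Your proposal is correct, but note that the paper contains no proof of Theorem \ref{sp-pq-parametrization} at all: it is stated as standard, being the modification of \cite[Theorem 9.3.5]{CoM} dictated by the sign-convention repair of Remark \ref{CM-correction}, with the supporting normal-form machinery (Proposition \ref{unitary-J-basis}, Remark \ref{unitary-J-basis-rmk}(3), Corollary \ref{c-m9.3.1}) developed in Section \ref{sec-prelims-nilpotent}. Your argument is essentially the proof that machinery is designed to enable, and you correctly follow the Jacobson--Morozov/$\s\l_2(\R)$-module route that the paper explicitly substitutes for the Springer--Steinberg linear algebra, which does not extend directly to $\D = \H$. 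Your observation that $\text{Nrd}$ of any $\<>$-isometry is automatically $1$, so that ${\rm U}(V,\<>) = {\rm SU}(V,\<>) = {\rm Sp}(p,q)$ and no determinant-correction step arises, is correct and is implicit in the paper's definition of ${\rm Sp}(p,q)$; likewise your diagnosis that the quaternionic skew-Hermitian forms attached to even parts carry no signature invariant is exactly what forces $M_\eta \in \Ab_{\eta,1}$ and lands the image in $\YC^{\rm even}(p,q)$ with singleton fibers. Two points deserve more care than your sketch gives them. First, Remark \ref{inv-form-old-new} is stated only for $g$ centralizing a single fixed triple; for your intertwiner between two triples $\{X,H,Y\}$ and $\{X',H',Y'\}$ you should instead invoke the computation behind it: Lemma \ref{extreme} forces $\langle X^a v,\, X^b w\rangle = 0$ for $v, w \in L(d-1)$ unless $a+b = d-1$, in which case $\langle X^a v,\, X^b w\rangle = (-1)^a (v,w)_d$, and this single identity both shows your map is a $\<>$-isometry (injectivity) and prescribes how to rebuild the Hermitian form from the $(\cdot,\cdot)_d$ in your surjectivity model, where you must also verify directly that $X$ and $H$, hence $Y$ by Lemma \ref{ei}, lie in the associated $\s\u$. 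Second, well-definedness of $\sgn_{\OC_X}$ rests on the conjugacy of any two $\s\l_2(\R)$-triples through the same nilpositive element $X$ under $\ZC_{{\rm Sp}(p,q)}(X)$; this is Kostant's theorem (valid over $\R$, the conjugating element lying in the unipotent radical of the centralizer), which the paper nowhere proves---Lemma \ref{ei} gives only uniqueness of $Y$ given $X$ and $H$---so it needs an explicit citation. With these two small repairs your outline is a complete proof.
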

 
\begin{remark}
On account of the error in \cite[Lemma 9.3.1]{CoM} (mentioned in Remark \ref{CM-correction}), the
above parametrization in Theorem \ref{sp-pq-parametrization} is a modification of the one
given in \cite[Theorem 9.3.5]{CoM}.
\end{remark}

\begin{theorem}\label{sp-pq}
 Let $X \,\in\, \s \p(p,q)$ be a nilpotent element. 
 Let $(\d,\, \sgn_{\OC_X}) \,\in\, \YC^{\rm even}(p,q)$ be the signed Young diagram of the orbit
$\OC_X$ (that is, $\Psi_{{\rm Sp}(p,q)}  (\OC_X) \,=\, (\d,\, \sgn_{\OC_X})$ in the notation of
Theorem \ref{sp-pq-parametrization}). Then $\dim_\R H^2(\OC_X,\, \R) \, =\, \# \E_\d$.
\end{theorem}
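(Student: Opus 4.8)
The plan is to follow the same route as Theorems \ref{so*} and \ref{sp-n-R}, exploiting the crucial simplification that a maximal compact subgroup of the ambient group is already semisimple. The statement is clear when $X \,=\, 0$, so I would assume $X \,\neq\, 0$ and fix a $\s\l_2(\R)$-triple $\{X,\, H,\, Y\} \,\subset\, \s\p(p,q)$. A maximal compact subgroup $M$ of ${\rm Sp}(p,q)$ is isomorphic to ${\rm Sp}(p) \times {\rm Sp}(q)$, which is semisimple; hence $\m \,=\, [\m,\, \m]$. Consequently Theorem \ref{thm-nilpotent-orbit} gives
$$
H^2(\OC_X,\, \R) \,\simeq\, [\z(\k)^*]^{K/K^\circ}\, ,
$$
where $K$ is a maximal compact subgroup of $\ZC_{{\rm Sp}(p,q)}(X,H,Y)$, so everything reduces to determining $\z(\k)$ and the component group $K/K^\circ$.

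To identify the centralizer I would appeal to Lemma \ref{reductive-part-comp}(4). Since $\<>$ is Hermitian, Remark \ref{unitary-J-basis-rmk} shows that the form $(\cdot,\, \cdot)_d$ on $L(d-1)$ is Hermitian for $d \,\in\, \O_\d$ and skew-Hermitian for $d \,\in\, \E_\d$. Therefore ${\rm U}(L(\theta-1),\, (\cdot,\, \cdot)_\theta) \,\simeq\, {\rm Sp}(p_\theta,\, q_\theta)$ when $\theta \,\in\, \O_\d$, while ${\rm U}(L(\eta-1),\, (\cdot,\, \cdot)_\eta) \,\simeq\, {\rm SO}^*(2t_\eta)$ when $\eta \,\in\, \E_\d$. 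As each such quaternionic unitary group already lies in ${\rm SL}(L(d-1))$, the reduced-norm condition $\bigchi_\d(g) \,=\, 1$ in Lemma \ref{reductive-part-comp}(4) is automatic, and so
$$
\ZC_{{\rm Sp}(p,q)}(X,H,Y) \,\simeq\, \prod_{\theta \in \O_\d} {\rm Sp}(p_\theta,\, q_\theta) \times \prod_{\eta \in \E_\d} {\rm SO}^*(2t_\eta)\, .
$$

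Finally I would pass to maximal compact subgroups one factor at a time. A maximal compact subgroup of ${\rm Sp}(p_\theta,\, q_\theta)$ is ${\rm Sp}(p_\theta) \times {\rm Sp}(q_\theta)$, which is semisimple, whereas a maximal compact subgroup of ${\rm SO}^*(2t_\eta)$ is ${\rm U}(t_\eta)$, whose center is one-dimensional (cf.\ Lemma \ref{max-cpt-so*-2n}). Hence
$$
K \,\simeq\, \prod_{\theta \in \O_\d} \big({\rm Sp}(p_\theta) \times {\rm Sp}(q_\theta)\big) \times \prod_{\eta \in \E_\d} {\rm U}(t_\eta)\, ,
$$
which is connected, so $K/K^\circ$ is trivial and $[\z(\k)^*]^{K/K^\circ} \,=\, \z(\k)^*$. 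Since each $\u(t_\eta)$ contributes a one-dimensional center while the symplectic factors contribute none, $\dim_\R \z(\k) \,=\, \#\E_\d$, which gives the claim. The only step that requires care is the identification of the two families of unitary groups with ${\rm Sp}(p_\theta,\, q_\theta)$ and ${\rm SO}^*(2t_\eta)$ according to the parity of $d$; once that is in place the connectedness of $K$ makes the computation immediate, in sharp contrast to the intricate embedding analysis that was unavoidable in the earlier subsections where the ambient maximal compact subgroup fails to be semisimple.
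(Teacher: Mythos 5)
Your proposal is correct and follows essentially the same route as the paper: both reduce via Theorem \ref{thm-nilpotent-orbit} (using that the maximal compact ${\rm Sp}(p)\times{\rm Sp}(q)$ is semisimple) and identify $\ZC_{{\rm Sp}(p,q)}(X,H,Y) \,\simeq\, \prod_{\theta\in\O_\d}{\rm Sp}(p_\theta,q_\theta)\times\prod_{\eta\in\E_\d}{\rm SO}^*(2t_\eta)$ through Lemma \ref{reductive-part-comp}(4), so that $K$ is connected and $\dim_\R\z(\k)=\#\E_\d$. The only cosmetic difference is that the paper first splits $V$ orthogonally into the even and odd isotypical parts $V_\E\oplus V_\O$ before invoking that lemma, whereas you apply it globally and explicitly justify that the reduced-norm condition $\bigchi_\d(g)=1$ is automatic over $\H$ --- a point the paper leaves implicit.
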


\begin{proof}
As the theorem is evident for $X \,=\, 0$, we assume that $X \,\neq\, 0$.

Let $\{X,\, H,\, Y\} \,\subset\, \s\p(p,q)$ be a $\s\l_2(\R)$-triple. 
 Let $V\,:= \,\H^n$ be the right $\H$-vector space of column vectors.
 We consider $V$ as a $\text{Span}_\R \{ X,H,Y\}$-module via its natural $\s\p(p,q)$-module
structure. Let 
$$
V_\E : = \bigoplus_{\eta \in \E_\d} M(\eta -1)\,; \qquad V_\O : = \bigoplus_{\theta \in \O_\d} M(\theta -1).
$$
Using Lemma \ref{ortho-isotypical} we see that $V=V_\E \oplus V_\O$ is an orthogonal decomposition
of $V$ with respect to $\<>$.  
Let $\<>_ \E := \<> |_{V_\E \times V_\E  }$ and $\<>_ \O := \<> |_{V_\O \times V_\O}$. 
Let $ X _\E := X|_{V_\E}$,  $X _\O := X|_{V_\O}$, $H_\E := H|_{V_\E}$, $H_\O := H|_{V_\O}$, $Y_\E:= Y|_{V_\E}$ and  $Y_\O := Y|_{V_\O}$.
Then we have the following natural isomorphism:
\begin{equation}\label{reductive-part-odd-even-sp}
 \ZC_{{\rm Sp}(p,q)}(X,H,Y)~ \simeq~ \ZC_{{\rm SU}(V_\E, \<>_\E)}(X_\E,H_\E,Y_\E) \times \ZC_{{\rm SU}(V_\O, \<>_\O)}(X_\O,H_\O,Y_\O).
\end{equation}
Recall that the non-degenerate form $(\cdot, \,\cdot)_d$ on $L (d-1)$ is skew-Hermitian for all
$d \,\in \,\E_\d$ and Hermitian for all $d \,\in\, \O_\d$; see Remark \ref{unitary-J-basis-rmk}.
Moreover, for any $\theta \,\in\, \O_\d$, the signature of $(\cdot,\, \cdot)_\theta$ is
$(p_\theta,\, q_\theta)$. It follows from Lemma \ref{reductive-part-comp}(4) that
$$
  \ZC_{{\rm SU}(V_\E, \<>_\E)}(X_\E,H_\E,Y_\E) \simeq \prod_{\eta \in \E_\d} {\rm SO^*}(2t_\eta)
 \,\, {\rm and} \, \, 
   \ZC_{{\rm SU}(V_\O, \<>_\O)}(X_\O,H_\O,Y_\O) \simeq \prod_{\theta \in \O_\d} {\rm Sp}(p_\theta, q_\theta). 
 $$
 In particular,  $\ZC_{{\rm SU}(V_\E, \<>_\E)}(X_\E,H_\E,Y_\E) $ and $\ZC_{{\rm SU}(V_\O, \<>_\O)}(X_\O,H_\O,Y_\O)$ are both connected groups.
 Let $K_\E$ be a maximal compact subgroup of $ \ZC_{{\rm SU}(V_\E, \<>_\E)}(X_\E,H_\E,Y_\E)
\simeq \prod_{\eta \in \E_\d}{\rm SO^*}(2t_\eta)$, and let $K_\O$ be a maximal compact subgroup of $\ZC_{{\rm SU}(V_\O, \<>_\O)}(X_\O,H_\O,Y_\O) \simeq \prod_{\theta \in \O_\d} {\rm Sp}(p_\theta, q_\theta)$. 
 Let $K$ be the image of $ K_\E \times K_\O$ under the isomorphism as in \eqref{reductive-part-odd-even-sp}. It is clear that $K$  
 is a maximal compact subgroup of $\ZC_{{\rm Sp}(p,q)}(X,H,Y)$. 
 Let $M$ be a maximal compact subgroup of ${\rm Sp}(p,q)$ containing $K$. As $M \simeq {\rm Sp}(p)\times {\rm Sp}(q)$ is semisimple,
and $K$ is connected, using Theorem \ref{thm-nilpotent-orbit} we have    
$$
H^2 ({\OC}_X,\, \R) ~ \simeq ~ \z (\k),  \quad \text{for all $X \neq 0$}.
$$
Let $\k_\O$ and $ \k_\E$ be the Lie algebras of $K_\O$ and $ K_\E$ respectively. 
As $K_\O$ is semisimple, we have $\z(\k_\O) \,=\,0$. Hence, $\z(\k) \,\simeq\, \z(\k_\E) \oplus \z (\k_\O) \,=\,\z(\k_\E)$. Since $\k_\E \,\simeq\, \bigoplus_{\eta \in \E_\d} \u(t_\eta)$, we
have $\dim_\R \z(\k_\E) \,=\,  \# \E_\d$. This completes the proof.
\end{proof}
 
\section{First cohomology groups of nilpotent orbits}\label{sec-First-Cohomology-of-Nilpotent-Orbits}

In this section we apply the results of the previous section to
compute the first cohomology groups of the nilpotent orbits in the classical simple Lie algebras. 
We begin by showing that in the case of complex simple Lie algebras the first cohomology groups of all the nilpotent orbits vanish.  

\begin{theorem}\label{complex-simple-H1}
Let $\g$ be a complex simple Lie algebra. Then $H^1( \OC_X,\, \R) \,=\,0$ for every nilpotent
element $X \,\in\, \g$.
\end{theorem}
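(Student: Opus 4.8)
The plan is to exhibit $\OC_X$ as a homogeneous space of a connected complex semisimple Lie group and then apply Corollary \ref{H.1} directly. First I would fix a connected complex Lie group $G$ with Lie algebra $\g$; since $\g$ is complex simple, $G$ is a connected complex semisimple (indeed simple) Lie group, and one may take $G$ to be the adjoint group so that the adjoint action on $\g$ is faithful. Because the center of $G$ acts trivially on $\g$, the orbit $\OC_X$ of $X$ under the adjoint action is independent of this choice, and the orbit map induces a diffeomorphism $\OC_X \,\simeq\, G/\ZC_G(X)$, where $\ZC_G(X)$ is the stabilizer of $X$.

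Next I would check that the hypotheses of Corollary \ref{H.1} hold, that is, $\ZC_G(X)$ is a closed subgroup of $G$ with finitely many connected components. Closedness is clear, as $\ZC_G(X)$ is the stabilizer of $X$ for the continuous adjoint action. For the component group, observe that $\ZC_G(X)$ is a complex linear algebraic subgroup of $G$ and hence automatically has finitely many connected components. (If $X=0$ this is transparent, since $\ZC_G(X)=G$ is connected; for $X\neq 0$ one may alternatively complete $X$ to an $\s\l_2(\R)$-triple $\{X,\,H,\,Y\}$ via Theorem \ref{Jacobson-Morozov-alg} and use the decomposition $\ZC_G(X) = \ZC_G(X,H,Y)\,R_u(\ZC_G(X))$ of Lemma \ref{reductive-part}, in which the unipotent radical is connected and the reductive Levi factor $\ZC_G(X,H,Y)$ has finitely many components.)

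Finally, applying Corollary \ref{H.1} with $H \,=\, \ZC_G(X)$ gives $\dim_\R H^1(\OC_X,\,\R) \,=\, \dim_\R H^1\big(G/\ZC_G(X),\,\R\big) \,=\, 0$, which is the desired conclusion. I do not anticipate any serious obstacle: all the analytic content has already been absorbed into Corollary \ref{H.1}, whose validity ultimately reflects the fact that a maximal compact subgroup $M$ of a complex semisimple Lie group is semisimple, so that $\m \,=\, [\m,\,\m]$ and the term $\Omega^1\big(\m/([\m,\,\m]+\k)\big)$ appearing in Theorem \ref{H^1} vanishes. The sole point needing a remark is the finiteness of the component group of the centralizer, handled above.
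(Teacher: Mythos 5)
Your proposal is correct and follows essentially the same route as the paper: the paper's proof is a two-line application of Corollary \ref{H.1}, noting that maximal compact subgroups of complex simple Lie groups are simple (hence semisimple). The only difference is that you additionally verify the hypothesis that $\ZC_G(X)$ has finitely many connected components (via algebraicity, or via Lemma \ref{reductive-part}), a point the paper leaves implicit; this is a harmless and valid elaboration, not a different argument.
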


\begin{proof}
 Any maximal compact subgroup of a simple complex Lie Group is simple. The theorem follows from Corollary \ref{H.1}.
\end{proof}

\begin{theorem}\label{sl-n-H-H1}
Let $\g$ be either $ \s\l_n(\H)$ or $\s\p(p,q)$. Then $H^1( \OC_X, \,\R)\, =\,0$ for every
nilpotent element $X \,\in\, \g$.
\end{theorem}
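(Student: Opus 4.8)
The statement to prove is that $H^1(\OC_X,\,\R) = 0$ for every nilpotent element $X$ in $\g = \s\l_n(\H)$ or $\g = \s\p(p,q)$. The plan is to combine the general formula for the first cohomology of nilpotent orbits with the structural fact that the relevant ambient maximal compact subgroups are semisimple.

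First I would dispose of the trivial case $X = 0$, where $\OC_X$ is a point and the claim is immediate. So assume $X \neq 0$ and choose an $\s\l_2(\R)$-triple $\{X,\,H,\,Y\}$ in $\g$, which exists by the Jacobson-Morozov theorem (Theorem \ref{Jacobson-Morozov-alg}). Let $K$ be a maximal compact subgroup of $\ZC_{G}(X,H,Y)$ and $M$ a maximal compact subgroup of the ambient group containing $K$, where $G = \mathrm{SL}_n(\H)$ or $\mathrm{Sp}(p,q)$ as appropriate. By Theorem \ref{thm-nilpotent-orbit}, since $G$ is $\R$-simple, the first cohomology is governed by whether $\k + [\m,\,\m]$ equals $\m$:
\begin{equation*}
\dim_\R H^1(\OC_X,\,\R) = \begin{cases} 1 & \text{if } \k + [\m,\,\m] \subsetneqq \m \\ 0 & \text{if } \k + [\m,\,\m] = \m. \end{cases}
\end{equation*}

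The key observation is that for both $G = \mathrm{SL}_n(\H)$ and $G = \mathrm{Sp}(p,q)$, the maximal compact subgroup $M$ is semisimple. Indeed, for $\mathrm{SL}_n(\H)$ one has $M \simeq \mathrm{Sp}(n)$ (as used in the proof of Theorem \ref{sl-n-H}), and for $\mathrm{Sp}(p,q)$ one has $M \simeq \mathrm{Sp}(p) \times \mathrm{Sp}(q)$ (as used in the proof of Theorem \ref{sp-pq}); both of these are semisimple compact groups. Consequently $\m = [\m,\,\m]$, which forces $\k + [\m,\,\m] = \m$ regardless of how $K$ sits inside $M$. Therefore the second branch of the formula applies and $\dim_\R H^1(\OC_X,\,\R) = 0$. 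Alternatively, one could invoke Corollary \ref{H1} directly: since $M$ is semisimple, $\m = [\m,\,\m]$, and Theorem \ref{H^1} together with the homotopy equivalence \eqref{homotopy} immediately gives $H^1 = 0$.

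I do not anticipate any serious obstacle here, as the argument reduces entirely to the semisimplicity of $M$ in the two cases, which is standard and already implicitly invoked in the second-cohomology computations (Theorems \ref{sl-n-H} and \ref{sp-pq}). The only point requiring a word of care is the reduction to the $\s\l_2(\R)$-triple picture via Theorem \ref{thm-nilpotent-orbit}, but that theorem is stated for $\R$-simple algebraic groups, and both $\mathrm{SL}_n(\H)$ and $\mathrm{Sp}(p,q)$ arise as the real points of such groups (as already used throughout Section \ref{sec-Second-Cohomology-of-Nilpotent-Orbits}). Thus the cleanest route is simply to cite Theorem \ref{thm-nilpotent-orbit} for the dimension formula and then observe that semisimplicity of $M$ kills the nonzero branch.
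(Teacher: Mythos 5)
Your proof is correct and follows essentially the same route as the paper, which likewise observes that any maximal compact subgroup of ${\rm SL}_n(\H)$ or ${\rm Sp}(p,q)$ is semisimple and then invokes Theorem \ref{thm-nilpotent-orbit}. Your version is if anything slightly more careful: the paper asserts the maximal compact is \emph{simple}, whereas for ${\rm Sp}(p,q)$ with $p,q>0$ it is ${\rm Sp}(p)\times{\rm Sp}(q)$, only semisimple --- but semisimplicity (i.e.\ $\m=[\m,\,\m]$) is all the argument needs, exactly as you say.
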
 

\begin{proof}
  Let $G$ be ${\rm SL}_n (\H)$ or ${\rm Sp} (p,q)$ according as $\g$ is $ \s\l_n(\H)$ or $\s\p(p,q)$.
  Then any maximal compact subgroup of $G$ is simple.
  The theorem now follows from Theorem \ref{thm-nilpotent-orbit}. 
\end{proof}

\begin{theorem}\label{sl-n-R-H1}
 Let $X \,\in\, \s \l_n(\R)$ be a non-zero nilpotent element. Then 
 $$ \dim_\R H^1(\OC_X, \,\R)  \,=\,
 \begin{cases}
  1 & \text{ if } n=2\\
  0 & \text{ if } n \geq 3.
 \end{cases}
$$
\end{theorem}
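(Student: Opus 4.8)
The plan is to apply Theorem \ref{thm-nilpotent-orbit} directly, reducing the computation of $\dim_\R H^1(\OC_X,\,\R)$ to a comparison between the subalgebras $\k + [\m,\,\m]$ and $\m$, where $\m$ is the Lie algebra of a maximal compact subgroup $M$ of ${\rm SL}_n(\R)$ and $\k$ is the Lie algebra of a maximal compact subgroup $K$ of $\ZC_{{\rm SL}_n(\R)}(X,H,Y)$ for a $\s\l_2(\R)$-triple $\{X,\,H,\,Y\}$ containing $X$. Since $X \,\neq\, 0$, such a triple exists by the Jacobson-Morozov Theorem (Theorem \ref{Jacobson-Morozov-alg}). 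Recall that a maximal compact subgroup of ${\rm SL}_n(\R)$ is $M \,\simeq\, {\rm SO}_n$, so $\m \,\simeq\, \s\o_n$.

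First I would dispose of the two cases according to the size of $n$. For $n\,=\,2$, we have $M \,\simeq\, {\rm SO}_2$, which is abelian, so $[\m,\,\m] \,=\, 0$. The centralizer structure forces $\k$ to be a proper subalgebra (indeed, the description in Lemma \ref{reductive-part-comp}(2) identifies $\ZC_{{\rm SL}_2(\R)}(X,H,Y)$ with a group whose maximal compact subgroup is too small to fill out $\m$; concretely, for $n\,=\,2$ the nilpotent orbit is the regular one with partition $[2^1]$, and the relevant centralizer is essentially finite, giving $\k \,=\, 0$). Thus $\k + [\m,\,\m] \,=\, 0 \,\subsetneqq\, \m$, and Theorem \ref{thm-nilpotent-orbit} yields $\dim_\R H^1(\OC_X,\,\R) \,=\, 1$. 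For $n\,\geq\, 3$, the group $M \,\simeq\, {\rm SO}_n$ is semisimple, so $\m \,=\, [\m,\,\m]$; hence trivially $\k + [\m,\,\m] \,=\, \m$, and Theorem \ref{thm-nilpotent-orbit} gives $\dim_\R H^1(\OC_X,\,\R) \,=\, 0$.

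The main conceptual point, and the only place requiring any care, is verifying the $n\,=\,2$ case: one must confirm that $\k \,\neq\, \m$ when $[\m,\,\m]\,=\,0$, equivalently that $\k + [\m,\,\m] \,\subsetneqq\, \m$. This follows because $\s\l_2(\R)$ is simple of real rank one with $\dim_\R \z(\m)\,=\,1$, so the alternative $\k + [\m,\,\m]\,=\,\m$ would force $\k$ to contain the one-dimensional $\m \,=\, \z(\m)$; but the centralizer of a regular $\s\l_2(\R)$-triple in ${\rm SL}_2(\R)$ has no nontrivial compact part meeting $\m$, so $\k$ cannot contain $\m$. Indeed this is precisely the dichotomy isolated in the statement of Theorem \ref{H.21}, which underlies Theorem \ref{thm-nilpotent-orbit}, and the general bound $\dim_\R H^1(\OC_X,\,\R) \,\leq\, 1$ there already guarantees the answer is either $0$ or $1$; the whole task is to decide which. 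Once the structure of $M$ is settled, the conclusion is immediate from Theorem \ref{thm-nilpotent-orbit}, with essentially all the work absorbed into the semisimplicity of ${\rm SO}_n$ for $n\,\geq\, 3$ versus the abelianness of ${\rm SO}_2$.
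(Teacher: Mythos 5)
Your proposal is correct and follows essentially the same route as the paper's proof: both apply Theorem \ref{thm-nilpotent-orbit}, dispose of $n\,\geq\,3$ by the semisimplicity of ${\rm SO}_n$ (so $\m\,=\,[\m,\,\m]$), and handle $n\,=\,2$ by noting the orbit has partition $[2^1]$ and invoking the centralizer description of Lemma \ref{reductive-part-comp}(2) (the isomorphism \eqref{sl-n-R-reductive-part} in the paper) to conclude $\k\,=\,0$, whence $\k+[\m,\,\m]\,\subsetneqq\,\m$. Your third paragraph re-derives the $n\,=\,2$ dichotomy somewhat redundantly, but the substance matches the paper's argument.
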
 
              
\begin{proof}
We follow the notation of the proof of Theorem \ref{sl-n-R}. When $n \,\geq\, 3$, it is clear
that $\m \,=\, [\m,\,\m]$. 
When $n\,=\,2$, we have $\m \,\simeq\, \s\o_2$ and $\Psi_{{\rm SL}_n(\R)}(\OC_X)\,=\, [2^1]$. 
Thus, using \eqref{sl-n-R-reductive-part} we see that $\k\,=\,0$. Now the
theorem follows from Theorem \ref{thm-nilpotent-orbit}.
\end{proof}

\begin{theorem}\label{su-pq-H1}
 Let $X \,\in\, \s \u(p,q)$ be a nilpotent element. 
 Let $(\d,\, \sgn_{\OC_X}) \,\in\, \YC(p,q)$ be the signed Young diagram of the orbit $\OC_X$ (that
is, $\Psi_{{\rm SU}(p,q)}(\OC_X) \,=\, (\d,\, \sgn_{\OC_X})$ in the notation of Theorem
\ref{su-pq-parametrization}). Let $l\,:\,= \# \{d\,\in\, \N_\d\,\mid\, p_d \neq 0\} + \#
\{d\in \N_\d\,\mid\, q_d \,\neq\, 0\}$. Then the following hold:
 \begin{enumerate}
 \item If $\N_\d \,=\, \E_\d$, then  $\dim_\R H^1(\OC_X,\, \R) \, = \, 1.$
 \item If $l\,=\,1$ and $\N_\d \,=\, \O_\d$, then  $\dim_\R H^1(\OC_X,\, \R) \, = \, 1.$
 \item If $l \,\geq\, 2$ and $\#\O_\d \,\geq \,1$, then  $\dim_\R H^1(\OC_X,\, \R)  \,= \, 0.$
\end{enumerate}
\end{theorem}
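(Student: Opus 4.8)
The plan is to derive everything from Theorem \ref{thm-nilpotent-orbit}, which for the simple real group ${\rm SU}(p,q)$ reduces the computation of $H^1(\OC_X,\,\R)$ to deciding whether $\k+[\m,\,\m]$ is all of $\m$ or a proper subspace. The case $X=0$ being trivial, I would fix an $\s\l_2(\R)$-triple $\{X,\,H,\,Y\}$ and take $K$ to be the maximal compact subgroup of $\ZC_{{\rm SU}(p,q)}(X,H,Y)$ furnished by Lemma \ref{max-cpt-su-pq}, together with a maximal compact subgroup $M\supset K$ of ${\rm SU}(p,q)$. Since $\s\u(p,q)$ is of Hermitian type, $\m\,\simeq\,\s(\u(p)\oplus\u(q))$ satisfies $\dim_\R\z(\m)=1$, so $\m=\z(\m)\oplus[\m,\,\m]$. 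Theorem \ref{thm-nilpotent-orbit} then gives $\dim_\R H^1(\OC_X,\,\R)=1$ when $\k+[\m,\,\m]\subsetneqq\m$ and $=0$ when $\k+[\m,\,\m]=\m$.

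The key reduction I would make is to convert this criterion, phrased in terms of all of $\k$, into a statement about the one-dimensional center $\z(\m)$. As in the proof of Theorem \ref{su-pq}, $K$ is isomorphic to $S\big(\prod_{d\in\N_\d}({\rm U}(p_d)\times{\rm U}(q_d))_\Delta^d\big)$, so $\k$ is reductive and $\k=\z(\k)\oplus[\k,\,\k]$ with $[\k,\,\k]\subset[\m,\,\m]$ automatically. Hence $\k\subset[\m,\,\m]$ if and only if $\z(\k)\subset[\m,\,\m]$. Because $\dim_\R\z(\m)=1$, this yields the clean dichotomy: if $\z(\k)\subset[\m,\,\m]$ then $\k+[\m,\,\m]=[\m,\,\m]\subsetneqq\m$ and $\dim_\R H^1(\OC_X,\,\R)=1$, whereas if $\z(\k)\not\subset[\m,\,\m]$ then $\k+[\m,\,\m]=\m$ and $\dim_\R H^1(\OC_X,\,\R)=0$.

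It then remains to place each of the three cases on the correct side of this dichotomy, and here I would reuse the structural input already established in the proof of Theorem \ref{su-pq}. In case (1), where $\N_\d=\E_\d$, that proof records $\k\subset[\m,\,\m]$, so a fortiori $\z(\k)\subset[\m,\,\m]$ and $\dim_\R H^1=1$. In case (2), where $l=1$ and $\N_\d=\O_\d$, the same proof shows $\d=[d^{t_d}]$ with $K\simeq S\big({\rm U}(t_d)_\Delta^d\big)$ having trivial center $\z(\k)=0$; this is vacuously contained in $[\m,\,\m]$, giving $\dim_\R H^1=1$. In case (3), where $l\geq2$ and $\#\O_\d\geq1$, the computation in Theorem \ref{su-pq}(3) gives $\dim_\R(\z(\k)\cap[\m,\,\m])=l-2$ while $\dim_\R\z(\k)=l-1$, so $\z(\k)\not\subset[\m,\,\m]$ and $\dim_\R H^1=0$.

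I do not anticipate a genuine obstacle, since the hard geometric work—the explicit embedding of $K$ in $M$ from Section \ref{sec-su-pq}—is already carried out for the second cohomology. The proof is essentially the observation that, for reductive $\k$, the condition governing $H^1$ collapses to $\z(\k)\subset[\m,\,\m]$, after which the three cases are read off from the dimension counts in Theorem \ref{su-pq}. The only point deserving a line of care is the equivalence $\k\subset[\m,\,\m]\iff\z(\k)\subset[\m,\,\m]$, which rests on $\k=\z(\k)\oplus[\k,\,\k]$ and is immediate from the product-of-unitary-groups description of $K$.
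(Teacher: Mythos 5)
Your proposal is correct and follows essentially the same route as the paper: both argue via Theorem \ref{thm-nilpotent-orbit}'s dichotomy on whether $\k+[\m,\,\m]$ equals $\m$ or is proper, and settle the three cases using exactly the structural facts about $K$, $\z(\k)$ and $[\m,\,\m]$ extracted from Proposition \ref{max-cpt-su-pq-wrt-onb} in the proof of Theorem \ref{su-pq}. Your explicit reduction $\k+[\m,\,\m]=\z(\k)+[\m,\,\m]$ (so that, since $\dim_\R \z(\m)=1$, everything collapses to whether $\z(\k)\subset[\m,\,\m]$) is just a slightly more spelled-out version of the observations the paper records.
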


\begin{proof}
 We follow the notation of the proof of Theorem \ref{su-pq}. We now appeal to Proposition
\ref{max-cpt-su-pq-wrt-onb} to make the following observations:
 \begin{enumerate}
  \item If $\N_\d \,=\, \E_\d$, then $\k\, \subset\, [\m,\,\m] $. Hence, $\k+ [\m,\,\m]\, \subsetneqq \,\m$.  
  \item If $\d\,=\, [d^{t_d}]$, then $\z(\k)\,=\, 0$.  Hence, $\k+ [\m,\,\m] \,\subsetneqq\, \m$.  
  \item If $\# \O_\d\,\geq\, 1$ and $l\,\geq\, 2$, then $\k + [\m,\,\m] \,=\, \m$.
 \end{enumerate} 
As $\dim_\R \z(\m)=1$, the theorem follows from Theorem \ref{thm-nilpotent-orbit}.
\end{proof}

We next describe the first cohomology groups of nilpotent orbits in the simple Lie algebra $\s\o(p,q)$ when $p >0, \, q >0$.
Recall that in view of \cite[Theorem 6.105, p. 421]{K} and isomorphisms
(iv), (v), (vi), (ix), (x) in \cite[Chapter X, \S 6, pp. 519-520]{He},
to ensure simplicity of $\s\o(p,q)$, we further assume that $(p,q) \not\in \{(1,1), (2,2)\}$; see
\S \ref{sec-so-pq} also.

\begin{theorem}\label{so-pq-H1}
Consider $\s \o(p,q)$, and assume that $p\,\neq\, 2,\, q \,\neq\, 2$ and $(p,q) \neq (1,1)$. Then $H^1(\OC_X,\,\R) \,=\,0$ for all nilpotent elements $X$ in $\s \o(p,q)$.
\end{theorem}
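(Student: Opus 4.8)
The plan is to read off the answer directly from the $H^1$ formula in Theorem \ref{thm-nilpotent-orbit}. The case $X = 0$ is trivial, since $\OC_0$ is a point, so I would assume $X \neq 0$ and fix an $\s\l_2(\R)$-triple $\{X,H,Y\}$ in $\s\o(p,q)$. Let $M$ be a maximal compact subgroup of ${\rm SO}(p,q)^\circ$ and let $K \subset \ZC_{{\rm SO}(p,q)^\circ}(X,H,Y)$ be a maximal compact subgroup contained in $M$, with Lie algebras $\m$ and $\k$. By Theorem \ref{thm-nilpotent-orbit}, $\dim_\R H^1(\OC_X,\R)$ equals $1$ or $0$ according as $\k + [\m,\,\m] \subsetneqq \m$ or $\k + [\m,\,\m] = \m$. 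Thus the entire task reduces to showing $[\m,\,\m] = \m$ under the stated hypotheses, which forces the second alternative for every choice of $\k$.

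The key step is to identify $\m$. A maximal compact subgroup of ${\rm SO}(p,q)^\circ$ is the identity component of ${\rm S}({\rm O}(p) \times {\rm O}(q))$, so $\m \cong \s\o(p) \oplus \s\o(q)$. Here I use the classical fact that $\s\o(r)$ is semisimple for $r \geq 3$, is zero for $r = 1$, and is the one-dimensional abelian algebra precisely when $r = 2$. Since the hypotheses give $p \neq 2$ and $q \neq 2$ with $p,\, q > 0$, each summand $\s\o(p)$ and $\s\o(q)$ is semisimple, and therefore $\m$ is semisimple. Consequently $[\m,\,\m] = \m$, whence $\k + [\m,\,\m] = \m$, and Theorem \ref{thm-nilpotent-orbit} yields $\dim_\R H^1(\OC_X,\R) = 0$.

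Two small points need checking in order to invoke Theorem \ref{thm-nilpotent-orbit} legitimately. First, that theorem requires $\s\o(p,q)$ to be $\R$-simple: this holds because $(p,q) \neq (1,1)$ by hypothesis, while $(p,q) \neq (2,2)$ is automatic from $p \neq 2$, and these exclusions together with $p,\,q > 0$ are exactly the conditions recorded before Theorem \ref{so-pq}. Second, the identification $\m \cong \s\o(p) \oplus \s\o(q)$ and the semisimplicity of $\s\o(r)$ for $r \geq 3$ are standard. I do not expect any genuine obstacle in this argument: once $[\m,\,\m] = \m$ is established the conclusion is immediate, and the statement is really just the $\s\o(p,q)$-instance of the general principle behind Corollary \ref{H1}. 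The only mild care required is the bookkeeping that excludes the borderline value $r = 2$, which is precisely why $p \neq 2$ and $q \neq 2$ appear in the hypothesis.
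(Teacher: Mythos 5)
Your proposal is correct and follows essentially the same route as the paper: both identify $\m \simeq \s\o(p)\oplus\s\o(q)$, observe that $p\neq 2$, $q\neq 2$ forces $\m = [\m,\,\m]$, and then read off the vanishing of $H^1$ from Theorem \ref{thm-nilpotent-orbit}. Your write-up merely spells out the semisimplicity bookkeeping (including the trivial $\s\o(1)=0$ case and the $\R$-simplicity hypothesis) that the paper leaves implicit.
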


\begin{proof}
 Let $\m,\, \k$ be as in the proof of Theorem \ref{so-pq}. Since $p\,\neq\, 2,\, q \,\neq\, 2$,
we have $\m\,=\, [\m,\,\m]$. Using Theorem 
 \ref{thm-nilpotent-orbit} we conclude that $H^1(\OC_X,\,\R)\,=\,0$.
\end{proof}

We will now consider the remaining cases of $\s \o(p,q)$ which are not covered in Theorem
\ref{so-pq-H1}; they are: $p > 2, q=2$; $p=2, q > 2$ and $(p,q)\in \{ (2,1),(1,2)\}$.
In Section \ref{sec-so-pq} it was observed that when $p>2, q=2$, the non-zero nilpotent orbits
correspond to only four possible signed Young diagrams as given in \eqref{a.1}, \eqref{a.2},
\eqref{a.3}, \eqref{a.4}, and similarly, when $p=2, q>2$, the non-zero nilpotent orbits
correspond to only four possible
 signed Young diagrams as given in \eqref{b.1}, \eqref{b.2}, \eqref{b.3}, \eqref{b.4}.

\begin{theorem}\label{so-pq-H1-2}
Let $\Psi_{{\rm SO} (p,q)^\circ}$ be the parametrization in Theorem \ref{so-pq-parametrization}. Let
$\OC_X \,\in\,\NC({\rm SO}(p,q)^\circ)$. Then the following hold:
  \begin{enumerate}
   \item  Suppose  $(p,q)\,\in\, \{(2,1),\,(1,2)\}$, then $ H^1 ( {\OC}_X,\, \R) \,=\,1$.
   
 \item  Assume that $p\,>\, 2 $ and $q\,=\,2$. \\
   (i)   If $\Psi_{{\rm SO}(p,2)^\circ}(\OC_X)$ is as in either \eqref{a.1} or
\eqref{a.2} or \eqref{a.3}, then $\dim_\R H^1(\OC_X,\, \R) \,=\, 1$.\\
   (ii)  If $\Psi_{{\rm SO}(p,2)^\circ}(\OC_X)$ is as in \eqref{a.4}, then $ H^1(\OC_X,\, \R)\,=\,0$. 
 \item  Assume that $p\,=\, 2 $ and $q\,>\,2$.\\
   (i)   If $\Psi_{{\rm SO}(2,q)^\circ}(\OC_X)$ is as in \eqref{b.1} or \eqref{b.2} or \eqref{b.3},
then  $\dim_\R H^1(\OC_X, \,\R) \,=\, 1$. \\
   (ii)  If $\Psi_{{\rm SO}(2,q)^\circ}(\OC_X)$ is as in \eqref{b.4}, then $H^1(\OC_X, \,\R)\,=\,0$.
  \end{enumerate}
   \end{theorem}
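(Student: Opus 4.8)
The plan is to reduce everything to the criterion of Theorem \ref{thm-nilpotent-orbit}, which says that for a nilpotent orbit $\OC_X$ in a simple real Lie algebra, $\dim_\R H^1(\OC_X,\,\R)$ equals $1$ if $\k + [\m,\,\m] \subsetneqq \m$ and $0$ if $\k + [\m,\,\m] = \m$, where $\k$ is the Lie algebra of a maximal compact subgroup of the centralizer of an $\s\l_2(\R)$-triple through $X$ and $\m$ is the Lie algebra of an ambient maximal compact subgroup. Since $\dim_\R \z(\m) = 1$ in each of these cases (the maximal compact subgroup of ${\rm SO}(p,2)^\circ$ or ${\rm SO}(2,q)^\circ$ is locally $S({\rm O}(p)\times {\rm O}(2))$, with one-dimensional center coming from the $\s\o_2$ factor), the question is entirely whether $\z(\m) \subset \k + [\m,\,\m]$, i.e. whether the central direction of $\m$ is already captured by $\k$ together with $[\m,\,\m]$. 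Thus the entire theorem is a matter of examining how $\k$ sits inside $\m$, and all the needed embedding data has already been worked out in the proofs of Theorem \ref{so-pq-2}.

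For part (1), when $(p,q) \in \{(2,1),(1,2)\}$ the relevant $\m$ satisfies $[\m,\,\m] = 0$ (as noted in the proof of Theorem \ref{so-pq-2}(1)), while $\m \simeq \s\o_2 \neq 0$; hence $\k + [\m,\,\m] = \k \subseteq \m$ is a proper subspace regardless of $\k$, giving $\dim_\R H^1 = 1$. For part (2), I would invoke the explicit descriptions of $\Lambda_\HC(K_\O)$ computed in the proof of Theorem \ref{so-pq-2}(2): equations \eqref{max-cpt-embdd-so-B-1}, \eqref{max-cpt-embdd-so-B-2}, \eqref{max-cpt-embdd-so-B-3}, and \eqref{description-of-K}. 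In the cases \eqref{a.1}, \eqref{a.2}, \eqref{a.3}, the image of $K_\O$ under $\Db_p \oplus \Db_q$ acts on the $\C$-line $\s\o_2 = \z(\m)$ only through determinant-type characters, and one checks directly from those matrix forms that the $\s\o_2$ direction is never contained in $\k + [\m,\,\m]$; here $\k$ contributes nothing to the central $\s\o_2$ direction, so $\k + [\m,\,\m] \subsetneqq \m$ and $\dim_\R H^1 = 1$. By contrast, in case \eqref{a.4} the description \eqref{description-of-K} shows that $\widetilde{K} \simeq {\rm SO}_{p-2}\times {\rm SO}_2$, whose ${\rm SO}_2$ factor maps precisely onto the central $\s\o_2$ of $\m$; thus $\z(\m) \subset \k$, giving $\k + [\m,\,\m] = \m$ and $H^1 = 0$. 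Part (3) is entirely parallel to part (2) via the ${\rm SO}(p,q) \cong {\rm SO}(q,p)$ symmetry, using the signed Young diagrams \eqref{b.1}--\eqref{b.4} in place of \eqref{a.1}--\eqref{a.4}.

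The main point to handle carefully is the bookkeeping of whether the single central direction of $\m$ lies inside $\k$ (or more generally inside $\k + [\m,\,\m]$), which is exactly the distinction between the cases \eqref{a.4}/\eqref{b.4} and the others. This is not a computational obstacle so much as a matter of reading off the image of $K_\O$ (resp. $\widetilde K$) from the already-established propositions \ref{max-cpt-so-pq-0-wrt-onb} and the matrix descriptions in the proof of Theorem \ref{so-pq-2}, and observing that the dimension computation for $H^1$ is governed by the same embedding data used there to compute $H^2$. In fact, the cleanest presentation is to note that in every case except \eqref{a.4} and \eqref{b.4}, the relevant maximal compact subgroup $K_\O$ meets the centralizer in a form involving disconnected orthogonal blocks with no ${\rm SO}_2$ summand surjecting onto $\z(\m)$, whereas \eqref{a.4} and \eqref{b.4} produce a genuine ${\rm SO}_2$ factor mapping isomorphically onto $\z(\m)$. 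Having recorded this dichotomy, the theorem follows immediately from Theorem \ref{thm-nilpotent-orbit}, and the proof can be kept short by citing the embeddings established in the proof of Theorem \ref{so-pq-2} rather than recomputing them.
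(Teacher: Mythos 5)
Your handling of parts (2) and (3) is exactly the paper's argument: reduce to Theorem \ref{thm-nilpotent-orbit} and read off the embeddings already computed in the proof of Theorem \ref{so-pq-2} — in the cases \eqref{a.1}, \eqref{a.2}, \eqref{a.3} the descriptions \eqref{max-cpt-embdd-so-B-1}, \eqref{max-cpt-embdd-so-B-2}, \eqref{max-cpt-embdd-so-B-3} show $\k_\O \subset [\m,\,\m]$, so $\k_\O + [\m,\,\m] = [\m,\,\m] \subsetneqq \m$ and $\dim_\R H^1 = 1$, while in case \eqref{a.4} the description \eqref{description-of-K} gives $\widetilde{\k} + [\m,\,\m] = \m$, and part (3) is parallel. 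One imprecision there: it is \emph{not} true that $\z(\m) \subset \widetilde{\k}$ in case \eqref{a.4}; by \eqref{description-of-K} the ${\rm SO}_2$ factor of $\widetilde{K}$ sits diagonally as $B \oplus B$ across the ${\rm O}(p)$- and ${\rm O}(2)$-blocks, so $\widetilde{\k}$ meets $\z(\m)$ only in $0$. What holds, and what you actually need (your own ``more generally inside $\k + [\m,\,\m]$'' phrasing covers this), is $\z(\m) \subset \widetilde{\k} + [\m,\,\m]$, obtained by subtracting the $\s\o_p$-component of the diagonal element, which lies in $[\m,\,\m]$.

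The genuine gap is in part (1). From $[\m,\,\m] = 0$ you conclude that $\k + [\m,\,\m] = \k$ is ``a proper subspace regardless of $\k$,'' but that inference is invalid: $\k$ is a subalgebra of $\m \simeq \s\o_2$ and nothing a priori rules out $\k = \m$, in which case Theorem \ref{thm-nilpotent-orbit} would give $H^1 = 0$ instead. The possibility is not vacuous — the cases \eqref{a.4} and \eqref{b.4} show precisely that a circle inside the maximal compact of the centralizer can account for the central direction of $\m$, which is why the dichotomy you describe exists at all. So you must actually verify $\k \subsetneqq \m$. The paper does this concretely: here $\Psi'_{{\rm SO}(p,q)^\circ}(\OC_X) = [3^1]$, so $\dim_\R L(2) = 1$, and Lemma \ref{reductive-part-comp}(4) identifies the reductive centralizer with a subgroup of ${\rm O}_1$, forcing $\k = 0$ (equivalently, $X$ is a principal nilpotent in $\s\o(2,1) \simeq \s\l_2(\R)$, whose triple has trivial centralizer Lie algebra). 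With $\k = 0 \subsetneqq \m$ the conclusion $\dim_\R H^1(\OC_X,\,\R) = 1$ follows; this one-line computation is what your argument is missing.
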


\begin{proof}
As $X \,\neq\, 0$, it lies in a $\s\l_2(\R)$-triple, say $\{ X,\,H,\,Y\}$, in $\s\o(p,q)$. 
 
 {\it Proof of (1): } Let $K'$ be a maximal compact subgroup of $\ZC_{{\rm SO}(p,q)^\circ}(X,H,Y)$. 
 Let $\k'$ be the Lie algebra of $K'$ and $\m$ the Lie algebra of a maximal compact subgroup of ${\rm SO}(p,q)^\circ$ which contains $K'$. 
 When $(p,q) \in \{ (2,1),(1,2)\}$, we have $\dim_\R \m =1$ and $\Psi'_{{\rm SO}(p,q)^\circ}(\OC_X) = [3^1]$. In particular, $\dim_\R L(3-1) =1$. Using Lemma \ref{reductive-part-comp} (4) we have $\k'=0$. Hence, using Theorem \ref{thm-nilpotent-orbit}, we have $\dim_\R H^1(\OC_X, \,\R)=1$.
  
 {\it Proof of (2):} 
 We first prove (2)(i).
Let $\Psi_{{\rm SO}(p,2)^\circ}(\OC_X)$ be as in \eqref{a.1}, \eqref{a.2} or \eqref{a.3}. Let $K$ and $M$ be the maximal compact subgroups of 
$\ZC_{{\rm SO}(p,2)}(X,H,Y)$ and ${\rm SO}(p,2)$ respectively, as defined in the first
paragraph of the proof of Theorem \ref{so-pq-2}(2).
Recall that $K_\O \,:=\, K \cap M^\circ \,=\, K \cap {\rm SO}(p,2)^\circ$ is a maximal compact subgroup of  $\ZC_{{\rm SO}(p,2)^\circ}(X,H,Y)$. 
Let $\k_\O $ and $\m$ be the Lie algebras of $K_\O$ and $M^\circ$ respectively.
Using \eqref{max-cpt-embdd-so-B-1}, \eqref{max-cpt-embdd-so-B-2}, \eqref{max-cpt-embdd-so-B-3} for
the signed Young diagrams \eqref{a.1}, \eqref{a.2}, \eqref{a.3} respectively, 
we observe that in all the cases
$\k_\O \,\subset\, [\m,\,\m]$. Now (2)(i) follows from Theorem \ref{thm-nilpotent-orbit}.
 
 We next prove (2)(ii).
 Let $\widetilde\k $ and $ \m$ be as in the proof of (2)(iv) of Theorem \ref{so-pq-2}. Then using
\eqref{description-of-K}, we have $\widetilde\k+[\m,\,\m]\,=\,\m$. 
The statement (2)(ii) now follows using Theorem \ref{thm-nilpotent-orbit}.
 
 The proofs of (3)(i) and (3)(ii) are similar to those of (2)(i) and (2)(ii) respectively.
\end{proof}

As we deal with nilpotent orbits in simple Lie algebras, to ensure simplicity of $\s\o^*(2n)$, in our next result we
further assume that $n\geq 3$; see \S \ref{sec-so*} also.

\begin{theorem}\label{so*-H1}
 Let $X \,\in\, \s \o^*(2n)$ be a nilpotent element when $n\geq 3$.
 Let $(\d, \,\sgn_{\OC_X}) \,\in \,\YC^{\rm odd}(n)$ be the signed Young diagram of the orbit
$\OC_X$ (that is, $\Psi_{{\rm SO}^*(2n)} (\OC_X) \,= \,(\d,\, \sgn_{\OC_X})$ in the notation of
Theorem \ref{so*-parametrization}). Then
 $$
\dim_\R H^1(\OC_X,\, \R)=\begin{cases}
                               1  & \text{ if } \, \# \O_\d =0 \\
                               0  & \text{ if } \, \# \O_\d \geq 1\,.
                             \end{cases}
$$
\end{theorem}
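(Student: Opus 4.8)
The plan is to read off the answer directly from the structure of the maximal compact subgroup $K \subset \ZC_{{\rm SO}^*(2n)}(X,H,Y)$ that was already analyzed in the proof of Theorem \ref{so*}, now feeding it into the first-cohomology clause of Theorem \ref{thm-nilpotent-orbit} rather than the second. Since the orbit of $X=0$ is a point, I would assume $X \neq 0$ and fix a $\s\l_2(\R)$-triple $\{X,H,Y\}$ in $\s\o^*(2n)$. Let $K$ be the maximal compact subgroup of $\ZC_{{\rm SO}^*(2n)}(X,H,Y)$ as in Lemma \ref{max-cpt-so*-reductive-part}, and let $K_\HC$ be the maximal compact subgroup of ${\rm SO}^*(2n)$ of Lemma \ref{max-cpt-so*-2n}(1), so that $K \subset K_\HC$; write $\k$, $\k_\HC$ for their Lie algebras. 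As ${\rm SO}^*(2n)$ is connected, $K_\HC$ is a valid choice of maximal compact subgroup containing $K$, so by Theorem \ref{thm-nilpotent-orbit} with $\m = \k_\HC$ it suffices to decide whether $\k + [\k_\HC, \k_\HC]$ equals $\k_\HC$ or is a proper subspace: in the proper case $\dim_\R H^1(\OC_X, \R) = 1$, and in the equality case it is $0$.

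From the proof of Theorem \ref{so*} I would recall the two facts that do all the work: first, $\dim_\R \z(\k_\HC) = 1$ (the maximal compact subgroup of the simple group ${\rm SO}^*(2n)$ has one-dimensional center); and second, via Proposition \ref{max-cpt-so*-wrt-basis}, $\z(\k) \subset [\k_\HC, \k_\HC]$ precisely when $\#\O_\d = 0$, while $\z(\k) \not\subset [\k_\HC, \k_\HC]$ when $\#\O_\d \geq 1$. The bridge from these facts to the condition on $\k + [\k_\HC, \k_\HC]$ is the reductivity of $\k$: since $K$ is compact and connected, $\k = [\k,\k] \oplus \z(\k)$ with $[\k,\k] \subset [\k_\HC,\k_\HC]$, so that $\k \subset [\k_\HC,\k_\HC]$ if and only if $\z(\k) \subset [\k_\HC,\k_\HC]$.

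With this reduction the two cases are immediate. When $\#\O_\d = 0$ I obtain $\k \subset [\k_\HC,\k_\HC]$, hence $\k + [\k_\HC,\k_\HC] = [\k_\HC,\k_\HC]$, which is a proper subspace of $\k_\HC$ because $\dim_\R \z(\k_\HC) = 1$; Theorem \ref{thm-nilpotent-orbit} then yields $\dim_\R H^1(\OC_X, \R) = 1$. When $\#\O_\d \geq 1$ I would choose $Z \in \z(\k) \setminus [\k_\HC,\k_\HC]$; since $\k_\HC = [\k_\HC,\k_\HC] \oplus \z(\k_\HC)$ with $\z(\k_\HC)$ one-dimensional, the image of $Z$ under the projection onto $\z(\k_\HC)$ is nonzero, so $\z(\k_\HC) \subset \k + [\k_\HC,\k_\HC]$ and therefore $\k + [\k_\HC,\k_\HC] = \k_\HC$, giving $\dim_\R H^1(\OC_X, \R) = 0$. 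One checks that the formula is also consistent for $X=0$, where $\d = [1^n]$ forces $\#\O_\d \geq 1$ and the point orbit has vanishing $H^1$.

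Because the delicate description of how $K$ embeds inside $K_\HC$ has already been carried out for the second-cohomology computation, I do not expect a genuine obstacle here. The only point requiring care is the elementary but essential observation that $\dim_\R \z(\k_\HC) = 1$, which forces the cokernel of $[\k_\HC,\k_\HC]$ in $\k_\HC$ to be one-dimensional and so makes the dichotomy ``$\z(\k)$ lies in $[\k_\HC,\k_\HC]$ or not'' exactly equivalent to the dichotomy governing $H^1$.
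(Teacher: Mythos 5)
Your proposal is correct and follows essentially the same route as the paper: the paper's proof of Theorem \ref{so*-H1} likewise reuses the setup of Theorem \ref{so*}, invokes Proposition \ref{max-cpt-so*-wrt-basis} to get $\k \subset [\k_\HC,\,\k_\HC]$ when $\# \O_\d = 0$ and $\k + [\k_\HC,\,\k_\HC] = \k_\HC$ when $\# \O_\d \geq 1$, and concludes via $\dim_\R \z(\k_\HC) = 1$ and the $H^1$ clause of Theorem \ref{thm-nilpotent-orbit}. The only difference is that you spell out the bridging steps (the decomposition $\k = [\k,\,\k] \oplus \z(\k)$ and the one-dimensionality argument forcing the dichotomy) that the paper leaves implicit.
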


\begin{proof}
 We  follow the notation of the proof of Theorem \ref{so*}. Using Proposition
\ref{max-cpt-so*-wrt-basis} we have $\k \,\subset\, [\k_\HC,\k_\HC]$ when $ \# \O_\d \,=\, 0$, and
$\k + [\k_\HC,\k_\HC] \,=\, \k_\HC$ when $ \# \O_\d \,\geq\, 1$. As $\dim_\R \z(\k_\HC)\,=\,1$,
the proof is completed by Theorem \ref{thm-nilpotent-orbit}.
\end{proof}

\begin{theorem}\label{sp-n-R-H1}
Let $X \,\in\, \s \p(n,\R)$ be a nilpotent element.
Let $(\d, \,\sgn_{\OC_X}) \,\in\, \YC^{\rm odd}_{-1}(2n)$ be the signed Young diagram of the orbit
$\OC_X$ (that is, $\Psi_{{\rm Sp}(n,\R)}  (\OC_X) \,=\, (\d,\, \sgn_{\OC_X})$
in the notation of Theorem \ref{sp-n-R-parametrization}). Then
$$
\dim_\R H^1(\OC_X,\, \R)=\begin{cases}
                               1  & \text{ if } \, \# \O_\d =0 \\
                               0  & \text{ if } \, \# \O_\d \geq 1\,.
                               \end{cases}
$$
\end{theorem}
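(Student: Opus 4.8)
The plan is to mirror exactly the strategy used in the proof of Theorem \ref{sp-n-R} (the second cohomology computation for $\s\p(n,\R)$), since the first cohomology statement has the identical dichotomy $\#\O_\d = 0$ versus $\#\O_\d \geq 1$ and rests on the same structural input. First I would invoke the Jacobson-Morozov theorem to embed the nonzero $X$ into a $\s\l_2(\R)$-triple $\{X,\,H,\,Y\}$ in $\s\p(n,\R)$; the case $X=0$ is trivial since $\OC_0$ is a point. I would then take $K$ to be the maximal compact subgroup of $\ZC_{{\rm Sp}(n,\R)}(X,H,Y)$ as in Lemma \ref{max-cpt-reductive-part-sp-n-R}, and $K_\HC$ the maximal compact subgroup of ${\rm Sp}(n,\R)$ as in Lemma \ref{max-cpt-sp-n-R}(1), with $\HC$ the symplectic basis of \eqref{symplectic-basis-final}. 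The key containment $K \subset K_\HC$ has already been recorded in the text just before Proposition \ref{max-cpt-sp-n-R-wrt-basis}, so $K$ is a legitimate maximal compact subgroup of $\ZC_{{\rm Sp}(n,\R)}(X,H,Y)$ sitting inside $K_\HC$.

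The crux is to decide whether $\k + [\k_\HC,\,\k_\HC]$ equals $\k_\HC$ or is a proper subspace, which by Theorem \ref{thm-nilpotent-orbit} is precisely what governs $\dim_\R H^1(\OC_X,\,\R)$. The relevant structural fact is that $K_\HC \simeq {\rm U}(n)$ (via the isomorphism $\widetilde{\Lambda}_\HC$), so $\dim_\R \z(\k_\HC) = 1$ and $\k_\HC = \z(\k_\HC) \oplus [\k_\HC,\,\k_\HC]$. Thus $\k + [\k_\HC,\,\k_\HC] = \k_\HC$ if and only if $\k$ is not contained in $[\k_\HC,\,\k_\HC]$, equivalently if and only if the image of $\k$ under projection onto $\z(\k_\HC)$ is nonzero. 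I would read this off from the explicit embedding in Proposition \ref{max-cpt-sp-n-R-wrt-basis}: when $\#\O_\d = 0$ the group $K$ is a product of orthogonal groups ${\rm O}_{p_{\eta_i}} \times {\rm O}_{q_{\eta_i}}$, whose Lie algebra lies in $[\k_\HC,\,\k_\HC] = \s\u(n)$ because orthogonal Lie algebras are semisimple and the $\C$-algebra embedding $\Db$ lands the relevant blocks inside the trace-zero part; whereas when $\#\O_\d \geq 1$ the presence of at least one unitary factor ${\rm U}(t_{\theta_j}/2)$ contributes a central scaling direction with nonzero image in $\z(\u(n))$, forcing $\k \not\subset [\k_\HC,\,\k_\HC]$. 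This is exactly the same determination already carried out in the proof of Theorem \ref{sp-n-R}, where it was shown that $\z(\k) \subset [\k_\HC,\,\k_\HC]$ when $\#\O_\d = 0$ and $\z(\k) \not\subset [\k_\HC,\,\k_\HC]$ when $\#\O_\d \geq 1$.

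Concretely I would write: let $\m := \k_\HC$ and note $\m = \z(\m) \oplus [\m,\,\m]$ with $\dim_\R \z(\m) = 1$. Then
$$
\k + [\m,\,\m] = \begin{cases} [\m,\,\m] \subsetneqq \m & \text{ if } \#\O_\d = 0 \\ \m & \text{ if } \#\O_\d \geq 1, \end{cases}
$$
and applying the $H^1$ clause of Theorem \ref{thm-nilpotent-orbit} (whose hypotheses are met since ${\rm Sp}(n,\R)$ is a connected simple real Lie group for $n \geq 1$, and $\OC_X$ is the orbit under the adjoint action) yields $\dim_\R H^1(\OC_X,\,\R) = 1$ when $\#\O_\d = 0$ and $0$ when $\#\O_\d \geq 1$.

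The main obstacle I anticipate is not conceptual but bookkeeping: verifying cleanly that the orthogonal blocks contribute nothing to $\z(\k_\HC)$ while a single unitary block contributes a nonzero central direction. This amounts to tracking, through the embedding $\Db$ into ${\rm M}_n(\C)$ and the identification $K_\HC \simeq {\rm U}(n)$, the trace of the image of a central element — i.e. confirming that the characters $\det$ restricted to the ${\rm U}(t_{\theta_j}/2)$ factors are nontrivial on the center whereas the orthogonal factors sit in ${\rm SU}(n)$ after embedding. Since Proposition \ref{max-cpt-sp-n-R-wrt-basis} supplies the embedding in fully explicit block form and the analogous projection computation is already completed in the proof of Theorem \ref{sp-n-R}, I would simply cite that computation rather than redo it, keeping the argument short and parallel to the second-cohomology case.
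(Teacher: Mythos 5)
Your proposal is correct and follows essentially the same route as the paper: the paper's proof of Theorem \ref{sp-n-R-H1} likewise works in the notation of Theorem \ref{sp-n-R}, reads off from Proposition \ref{max-cpt-sp-n-R-wrt-basis} that $\k \subset [\k_\HC,\,\k_\HC]$ when $\#\O_\d = 0$ while $\k + [\k_\HC,\,\k_\HC] = \k_\HC$ when $\#\O_\d \geq 1$, and concludes via $\dim_\R \z(\k_\HC)=1$ and the $H^1$ clause of Theorem \ref{thm-nilpotent-orbit}. Your block-trace bookkeeping (skew-symmetric orthogonal blocks are traceless, while the unpaired unitary block in an odd part contributes a nonzero central trace) is exactly the verification the paper leaves implicit.
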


\begin{proof}
We follow the notation of the proof of Theorem \ref{sp-n-R}. Using Proposition
\ref{max-cpt-sp-n-R-wrt-basis}, we conclude that $\k \,\subset\, [\k_\HC,\,\k_\HC]$ when $ \# \O_\d\, =\, 0$
and $\k + [\k_\HC,\,\k_\HC] \,=\, \k_\HC$ when $ \# \O_\d \geq\, 1$. As $\dim_\R \z(\k_\HC)\,=\,1$, 
the proof is completed by Theorem \ref{thm-nilpotent-orbit}.
\end{proof}

\section*{Appendix. Basic results on nilpotent orbits}

This appendix is devoted to working out certain details on the structures of the nilpotent orbits in classical Lie algebras so that 
they can be applied in our computations in Section \ref{sec-Second-Cohomology-of-Nilpotent-Orbits}.

We recall that the structures of the nilpotent orbits in classical Lie algebras over $\R$ or $\C$
are due to Springer and Steinberg and they are obtained in \cite[p. 249, 1.6; p. 259, 2.19]{SS} using elementary linear algebra.
However, when classical Lie algebras over $\H$ are considered, due to non-commutativity of $\H$
the above results do not seem to allow direct extensions.
Here, by applying the Jacobson-Morozov Theorem and basics of  $\s\l_2 (\R)$-representation theory,
we overcome this and moreover, work on Lie algebras involving $\R$, $\C$ and $\H$ simultaneously.
This method is suggested in \cite[\S~3.1--3.3, pp. 174-180]{M} and in \cite[\S~9.3, p. 139]{CoM}. 

Following the above approach we also detect an error in \cite[Lemma 9.3.1, p. 139]{CoM} which we point out in
Remark \ref{CM-correction}. This led us to modify the definition of signed Young diagrams as given in \cite[p. 140]{CoM} and
choose different signs in the last columns of the associated matrices, as done in \ref{yd-def2}. 

We follow the notation established in Section \ref{notions-sl2-modules}.
The next lemma is an elementary application of the standard structure theory of irreducible $\s\l_2(\R)$-modules.
Let $\D$ denote either $\R$, $\C$ or $\H$, as before.

\begin{lemma}\label{D-basis}
	Let $V$ be a right $\D$-vector space, and let $\{X,\,H,\,Y\} \,\subset\, \s\l (V )$ be a $\s\l_2(\R)$-triple. 
	Let $d$ be a positive integer such that $M(d-1) \,\neq\, 0$. Let
	$\{ w_1,\, w_2,\, \cdots, \,w_{t_d} \}$ be any $\D$-basis of $L(d-1)$. Then
	\begin{enumerate}
		\item  $X^d w_j\,=\,0$ and $H(X^lw_j)\,=\, X^l w_j (2l+1-d)$ for  all $1 \,\leq\, j \,\leq\, t_d$;
		
		\item the set $\{X^l w_j  \, \mid  \  1\,\leq \,j \,\leq \,t_d, ~ \  0 \,\leq\, l \,\leq\, d-1  \}$ is a $\D$-basis of $M(d-1)$; 
		
		\item  the $\R$-Span of $\{ w_j,\, Xw_j,\, \cdots,\, X^{d-1}w_j\}$ is an irreducible ${\rm Span}_\R \{ X,\,H,\,
		Y\} $-submodule of $M(d-1)$, and moreover, if $W_j$ is the $\D$-Span of $\{ w_j,\,Xw_j,\, \cdots,\, X^{d-1}w_j\}$, then 
		\begin{equation}\label{LXl}
		M(d-1) \, = \, W_1 \oplus W_2 \oplus \cdots \oplus W_{t_d}\,
		=\, L(d-1) \oplus X L(d-1)\oplus \cdots \oplus X^{d-1}L(d-1)\, . 
		\end{equation}
	\end{enumerate}
\end{lemma}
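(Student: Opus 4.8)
The plan is to use the isomorphism $\text{Span}_\R\{X,H,Y\}\simeq\s\l_2(\R)$ to regard $V$ as a finite-dimensional real $\s\l_2(\R)$-module, and then transport the standard weight-space picture of such modules into the $\D$-linear setting. The single structural observation that makes this possible is that $X$, $H$ and $Y$ are $\D$-linear: since the eigenvalues of $H$ are real and hence central in $\D$, each eigenspace $V_{H,\mu}$ is a right $\D$-subspace and $V=\bigoplus_{\mu}V_{H,\mu}$ as $\D$-vector spaces. By Weyl complete reducibility $V$ is a direct sum of irreducible $\text{Span}_\R\{X,H,Y\}$-submodules, the $d$-dimensional one being $\text{Sym}^{d-1}$ with $H$-weights $1-d,3-d,\dots,d-1$. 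Consequently $M(d-1)$, being the span of the summands of $\R$-dimension $d$, has all its weights in the range $1-d$ to $d-1$, and $L(d-1)=V_{Y,0}\cap M(d-1)$ is exactly its lowest-weight space; in particular every nonzero $w\in L(d-1)$ satisfies $Hw=w(1-d)$ and $Yw=0$.

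For part (1) I would argue purely from the bracket relations. Writing the action as composition of operators, $[H,X]=2X$ becomes $H\circ X-X\circ H=2X$; applying this to $w_j$ and inducting on $l$ gives $H(X^l w_j)=X^l w_j\,(2l+1-d)$, the real scalar $2l+1-d$ passing freely through the $\D$-linear maps. Then $X^d w_j$ would be a weight vector of weight $d+1$, exceeding the maximal weight $d-1$ occurring in $M(d-1)$; since that weight space vanishes, $X^d w_j=0$.

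The substance of the lemma lies in (2) and (3), and the main obstacle is to run the linear-independence argument over the possibly noncommutative $\D$ rather than over $\R$. I would prove $\D$-linear independence of $\{X^l w_j\mid 1\le j\le t_d,\ 0\le l\le d-1\}$ directly: given $\sum_{j,l}X^l w_j\,\alpha_{jl}=0$ with $\alpha_{jl}\in\D$, part (1) shows each $u_l:=\sum_j X^l w_j\,\alpha_{jl}$ lies in $V_{H,\,2l+1-d}$ (here realness of the weight is exactly what lets $2l+1-d$ commute past $\alpha_{jl}$), so the decomposition $V=\bigoplus_\mu V_{H,\mu}$ forces $u_l=X^l\big(\sum_j w_j\alpha_{jl}\big)=0$ for every $l$. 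Each $\sum_j w_j\alpha_{jl}$ lies in $L(d-1)$, and since any nonzero lowest-weight vector generates a $d$-dimensional irreducible on which $X^l$ is injective for $l\le d-1$, we conclude $\sum_j w_j\alpha_{jl}=0$, whence all $\alpha_{jl}=0$ because $\{w_j\}$ is a $\D$-basis of $L(d-1)$.

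To finish I would count dimensions: the multiplicity of $\text{Sym}^{d-1}$ in $M(d-1)$ equals $\dim_\R L(d-1)=t_d\dim_\R\D$, so $\dim_\D M(d-1)=t_d\,d$, matching the number of independent vectors just produced; hence they form a $\D$-basis, which is (2). For (3), the same relations show that each $\text{Span}_\R\{w_j,Xw_j,\dots,X^{d-1}w_j\}$ is stable under $X$, $H$ and $Y$ (the $Y$-action being computed from $YX=XY-H$ together with $Yw_j=0$, yielding real multiples of the $X^{l-1}w_j$), so it is an irreducible submodule isomorphic to $\text{Sym}^{d-1}$; the established independence and the dimension count then deliver both direct-sum decompositions $M(d-1)=\bigoplus_j W_j=\bigoplus_{l}X^l L(d-1)$.
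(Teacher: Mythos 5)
Your proof is correct, and it shares the same $\s\l_2(\R)$-theoretic skeleton as the paper's: the weight relations $H(X^lw_j)=X^lw_j(2l+1-d)$ obtained by induction from the bracket identities, the vanishing of $X^dw_j$ because its putative weight $d+1$ exceeds the top weight of $M(d-1)$, injectivity of $X^l$ on $L(d-1)$ for $l\le d-1$, and the dimension count $\dim M(d-1)=d\dim L(d-1)$. The genuine difference is in how $\D$ is handled. The paper disposes of noncommutativity in one line --- since $M(d-1)$, $V_{Y,0}$ and $L(d-1)$ are $\D$-subspaces, ``it suffices to prove the lemma for $\D=\R$'' (implicitly: expand the $\D$-basis of $L(d-1)$ by an $\R$-basis of $\D$ and translate between $\R$- and $\D$-independence, a step it does not spell out) --- and then proves the real case self-containedly, deriving $YX^lv=(X^{l-1}v)\,l(d-l)$ and hence $Y^lX^lw_j=w_j(l!)(d-1)\cdots(d-l)$ to get independence of $\{X^lw_j\}_j$ at each fixed level $l$. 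You instead stay over $\D$ throughout: the observation that the $H$-eigenvalues are real, hence central in $\D$, makes each weight space a right $\D$-subspace, so a mixed $\D$-linear relation splits into weight components $X^l\big(\sum_j w_j\alpha_{jl}\big)=0$, after which you invoke the standard fact that a nonzero vector of $L(d-1)$ generates a $d$-dimensional irreducible on which $X^l$ is injective for $l\le d-1$ (the paper proves this same fact via its explicit $Y^lX^l$ computation rather than citing it). Your version buys transparency --- it isolates exactly why noncommutativity of $\H$ is harmless, which the paper's terse reduction leaves to the reader --- at the cost of leaning on quoted $\s\l_2$ structure theory where the paper computes; both arguments are complete, and the conclusions for parts (2) and (3) follow in either case from the same independence-plus-dimension bookkeeping.
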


\begin{proof}
	As $M(d-1)$, $V_{Y,0}$ and $L(d-1)$ of are $\D$-subspaces of $V$, it suffices to prove the lemma for
	$\D \,=\, \R$. We have the following relations: for $1\leq j \leq t_d$ and $0\leq l \leq d-1$,
	\begin{equation}\label{eigsp}
	Hw_j\,=\, w_j(1-d)\, , \ \ \  H(X^lw_j)\,=\, w_j(2l+1-d)\, .
	\end{equation}
	Using induction on $l$, it follows from the relations $[H,\, X] \,=\, 2X,\, [H,\,Y] \,=\, -2Y,\, [X,\, Y]\,=\,H$, that
	$Y X^l v\,=\,  (X^{l-1}v )l (d-l)$ for all $v \,\in\, L (d-1)$ and $l\, > \, 0$. This in turn implies that 
	\begin{equation}\label{YXrel}
	Y^lX^lw_j\,=\, w_j(l!)(d-1)(d-2)\cdots (d-l)\, .
	\end{equation}
	Note that $X^{d}w_j\,=\,0$ because $d-1$ is the highest weight. From 
	\eqref{eigsp} it follows that $X^l w_j$ and $X^k w_i$ are linearly independent if 
	$l\,\neq \,k \,;\ 0\,\leq\, l,\, k \,\leq\, d-1 \,;\ 1\,\leq\, j,i \,\leq\, t_d$. Furthermore, \eqref{YXrel} implies that
	for each $l$ with $0\,\leq\, l \,<\, d$, the vectors
	$\{ X^lw_j \,\mid \, 1\,\leq\, j \,\leq\, t_d \} $ are $\R$-linearly independent. It is a basic fact that
	$\dim_\R M(d-1)\,=\, d \dim_\R L(d-1)$.
	Consequently, $$\{X^l w_j  \, \mid \  1\,\leq\, j \,\leq\, t_d, \  0 \,\leq\, l \,\leq\, d-1  \}$$ is a $\R$-basis of
	$M(d-1)$. This proves (2). Part (3) follows immediately from (2).
\end{proof}

When $\D= \R$ or $\C$ Proposition \ref{J-basis} follows from \cite[p. 249, 1.6]{SS}. 

\begin{proposition}\label{J-basis}
	Let $\{X,H,Y\} \subset \s\l (V) $ be a $\s\l_2(\R)$-triple, where $V$ is a right $\D$-vector space. 
	For all $d \,\in \,\N_\d$ and for any $\D$-basis of 
	$L(d -1)$, say, $\{ v^d_j  \,\mid\,  1 \,\leq\, j \,\leq\, t_d\,:=\, \dim_\D L(d-1) \}$ the following two hold:
	\begin{enumerate}
		\item $ X^d v^d_j \,=\, 0$ and  $H(X^l v^d_j)\,= \, X^lv^d_j(2l + 1 - d) $ for $ 1\,\leq\, j \,\leq\, t_d$,
		$0 \,\leq\, l \,\leq\, d-1$, $d \,\in\, \N_\d$.
		
		\item  For all $d \,\in \,\N_\d$, the set $\{ X^lv^d_j \,\mid\,  1\,\leq\, j \,\leq\, t_d,\  0 \,\leq\, l
		\,\leq\, d-1 \}$ is a $\D$-basis of $M(d-1)$. In particular, 
		$\{ X^l v^d_j \,\mid  \, 1\,\leq\, j \,\leq\, t_d, \  0 \,\leq\, l \,\leq\, d-1,\ d \,\in\, \N_\d \}$
		is a $\D$-basis of $V$. 
	\end{enumerate}
\end{proposition}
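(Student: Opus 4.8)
The plan is to deduce the proposition directly from Lemma \ref{D-basis} together with the isotypical decomposition recorded in \eqref{isotypicalcomp}, by globalizing the single-$d$ statement across all $d \,\in\, \N_\d$. The point is that, for each fixed $d$, the content of the proposition is exactly what Lemma \ref{D-basis} already supplies; the only additional work is to check that Lemma \ref{D-basis} is applicable for every $d \,\in\, \N_\d$ and then to patch the resulting bases of the isotypical components $M(d-1)$ into a single basis of $V$.

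First I would record the applicability. For $d \,\in\, \N_\d$ one has $M(d-1) \,\neq\, 0$ by the very definition of the partition $\d$ in \eqref{partition-symbol}, since $\N_\d \,=\, \{d_1,\, \cdots,\, d_s\}$ collects precisely those integers arising as the $\R$-dimension of a non-zero irreducible $\text{Span}_\R \{X,\,H,\,Y\}$-submodule of $V$; in particular $t_d \,=\, \dim_\D L(d-1) \,\geq\, 1$. Thus, fixing any such $d$ and any $\D$-basis $\{ v^d_j \,\mid\, 1 \,\leq\, j \,\leq\, t_d \}$ of $L(d-1)$, Lemma \ref{D-basis} applies verbatim with $w_j \,=\, v^d_j$. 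Part (1) is then immediate: it is exactly the relation in Lemma \ref{D-basis}(1), namely $X^d v^d_j \,=\, 0$ (because $d-1$ is the highest weight occurring in $M(d-1)$) together with $H(X^l v^d_j) \,=\, X^l v^d_j (2l+1-d)$. Likewise, the first assertion of Part (2)—that $\{ X^l v^d_j \,\mid\, 1 \,\leq\, j \,\leq\, t_d,\ 0 \,\leq\, l \,\leq\, d-1 \}$ is a $\D$-basis of $M(d-1)$—is precisely Lemma \ref{D-basis}(2) for the fixed $d$.

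It then remains to assemble the global basis of $V$ claimed in the ``in particular'' clause. For this I would invoke the decomposition $V \,=\, \bigoplus_{d \in \N_\d} M(d-1)$ from \eqref{isotypicalcomp}: a basis of a finite direct sum is obtained as the union of bases of the summands, so the union over $d \,\in\, \N_\d$ of the bases of the $M(d-1)$ produced above is a $\D$-basis of $V$. This is pure bookkeeping, since the $\D$-linear independence within each summand is already furnished by the previous step and the summands are $\D$-subspaces in direct sum.

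I do not expect any genuine obstacle here; the proposition is essentially a reformulation and globalization of Lemma \ref{D-basis}, whose substantive content (the structure theory of irreducible $\s\l_2(\R)$-modules, and the reduction from $\H$ to $\R$) has already been carried out. The only point deserving explicit mention is the uniformity in $\D \,\in\, \{\R,\,\C,\,\H\}$: this is guaranteed because $M(d-1)$, $V_{Y,0}$ and $L(d-1)$ are $\D$-subspaces and Lemma \ref{D-basis} was established for all three choices of $\D$ simultaneously (via reduction to $\D \,=\, \R$), which is exactly the gain over citing \cite[p. 249, 1.6]{SS}, that treats only $\D \,=\, \R,\, \C$.
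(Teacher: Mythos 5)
Your proposal is correct and follows exactly the paper's own route: the paper proves Proposition \ref{J-basis} by citing Lemma \ref{D-basis} together with the decomposition \eqref{isotypicalcomp}, which is precisely your argument, merely spelled out in more detail. The extra checks you record (non-vanishing of $M(d-1)$ for $d \in \N_\d$, and the uniformity in $\D$) are sound and implicit in the paper's one-line proof.
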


\begin{proof}
	This follows from Lemma \ref{D-basis} and \eqref{isotypicalcomp}.
\end{proof} 

Henceforth, $\sigma \,:\, \D \,\longrightarrow\, \D$ will denote either the identity map or $\sigma_c$ (defined
in Section \ref{sec-epsilon-sigma-forms}) when $\D$ is $\C$ or $\H$.  Let $V$ be a right $\D$-vector space,
 and let $\langle \cdot,\, \cdot \rangle \,:\, V \times V \,\longrightarrow\, \D \ $ be a $\epsilon$-$\sigma$ 
 Hermitian form. Let  $X$ be a non-zero nilpotent element
in $\s\u (V, \,\langle \cdot,\, \cdot \rangle )$. Using Theorem  \ref{Jacobson-Morozov-alg}, there exists
$ H, \, Y\,\in\, \s\u (V,\, \langle \cdot,\, \cdot \rangle )$ such that $\text{Span}_\R \{ X,\,H,\,Y \}$
is isomorphic to $ \s\l_2 (\R)$. Thus, $V$ becomes a $\text{Span}_\R \{ X,\,H,\,Y\}$-module.

We record the following straightforward but useful fact.

\begin{lemma}[{cf.\,\cite[\S~2.4, p. 171]{M}}]\label{extreme} 
	Let $\sigma \,: \,\D \,\longrightarrow\, \D$ be either the identity map or $\sigma_c$ when $\D$ is $\C$ or $\H$.
	Let $ \langle \cdot,\, \cdot \rangle \,:\, V \times V \,\longrightarrow\, \D \ $ be a $\epsilon$-$\sigma$ Hermitian form.
	Suppose  $A \in {\rm End}_\D(V)$ such that $\langle Ax,\, y \rangle +\langle x,\,Ay \rangle \,=\,0 $ for all $x,\,y\,\in\,
	V$. Let $v$ and $ w$ be two nonzero elements in $V$ such that $Av\,=\,v \lambda$ and $Aw \,= \, w\mu $ for some 
	$\lambda , \,\mu \,\in\, \R$. If $\lambda + \mu \,\neq\, 0$, then $~\langle v,\,w\rangle \,= \,0$.
\end{lemma}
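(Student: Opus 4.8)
The plan is to evaluate the skew-adjointness hypothesis on the single pair $(v,\,w)$ and then exploit that the eigenvalues $\lambda,\,\mu$ are real. Concretely, I would set $x \,=\, v$ and $y \,=\, w$ in the identity $\langle Ax,\, y \rangle + \langle x,\, Ay \rangle \,=\, 0$, obtaining
\[
\langle Av,\, w \rangle + \langle v,\, Aw \rangle \,=\, 0\, .
\]
Substituting the eigenrelations $Av \,=\, v\lambda$ and $Aw \,=\, w\mu$ turns the left-hand side into $\langle v\lambda,\, w \rangle + \langle v,\, w\mu \rangle$, so everything now reduces to understanding how the scalars $\lambda$ and $\mu$ may be pulled out of the two arguments of the form.

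First I would extract these scalars using the defining axioms of an $\epsilon$-$\sigma$ Hermitian form. By the scaling property, $\lambda$ comes out of the first argument carrying a factor $\sigma(\lambda)$, while $\mu$ comes out of the second argument (after at most one use of the symmetry $\langle v,\, w \rangle \,=\, \epsilon\, \sigma(\langle w,\, v \rangle)$ to relate the two slots). The crucial observation is that $\lambda,\, \mu \,\in\, \R$, so they are central in $\D$ and are fixed by $\sigma$: this holds whether $\sigma \,=\, \text{Id}$ or $\sigma \,=\, \sigma_c$, since $\sigma_c$ restricts to the identity on $\R$. Consequently the two terms acquire the factors $\lambda$ and $\mu$ without any interference from $\sigma$, and the identity collapses to
\[
(\lambda + \mu)\, \langle v,\, w \rangle \,=\, 0
\]
(equivalently $(\lambda + \mu)\,\langle w,\, v \rangle \,=\, 0$, depending on which slot one normalizes to).

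Finally, since $\D$ is one of $\R,\, \C,\, \H$, it is a division algebra with no zero divisors, and $\lambda + \mu \,\neq\, 0$ is a nonzero real, hence invertible in $\D$; multiplying by $(\lambda+\mu)^{-1}$ gives $\langle v,\, w \rangle \,=\, 0$ (using the Hermitian symmetry once more in the second normalization). I do not expect any serious obstacle here: the statement is a direct eigenvector-orthogonality argument. The only point demanding care is the bookkeeping of how the real scalars $\lambda,\,\mu$ emerge from each argument together with the role of $\sigma$ fixing $\R$ pointwise, which is exactly what guarantees that the two contributions add up to the common factor $\lambda + \mu$ rather than cancelling or twisting.
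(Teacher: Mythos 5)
Your proposal is correct and is essentially the paper's own argument: the paper likewise substitutes $x\,=\,v$, $y\,=\,w$, uses that the real eigenvalues are central in $\D$ and fixed by $\sigma$ to get $\langle v,\,w\rangle(\lambda+\mu)\,=\,0$, and concludes since $\lambda+\mu\,\neq\,0$ is invertible in $\D$. The only difference is that you spell out the scalar-extraction bookkeeping that the paper compresses into ``it follows immediately.''
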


\begin{proof} As $\langle Av,\,w\rangle + \langle v, \,Aw\rangle \,=\,0$, it follows immediately that
	$ \langle v,\, w\rangle (\lambda + \mu )\, =\, 0 $. Now the lemma follows because $ \lambda + \mu \,\neq\, 0 $.
\end{proof}

\begin{lemma}[{cf.\,\cite[\S~3.2, p. 178]{M}}]\label{ortho-isotypical}
	Let $V$ be a right $\D$-vector space, and $\epsilon\,=\, \pm 1$. Let $\sigma$ be as in Lemma \ref{extreme}, and
	let $$\langle \cdot,\, \cdot \rangle \,: \,V \times V \,\longrightarrow\, \D \ $$ be a $\epsilon$-$\sigma$ Hermitian form.  
	Let $\{X,\,H,\,Y\} \,\subset\, \s\u (V,\, \langle \cdot,\,
	\cdot \rangle ) $ be a $\s\l_2(\R)$-triple.  Let $\d$ be as in \eqref{partition-symbol}, and
	let $d,\, d' \,\in\, \N_\d$ be such that $d \,\neq\, d'$.
	Then $M(d-1) $ and $M(d' -1)$ are orthogonal with respect to $\langle \cdot,\,\cdot \rangle$.
	In particular, the Hermitian form $\<>$ on $M(d-1)$ is non-degenerate for all $d$.
\end{lemma}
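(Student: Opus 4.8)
The plan is to run the argument through the $H$-weight space decomposition of the isotypical components, using Lemma \ref{extreme} for the generic pairings and a single weight-raising trick for the diagonal ones. Since $\{X,\,H,\,Y\} \,\subset\, \s\u (V,\, \langle \cdot,\, \cdot \rangle)$, each of $X,\,H,\,Y$ lies in $\u (V,\, \langle \cdot,\, \cdot \rangle)$, so by definition
\[
\langle Av,\, w\rangle + \langle v,\, Aw\rangle \,=\, 0 \qquad \text{for } A \,\in\, \{X,\,H,\,Y\} \text{ and all } v,\,w \,\in\, V.
\]
By Lemma \ref{D-basis} and \eqref{LXl}, for each $d \,\in\, \N_\d$ the component $M(d-1)$ is the $\D$-direct sum of the $H$-weight spaces $X^a L(d-1) \,=\, M(d-1) \cap V_{H,\,2a+1-d}$, $0 \,\leq\, a \,\leq\, d-1$, each of which is a $\D$-subspace because $H$ is $\D$-linear. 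By the additivity of $\langle \cdot,\, \cdot \rangle$ in each argument, to prove $M(d-1) \,\perp\, M(d'-1)$ it thus suffices to show $\langle v,\,w\rangle \,=\, 0$ for $\D$-weight vectors $v \,\in\, M(d-1) \cap V_{H,\lambda}$ and $w \,\in\, M(d'-1) \cap V_{H,\mu}$.

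First I would dispose of the generic case $\lambda + \mu \,\neq\, 0$: here Lemma \ref{extreme} applied to $A \,=\, H$ gives $\langle v,\,w\rangle \,=\, 0$ at once. The remaining case $\mu \,=\, -\lambda$ is the hard part, and it is where the hypothesis $d \,\neq\, d'$ is genuinely used. Since $\langle w,\,v\rangle \,=\, \epsilon\,\sigma(\langle v,\,w\rangle)$ and $\epsilon \,=\, \pm 1$, orthogonality is symmetric in $d$ and $d'$, so I may assume $d \,>\, d'$. Writing $\lambda \,=\, 2a + 1 - d$ with $0 \,\leq\, a \,\leq\, d-1$, equation \eqref{LXl} lets me write $v \,=\, X^a v'$ for some $v' \,\in\, L(d-1)$. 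The key observation is that $X^a w$ lies in the $X$-stable submodule $M(d'-1)$ and has $H$-weight $\mu + 2a \,=\, -\lambda + 2a \,=\, d-1$, which strictly exceeds the top weight $d'-1$ of $M(d'-1)$ because $d \,>\, d'$; hence $X^a w \,=\, 0$. Iterating the infinitesimal isometry relation for $X$ then yields
\[
\langle v,\,w\rangle \,=\, \langle X^a v',\, w\rangle \,=\, (-1)^a \langle v',\, X^a w\rangle \,=\, 0 ,
\]
completing the proof of orthogonality. I expect this weight-raising step to be the main obstacle conceptually: it is exactly the device that converts the diagonal obstruction $\lambda + \mu \,=\, 0$, which Lemma \ref{extreme} cannot detect, into the vanishing of a weight that simply does not occur in the smaller module.

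Finally, for the ``in particular'' assertion I would argue from the orthogonal decomposition. By \eqref{isotypicalcomp} we have $V \,=\, \bigoplus_{d \in \N_\d} M(d-1)$, and the orthogonality just established makes this an orthogonal direct sum. Fixing $d$ and taking $v \,\in\, M(d-1)$ with $\langle v,\, u\rangle \,=\, 0$ for all $u \,\in\, M(d-1)$, the orthogonality gives $\langle v,\, u\rangle \,=\, 0$ for every $u$ in the remaining summands as well, hence for all $u \,\in\, V$; the non-degeneracy of $\langle \cdot,\, \cdot \rangle$ on $V$ then forces $v \,=\, 0$. Thus the restriction of $\langle \cdot,\, \cdot \rangle$ to each $M(d-1)$ is non-degenerate. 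The only bookkeeping subtlety I anticipate is keeping the right-$\D$-module and sesquilinearity conventions straight, but these affect neither the additivity reduction to weight vectors nor the weight computations used above.
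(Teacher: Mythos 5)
Your proof is correct and follows essentially the same route as the paper: both arguments combine Lemma \ref{extreme} (for weight pairs with nonzero sum), the decomposition \eqref{LXl}, and the skew-adjointness relation $\langle Xv,\,w\rangle \,=\, -\langle v,\,Xw\rangle$ to transfer powers of $X$ onto the module with the smaller highest weight, where they vanish because $d\,>\,d'$; the paper merely anchors at vectors $v\,\in\,L(d-1)$, $u\,\in\,L(d'-1)$ and treats all $\langle v,\,X^l u\rangle$ at once rather than splitting into your generic and diagonal weight cases. Your verification of the ``in particular'' non-degeneracy claim via the orthogonal decomposition \eqref{isotypicalcomp} is the standard argument the paper leaves implicit.
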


\begin{proof}
	We may assume that $d \,>\,d'$. Let $v \,\in\, L(d-1)$ and $u \,\in\, L(d'-1)$. By Lemma \ref{extreme} 
	we have that $\langle v, X^lu \rangle =0$ when $0 \leq l \leq d'-1$. Moreover, $X^l u = 0 $ if $l \geq d'$. Thus 
	$\langle v,\, X^lu \rangle \,=0\,$ for all $l \,\geq\, 0$.  
	Hence, $\langle X^h v,\, X^lu \rangle \,=\,(-1)^h \langle  v,\, X^{l+h}u \rangle \,=\,0$. Now the lemma follows from
	\eqref{LXl} of Lemma \ref{D-basis}.
\end{proof}

The following lemma, which further decomposes each isotypical component $M(d-1)\, \subset\, V$ into orthogonal subspaces, 
seems basic. However, as we are unable to locate it in the literature, we include a proof here. 

\begin{lemma}\label{unitary-J-basis-lemma} 
	Let $V$ be a right $\D$-vector space, and $\epsilon \,= \,\pm 1$. Let $\sigma \,:\, \D \,\longrightarrow\, \D$ be either
	the identity map or $\sigma_c$ when $\D$ is $\C$ or $\H$.
	Let $\langle \cdot,\, \cdot \rangle \,:\, V \times V \,\longrightarrow\,\D$ be a (non-degenerate) $\epsilon$-$\sigma$
	Hermitian form. Let $\{X,\,H,\,Y\} \,\subset\, \s\u (V, \,\langle \cdot,\,\cdot \rangle ) $ be a $\s\l_2(\R)$-triple. Let
	$\d$ be as in \eqref{partition-symbol}, $d \,\in\, \N_\d$ and $t_d \,:= \,\dim_\D L(d-1)$.
	Then there exists a $\D$-basis $\{ w_1,\, \cdots,\, w_{t_d} \}$ of $L(d-1)$ such that
	the set $$\{X^l w_j  \, \mid  \  1\,\leq\, j \,\leq\, t_d, \  0 \,\leq \,l \,\leq\, d-1  \}$$
	is a $\D$-basis of $M(d-1)$, and moreover,
	the value of $\langle \cdot,\, \cdot \rangle$ on a pair of these basis vector is $0$, except in the following cases:
	\begin{itemize}
		\item[{\rm (1)}] If $\sigma \,= \,\sigma_c $, then $ \langle X^lw_{j},\, X^{d -1-l}w_{j} \,\rangle\, \in \D^*$.
		
		\item[{\rm (2)}] If $\sigma \,= \,{\rm Id}$ and $\epsilon \,=\,1$, then $\langle X^lw_{j},\, X^{d -1-l}w_{j} \rangle
		\,\in \,\D^*$ for $ d$ odd, and $$\langle X^lw_{j},\, X^{d -1-l}w_{j+1} \rangle \,\in\, \D^*$$ for $ d$ even and $j$ odd.
		
		\item[{\rm (3)}] If $\sigma \,=\, {\rm Id}$ and $\epsilon \,=\, -1$, then
		$\langle X^lw_{j},\, X^{d -1-l}w_{j} \rangle \,\in\, \D^* $ for $d$ even, and 
		$$ \langle X^lw_{j},\, X^{d-1 -l}w_{j+1} \rangle \,\in\, \D^* $$ for $d$ odd and $j$ odd.
	\end{itemize}
\end{lemma}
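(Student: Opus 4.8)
The plan is to fix a single $d \in \N_\d$ and work entirely inside the isotypical component $M(d-1)$. This is legitimate because Lemma \ref{ortho-isotypical} shows that distinct components are mutually orthogonal and that $\langle \cdot,\, \cdot \rangle$ restricts to a non-degenerate $\epsilon$-$\sigma$ Hermitian form on each $M(d-1)$. By Lemma \ref{D-basis}, once a $\D$-basis $\{w_1,\, \cdots,\, w_{t_d}\}$ of $L(d-1)$ is chosen, the set $\{X^l w_j\}$ is automatically a $\D$-basis of $M(d-1)$, and $X^l w_j$ is an $H$-eigenvector of weight $2l+1-d$. Thus the whole problem reduces to a judicious choice of the basis $\{w_j\}$ of $L(d-1)$.

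First I would record which pairings can survive. Since $X,\,H \in \s\u(V,\, \langle \cdot,\, \cdot \rangle)$ are skew for the form, applying Lemma \ref{extreme} to the eigenvectors $X^l w_j$ and $X^{l'} w_{j'}$ gives $\langle X^l w_j,\, X^{l'} w_{j'}\rangle = 0$ unless $(2l+1-d)+(2l'+1-d)=0$, i.e.\ unless $l' = d-1-l$. For the surviving pairings, repeated use of the skewness of $X$ yields the identity
\begin{equation*}
\langle X^l w_j,\, X^{d-1-l} w_{j'}\rangle \,=\, (-1)^l \langle w_j,\, X^{d-1} w_{j'}\rangle .
\end{equation*}
This motivates the auxiliary form $\beta$ on $L(d-1)$ defined by $\beta(w,\, w') := \langle w,\, X^{d-1} w'\rangle$: the entire lemma becomes the statement that $\beta$ admits a basis in which the off-diagonal (or off-pair) entries all vanish.

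Next I would pin down the type of $\beta$. Non-degeneracy of $\beta$ follows from non-degeneracy of $\langle \cdot,\, \cdot \rangle$ on $M(d-1)$: written in the $H$-weight decomposition $M(d-1)=\bigoplus_{l=0}^{d-1} X^l L(d-1)$, the restricted form is block anti-diagonal, so the block pairing $L(d-1)$ with $X^{d-1}L(d-1)$ must be perfect, and that block is exactly $\beta$. Using the axiom $\langle a,\, b\rangle = \epsilon\,\sigma(\langle b,\, a\rangle)$ together with the skewness identity above, one computes $\beta(w,\, w') = \epsilon(-1)^{d-1}\sigma(\beta(w',\, w))$, so $\beta$ is a non-degenerate $\epsilon(-1)^{d-1}$-$\sigma$ Hermitian form (its sesquilinearity is inherited directly from $\langle \cdot,\, \cdot \rangle$ and the $\D$-linearity of $X^{d-1}$). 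I would then choose $\{w_j\}$ by the standard classification of such forms, according to the sign $\epsilon(-1)^{d-1}$. When $\sigma = \sigma_c$ (case (1)), $\beta$ is Hermitian or skew-Hermitian over $\C$ or $\H$, and each of these admits an orthogonal basis with $\beta(w_j,\, w_j)\in\D^*$, giving $\langle X^l w_j,\, X^{d-1-l} w_j\rangle = (-1)^l\beta(w_j,\, w_j)\in\D^*$ and all other pairings zero. When $\sigma=\mathrm{Id}$, the sign is $+1$ exactly in the sub-cases asserting a diagonal pairing ($\epsilon=1$, $d$ odd; or $\epsilon=-1$, $d$ even), where $\beta$ is symmetric and I take an orthogonal basis; in the complementary sub-cases the sign is $-1$, $\beta$ is alternating, and a symplectic basis pairing $w_{2k-1}$ with $w_{2k}$ produces precisely the pattern $\langle X^l w_j,\, X^{d-1-l} w_{j+1}\rangle\in\D^*$ for $j$ odd. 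Translating each normal form back through the displayed identity gives the asserted list.

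The main obstacle I anticipate is bookkeeping rather than conceptual: correctly tracking the parity of $d$ through the sign $\epsilon(-1)^{d-1}$ in all three cases, and confirming that over $\H$ (and for skew-Hermitian forms over $\C$) an orthogonal basis genuinely exists, so that case (1) is uniform. Once $\beta$ and its Hermitian type are identified, the remainder is a routine application of the normal-form theory for non-degenerate sesquilinear forms.
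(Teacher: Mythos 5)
Your argument is correct, but it is organized quite differently from the paper's. The paper proves the lemma by induction on $\dim_\D V$: in the case $(-1)^{d-1}\epsilon \,=\,1$ (or $\sigma\,=\,\sigma_c$) it produces, by a contradiction argument carried out separately for $\D\,=\,\R$, $\C$ and $\H$, a vector $x_1 \,\in\, L(d-1)$ with $\langle x_1,\, X^{d-1}x_1\rangle \,\neq\, 0$, splits off the non-degenerate subspace $W \,=\, {\rm Span}_\D\{X^l x_1\}$, and recurses on $W^\perp$; in the remaining case $\D\,=\,\R$, $\sigma\,=\,{\rm Id}$, $(-1)^{d-1}\epsilon\,=\,-1$ it splits off a hyperbolic pair $\{X^l x_1,\, X^l y_1\}$ instead. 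You instead descend the whole problem to $L(d-1)$ at once: your auxiliary form $\beta$ is precisely the form $(\cdot,\,\cdot)_d$ of \eqref{new-form}, your weight computation via Lemma \ref{extreme} and the identity $\langle X^l w,\, X^{d-1-l}w'\rangle \,=\, (-1)^l\langle w,\, X^{d-1}w'\rangle$ is sound, your non-degeneracy argument (perfectness of the anti-diagonal block pairing $L(d-1)$ with $X^{d-1}L(d-1)$, using Lemma \ref{ortho-isotypical}) is valid, and your sign computation $\beta(w,\,w')\,=\,\epsilon(-1)^{d-1}\sigma(\beta(w',\,w))$ correctly identifies the type in all three cases — indeed the paper records exactly this type determination afterwards in Remark \ref{unitary-J-basis-rmk}. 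The difference is that you then invoke the standard normal-form theory for non-degenerate $\epsilon$-$\sigma$ Hermitian forms (orthogonal bases in the Hermitian, skew-Hermitian and symmetric cases, a symplectic basis in the alternating case), whereas the paper's induction in effect re-proves those normal forms inline, which is why it needs the case-by-case scalar manipulations over $\R$, $\C$ and $\H$. Your route buys brevity and makes transparent that the lemma \emph{is} the normal-form statement for $(\cdot,\,\cdot)_d$ transported by the identity above; the paper's route buys self-containedness, since the existence of orthogonal bases for skew-Hermitian forms over $\H$ (and over $\C$) is exactly the kind of fact you are taking as a black box and would need to cite. The only point worth making explicit in a final write-up is that citation, together with the remark (already in the paper) that the case $\D\,=\,\H$, $\sigma\,=\,{\rm Id}$ does not occur, so your trichotomy of cases is exhaustive.
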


\begin{proof}
	We use induction on $\dim_\D V$. The proof is divided into two parts.
	
	{\it Part 1.}\, In this part assume that one of the following two holds:
	\begin{itemize}
		\item{} $\D \,= \,\R$, $\sigma \,= \,\text{Id}$, and $(-1)^{d-1} \epsilon \,=\,1$;
		
		\item{}  $\D\,=\,\H$ or $ \C $ and $ \sigma \,=\, \sigma_c $.
	\end{itemize}
	
	We claim that there is an element $x_1 \,\in\, L(d-1)$ such that $\langle x_1,\, X^{d-1}x_1 \rangle \,\neq\, 0$.
	
	To prove the claim by contradiction, assume that $\langle x,\, X^{d-1}x \rangle \,=\, 0$ for all $x \,\in \,L(d-1)$. Lemma
	\ref{extreme} implies that $\langle z_{1},\, X^{l}z_{2}\rangle\,=\, 0$ 
	for $l \,\neq\, d-1$ and $z_1,\,z_2 \,\in\, L(d-1)$. Fix a nonzero element $x \,\in\,  L(d-1)$. 
	Since $\langle \cdot ,\, \cdot \rangle $ is non-degenerate on $M(d-1) \times M(d-1)$, there exists
	an element $y\,\in\, L(d-1)$ such that $\langle x, \,X^{d-1}y\rangle \,\neq \,0$. 
	As $x+y \,\in\, L(d-1)$, we also know that $\langle x+y, \ X^{d-1}(x+y) \rangle  \,=\, 0$.
	
	Now we will arrive at a contradiction considering the two cases separately.
	
	First assume that $\D \,=\, \R$, $\sigma \,=\, \text{Id}$ and $(-1)^{d-1} \epsilon \,=\,1$.
	Then $$0 \,=\,\langle x+y, \,\ X^{d-1}(x+y) \rangle \,=\, 2 \langle x,\, X^{d-1}y\rangle\, .$$
	This is evidently a contradiction.
	
	Next assume that $\D\,=\,\C$. Writing $\langle x,\, X^{d-1}y\rangle \,= \,a_1 + \sqrt{-1} b_1$ where $a_1,b_1\in \R$ and multiplying $y$ by an
	appropriate scalar from $\C$ if required, we may assume that $a_1 \,\neq\, 0$ as well as $b_1\,\neq\, 0$.
	Now the condition $\langle x+y, \, X^{d-1}(x+y) \rangle  \,=\, 0$ implies that $(a_1 + \sqrt{-1} b_1)+(-1)^{d-1} \epsilon(a_1 - \sqrt{-1} b_1)
	\,=\,0$. This contradicts the fact that both $a_1$ and $b_1$ are non-zero.
	
	Finally, assume that $\D\,=\, \H$. Writing $\langle x,\, X^{d-1}y\rangle \,=\, a_1 + \ib b_1 + \jb c_1 + \kb d_1$ where $a_1,b_1,c_1,d_1\in \R$ and  
	multiplying $y$ by an appropriate scalar from $\H$ if needed, we may assume that $a_1 \,\neq\, 0$ and $b_1\,\neq\, 0$. Then,
	\begin{align*}
	\langle x&\,, \ X^{d-1}y\rangle + (-1)^{d-1} \epsilon \sigma (\langle x, \ X^{d-1}y\rangle)  \,=\,0\, .
	\end{align*} 
	From this it follows that  $(a_1 + \ib b_1 + \jb c_1 + \kb d_1) + (-1)^{d-1} \epsilon  (a_1 - \ib b_1 - \jb c_1- \kb d_1)\,=\,0$.
	This gives a contradiction as both $a_1$ and $b_1$ are nonzero. This completes the proof of the claim.
	
	Let $W$ be the $\D$-Span of $\{ X^l x_1  \,\mid  \, 0 \,\leq\, l \,\leq \,d-1  \}$, where $x_1$ is the element
	of $L(d-1)$ in the above claim. As the vectors
	$\{ X^l x_1 \, \mid  \, 0 \,\leq\, l \,\leq\, d-1  \}$ are $\D$-linearly independent, and $\langle x_1, \,X^{d-1}x_1 \rangle
	\,\neq\, 0$, it follows that 
	$\langle \cdot, \,\cdot \rangle $  is non-degenerate on $W$. Hence, $$V \,= \,W \oplus W^\bot \, ,$$ where  
	$W^{\bot} \,:=\, \{ v \,\in\, V  \,\mid\,  \langle v,\, W \rangle \,=\,0 \} $.
	As $\{X,\,H,\,Y\} \,\subset\, \s\u (V, \<>)$, it follows immediately that $X,\,H,\,Y$ leave $W^{\bot}$ invariant.
	Let $$X_1 \,:=\, X\vert_{W^\bot}\,,\ \ H_1 \,:=\, H\vert_{W^\bot}\, ,\ \ Y_1 \,:=\, Y\vert_{W^\bot}\, .$$
	Let $\<>'$ be the restriction of $\<>$ to ${W^\bot}$. 
	Then $$\{ X_1,\,H_1,\, Y_1\}\,\subset\, \s\u(W^\bot , \<>')$$ is a $\s\l_2(\R)$-triple. 
	Let $M_{W^\bot} (d-1)$ be the isotypical component of $W^\bot$ consisting of sum of all $\R$-subspaces $B$ of $W^\bot$
	with $\dim_\R B \,=\, d$ which are also irreducible $\text{Span}_\R \{ X_1 ,H_1 ,Y_1 \}$-submodules of $W^\bot$. Then 
	we have $M_{W^\bot} (d-1) \,=\, W^\bot \cap M (d-1)$ and $M( d-1) \,= \,W \oplus M_{W^\bot} (d-1)$. 
	Since $\dim_\D W^\bot \,<\, \dim_\D V$, from the induction hypothesis, $M_{W^\bot} (d-1)$
	has a $\D$-basis satisfying (1), (2), (3) of the lemma. This $\D$-basis of $M_{W^\bot} (d-1)$
	together with the $\D$-basis $$\{X^l x_1 \,\mid  \, 0 \,\leq\, l\,\leq\, d-1\}$$ of
	$W$ will give the required $\D$-basis of $M(d-1)$. This completes the proof using induction on $\dim_\D V$.
	
	{\it Part 2 :}\,
	Here we deal with the remaining case where $\D\,=\,\R$, $\sigma\,=\, \text{Id}$ and $(-1)^{d-1} \epsilon\,=\, -1$.
	
	For all $x \,\in\, L(d-1)$, as $$\langle x , \,  X^{d-1}x \rangle \,=\, (-1)^{d-1} \epsilon \langle x , \,  X^{d-1}x \rangle
	\,=\, -\langle x , \,  X^{d-1}x \rangle\, ,$$
	it is clear that $\langle x , \,  X^{d-1}x \rangle \,=\, 0$. Lemma \ref{extreme} gives
	that $$\langle z_{1},\,
	X^{l}z_{2}\rangle\,=\, 0$$ for $l \,\neq\, d-1, \ z_1,\,z_2 \,\in\, L(d-1)$. 
	Fix any nonzero $x_1 \,\in\,L(d-1)$.
	Since $\langle \cdot ,\, \cdot \rangle$ is non-degenerate on $M(d-1) \times M(d-1)$, there exists $y_1\,\in\, L(d-1)
	\setminus x_1\D$ such that $\langle x_1,\, X^{d-1}y_1\rangle \,\neq\, 0$.
	Let $W'$ be the $\D$-Span of $\{ X^l x_1, \,X^l y_1\, \mid  \ 0 \,\leq\, l \,\leq\, d-1\}$.  As the vectors $\{ X^l x_1, \,
	X^l y_1 \,\mid  \ 0 \,\leq\, l \,\leq\, d-1\}$
	are $\D$-linearly independent, and $\langle x_1,\, X^{d-1}y_1\rangle\,\neq\, 0$, it follows that 
	$\langle \cdot, \cdot \rangle $ is non-degenerate on $ W' $. As before, define 
	$ W'^{\bot} \,:=\, \{ v \,\in\, V \, \mid  \, \langle v,\, W' \rangle \,=\,0 \}$.
	As $V \,=\, W' \oplus W'^\bot $, and $\dim_\D W'^\bot \,<\, \dim_\D V$, repeating the argument in part 1 the
	proof is completed. 
\end{proof}

The next result is an analogue of Proposition \ref{J-basis} in the presence of a $\epsilon$-$\sigma$ Hermitian form.
When $\D= \R$ or $\C$ Proposition \ref{unitary-J-basis} follows from \cite[p. 259, 2.19]{SS}. 

\begin{proposition}\label{unitary-J-basis}
	Let $V$ be a right $\D$-vector space, $\epsilon \,= \,\pm 1$, $\sigma \,\colon\, 
	\D \,\longrightarrow\, \D$ is either the identity map or it is $\sigma_c$ when $\D$ is $\C$ or $\H$.
	Let $\langle \cdot,\, \cdot \rangle \,:\, V \times V \,\longrightarrow\, \D$ be a $\epsilon$-$\sigma$ Hermitian form.
	Let $\{X,\,H,\,Y\}\,\subset\, \s\u (V,\, \langle \cdot,\, \cdot \rangle ) $ be a $\s\l_2(\R)$-triple. 
	Let $d \,\in\, \N_\d$ and $t_{d}\,:= \,\dim_\D L(d-1)$. 
	Then for all $d \,\in\, \N_\d$, there exists a $\D$-basis $\{ {v^d_j}\,\mid \, 1\,\leq\, j \,\leq \,t_d \}$ of 
	$L(d -1)$ such  that the following three hold:
	\begin{enumerate}
		\item $X^d v^d_j \,=\, 0$ and $H(X^l v^d_j)\,=\,  X^l v^d_j(2l + 1 - d)$ for all
		$1\,\leq\, j \,\leq\, t_d$, $0 \,\leq\, l \,\leq\, d-1$ and $d \,\in\, \N_\d$.
		
		\item  For all $d \,\in\, \N_\d$, the set $\{ X^lv^d_j\,\mid\,  1\,\leq\, j \,\leq\, t_d,\ 0 \,\leq\, l
		\,\leq\, d-1 \}$ is a $\D$-basis of $M(d-1)$. In particular, 
		$$\{ X^l v^d_j \,\mid\, 1\,\leq\, j \,\leq\, t_d,\  0 \,\leq\, l \,\leq\, d-1,\ d \,\in\, \N_\d\}$$
		is a $\D$-basis of $V$. 
		
		\item  The value of $\langle \cdot  , \,\cdot \rangle$ on any pair of the above basis vectors is $0$, except in the
		following cases: 
		\begin{itemize} 
			\item  If $\sigma \,=\, \sigma_c $, then $ \langle X^l v^d_j,\, X^{d -1 -l} v^d_j \rangle \,\in\, \D^*$.
			
			\item If $\sigma \,=\, {\rm Id}$ and $\epsilon \,=\,1$, then $\langle X^l v^d_j ,\, X^{d -1-l} v^d_j \rangle \,\in\, \D^*$ 
			when $d \,\in\, \O_\d$, and 
			$$\langle X^l v^d_j ,\, X^{d-1 -l} v^d_{j+1} \rangle \,\in\, \D^* $$ when $ d \,\in\,\E_\d $ and $j$ is odd. 
			
			\item If $\sigma \,= \,{\rm Id}$ and $\epsilon \,= \,-1$, then
			$\langle X^l v^d_j ,\, X^{d -1-l} v^d_j \rangle \,\in\, \D^*$ when $d \in \E_\d$, and
			$$\langle X^l v^d_j ,\, X^{d-1 -l} v^d_{j+1} \rangle \,\in\, \D^* $$ when $d\,\in\,\O_\d $ and $j$ is odd.
		\end{itemize}
	\end{enumerate}
\end{proposition}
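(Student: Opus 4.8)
The plan is to assemble the proposition from the three results already established for the isotypical components, since essentially all the analytic work has been localized there. Properties (1) and (2) concern only the $\s\l_2(\R)$-module structure of $V$ and are insensitive to the presence of the form, so I would deduce them verbatim from Proposition \ref{J-basis} (equivalently Lemma \ref{D-basis}(1)--(2)); note that they hold for \emph{any} $\D$-basis of $L(d-1)$, hence in particular for the basis we are about to select. The only genuinely new content is the orthogonality statement (3).

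The first step is to reduce to a single isotypical component. By \eqref{isotypicalcomp} we have $V \,=\, \bigoplus_{d \in \N_\d} M(d-1)$, and Lemma \ref{ortho-isotypical} shows that this is an \emph{orthogonal} direct sum with respect to $\langle \cdot,\, \cdot \rangle$ and that the restriction of the form to each $M(d-1)$ is non-degenerate. Consequently it suffices to construct, for each fixed $d \,\in\, \N_\d$ separately, a $\D$-basis of $L(d-1)$ with the desired \emph{internal} orthogonality: any pairing $\langle X^l v^d_j,\, X^{l'} v^{d'}_{j'} \rangle$ with $d \,\neq\, d'$ vanishes automatically, and the union over $d$ of the per-component bases is a $\D$-basis of $V$ by the decomposition above. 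I would emphasize that the cross-component vanishing comes from Lemma \ref{ortho-isotypical} and is not an extra hypothesis, so the locally constructed bases genuinely glue into a basis of $V$ whose full pairing table has exactly the claimed nonzero entries.

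The second step is to apply Lemma \ref{unitary-J-basis-lemma} to each non-degenerate component $M(d-1)$, producing for each $d$ a $\D$-basis $\{v^d_j \mid 1 \,\leq\, j \,\leq\, t_d\}$ of $L(d-1)$ for which $\{X^l v^d_j\}$ is a $\D$-basis of $M(d-1)$ and every internal pairing vanishes except in the cases recorded there. The final step is bookkeeping: I would verify that the exceptional cases of Lemma \ref{unitary-J-basis-lemma}, stated in terms of the parity of $d$, translate precisely into the conditions of part (3), using that $d \,\in\, \O_\d$ if and only if $d$ is odd and $d \,\in\, \E_\d$ if and only if $d$ is even by the definitions in \eqref{Nd-Ed-Od}; the three cases $\sigma = \sigma_c$, $\sigma = \text{Id}$ with $\epsilon = 1$, and $\sigma = \text{Id}$ with $\epsilon = -1$ then match one-to-one. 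Since the substantive analysis---locating a vector $x_1$ with $\langle x_1,\, X^{d-1} x_1 \rangle \,\neq\, 0$, or a hyperbolic pair $x_1, y_1$ in the alternating case, and peeling off its $\s\l_2$-submodule by induction on $\dim_\D V$---is already carried out in Lemma \ref{unitary-J-basis-lemma}, no serious obstacle remains; the only point demanding care is the global-versus-local distinction noted above, ensuring the per-component orthogonality upgrades to orthogonality across all of $V$.
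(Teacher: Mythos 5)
Your proposal is correct and follows essentially the same route as the paper's own (very terse) proof: the paper also invokes Lemma \ref{ortho-isotypical} for the orthogonal decomposition $V = \bigoplus_{d \in \N_\d} M(d-1)$ and then cites Lemma \ref{unitary-J-basis-lemma} componentwise, with parts (1)--(2) implicit from the $\s\l_2(\R)$-module structure as in Proposition \ref{J-basis}. Your explicit attention to the cross-component vanishing and the parity-to-$\O_\d$/$\E_\d$ translation merely spells out details the paper leaves to the reader.
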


\begin{proof}
	Lemma \ref{ortho-isotypical} gives the
	orthogonal decomposition $ V  \,=  \,\bigoplus_{d \in \N_\d} M(d-1)$ with respect to the
	non-degenerate form $\<>$ on $V$. The proposition now follows from Lemma \ref{unitary-J-basis-lemma}.
\end{proof} 

Let $ (\cdot, \cdot)_{d}$ be the form on $ L(d-1)$ as defined in \eqref{new-form}.

\begin{remark}\label{inv-form-old-new}
	Observe that if $\{ X,\, H, \,Y\}$ is a $\s\l_2 (\R)$-triple in the Lie algebra $\s\u (V, \<>)$,
	and $g \,\in\, \ZC_{{\rm GL}(V)}(X, H, Y)$, then 
	$(g x,\, gy)_{d}\,= \,(x,\, y)_{d}$ for all $x,\,y \,\in\, L(d-1)$ if and only if $ \langle g v ,\, gw \rangle
	\,=\, \langle v  ,\, w \rangle$ for all $v,\,w \,\in\, M(d-1)$.
\end{remark}

\begin{remark}\label{unitary-J-basis-rmk}
	It is easy to see that when $\<>$ is Hermitian, then the form $(\cdot ,\,\cdot)_{d}$ is Hermitian (respectively, skew-Hermitian)
	if $d$ is odd (respectively, even). 
	When $\<>$ is symmetric, then $(\cdot ,\,\cdot)_{d}$ is symmetric (respectively, symplectic) if $d$ is odd
	(respectively, even).
	When $\<>$ is symplectic, then  $(\cdot ,\,\cdot)_{d}$ is  symplectic (respectively, symmetric) if $d$ is odd
	(respectively, even).
	Lastly, when $\<>$ is skew-Hermitian, then  $(\cdot ,\, \cdot)_{d}$ is skew-Hermitian (respectively, Hermitian)
	if $d$ is odd (respectively, even).
	
	From Lemma \ref{unitary-J-basis-lemma} it follows that $(\cdot ,\, \cdot)_{d}$ is
	non-degenerate.
	The $\D$-basis elements $$\{ v^d_j \,\mid\,  1\,\leq\, j\, \leq\, t_d \}$$ of $L(d-1)$ in Proposition \ref{unitary-J-basis}
	are modified as follows:
	\begin{enumerate} 
		\item 
		If $\D\,=\,\R$ and  $\epsilon \,=\,1$, by suitable rescaling each element of
		$\{ v^d_j \,\mid\,  1\,\leq\, j\, \leq\, t_d \}$ we may assume that 
		\begin{itemize}
			\item  $\langle v^d_j,\, X^{d-1} v^d_j \rangle\,=\, \pm 1$ when
			$d \,\in\, \O_\d$, and
			
			\item $\langle v^d_j ,\, X^{d-1} v^d_{j+1} \rangle \,=\, 1$ when $d \,\in\, \E_\d$ and $j$ is odd.
		\end{itemize}
		In particular, $(v_1^d,\, \cdots,\, v_{t_d}^d)$ is an standard orthogonal basis of $L(d-1)$ with respect to
		$(\cdot, \,  \cdot)_{d}$ for $d \,\in\, \O_\d$. 
		If $\D\,=\,\R$ and  $\epsilon \,=\,-1$, analogously we may assume that the elements of
		the $\R$-basis  $$\{ v^d_j \,\mid\,  1\,\leq\, j \,\leq \,t_d \}$$ of $L(d-1)$ in Proposition \ref{unitary-J-basis}
		satisfy the condition that 
		\begin{itemize}
			\item $\langle v^d_j,\, X^{d-1} v^d_j \rangle \,=\, \pm 1$ when
			$d \,\in \,\E_\d$, and
			
			\item $\langle v^d_j,\, X^{d-1} v^d_{t_d/2 +j} \rangle \,=\, 1 $ when
			$d \,\in\, \O_\d$ and $1\,\leq\, j\,\leq\, t_d/2$.
		\end{itemize}
		In particular, $(v_1^d,\,\cdots,\, v_{t_d}^d)$ is an orthogonal basis for $d \,\in \,\E_\d$, and
		$$(v_1^d,\, \cdots,\, v_{t_d/2}^d;\, v^d_{t_d/2 +1},\, \cdots ,\,v_{t_d}^d )$$ is a symplectic basis for
		$d \,\in\, \O_\d$ of $L(d-1)$ with respect to $(\cdot, \, \cdot)_{d}$.
		
		\item 
		If $\D\,=\,\C$, $\epsilon \,=\,1$ and $ \sigma \,=\,\sigma_c$, rescaling the elements of the $\C$-basis
		$\{ v^d_j \,\mid \, 1\,\leq\, j \,\leq\, t_d \}$ we may assume that
		\begin{itemize}
			\item $\langle v^d_j ,\, X^{d-1} v^d_j \rangle\,=\, \pm 1$ when 
			$d\,\in\, \O_\d$, and
			
			\item $\langle v^d_j ,\, X^{d-1} v^d_j \rangle \,=\, \pm \sqrt{-1}$ when $d \,\in\,  \E_\d$.
		\end{itemize}
		In particular, $(v_1^d,\, \cdots,\, v_{t_d}^d)$ is an orthogonal basis of $L(d-1)$ with respect to $(\cdot, \, \cdot)_{d}$
		for $d \,\in\, \N_\d$.
		
		\item 
		If $\D\,=\, \H$, $\epsilon\, =\,1$ and $\sigma \,=\, \sigma_c$, after rescaling and conjugating 
		the elements of the $\H$-basis $\{ v^d_j \,\mid\,  1\,\leq\, j \,\leq\, t_d \}$ of $L(d-1)$
		by suitable scalars (see Lemma \ref{H-conjugate}) the elements of the $\H$-basis, we may assume that 
		\begin{itemize}
			\item $\langle v^d_j,\, X^{d-1} v^d_j \rangle\,=\, \pm 1$ when $d \,\in\, \O_\d$, and
			
			\item $\langle v^d_j,\, X^{d-1} v^d_j \rangle\,=\, \jb$ when $d \,\in\, \E_\d$.
		\end{itemize} 
		If $\D\,=\,\H$, $\epsilon \,=\, -1$ and $\sigma  \,=\, \sigma_c$, analogously we may assume that the elements of
		the $\H$-basis $\{ v^d_j \,\mid\,  1\,\leq\, j \,\leq \,t_d \}$ of $L(d-1)$
		satisfy
		\begin{itemize}
			\item $\langle v^d_j, \,X^{d-1} v^d_j \rangle\,=\, \pm 1$ when $d \,\in\, \E_\d$, and
			
			\item $\langle v^d_j,\, X^{d-1} v^d_j \rangle\,= \,\jb$ when $d \,\in\, \O_\d$. 
		\end{itemize}
	\end{enumerate}
\end{remark}

Let $( v^{d}_1,\, \cdots,\, v^{d}_{t_{d}} )$ be an ordered $\D$-basis of $L(d-1)$ as in 
Proposition \ref{unitary-J-basis} satisfying the properties as Remark \ref{unitary-J-basis-rmk}.
The proofs of the following lemmas are straightforward and they are omitted.

\begin{lemma}\label{orthogonal-basis-R}
	Let  $\D\,=\, \R$, $\sigma \,=\,{\rm Id}$ and $\epsilon \,=\,1$. Fix $d \,\in\, \N_\d $ and $1 \,\leq\, j \,\leq\, t_d$.
	\begin{enumerate}
		\item If $\eta \,\in\, \E_\d$ and $j$ is odd, define
		$$
		{w}^{\eta}_{jl}\,:=\,  
		\begin{cases}
		\big(X^l v^{\eta}_j  + X^{\eta-1-l} v^{\eta}_{j+1} \big)\frac{1}{\sqrt{2}}  &  \text{ if }\ 0 \,\leq\, l \,\leq\,
		\eta -1 \vspace*{.2cm} \\ 
		\big(  X^{2\eta-1-l} v^{\eta}_j  - X^{l-\eta} v^{\eta}_{j+1} \big) \frac{1}{\sqrt{2}}   & \text{ if }\ \eta\,\leq\, l
		\,\leq\, 2\eta -1.
		\end{cases}
		$$
		Then 
		$ \langle  {w}^{\eta}_{jl},\, {w}^{\eta}_{j(2\eta-1-l)} \rangle \,=\,0$, and $ \langle  {w}^{\eta}_{jl}, \,
		{w}^{\eta}_{jl} \rangle  \,=\, (-1)^{l}\,  \langle  v^{\eta}_j,\, X^{\eta-1}v^{\eta}_{j+1} \rangle$.

		\item For $\theta \,\in\, \O_\d$, define
		$$ \ \
		{ w}^{\theta}_{jl}\,:=\,
		\begin{cases}
		\big( X^l v^{\theta}_j  +  X^{\theta-1-l} v^{\theta}_j \big)\frac{1}{\sqrt{2}}    & \text{ if }\ 0\,\leq\,l\,<\, (\theta-1)/2 \vspace*{.2cm}\\
		X^l v^{\theta}_j     & \text{ if }\  l \,= \,(\theta-1)/2 \vspace*{.2cm}\\
		\big(   X^{\theta-1-l} v^{\theta}_j - X^l v^{\theta}_j \big) \frac{1}{\sqrt{2}}  & \text{ if }\ (\theta-1)/2 \,<\,
		l \,\leq \,\theta -1.
		\end{cases} 
		$$
		Then 
		$ \langle{w}^{\theta}_{jl}, \,{w}^{\theta}_{j(\theta-1-l)} \rangle \,=\,0$
		and
		$$
		\langle  {w}^{\theta}_{jl}, \,{w}^{\theta}_{jl} \rangle  \,=\, 
		\begin{cases}
		(-1)^{l}   \langle  v^{\theta}_j, X^{\theta-1}v^{\theta}_j \rangle  & \text{ if }\ 0 \,\leq\, l \,<\, (\theta-1)/2 \vspace*{.14cm}\\
		
		(-1)^{l}  \langle  v^{\theta}_j, X^{\theta-1}v^{\theta}_j \rangle   & \text{ if } \  l \,=\, (\theta-1)/2 \vspace*{.14cm}\\
		
		(-1)^{l+1} \langle  v^{\theta}_j, X^{\theta-1}v^{\theta}_j \rangle  &    \text{ if }\ (\theta-1)/2 \,<\, l \,\leq\, \theta -1.
		\end{cases}
		$$
	\end{enumerate}
	Therefore, for any $\theta \,\in\, \O_\d$, $$\langle  {w}^{\theta}_{jl},\, {w}^{\theta}_{jl')} \rangle \,=\, 0$$
	when $l \,\neq\, l'$ and $0 \,\leq\, l,\, l' \,\theta-1$.
\end{lemma}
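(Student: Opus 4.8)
The plan is to reduce every assertion to a single bilinear identity together with the orthogonality pattern already recorded in Proposition \ref{unitary-J-basis}(3) and the normalizations in Remark \ref{unitary-J-basis-rmk}. Throughout fix $d \,\in\, \N_\d$ and $1 \,\leq\, j \,\leq\, t_d$, and abbreviate $a \,:=\, v^d_j$ (and, in the even case, $b \,:=\, v^d_{j+1}$). Since $\{X,\,H,\,Y\} \,\subset\, \s\u(V,\, \langle\cdot,\,\cdot\rangle)$, the endomorphism $X$ satisfies $\langle Xu,\, w\rangle + \langle u,\, Xw\rangle \,=\, 0$, so by iteration
\[
\langle X^l u,\, X^k w\rangle \,=\, (-1)^l \langle u,\, X^{l+k} w\rangle , \qquad u,\, w \,\in\, V,\ \ l,\, k \,\geq\, 0 .
\]
Combined with $X^d a \,=\, 0$ and the fact (Proposition \ref{unitary-J-basis}(3), for $\sigma \,=\, {\rm Id}$, $\epsilon \,=\, 1$) that the only nonzero pairings among the vectors $X^l a,\, X^l b$ are those whose two $X$-degrees sum to $d-1$, this reduces each inner product of two basis vectors to an explicit scalar multiple of $c$, where $c \,:=\, \langle a,\, X^{\theta-1} a\rangle$ in the odd case and $c \,:=\, \langle a,\, X^{\eta-1} b\rangle$ in the even case. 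I would record these facts at the outset, noting that $(\cdot,\,\cdot)_d$ is symmetric for $d$ odd (so $c \,\neq\, 0$ pairs $a$ with itself) and symplectic for $d$ even (so $a$ must be paired with $b$), and then use them repeatedly.

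Next I would treat the odd case (part (2)) first, as it is the cleaner of the two. Here $d \,=\, \theta$ is odd, $(\theta-1)/2$ is an integer, and the only surviving pairings are $\langle X^l a,\, X^{\theta-1-l} a\rangle \,=\, (-1)^l c$. Expanding each of the three defining branches of $w^\theta_{jl}$ bilinearly, every ``diagonal'' term $\langle X^m a,\, X^m a\rangle$ vanishes unless $m \,=\, (\theta-1)/2$, and the two surviving cross terms are governed by the sign identity. Using that $\theta$ is odd, so $(-1)^{\theta-1-l} \,=\, (-1)^l$, one obtains $\langle w^\theta_{jl},\, w^\theta_{jl}\rangle \,=\, (-1)^l c$ in the first two ranges and $(-1)^{l+1} c$ in the third, while $\langle w^\theta_{jl},\, w^\theta_{j(\theta-1-l)}\rangle \,=\, 0$ because the two cross terms cancel. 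For the final orthogonality claim I would observe that, as a combination of the $X^m a$, the vector $w^\theta_{jl}$ is supported on the index set $\{l,\, \theta-1-l\}$; hence $\langle w^\theta_{jl},\, w^\theta_{jl'}\rangle$ can be nonzero only when an index from $\{l,\, \theta-1-l\}$ and an index from $\{l',\, \theta-1-l'\}$ sum to $\theta-1$, forcing $l' \,\in\, \{l,\, \theta-1-l\}$, both of which have already been computed.

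For the even case (part (1)), $d \,=\, \eta$ is even and $j$ is odd, so the pairing is carried between $(a,\,b) \,=\, (v^\eta_j,\, v^\eta_{j+1})$: the surviving pairings are $\langle X^l a,\, X^{\eta-1-l} b\rangle \,=\, (-1)^l c$ and, by symmetry of the form together with the identity above, $\langle X^l b,\, X^{\eta-1-l} a\rangle \,=\, -(-1)^l c$, the extra minus sign coming from $(-1)^{\eta-1} \,=\, -1$. I would split the range $0 \,\leq\, l \,\leq\, 2\eta-1$ at $l \,=\, \eta$ according to the two branches of the definition, substitute $l \,\mapsto\, l-\eta$ on the upper branch to rewrite $w^\eta_{jl}$ uniformly as $(X^{\eta-1-m} a - X^m b)/\sqrt{2}$, and expand. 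The computation shows that all $aa$- and $bb$-pairings drop out, that $\langle w^\eta_{jl},\, w^\eta_{jl}\rangle \,=\, (-1)^l c$ on both branches (the parity of $\eta$ ensuring the two branches agree), and that in $\langle w^\eta_{jl},\, w^\eta_{j(2\eta-1-l)}\rangle$ the two surviving cross terms are $-(-1)^l c$ and $+(-1)^l c$, which cancel.

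The only real subtlety, and the place where I would be most careful, is the uniform sign bookkeeping: one must track the parity of $\eta$ (even) versus $\theta$ (odd) when applying the identity, and the minus sign produced when transposing $a$ and $b$ through the symmetric form. Everything else is a routine bilinear expansion, which is why the statement is labelled straightforward.
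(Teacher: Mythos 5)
Your proof is correct, and it is essentially the paper's own (omitted) argument: the paper explicitly declares these lemmas ``straightforward'' and skips the proofs, and your route — iterating $\langle Xu,\,w\rangle + \langle u,\,Xw\rangle = 0$ to get $\langle X^l u,\, X^k w\rangle = (-1)^l\langle u,\, X^{l+k}w\rangle$, invoking the vanishing pattern of Proposition \ref{unitary-J-basis}(3) to kill all diagonal terms, and expanding bilinearly while tracking the parity of $\eta$ versus $\theta$ — is exactly the intended computation. All of your sign bookkeeping checks out (in particular $\langle X^l b,\, X^{\eta-1-l}a\rangle = -(-1)^l c$ from $(-1)^{\eta-1}=-1$ in the even case, and the agreement $(-1)^{\theta-1-l}=(-1)^l$ in the odd case), and your support argument correctly settles the final orthogonality claim $\langle w^{\theta}_{jl},\, w^{\theta}_{jl'}\rangle = 0$ for $l \neq l'$.
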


\begin{lemma}\label{orthogonal-basis-C}
	Let  $\D\,= \,\C$, $\sigma \,= \,\sigma_c $ and $\epsilon \,=\,1$. Fix $d \,\in\, \N_\d$ and $1\,\leq\, j\,\leq\, t_d$.
	\begin{enumerate}
		\item For $\eta \,\in\, \E_\d$, define
		$$
		{\widetilde w}^{\eta}_{jl}\,:=\,
		\begin{cases}
		\big({X^l v^{\eta}_j } + X^{\eta-1-l} v^{\eta}_j\sqrt{-1} \big) \frac{1}{\sqrt{2}}  & \text{ if }\ 0 \,\leq\, l \,<\, \eta/2  \vspace*{.2cm} \\
		\big( X^{\eta-1-l} v^{\eta}_j - X^l v^{\eta}_j\sqrt{-1} \big) \frac{1}{\sqrt{2}}    &  \text{ if }\ \eta/2\,\leq\, l\,\leq\, \eta -1.
		\end{cases}
		$$
		Then $ \langle  \widetilde{w}^{\eta}_{jl}, \,\widetilde{w}^{\eta}_{j(\eta-1-l)} \rangle\,=\,0$  and $ \langle
		\widetilde{w}^{\eta}_{jl},\, \widetilde{w}^{\eta}_{jl} \rangle  \,=\, (-1)^l\sqrt{-1}  \langle  v^{\eta}_j, X^{\eta-1}v^{\eta}_j \rangle.$
		
		\item For $\theta \,\in\, \O_\d$, define
		$$ \ 
		{\widetilde w}^{\theta}_{jl}\,:=\,
		\begin{cases}
		\big( X^l v^{\theta}_j  +  X^{\theta-1-l} v^{\theta}_j  \big) \frac{1}{\sqrt{2}} & \text{ if }\ 0 \,\leq\, l \,<\, (\theta-1)/2 \vspace*{.2cm}\\
		
		X^l v^{\theta}_j     & \text{ if }\  l \,=\, (\theta-1)/2 \vspace*{.2cm} \\
		
		\big( X^{\theta-1-l}v^{\theta}_j - X^l v^{\theta}_j\big) \frac{1}{\sqrt{2}}  &  \text{ if }\ (\theta-1)/2 \,<\, l \,\leq\, \theta -1.
		\end{cases}
		$$
		Then 
		$ \langle  \widetilde{w}^{\theta}_{jl}, \,\widetilde{w}^{\theta}_{j(\theta-1-l)} \rangle \,=\,0$, 
		and
		$$
		\langle  \widetilde{w}^{\theta}_{jl}, \widetilde{w}^{\theta}_{jl} \rangle  \,=\, 
		\begin{cases}
		(-1)^{l}  \langle  v^{\theta}_j, X^{\theta-1}v^{\theta}_j \rangle  & \text{ if }\ 0 \,\leq \,l \,<\, (\theta-1)/2 \vspace*{.2cm} \\
		
		(-1)^{l}  \langle  v^{\theta}_j, X^{\theta-1}v^{\theta}_j \rangle  & \text{ if }\  l \,=\, (\theta-1)/2 \vspace*{.2cm}\\
		
		(-1)^{l+1}  \langle  v^{\theta}_j, X^{\theta-1}v^{\theta}_j \rangle & \text{ if }\ (\theta-1)/2 \,<\, l \,\leq\, \theta -1.
		\end{cases}
		$$
	\end{enumerate}
	Therefore, for any $\theta \,\in\, \O_\d$, $$\langle  \widetilde{w}^{\theta}_{jl},\, \widetilde{w}^{\theta}_{jl')} \rangle
	\,=\, 0$$ when $l \,\neq\, l'$ and $0 \,\leq\, l ,\,l' \,\leq \,\theta-1$.
\end{lemma}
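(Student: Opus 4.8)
The plan is to reduce every asserted identity to two elementary properties of the basis produced by Proposition \ref{unitary-J-basis}, both valid in the present situation $\D = \C$, $\sigma = \sigma_c$, $\epsilon = 1$. Fix $d \in \N_\d$ and $1 \leq j \leq t_d$, and abbreviate $v := v^d_j$ and $c := \langle v,\, X^{d-1} v\rangle$. Since $\{X,\,H,\,Y\} \subset \s\u (V,\, \<>)$, the operator $X$ is skew for the form, so $\langle X^h u,\, X^l u'\rangle = (-1)^h \langle u,\, X^{h+l} u'\rangle$ for all $u,\,u' \in V$; in particular $\langle X^l v,\, X^{d-1-l} v\rangle = (-1)^l c$. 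Next, by Proposition \ref{unitary-J-basis}(1) the vector $X^l v$ is an $H$-eigenvector of eigenvalue $2l+1-d$, so Lemma \ref{extreme} forces $\langle X^l v,\, X^{l'} v\rangle = 0$ whenever $(2l+1-d)+(2l'+1-d) \neq 0$, i.e. whenever $l + l' \neq d - 1$. Finally I shall use the Hermitian symmetry $\langle u',\, u\rangle = \overline{\langle u,\, u'\rangle}$, the sesquilinearity $\langle u\alpha,\, u'\rangle = \overline{\alpha}\,\langle u,\, u'\rangle$ and $\langle u,\, u'\alpha\rangle = \langle u,\, u'\rangle \alpha$, and the normalization from Remark \ref{unitary-J-basis-rmk}(2): $c = \pm 1$ is real when $d \in \O_\d$, while $c = \pm\sqrt{-1}$ is purely imaginary when $d \in \E_\d$.

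For the odd case $\theta \in \O_\d$ I would expand each pairing using the two facts above, noting that $\theta - 1$ is even, so $(-1)^{\theta-1-l} = (-1)^l$, and $c$ is real. For $0 \leq l < (\theta-1)/2$ the two square terms $\langle X^l v,\, X^l v\rangle$ and $\langle X^{\theta-1-l}v,\, X^{\theta-1-l}v\rangle$ in $\langle \widetilde{w}^\theta_{jl},\, \widetilde{w}^\theta_{jl}\rangle$ vanish by Lemma \ref{extreme} (their eigenvalues are nonzero), while the two cross terms each equal $(-1)^l c$; as $c$ is real they sum to $2(-1)^l c$, giving $\langle \widetilde{w}^\theta_{jl},\, \widetilde{w}^\theta_{jl}\rangle = (-1)^l c$. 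The mirror pairing $\langle \widetilde{w}^\theta_{jl},\, \widetilde{w}^\theta_{j(\theta-1-l)}\rangle$ produces the same two cross terms with opposite signs and hence vanishes. The range $(\theta-1)/2 < l \leq \theta-1$ is handled identically, the sign change in the definition of $\widetilde{w}^\theta_{jl}$ turning the answer into $(-1)^{l+1} c$.

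The midpoint $l = (\theta-1)/2$ is the one place where Lemma \ref{extreme} is inapplicable, the eigenvalue being $0$; there $\widetilde{w}^\theta_{jl} = X^l v$, and I would compute its norm directly from the first identity, $\langle X^l v,\, X^l v\rangle = (-1)^l \langle v,\, X^{\theta-1} v\rangle = (-1)^l c$, matching the stated value. The even case $\eta \in \E_\d$ proceeds along the same lines; the new feature is the scalar $\sqrt{-1}$ in the definition, so one must track the conjugation when pulling it out of the first slot, $\langle X^{\eta-1-l}v\,\sqrt{-1},\, X^l v\rangle = -\sqrt{-1}\,\langle X^{\eta-1-l}v,\, X^l v\rangle$. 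Since now $\eta - 1$ is odd, $(-1)^{\eta-1-l} = -(-1)^l$, and $c$ is purely imaginary, the two cross terms again coincide, both equal to $(-1)^l c\,\sqrt{-1}$ (a real number, as it must be), whence $\langle \widetilde{w}^\eta_{jl},\, \widetilde{w}^\eta_{jl}\rangle = (-1)^l \sqrt{-1}\,\langle v,\, X^{\eta-1} v\rangle$; the mirror pairing cancels exactly as before, and the computation is the same in the range $\eta/2 \leq l \leq \eta - 1$.

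For the concluding orthogonality statement (asserted for $\theta \in \O_\d$), I would observe that $\widetilde{w}^\theta_{jl}$ lies in the span of the two $H$-eigenvectors $X^l v$ and $X^{\theta-1-l} v$, of eigenvalues $\pm(2l+1-\theta)$. By Lemma \ref{extreme} a pairing of eigenvectors vanishes unless their eigenvalues are negatives of one another, so $\langle \widetilde{w}^\theta_{jl},\, \widetilde{w}^\theta_{jl'}\rangle$ can be nonzero only for $l' = l$ or $l' = \theta-1-l$; the latter case was just shown to give $0$, which yields orthogonality for all $l \neq l'$. The only genuinely delicate points in the whole argument are the midpoint eigenvalue-zero computation and the conjugation bookkeeping in the even case; everything else is a routine expansion, which is presumably why the authors declare the proof straightforward and omit it.
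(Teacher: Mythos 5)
Your proof is correct, and it is precisely the routine verification the paper declares straightforward and omits: expansion of each pairing using the skew-symmetry relation $\langle X^h u,\, X^l u'\rangle = (-1)^h \langle u,\, X^{h+l} u'\rangle$, the $H$-eigenvalue orthogonality from Lemma \ref{extreme} (pairings of $X^l v$ and $X^{l'} v$ vanish unless $l+l'=d-1$), the normalizations of Remark \ref{unitary-J-basis-rmk}(2) ($\langle v^\theta_j, X^{\theta-1}v^\theta_j\rangle$ real for $\theta\in\O_\d$, purely imaginary for $\eta\in\E_\d$), and conjugate-linearity in the first slot. You also correctly flag and handle the only two delicate points — the midpoint $l=(\theta-1)/2$, where Lemma \ref{extreme} fails and one computes $\langle X^l v, X^l v\rangle = (-1)^l\langle v, X^{\theta-1}v\rangle$ directly, and the conjugation bookkeeping for the factor $\sqrt{-1}$ in the even case — and your two-dimensional eigenspace argument reducing the final orthogonality claim to the already-verified mirror pairing $l'=\theta-1-l$ is exactly what the lemma's concluding ``Therefore'' intends.
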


\begin{lemma}\label{orthogonal-basis-H}
	Let  $\D= \H$, $\sigma = \sigma_c$ and $\epsilon \,=\,1$. Fix ${d}$ and $1 \,\leq\, j \,\leq\, t_{d}$. 
	\begin{enumerate}
		\item For $\eta\,\in\, \E_\d$, define
		$$
		\widehat{w}_{jl}^{\eta}\,:=\,
		\begin{cases}
		\big( {X^l v^{\eta}_j } + X^{\eta-1-l} v^{\eta}_j \alpha_j \big)\frac{1}{\sqrt{2}}   & \text{ if } \ 
		0 \,\leq\, l \,<\, \eta/2   \vspace*{.2cm} \\
		\big( X^{\eta-1-l} v^{\eta}_j - X^l v^{\eta}_j \alpha_j \big)\frac{1}{\sqrt{2}} 
		& \text{ if }  \ \eta/2 \,\leq\, l\,\leq\, \eta -1
		\end{cases}
		$$ 
		where $\alpha_j \,= \,\langle v^{\eta}_j , \,X^{\eta-1} v^{\eta}_j  \rangle$. Then
		$$ \langle  \widehat{w}^{\eta}_{jl},\, \widehat{w}^{\eta}_{j(\eta-1-l)} \rangle \,=\,0 \ \ \text{ and }\ \
		\langle \widehat{w}^{\eta}_{jl},\, \widehat{w}^{\eta}_{jl} \rangle\,=\, (-1)^{l+1}{\rm Nrd}(\langle v^{\eta}_j ,\,
		X^{\eta-1} v^{\eta}_j \rangle)\, .$$
		
		\item When $\theta \,\in\, \O_\d$, define
		$$
		{\widehat w}^{\theta}_{jl}\,:=\,
		\begin{cases}
		\big( X^l v^{\theta}_j  +  X^{\theta-1-l} v^{\theta}_j \big)\frac{1}{\sqrt{2}} & \text{ if }\ 0 \,\leq\, l \,<\, (\theta-1)/2 \vspace*{.1cm}\\
		X^l v^{\theta}_j    & \text{ if }\  l \,= \,(\theta-1)/2 \vspace*{.1cm}\\
		\big( X^{\theta-1-l} v^{\theta}_j - X^l v^{\theta}_j \big)\frac{1}{\sqrt{2}} &  \text{ if }\ (\theta-1)/2 \,<\, l \,\leq\, \theta -1.
		\end{cases} 
		$$
		Then 
		$ \langle \widehat{w}^{\theta}_{jl},\, \widehat{w}^{\theta}_{j(\theta-1-l)} \rangle \,=\,0 $, 
		and
		$$
		\langle \widehat{w}^{\theta}_{jl},\, \widehat{w}^{\theta}_{jl} \rangle\, =\, 
		\begin{cases}
		(-1)^{l}\,   \langle  v^{\theta}_j, X^{\theta-1}v^{\theta}_j \rangle & \text{ if }\ 0 \,\leq\, l \,<\, (\theta-1)/2 \vspace*{.14cm}\\
		(-1)^{l}  \langle  v^{\theta}_j, X^{\theta-1}v^{\theta}_j \rangle & \text{ if }\  l \,=\, (\theta-1)/2 \vspace*{.14cm}\\
		(-1)^{l+1}\,  \langle  v^{\theta}_j, X^{\theta-1}v^{\theta}_j \rangle  &    \text{ if }\ (\theta-1)/2 \,<\, l \,\leq\, \theta -1.
		\end{cases}
		$$
	\end{enumerate}
	Therefore, for any $d\,\in \,\N_\d$, $$\langle  \widehat{w}^{d}_{jl},\, \widehat{w}^{d}_{jl'} \rangle \, =\, 0$$
	when $l \,\neq \,l'$ and $0 \,\leq \,l,\,l' \,\leq\, d-1$.
\end{lemma}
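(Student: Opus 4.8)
The plan is to fix $d \,\in\, \N_\d$ and an index $j$, abbreviate $e_l \,:=\, X^l v^d_j$ for $0 \,\leq\, l \,\leq\, d-1$ and $\alpha \,:=\, \langle v^d_j,\, X^{d-1} v^d_j\rangle$, and reduce the whole statement to a single pairing table for the $e_l$. Since $\{X,H,Y\} \,\subset\, \s\u(V,\langle\cdot,\cdot\rangle)$, the relation $\langle Xx, y\rangle \,=\, -\langle x, Xy\rangle$ gives $\langle X^a u,\, X^b w\rangle \,=\, (-1)^a\langle u,\, X^{a+b}w\rangle$ for all $a,b \,\geq\, 0$. Applying Lemma \ref{extreme} to the $H$-eigenvectors $e_a$ and $e_b$, whose eigenvalues $2a+1-d$ and $2b+1-d$ sum to zero only when $a+b = d-1$, I would obtain $\langle e_a, e_b\rangle \,=\, 0$ unless $a+b = d-1$; and the shift identity gives $\langle e_a, e_b\rangle \,=\, (-1)^a\alpha$ when $a+b=d-1$. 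By Remark \ref{unitary-J-basis-rmk}(3) one has $\alpha = \jb$ for $d \,\in\, \E_\d$ and $\alpha = \pm 1$ for $d \,\in\, \O_\d$; in both cases $\sigma_c(\alpha)\alpha = \alpha\sigma_c(\alpha) = {\rm Nrd}(\alpha)$, with $\alpha^2 = -{\rm Nrd}(\alpha)$ in the (purely imaginary) even case and $\alpha^2 = {\rm Nrd}(\alpha)$ in the (real) odd case. Throughout, scalars are extracted via $\langle x, y\beta\rangle = \langle x, y\rangle\beta$ and $\langle x\beta, y\rangle = \sigma_c(\beta)\langle x, y\rangle$.

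For the even case $\eta \,\in\, \E_\d$ I would first record two consequences of the table: $\langle e_a, e_a\rangle = 0$ for every $a$ (as $2a \neq \eta-1$), and $(-1)^{\eta-1-l} = -(-1)^l$ (since $\eta-1$ is odd). For $0 \,\leq\, l \,<\, \eta/2$ a direct computation shows $\widehat w^\eta_{j(\eta-1-l)} = (e_l - e_{\eta-1-l}\alpha)\frac{1}{\sqrt{2}}$, and in the expansion of $\langle \widehat w^\eta_{jl},\, \widehat w^\eta_{j(\eta-1-l)}\rangle$ only the two cross terms survive; pulling out $\alpha$ on the right and $\sigma_c(\alpha)$ on the left they sum to a multiple of $\alpha^2 + \sigma_c(\alpha)\alpha = -{\rm Nrd}(\alpha)+{\rm Nrd}(\alpha) = 0$, which is the asserted orthogonality. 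The same expansion of $\langle \widehat w^\eta_{jl},\, \widehat w^\eta_{jl}\rangle$ produces $\tfrac{(-1)^l}{2}(\alpha^2 - \sigma_c(\alpha)\alpha) = (-1)^{l+1}{\rm Nrd}(\alpha)$; the branch $\eta/2 \,\leq\, l \,\leq\, \eta-1$ is identical after interchanging $l$ and $\eta-1-l$.

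The odd case $\theta \,\in\, \O_\d$ is the same computation but lighter, because $\alpha = \pm 1$ is central and $\theta-1$ is even, so $(-1)^{\theta-1-l} = (-1)^l$ and $l = (\theta-1)/2$ is the unique index with $\langle e_l, e_l\rangle = (-1)^l\alpha \neq 0$; this is exactly why the middle vector $\widehat w^\theta_{j,(\theta-1)/2}$ is left equal to $e_{(\theta-1)/2}$. Expanding the three branches against the table yields the stated diagonal values $(-1)^l\langle v^\theta_j, X^{\theta-1}v^\theta_j\rangle$ for $l \,\leq\, (\theta-1)/2$ and $(-1)^{l+1}\langle v^\theta_j, X^{\theta-1}v^\theta_j\rangle$ for $l \,>\, (\theta-1)/2$, while the antidiagonal pairing $\langle \widehat w^\theta_{jl},\, \widehat w^\theta_{j(\theta-1-l)}\rangle$ cancels just as before. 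For the concluding orthogonality $\langle \widehat w^d_{jl},\, \widehat w^d_{jl'}\rangle = 0$ with $l \neq l'$, valid for every $d \,\in\, \N_\d$, I would note that $\widehat w^d_{jl} \,\in\, {\rm Span}_{\D}\{e_l,\, e_{d-1-l}\}$, so by the table a contributing term $\langle e_a, e_b\rangle$ with $a \,\in\, \{l, d-1-l\}$, $b \,\in\, \{l', d-1-l'\}$ needs $a+b = d-1$, forcing $l' \,\in\, \{l, d-1-l\}$; for $l \neq l'$ only $l' = d-1-l$ remains, already shown to vanish.

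The one genuine subtlety, absent in the real and complex analogues (Lemmas \ref{orthogonal-basis-R} and \ref{orthogonal-basis-C}), is the non-commutativity of $\alpha = \jb$ in the even case: one must consistently extract $\alpha$ unconjugated from the right slot and as $\sigma_c(\alpha)$ from the left slot, and every cancellation rests on $\alpha$ being purely imaginary, that is, on $\alpha^2 + \sigma_c(\alpha)\alpha = 0$ together with $\sigma_c(\alpha)\alpha = {\rm Nrd}(\alpha)$. Everything else is routine tracking of the signs $(-1)^l$ and $(-1)^{\eta-1-l}$.
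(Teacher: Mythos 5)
Your proof is correct, and it is precisely the straightforward verification the paper has in mind when it omits the proofs of Lemmas \ref{orthogonal-basis-R}--\ref{orthogonal-basis-H}: reduce everything to the pairing table $\langle X^a v^d_j,\, X^b v^d_j\rangle \,=\, 0$ unless $a+b\,=\,d-1$ (via Lemma \ref{extreme} applied to $H$) and $\langle X^a v^d_j,\, X^{d-1-a} v^d_j\rangle \,=\, (-1)^a\alpha_j$, then expand each branch. You also correctly isolate the one quaternionic subtlety — that the even-case cancellations rest on $\alpha_j$ being purely imaginary (automatic, since $(\cdot,\cdot)_\eta$ is skew-Hermitian for $\eta\,\in\,\E_\d$, and normalized to $\jb$ by Remark \ref{unitary-J-basis-rmk}(3)) together with $\sigma_c(\alpha_j)\alpha_j\,=\,{\rm Nrd}(\alpha_j)$ — which is exactly what distinguishes this lemma from its real and complex analogues.
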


The next corollary, which closely follows \cite[Lemma 9.3.1]{CoM}, gives a direct correspondence between the signature
of $(\cdot, \, \cdot)_{d}$ on $L(d-1)$ and the signature of $\langle \cdot ,\, \cdot \rangle$ on $M(d-1)$
when both $\<>$ and $(\cdot ,\, \cdot)_{d}$ have signatures. 
In part (3) of the corollary we record a correct version of a result in \cite[Lemma 9.3.1]{CoM}.
Recall that $\O^1_\d$ and $\O^3_\d$ are as in \eqref{Od1-Od3}. 

\begin{corollary}\label{c-m9.3.1}
	Let  $\<>$ be a $\epsilon$-$\sigma$ Hermitian form on $V$. Assume that $\epsilon\,=\,1$, that is,
	the form $\<>$ is symmetric or Hermitian.
	\begin{enumerate}
		\item  If $d \,\in\, \E_\d$, then the signature of $\langle \cdot ,\, \cdot \rangle$ on $M(d-1)$ is 
		$\big(\dim_\D M(d-1)/2  , \,\dim_\D M(d-1)/ 2\big)$.
		
		\item  If $d \,\in\, \O^1_\d$, and $(p_d,\, q_d)$ is the signature
		of $(\cdot  ,\,\cdot)_{d}$, then the signature of $\langle \cdot \, , \cdot \rangle$ on $M(d-1)$ is  
		$$ ((\dim_\D M(d-1) + p_d-q_d )/ 2 , \, (\dim_\D M(d-1) + q_d -p_d) / 2).$$
		
		\item If $d \,\in\, \O^3_\d$, and $(p_d, \,q_d)$ is the
		signature of $(\cdot  ,\,\cdot)_{d}$, then the signature of $\langle \cdot\, , \cdot \rangle$ on $M(d-1)$ is  
		$$( (\dim_\D M(d-1)  + q_d -p_d )/ 2, \, (\dim_\D M(d-1) + p_d -q_d) / 2).$$ 
	\end{enumerate}
\end{corollary}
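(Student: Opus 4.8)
The plan is to determine the signature of $\<>$ on $M(d-1)$ by reading it off from one of the explicit orthogonal bases produced in Lemmas \ref{orthogonal-basis-R}, \ref{orthogonal-basis-C} and \ref{orthogonal-basis-H}, according as $\D=\R$, $\C$ or $\H$. Each of these lemmas supplies, for a fixed $d\in\N_\d$, an orthogonal $\D$-basis of $M(d-1)$ indexed by pairs $(j,l)$, together with an explicit formula for the diagonal value of $\<>$ on each basis vector. After rescaling each diagonal entry to have modulus one, the signature of $\<>$ on $M(d-1)$ is simply the number of positive versus negative diagonal entries, so the whole problem reduces to a sign count over the index $l$, carried out uniformly in $\D$ since the three lemmas yield the same sign pattern (only the global normalizing factor $\pm 1$, $\pm\sqrt{-1}$ or $\jb$ differs, and it is absorbed in the rescaling).

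First I would dispose of Part (1), where $d\in\E_\d$. The even-case formulas (part (1) of Lemmas \ref{orthogonal-basis-R}, \ref{orthogonal-basis-C}, \ref{orthogonal-basis-H}) give each diagonal value the sign $(-1)^l$ up to a global factor independent of $l$, as $l$ runs over a block of even length (the block $0\le l\le d-1$ for $\D=\C,\H$, and the paired block $0\le l\le 2\eta-1$ for $\D=\R$). Over a block of even length the signs $(-1)^l$ are equally split between $+$ and $-$; summing over $j$ then produces exactly $\dim_\D M(d-1)/2$ positive and $\dim_\D M(d-1)/2$ negative entries. This settles Part (1), and it is consistent with the fact, noted in Remark \ref{unitary-J-basis-rmk}, that $(\cdot,\,\cdot)_d$ is symplectic or skew-Hermitian for even $d$, so that $(p_d,\,q_d)$ does not enter.

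Next I would treat Parts (2) and (3) together, setting $d=\theta\in\O_\d$. By Remark \ref{unitary-J-basis-rmk} the normalized basis of $L(\theta-1)$ satisfies $(v^\theta_j,\,v^\theta_j)_\theta=\langle v^\theta_j,\,X^{\theta-1}v^\theta_j\rangle=s_j\in\{\pm 1\}$, so the signature $(p_\theta,\,q_\theta)$ of $(\cdot,\,\cdot)_\theta$ is precisely $(\#\{j:s_j=+1\},\,\#\{j:s_j=-1\})$. For a fixed $j$, the odd-case formulas (part (2) of the same lemmas) give the diagonal value the sign $(-1)^l s_j$ for $0\le l\le(\theta-1)/2$ and $(-1)^{l+1}s_j$ for $(\theta-1)/2<l\le\theta-1$. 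The crux is the elementary summation $\sum_l(\text{sign})$ over this single $j$: it evaluates to $s_j$ when $(\theta-1)/2$ is even and to $-s_j$ when $(\theta-1)/2$ is odd. Since $\theta\equiv 1\pmod 4$ is equivalent to $(\theta-1)/2$ even and $\theta\equiv 3\pmod 4$ to $(\theta-1)/2$ odd, summing over all $j$ gives net signature $\sum_j s_j=p_\theta-q_\theta$ for $\theta\in\O^1_\d$ and $-\sum_j s_j=q_\theta-p_\theta$ for $\theta\in\O^3_\d$. Combining each net with the total $\dim_\D M(\theta-1)$ yields the displayed signature pairs in Parts (2) and (3).

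The main obstacle will be exactly this sign bookkeeping in the odd case: one must handle the middle index $l=(\theta-1)/2$ with care and track the parity interplay between $(-1)^l$, $(-1)^{l+1}$ and the length $(\theta+1)/2$ of the first block, for this is precisely what separates the $\O^1_\d$ behaviour from the $\O^3_\d$ behaviour, and hence Part (2) from Part (3). Everything else is routine once the three lemmas are in hand, since they have been arranged so that the diagonal sign patterns coincide across $\D=\R,\C,\H$.
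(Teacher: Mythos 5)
Your proposal is correct and is exactly the paper's approach: the paper proves Corollary \ref{c-m9.3.1} by stating that it "follows directly from Lemmas \ref{orthogonal-basis-R}, \ref{orthogonal-basis-C} and \ref{orthogonal-basis-H}," and your argument simply makes explicit the sign count that is left implicit there. Your parity bookkeeping checks out: for fixed $j$ the net sign over $0\,\leq\, l\,\leq\, \theta-1$ is $s_j$ when $(\theta-1)/2$ is even and $-s_j$ when it is odd, which is precisely what distinguishes $\O^1_\d$ from $\O^3_\d$, and the even-$d$ case gives an equal split over an even-length block regardless of the global unit factor.
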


\begin{proof}
	This follows directly from Lemmas \ref{orthogonal-basis-R}, \ref{orthogonal-basis-C} and \ref{orthogonal-basis-H}.
\end{proof}

\begin{remark}\label{CM-correction}
	We will now point out an error in \cite[p. 139, Lemma 9.3.1]{CoM}, and also explain why
	the definition of $m^d_{id}$ in the case of $ d \,\in \,\O^3_\d$ as in \ref{yd-def2} (in
	Section \ref{sec-partition-Young-diagram}) is different from that in the case of $d \,\in\,
	\E_\d \cup \O^1_\d$. Let $V$ be a right $\D$-vector space, and
	let $\<>$ be a Hermitian (respectively symmetric) form on $V$ if $\D \,=\, \H, \, \C$
	(respectively, $\D \,=\, \R$). Take a $\s\l_2 (\R)$-triple
	$\{X, \,H,\, Y\} \,\subset\, \s\u( V, \<>)$. Note that if $d \,\in\, \O^3_\d$, then
	the form $( \cdot,\, \cdot )_d$ in \eqref{new-form} is Hermitian
	(respectively, symmetric) when $\D \,=\, \H,\, \C$ (respectively, $\D\,=\, \R$).
	Let $(p_d, \,q_d)$ be the signature of $(\cdot,\, \cdot )_d$ when $d\,\in\, \O_\d^3$. Corollary \ref{c-m9.3.1}(3) says that
	the signature of $\<>$ restricted to $M(d-1)$ is
	$$((\dim_\D M(d-1)  + q_d -p_d )/2 ,\, (\dim_\D M(d-1) + p_d -q_d )/2 )$$ when $d \,\in\, \O^3_\d$. 
	Set the signs in first column of the matrix $(m^d_{ij})$ as in \ref{yd-def1}, and thus define
	$m^d_{i1} \,= \,+1$ when $1 \,\leq \,i \,\leq\, p_d $, and define $m^d_{i1} \,=\, -1$
	when $p_d \,<\, i \,\leq\, t_d$.
	
	However, in the case of $d \,\in\, \O^3_\d$, if we, following \cite[p.~139, Lemma 9.3.1]{CoM}, define $$m^d_{ij} \,=\,
	(-1)^{j+1}m^d_{i1}$$ for $1\,<\,j \,\leq \,d$,
	then it can be easily verified that $$({\rm sgn}_+(m^d_{ij}),\, {\rm sgn}_-(m^d_{ij})) \,= \,
	((\dim_\D M(d-1)  + p_d - q_d )/2 ,\, (\dim_\D M(d-1) + q_d -p_d )/2 )\, .$$
	Thus, if $d  \,\in\, \O^3_\d$ and $p_d \,\neq\, q_d$, then appealing to 
	Corollary \ref{c-m9.3.1}(3) we see that the signature of $\<>$ restricted to $M(d-1)$ does not
	coincide with $({\rm sgn}_+(m^d_{ij}),\, {\rm sgn}_-(m^d_{ij}))$.
	This shows that the second statement of \cite[p.~139, Lemma 9.3.1]{CoM} is not true
	when $d\,\in\, \O^3_\d$ and $p_d \,\neq\, q_d$ (this means that $r \,\equiv\, 2\, \pmod{4}$ in the notation of
	\cite[p. 139, Lemma 9.3.1]{CoM}).
	Recall that in \ref{yd-def2} (see Section \ref{sec-partition-Young-diagram}), when $d \,\in\, \O^3_\d$
	we have defined $m^d_{ij} \,= \,(-1)^{j+1}m^d_{i1}$ when $1\,<\,j \,\leq\, d-1$ while $m^d_{id}\,:=\, -m^d_{i1}$. Using the
	definitions of $m^d_{i1}$ as above
	we have that $$({\rm sgn}_+(m^d_{ij}),\, {\rm sgn}_-(m^d_{ij}))\,=\, 
	((\dim_\D M(d-1)  + q_d -p_d )/2 ,\, (\dim_\D M(d-1) + p_d -q_d )/2)\, .$$ 
	Thus, if we define $m^d_{ij}$ as in  \ref{yd-def1} and \ref{yd-def2}, then
	the signature of $\<>$ on $M(d-1)$ does coincide with $({\rm sgn}_+(m^d_{ij}),\, 
	{\rm sgn}_-(m^d_{ij}))$ for $d\,\in\, \N_\d$.
\end{remark}

\end{document}